\definecolor{myblue}{rgb}{0,0,0.6}
\definecolor{myred}{rgb}{0.8,0,0}
\def\Nt#1{\|\!|{#1}\|\!|}
\def\BigNt#1{\Big \|\! \Big |{#1} \Big\|\! \Big |}
\def\tildeto{\stackrel{\sim}{\longrightarrow}}
\DeclareMathAlphabet{\mathscr}{OT1}{pzc}{m}{it} 
\font\tenms=msbm10
\font\sevenms=msbm7
\font\fivems=msbm5
\def\a{\alpha}
\def\o{\omega}
\newtheorem{Thm}{Theorem}
\newtheorem{Def}{Definition}[section]
\newtheorem{Rem}[Def]{Remark}
\newtheorem{Cor}[Def]{Corollary}
\newtheorem{Lem}[Def]{Lemma}
\newtheorem{lem}[Def]{Lemma}
\newtheorem{Rmk}[Def]{Remark}
\newtheorem{ass}[Def]{Assumption}
\newtheorem{nota}[Def]{Notation}
\def\d {{\partial}}
\def\b{\beta}
\def\k{\kappa}
\let\ds=\displaystyle
\def\C{\mathcal C}
\def\D{\mathcal D}
\newcommand{\Tr}{\operatorname{trace}}
\newcommand{\ba}{\begin{aligned}}
\newcommand{\ea}{\end{aligned}}
\newcommand{\be}{\begin{equation}}
\newcommand{\ee}{\end{equation}}
\def\prodetage#1#2{
\prod_{\scriptstyle {#1}\atop\scriptstyle {#2}} }
\def \N{{\mathbf N}}
\def \R{{\mathbf R}}
\def\cB{\mathcal{B}}
\def\cD{\mathcal{D}}
\def\cE{\mathcal{E}}
\def\cG{\mathcal{G}}
\def\cJ{\mathcal{J}}
\def\cL{\mathcal{L}}
\def\cM{\mathcal{M}}
\def\cN{\mathcal{N}}
\def \indc{ 1 \!\! 1}
\def \eps{{\varepsilon}}
\def \e{{\varepsilon}}
\def\s{\sigma}
\def \d{{\partial}}
\def\l{\lambda}
\def \iint{\int \! \! \! \int }
\def \iiint{\int \! \! \! \int \! \! \! \int }
\def\t{\tau}
\def \1{ 1 \!\! 1}
\def\g{\gamma}
\def\P{\mathcal P}
\def\dsp{\displaystyle}
\newcommand{{\bv}}{ \bf v}
\begin{document}

\author[I. Gallagher]{Isabelle Gallagher}
\address[I. Gallagher]%
{Institut de Math{\'e}matiques de Jussieu UMR CNRS 7586 \\
      Universit{\'e} Paris-Diderot (Paris 7) \\
175, rue du Chevaleret\\
75013 Paris\\FRANCE}
\email{gallagher@math.univ-paris-diderot.fr}
\author[L. Saint-Raymond]{Laure Saint-Raymond}
\address[L. Saint-Raymond]%
{Universit\'e Paris VI and DMA \'Ecole Normale Sup\'erieure, 45 rue d'Ulm, 75230
Paris Cedex 05\\FRANCE }
\email{Laure.Saint-Raymond@ens.fr}
\author[B. Texier]{Benjamin Texier}
\address[B. Texier]
{Institut de Math{\'e}matiques de Jussieu UMR CNRS 7586 \\
      Universit{\'e} Paris-Diderot (Paris 7) \\
175, rue du Chevaleret\\
75013 Paris\\FRANCE \\
and  
 DMA \'Ecole Normale Sup\'erieure, 45 rue d'Ulm, 75230
Paris Cedex 05\\FRANCE }
\email{Benjamin.Texier@math.jussieu.fr}
\title{From Newton to Boltzmann: hard spheres and   short-range potentials }

 \begin{abstract}
We provide a rigorous derivation of the Boltzmann equation as the mesoscopic limit of systems of hard spheres, or Newtonian particles interacting via a short-range potential, as the number of particles $N$ goes to infinity and the characteristic length of interaction $\e$ simultaneously goes to $0,$ in the Boltzmann-Grad scaling $N \e^{d-1} \equiv 1.$ 

The time of validity of the convergence is a fraction of the average time of first collision, due to a limitation of the time on which one can prove uniform estimates for the BBGKY and Boltzmann hierarchies.

Our proof relies on the fundamental ideas of Lanford, and the important contributions of   King, Cercignani, Illner and Pulvirenti, and Cercignani, Gerasimenko and Petrina. 
The main novelty here is the detailed study of pathological trajectories involving recollisions, which proves the  term-by-term convergence  for the correlation series expansion.
    \end{abstract}

\maketitle

\frontmatter

\tableofcontents

\mainmatter

\newpage

{\bf Acknowledgements}: We thank J. Bertoin, Th. Bodineau, D. Cordero-Erausquin, L. Desvillettes, F. Golse, S. Mischler, C. Mouhot and R. Strain for many helpful discussions on topics addressed in this text. We are particularly grateful to M. Pulvirenti, C. Saffirio and S. Simonella for explaining 
to us
how condition~(\ref{strangeassumption}) makes possible a parametrization of the collision integral by the deflection angle  (see Chapter~\ref{scattering}).

Finally we thank the anonymous referee for helpful suggestions to improve the   manuscript.

\part{Introduction}

\chapter{The low density limit}
\label{intro} 
\setcounter{equation}{0}

 We are interested in this monograph in the qualitative behavior of systems of  particles with short-range interactions. We study the qualitative behaviour of particle systems with short-range binary interactions, in two cases: hard spheres, that move in uniform rectilinear motion until they undergo elastic collisions, and smooth, monotonic, compactly supported potentials. 
 
 \medskip
 $\bullet$
 For hard spheres, the equations of motion are
 \begin{equation} \label{n1-HS}
\begin{aligned}
 \frac  { d x_i }{d t} = \displaystyle v_i \, , \qquad 
 \frac  { d  v_i }{d  t} = 0 \, ,
\end{aligned}
\end{equation}
for $1 \leq i \leq N,$ where $(x_i,v_i) \in \R^d \times \R^d$ denote the position and velocity of particle $i$,
provided that the exclusion condition $|x_i(t) - x_j(t)|> \sigma$ is satisfied, where $\sigma$ denotes the diameter of the particles.
We further have to prescribe a reflection condition at the boundary~: if there exists $j \neq i $ such that $   |x_i-x_j|=\sigma$
\begin{equation}
\label{HS-BC}
\begin{aligned}v_i^{in} = v_i^{out} - \nu^{i,j} \cdot( v_i^{out} -   v_j^{out} ) \, \nu^{i,j}&  \\
v_j^{in} = v_j^{out} + \nu^{i,j} \cdot ( v_i^{out} -   v_j^{out} )  \,\nu^{i,j} & \,,
\end{aligned}
\end{equation}
where~$\nu^{i,j}:=(x_i-x_j) / |x_i-x_j|$.
Note that it is not obvious to check that (\ref{n1-HS})-(\ref{HS-BC}) defines  global dynamics. This question is addressed in Chapter~\ref{spheres}.

\bigskip

$\bullet$ 
In the case of smooth interactions, the Hamiltonian  equations of motion are
\begin{equation} \label{n1}
\begin{aligned}
 \frac  { d x_i }{d t} = \displaystyle v_i \, , \qquad 
 m_i \frac  { d  v_i }{d  t} = - \sum_{ j \neq i} \nabla \Phi(x_i - x_j) \, ,
\end{aligned}
\end{equation}
 where  $m_i$ is the mass of particle $i$ (which we shall assume equal to~1 to simplify) and  the force exerted by particle $j$ on particle $i$ is $ - \nabla \Phi(x_i - x_j)$.

 When the system is constituted of two elementary particles, in the reference frame attached to the center of mass, the dynamics is two-dimensional. The deflection of the particle trajectories from straight lines can then be described through explicit formulas (which are given in Chapter~\ref{scattering}).

When the system is constituted of     three particles or more, the integrability  is lost, and in general the problem becomes very complicated, as already noted by Poincar\'e \cite{poincare}.

\begin{Rmk}\label{limithardspherespotential}
Note that the dynamics of hard spheres is in some sense a limit of the smooth-forces case with
$$\Phi(x)  = +\infty \hbox{ if }|x|<\sigma \, , \qquad \Phi (x) = 0 \hbox{ if } |x|> \sigma\,.$$
Nevertheless, to our knowledge, there does not exist any mathematical statement concerning these asymptotics.

We will however see in the sequel that the two types of systems exhibit very similar qualitative behaviours in the low density limit. Once the dynamics is defined (i.e. provided that we can discard multiple collisions), the case of hard spheres is actually simpler and we will discuss it in Part II to explain the main ideas and conceptual difficulties. We will then explain, in Part III, how to extend the arguments to the smoother case of Hamiltonian systems.

\end{Rmk}

\section{The Liouville equation}\label{Liouville-PartI}$ $
\setcounter{equation}{0}

In the large $N$ limit,  individual trajectories become irrelevant, and our goal is to describe an average behaviour.

This average will be  of course over particles which are indistiguishable, meaning that we will be only interested in some distribution related to the empirical measure
$$\mu_N \big(t,X_N(0),V_N(0) \big) :=\frac1N \sum_{i=1}^N \delta_{x_i(t), v_i(t)}\,,$$
with~$X_N(0) := (x_1(0),\dots, x_N(0)) \in \R^{dN}$ and~$V_N(0) := (v_1(0),\dots, v_N(0)) \in \R^{dN}$, and~$(x_i(t), v_i(t))$ is the state  at time~$t$ of  particle~$i$ in  the system with initial configuration $\big(X_N(0),V_N(0) \big)$.

But, because we have only a vague knowledge of the state of the system at initial time, we will further average over initial configurations.
At time 0, we thus start with a distribution~$f_N^0(Z_N),$
where we   use the following notation: for any set of~$s$ particles with positions~$X_s:=(x_1,\dots,x_s) \in \R^{ds}$  and velocities~$V_s:=(v_1,\dots,v_s) \in \R^{ds}$, we write~$Z_s := (z_1,\dots, z_s)\in \R^{2ds}$ with~$z_i := (x_i,v_i) \in \R^{2d}$.

We then aim at describing  the evolution of the distribution
$$\int \left( \frac1N \sum_{i=1}^N \delta_{z_i(t)} \right) f_N^0(Z_N) dZ_N\,.$$

We    thus define  the probability $f_N =f_N(t,Z_N)$, referred to as the {\it $N$-particle  distribution function}\index{$N$-particle  distribution function}, and we assume that it satisfies for all permutations $\s$ of $\{1,\dots,N\},$ 
\begin{equation} \label{sym}
 f_N (t,Z_{\sigma(N)}) = f_N (t,Z_N)\, ,
 \end{equation} 
with $Z_{\sigma(N)} = (x_{\sigma(1)}, v_{\sigma(1)}, \dots, x_{\sigma(N)}, v_{\sigma(N)})$. This corresponds to the property that the particles are indistinguishable.

The distribution we are interested in  is therefore nothing else than the first marginal $f^{(1)}_N$ of the distribution function $f_N$, defined by
$$ f^{(1)}_N (t,Z_1) :=\int  f_N (t,Z_N) \,  dz_2 \dots dz_N\,.$$

\bigskip
Since~$f_N$ is an invariant of the particle system, 
the  {\it Liouville equation}\index{Liouville equation} relative to the particle system~\eqref{n1} is
\begin{equation}
\label{liouville}
\partial_t f_N +\sum_{i=1}^N v_i \cdot \nabla_{x_i} f_N  -\sum_{i=1}^N \sum_{j=1 \atop j\neq i}^N \nabla_x \Phi \left(x_i - x_j \right)\cdot \nabla_{v_i} f_N  = 0 \, .
\end{equation}
 
 For hard spheres, provided that we can prove that the dynamics is well defined for almost all initial configurations, we find the Liouville equation
 \begin{equation}\label{liouville-HS}
\partial_t f_N +\sum_{i=1}^N v_i \cdot \nabla_{x_i} f_N  = 0 
\end{equation}
on the domain
   $$
    \cD_N :=\Big \{ Z_N \in \R^{2dN}\,/\, \forall i\neq j,\, |x_i-x_j|>\sigma \Big\}
   $$
with the boundary condition~$f_N(t,Z_N^{in}) = f_N(t,Z_N^{out}) $, meaning that on the part of the boundary such that $   |x_i-x_j|=\sigma$
$$ f_N ( t, \dots, x_i, v_i^{in}, \dots x_j, v_j^{in},\dots)  =  f_N ( t, \dots, x_i, v_i^{out}, \dots x_j, v_j^{out},\dots) $$
where the ingoing and outgoing velocities are related by (\ref{HS-BC}).

\section{Mean field versus collisional dynamics}$ $
\setcounter{equation}{0}

In this framework, in order for the average energy per particle to remain bounded, one has to assume that the energy of each pairwise interaction is small. In other words, one has to consider a rescaled potential $\Phi_\eps$ obtained
\begin{itemize}
\item either by scaling the strength of the force,
\item or by scaling the range of potential.
\end{itemize}
According to the scaling chosen, we expect to obtain different  asymptotics.

\medskip
\noindent
$\bullet$ In the case of a weak coupling, i.e. when the strength of the individual interaction becomes small (of order $1/N$) but the range  remains macroscopic, the convenient scaling in order for the macroscopic dynamics to be sensitive to the coupling is:
$$\partial_t f_N +\sum_{i=1}^N v_i \cdot \nabla_{x_i} f_N  -\frac1N \sum_{i=1}^N \sum_{j=1 \atop j\neq i}^N \nabla \Phi \left(x_i - x_j \right)\cdot \nabla_{v_i} f_N  = 0 \, .
$$
Then each particle feels the effect of the force field created by all the (other) particles
$$F_N (x) =- \frac1N \sum_{j=1}^N \nabla_x \Phi \left(x - x_j \right) \sim -\iint \nabla \Phi (x-y) f^{(1)}_N(t,y,v) dydv\,.$$
In particular, the dynamics seems to be stable under small perturbations of the positions or velocities of the particles.

In the limit $N\to \infty$, we thus get a {\it mean field approximation}, that is an equation of the form
$$\partial_t f +v\cdot \nabla_x f  +F \cdot \nabla_v f  = 0$$
for the first marginal, where the coupling arises only through some average 
$$F:= -\nabla_x \Phi * \int fdv\,.$$

An important amount of literature is devoted to such asymptotics, but this is not our purpose here.  We refer to \cite{braun-hepp,S2} for pioneering results, to~\cite{haurayjabin} for a recent study and  to \cite{golse} for a   review on that topic.

\medskip
\noindent
$\bullet$ The scaling we shall deal with in the present work corresponds to a strong coupling, i.e. to the case when the amplitude of the potential remains of size $O(1)$, but its range becomes small.

 Introduce a small parameter $\e > 0$ corresponding to the typical interaction length of the particles. For hard spheres, $\eps$ is simply the diameter of particles.
In the case of Hamiltonian systems, $\eps$ will be the range of the interaction potential. We shall indeed assume throughout this text the following properties for~$\Phi$  (a {\it short-range} potential).
\begin{ass}\label{propertiesphi}
The potential~$\Phi: \R^d \to \R$  is a radial, nonnegative, nonincreasing function supported  in the unit ball of~$\R^d$, of class~$C^2$ in~$\{x \in \R^d \, , 0< |x|<1 \}$. Moreover it is assumed that~$\Phi$ is unbounded near zero, goes to zero at~$|x|=1$ with bounded derivatives, and  that~$\nabla\Phi$ vanishes only on~$|x|=1$. 
\end{ass}
   Then in the macroscopic spatial and temporal scales, the Hamiltonian system becomes
   \begin{equation} \label{n1.1}
\begin{aligned}
 \frac  { d x_i }{d t} = \displaystyle v_i \, , \qquad 
  \frac  { d  v_i }{d  t} = - \frac1\e\sum_{ j \neq i} \nabla \Phi\left({x_i - x_j \over \eps}\right) \, ,
\end{aligned}
\end{equation}
   and the Liouville equation takes the form 
\begin{equation}\label{Liouville}
\partial_t f_N +\sum_{i=1}^N v_i \cdot \nabla_{x_i} f_N  -\sum_{i=1}^N \sum_{j=1 \atop j\neq i}^N \frac1\eps \nabla_x \Phi \left({x_i - x_j \over \eps}\right)\cdot \nabla_{v_i} f_N  = 0 \, .
\end{equation}

With such a scaling, the dynamics is very sensitive to the positions of the particles.

\begin{figure}[h]
\begin{center}
\scalebox{0.4}{\includegraphics{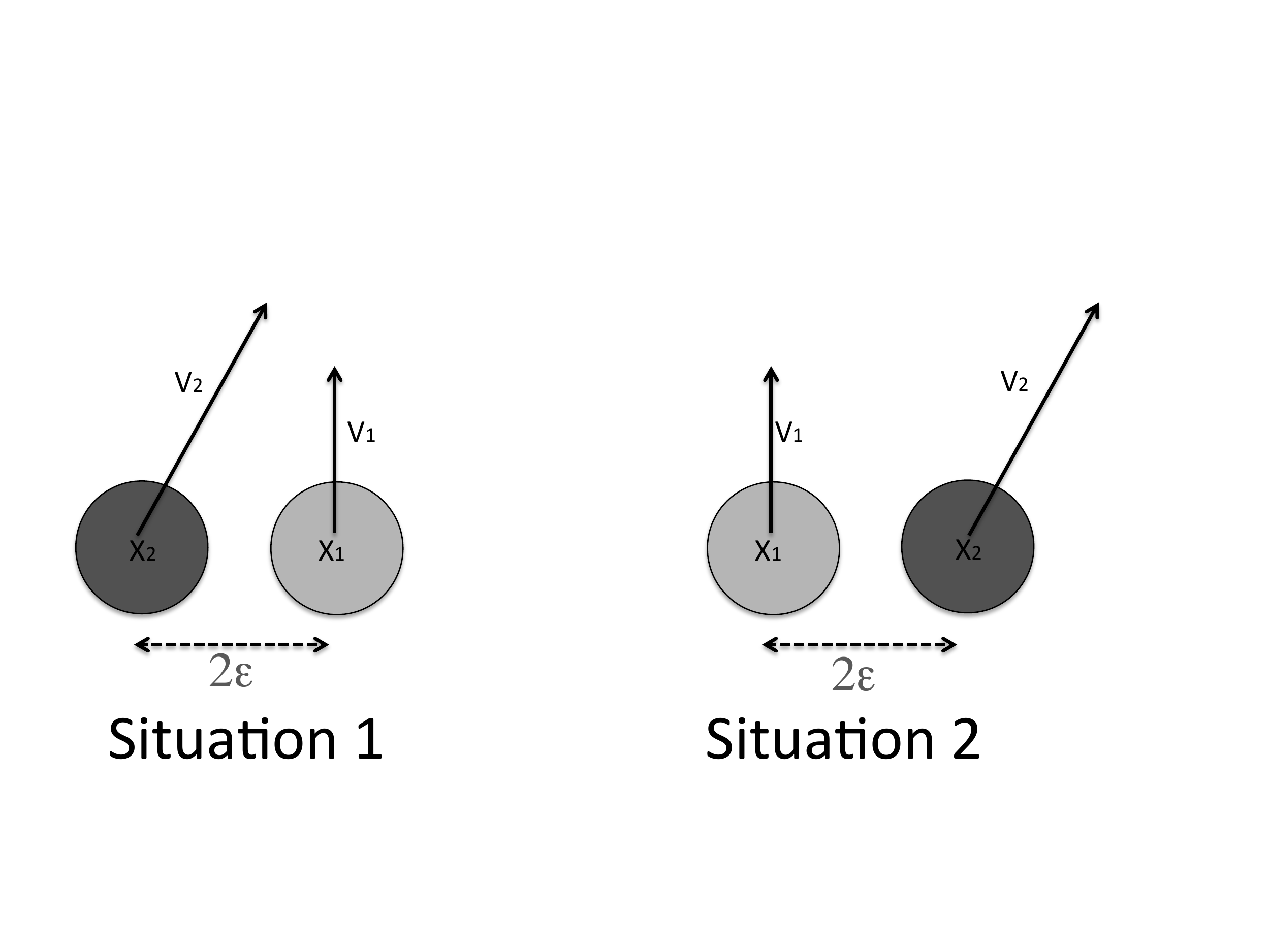}}
\caption{\label{unstability}Instability}
\end{center}
\end{figure}

Situations 1 and 2 on Figure \ref{unstability} are differ by a spatial translation of $O(\eps)$ only. However in Situation~1, particles will interact and be deviated from their free motion, while in Situation 2, they will evolve under free flow.

\section{The Boltzmann-Grad limit}
\setcounter{equation}{0}

Particles move with uniform rectilinear motion as long as they remain at a distance greater than $\eps$ to other particles.
 In the limit $\eps\to 0$, we thus expect trajectories to be almost polylines.
 
 Deflections are due to elementary interactions 
 \begin{itemize}
 \item which occur when two particles are at a distance smaller than $\eps$ (exactly $\eps$ in the case of hard spheres),
 
 \item during a time interval of order $\eps$ (if the relative velocity is not too small) or even instantaneously in the case of hard spheres,
 
 \item which involve generally only two particles~: the probability that a third particle enters a security ball of radius $\eps$ should indeed  tend to 0 as $\eps \to 0$ in the convenient scaling. We are therefore brought back to the case of the two-body system, which is completely integrable (see Chapter~\ref{scattering}).

 \end{itemize}

\medskip

In order for the interactions to have a macroscopic effect on the dynamics, each particle should undergo a finite number of collisions per unit of time. A   scaling argument, giving the mean free path in terms of~$N$ and~$\eps$, then shows that $N\eps^{d-1} =O(1)$: indeed  a particle travelling at speed bounded by~$R$  covers in unit time an area  of size~$R\eps^{d-1}$, and there are~$N$ such particles.  This is the Boltzmann-Grad scaling (see~\cite{grad}).

The Boltzmann equation, which is the master equation in collisional kinetic theory \cite{CIP, villani}, is expected to describe such a dynamics.

\chapter{The Boltzmann equation}\label{boltz-chapter}

\section{Transport and collisions}
\setcounter{equation}{0}

 As mentioned in the previous chapter, the state of the system in the low density limit  should be described (at the statistical level) by the kinetic density, i.e. by the probability $f\equiv f(t,x,v)$ of finding a particle with position $x$ and velocity $v$ at time $t$.
 
 This density is expected to evolve under both the effects of transport and binary elastic collisions, which is expressed in  the Boltzmann equation (introduced by Boltzmann in \cite{boltzmann1}-\cite{boltzmann})~:
  \begin{equation}
  \label{boltz-eq}
\underbrace{ \d_t f +v\cdot \nabla_x f}_{\mbox{
\footnotesize{free transport}}} \quad = 
\underbrace{  Q(f,f)}_{\mbox{\footnotesize{localized binary collisions}}}  
\end{equation}
The Boltzmann collision operator, present in the right-hand side of  \eqref{boltz-eq}, is the quadratic form, acting on the velocity variable, associated with the bilinear operator
\begin{equation}
	\label{boltz-operator}
	Q(f,f)=\iint [f'f'_1-ff_1]  \,b(v-v_1,\omega) \, dv_1 d\omega
\end{equation}
where we have used the standard abbreviations
$$
	f=f(v) \,, \quad f'=f(v') \,, \quad f'_1=f(v_1') \,, \quad f_1=f(v_1) \,,
$$
with $(v',v'_1)$ given by
$$
	v'=v +\omega \cdot (v_1-v)  \, \omega \, , 
	\quad v'_1=v_1 - \omega \cdot(v_1-v) \, \omega \,.
$$
One can easily show that the quadruple $(v,v_1,v',v_1')$ parametrized by $\omega\in{\mathbf {S}}_1^{d-1}$ (where~${\mathbf {S}}_\rho^{d-1}$\label{unitsphere} denotes the sphere of radius~$\rho$ in~$\R^d$) provides the family of all solutions to the system of $d+1$ equations
\begin{equation}\label{conservation momentum energy}
	\begin{aligned}
		v+v_1 & = v'+v_1'  \,,\\
		|v|^2+|v_1|^2 & = |v'|^2+|v_1'|^2 \,,
	\end{aligned}
\end{equation}
which, at the kinetic level, express the fact that  collisions are  elastic and thus conserve momentum and energy. Notice that the transformation $(v,v_1,\omega)\mapsto \left(v',v'_1,-\omega\right)$ is an involution.

\begin{figure}[h]
\begin{center}
\scalebox{0.4}{\includegraphics{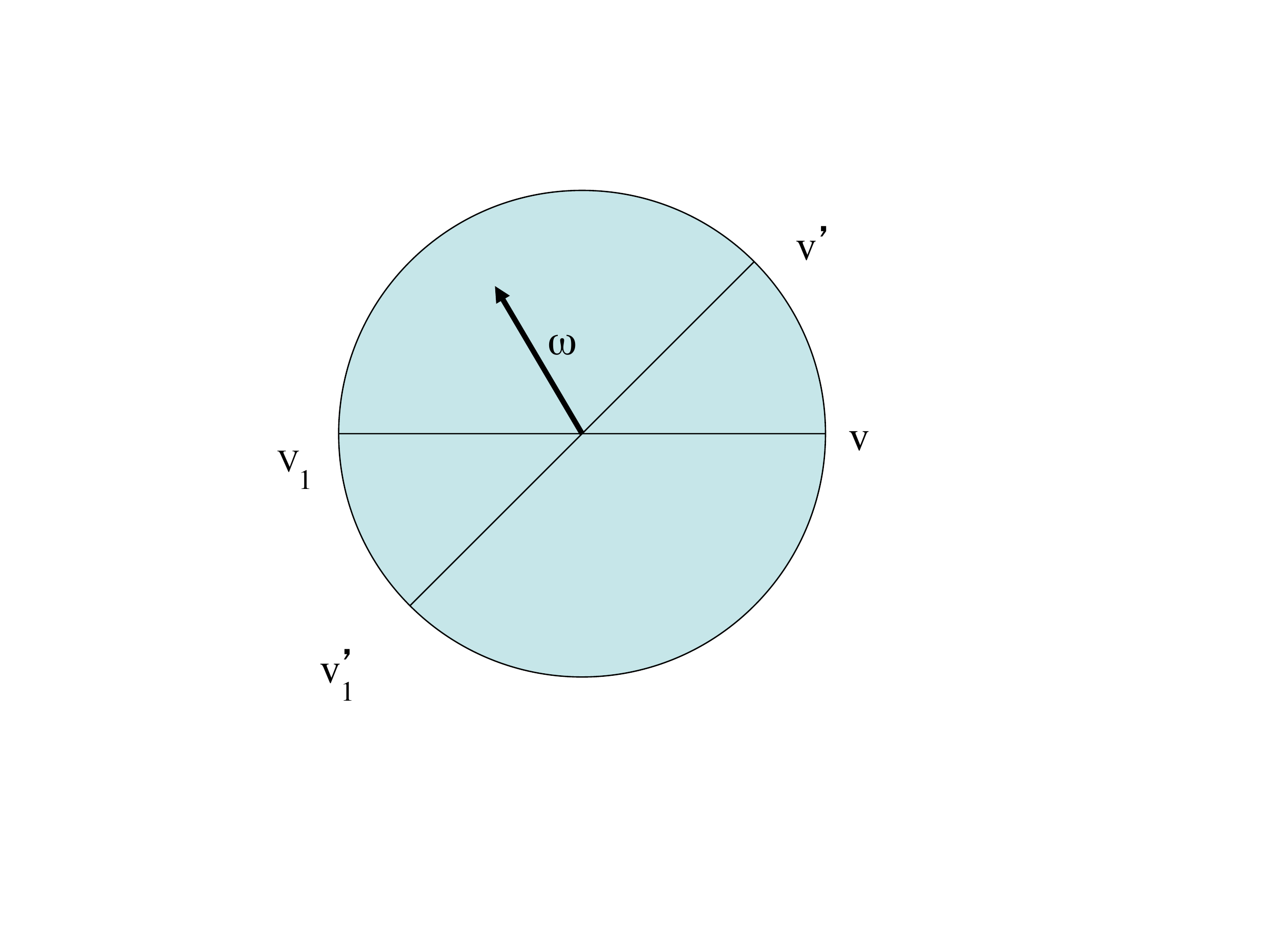}}
\caption{\label{omega-fig}Parametrization of the collision by the deflection angle $\omega$}
\end{center}
\end{figure}

The Boltzmann collision operator can therefore be split, at least formally, into a gain term and a loss term (see \cite{cercignani, villani})
$$Q(f,f)=Q^+(f,f)-Q^-(f,f).$$
The loss term counts all collisions in which a given particle of velocity $v$ will encounter another particle, of velocity $v_1$, and thus will change its velocity leading to a loss of particles of velocity $v$, whereas the gain term measures the number of particles of velocity $v$ which are created due to a collision between particles of velocities $v'$ and $v'_1$.

The collision kernel $b=b(w, \omega)$  is a measurable function positive almost everywhere, which  measures the statistical repartition of post-collisional velocities $(v,v_1)$ given the pre-collisional velocities $(v',v'_1)$. Its precise form depends crucially on the nature of the microscopic interactions, and will be discussed in more details in the sequel. 
Note that, due to the Galilean invariance of collisions, it only depends on the magnitude of the relative velocity $|w|$ and on the deviation angle $\theta$, or deflection (scattering) angle, defined by $\cos\theta=k\cdot\omega$ where $k= {w/|w|}$.

\section{Boltzmann's H theorem and irreversibility}
\setcounter{equation}{0}

From \eqref{conservation momentum energy} and using the well-known facts (see \cite{cercignani}) that transforming $(v,v_1)\mapsto(v_1,v)$ and~$(v,v_1,\omega)\mapsto \left(v',v_1',\omega\right)$ merely induces mappings with unit Jacobian determinants, one can show that formally
\begin{equation}
	\int Q(f,f)\varphi dv=\frac14 \iiint [f'f'_1-ff_1] (\varphi+\varphi_1 - \varphi'-\varphi'_1) \,b(v-v_1,\omega) \, dvdv_1 d\omega\,.
\end{equation}

In particular,
$$	\int Q(f,f)\varphi dv=0
$$
for all $f$ regular enough, if and only if $\varphi(v)$ is a collision invariant, i.e. $\varphi(v)$ is a linear combination of $\left\{1,v_1,\ldots,v_d,|v|^2\right\}$. Thus, successively multiplying the Boltzmann equation \eqref{boltz-eq} by the collision invariants and then integrating in velocity yields formally the local conservation laws
\begin{equation}\label{macroscopic conservation laws}
	\partial_t\int_{\mathbf{R}^d}f
	\left(\begin{array}{c} 1 \\ v \\ \frac{|v|^2}{2} \end{array}\right)
	dv
	+
	\nabla_x\cdot\int_{\mathbf{R}^d}f
	\left(\begin{array}{c} v \\ v\otimes v \\ \frac{|v|^2}{2}v \end{array}\right)
	dv
	=0 \, ,
\end{equation}
which provides the link to a macroscopic description of the gas.

\bigskip

The other very important feature of the Boltzmann equation comes also from the symmetries of the collision operator. Disregarding integrability issues, we choose~$\varphi=\log f$  and use the properties of the logarithm, to find
\begin{equation}
\label{dissipation-def}
\begin{array}{rl}
D(f)&\ds \equiv  -\int Q(f,f) \log f dv\\
&\ds = \frac14 \int_{\R^d\times \R^d \times {\mathbf S}_1^{d-1}}  b(v-v_1,\omega)(f'f'_1-ff_1) \log {f'f'_1\over ff_1} \, dvdv_1 d\omega \geq 0 \, .
\end{array}
\end{equation}
The so-defined entropy dissipation is therefore a nonnegative functional.

This leads to  Boltzmann's H theorem, also known as second principle of thermodynamics, stating that the entropy is (at least  formally) a Lyapunov functional for the Boltzmann equation.
\begin{equation}
\label{H-thm}
\d_t \int_{\mathbf{R}^d} f\log f dv +\nabla_x \cdot \int_{\mathbf{R}^d}  f\log f vdv \leq 0 \, .
\end{equation}

\bigskip
As to the equation $Q(f,f)=0$, it is possible to show that it is only satisfied by the so-called Maxwellian distributions $M_{\rho,u,\theta}$, which are defined by
\begin{equation}\label{defMrhoutheta}
	M_{\rho,u,\theta}(v):=\frac{\rho}{(2\pi\theta)^{\frac{d}{2}}}e^{-\frac{|v-u|^2}{2\theta}},
\end{equation}
where $\rho\in\mathbf{R}_+$, $u\in\mathbf{R}^d$ and $\theta\in\mathbf{R}_+$ are respectively the macroscopic density, bulk velocity and temperature, under some appropriate choice of units. The relation $Q(f,f)=0$ expresses the fact that collisions are no longer responsible for any variation in the density and so, that the gas has reached statistical equilibrium. In fact, it is possible to show that if the density $f$ is a Maxwellian distribution for some $\rho(t,x)$, $u(t,x)$ and $\theta(t,x)$, then the macroscopic conservation laws \eqref{macroscopic conservation laws} turn out to constitute the compressible Euler system.

More generally, the H-theorem~(\ref{H-thm})   together with the conservation laws (\ref{macroscopic conservation laws}) constitute the key elements of the study of hydrodynamic limits. 

\begin{Rmk}
Note that the irreversibility inherent to the Boltzmann dynamics seems at first sight to contradict the possible existence of a connection with the microscopic dynamics which
is reversible and satisfies the Poincar\'e recurrence theorem (while the Boltzmann dynamics predict some relaxation towards equilibrium).

That irreversibility will actually appear in the limiting process as an arbitrary choice of the time direction (encoded in the distinction between pre-collisional and post-collisional particles), and more precisely as an arbitrary choice of the initial time, which is the only time for which one has a complete information on the correlations. The point is  that the joint probability of having particles of velocity~$(v',v'_1)$ (respectively of velocities~$(v,v_1)$) before the collision is assumed to be equal to~$f(t,x,v')f(t,x,v'_1)$ (resp. to~$f(t,x,v)f(t,x,v_1)$), meaning that  particles should be  independent before collision.
\end{Rmk}

\section{The Cauchy problem}
\setcounter{equation}{0}

Let us first describe briefly the most apparent problems in trying to construct a general, good Cauchy theory for the Boltzmann equation. In the full, general situation, known a priori estimates for the Boltzmann equation are only those which are associated with the basic physical laws, namely the formal conservation of mass and energy, and the bounds on entropy and entropy dissipation. Note that, when the physical space is unbounded, the dispersive properties of the free transport operator allow to further expect some control on the moments with respect to~$x$-variables.  Yet the Boltzmann collision integral is a quadratic operator that is purely local in the position and time variables, meaning that it acts as a convolution in the $v$ variable, but as a pointwise multiplication in the $t$ and $x$ variables~: thus, with the only a priori estimates which seem to hold in full generality, the collision integral is even not a well-defined distribution with respect to $x$-variables. This major obstruction is one of the reasons why the Cauchy problem for the Boltzmann equation is so tricky, another reason being the intricate nature of the Boltzmann operator.

For the sake of simplicity, we shall consider here only bounded collision cross-sections $b$. A huge literature is devoted to the study of more singular cross-sections insofar as the presence of long range interactions always creates  singularities associated to grazing collisions. However, at the present time, there is no extension of Lanford's convergence result in this framework.

\subsection{Short time existence of continuous solutions}

The easiest way to construct local solutions to the Boltzmann equation is to use a fixed point argument in the space of continuous  functions.

Remarking that the free transport operator preserves weighted  $L^\infty$ norms
$$ \Big\| f_0(x-vt, v) \exp\left( \frac{\beta}2 |v|^2\right) \Big\| _{L^\infty} =   \Big\| f_0(x,v) \exp( \frac{\beta}2 |v|^2) \Big\|_{L^\infty} \,  ,$$
  and that the following continuity property holds for the collision operator
$$ \Big\| Q(f,f) (v) \exp(\frac{\beta}2 |v|^2) \Big \|_{L^\infty}  \leq C_\beta  \Big \| f(v) \exp(\frac{\beta}2 |v|^2) \Big\| _{L^\infty}  ^2\,  ,$$
we get the existence of continuous solutions, the lifespan of which is inversely proportional to the norm of the initial data.

\begin{Thm}\label{Boltz-existence0}
Let $f_0\in C^0(\R^d\times \R^d)$ such that
\begin{equation}
\Big\|f_0\exp( \frac \beta 2 |v|^2)\Big \|_{L^\infty} <+\infty
\end{equation}
for some $\beta >0$.
 
Then,  there exists $C_\beta>0$ (depending only $\beta$) such that  the  Boltzmann equation {\rm(\ref{boltz-eq})}   with initial data
$ 
f_0\,
$ has a unique continuous solution on $[0,T]$ with
$$
T = \frac{ C_\beta }{\Big \|f_0\exp( \frac \beta 2 |v|^2)\Big\|_{L^\infty}} \, \cdotp
$$ 
\end{Thm}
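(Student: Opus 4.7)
The plan is to construct the solution as a fixed point of the Duhamel (mild) formulation of (\ref{boltz-eq}), integrated along the free transport characteristics:
\[
f(t,x,v) = f_0(x-vt,v) + \int_0^t Q(f,f)\bigl(s,\, x-v(t-s),\, v\bigr)\, ds.
\]
For a time $T > 0$ to be fixed, set $M_0 := \| f_0\, \exp(\frac{\beta}{2}|v|^2)\|_{L^\infty}$ and introduce the complete metric space
\[
X_T := \Bigl\{ f \in C^0\bigl([0,T]\times \R^d \times \R^d\bigr)\ :\ \Nt{f}_T \le 2 M_0 \Bigr\},\quad \Nt{f}_T := \sup_{[0,T]\times\R^d\times\R^d} |f(t,x,v)|\, e^{\frac{\beta}{2}|v|^2},
\]
equipped with the distance induced by $\Nt{\cdot}_T$.

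First I would use the fact, recalled just before the statement, that free transport preserves the weighted norm, so the free-streaming term $(t,x,v)\mapsto f_0(x-vt,v)$ lies in $X_T$ with norm $M_0$. Then, by polarizing the quadratic continuity estimate also recalled there, one obtains a bilinear bound
\[
\Bigl\| Q(f,g)(v)\, e^{\frac{\beta}{2}|v|^2} \Bigr\|_{L^\infty_v} \le C\, \Nt{f}_T\, \Nt{g}_T,
\]
with $C$ depending only on $\beta$. Combining these, the Picard map $\Psi(f)(t,x,v) := f_0(x-vt,v) + \int_0^t Q(f,f)(s, x-v(t-s), v)\, ds$ satisfies, for all $f,g \in X_T$,
\[
\Nt{\Psi(f)}_T \le M_0 + T\, C (2M_0)^2,\qquad \Nt{\Psi(f)-\Psi(g)}_T \le T\, C \bigl(\Nt{f}_T+\Nt{g}_T\bigr)\Nt{f-g}_T.
\]
Choosing $T \le c_\beta / M_0$ with $c_\beta > 0$ small enough and depending only on $\beta$ then makes $\Psi$ stable on the ball of radius $2 M_0$ and a strict contraction; the Banach fixed point theorem yields a unique $f \in X_T$ with $\Psi(f) = f$, which gives the claimed lifetime of the form $T = C_\beta / M_0$.

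It remains to check that this fixed point is a genuine continuous solution of the Boltzmann equation. The Picard iterates $f^{(0)}(t,x,v) := f_0(x-vt,v)$ and $f^{(n+1)} := \Psi(f^{(n)})$ are continuous (free transport preserves continuity, and $Q(f^{(n)},f^{(n)})$ is continuous by dominated convergence since the integrand is dominated by the Gaussian weight thanks to $\Nt{f^{(n)}}_T \le 2 M_0$ and the boundedness of $b$), and they converge uniformly to $f$, so $f$ itself is continuous. The Duhamel identity then shows that $t\mapsto f(t, x+vt, v)$ is $C^1$ with derivative $Q(f,f)(t,x+vt,v)$, which is exactly (\ref{boltz-eq}) read along characteristics. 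Uniqueness in $X_T$ is built into the contraction argument; uniqueness among all continuous solutions with the same weighted bound follows by a Gronwall argument using the same bilinear estimate.

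The main obstacle is concentrated in the weighted bilinear continuity bound on $Q$: this is the only step where the detailed structure of the collision kernel and the Gaussian weight really interact. With $b$ bounded, energy conservation $|v'|^2 + |v_1'|^2 = |v|^2 + |v_1|^2$ from \eqref{conservation momentum energy} yields the rewriting
\[
f(v')\, f(v_1')\, e^{\frac{\beta}{2}|v|^2} = \bigl(f(v')\, e^{\frac{\beta}{2}|v'|^2}\bigr)\bigl(f(v_1')\, e^{\frac{\beta}{2}|v_1'|^2}\bigr)\, e^{-\frac{\beta}{2}|v_1|^2},
\]
so that the $v_1$-integration converges thanks to the Gaussian factor in $v_1$, and the loss term is handled identically without even using the velocity jump. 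Once this estimate is secured, all the remaining steps --- stability in the ball, contraction, continuity of the limit, Gronwall uniqueness --- are routine Banach-space manipulations.
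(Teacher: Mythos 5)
Your proof is correct and follows essentially the same route as the text, which only sketches the argument: a Picard fixed point for the Duhamel formulation in the weighted space $\|f e^{\frac{\beta}{2}|v|^2}\|_{L^\infty}$, using the invariance of that norm under free transport and the bilinear continuity of $Q$ obtained from energy conservation and the boundedness of $b$. Your write-up supplies the details (contraction constants, continuity of the limit, Gronwall uniqueness) that the paper leaves implicit, and they are all in order.
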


Note that the weigthed $L^\infty$ norm controls in particular the macroscopic density
$$ \rho(t,x) := \int f(t,x,v) dv \leq C_\beta \| f (t,x,v) \exp( \frac \beta 2 |v|^2) \|_\infty\,,$$
therefore the possible concentrations for which the collision process can become very pathological.
This restriction, even coming from a very rough analysis, has therefore a physical meaning.

\subsection{Fluctuations around some global equilibrium}

Historically the first global existence result for the spatially inhomogeneous Boltzmann equation is due to S. Ukai \cite{ukai1,ukai2}, who considered initial data that are
fluctuations around a global equilibrium, for instance around the reduced centered Gaussian $M:=M_{1,0,1}$ with notation~(\ref{defMrhoutheta}):
$$
f_0=M(1+g_0) \, .
$$
He proved the global existence of a solution to the Cauchy poblem for (\ref{boltz-eq}) under the assumption 
that the initial perturbation $g_0$ is smooth and small enough in a norm that involves derivatives and weights so as to ensure decay for
large $v$.

The convenient  functional space to be considered is indeed
$$
H_{\ell,k}=\{g\equiv g(x,v)\,/\,
\|g\|_{\ell,k}:=\sup_v(1+|v|^k)\|M^{1/2}g(\cdot,v)\|_{H^ \ell_x}<+\infty\}\,.
$$

\begin{Thm}[\cite{ukai1,ukai2}]\label{Boltz-existence1}
Let $g_0\in H_{\ell,k}$ for $\ell>d/2$ and $k>d/2+1$ such that
\begin{equation}
\label{small-init}
\|g_0\|_{\ell,k}\leq a_0
\end{equation}
for some $a_0$  sufficiently small. 
 
Then,  there exists a unique global solution $f=M(1+g)$ with 
$g\in L^\infty(\R^+, H_{\ell,k}) \cap C(\R^+, H_{\ell,k})$ to the  Boltzmann equation {\rm(\ref{boltz-eq})}
 with initial data
$$ 
g_{|t=0}=g_0\,.
$$
\end{Thm}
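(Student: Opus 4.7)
The plan is to linearize \eqref{boltz-eq} around $M$ and then solve the perturbed equation by a Picard iteration on a Duhamel formulation. Writing $f = M(1+g)$ yields
$$
\partial_t g + v \cdot \nabla_x g - \cL g = \Gamma(g,g),
$$
with linearized collision operator $\cL g := M^{-1}(Q(M,Mg)+Q(Mg,M))$ and quadratic remainder $\Gamma(g,g) := M^{-1}Q(Mg,Mg)$. Setting $B := \cL - v \cdot \nabla_x$, the idea is to derive decay estimates for the semigroup $e^{tB}$ strong enough to close a contraction mapping for
$$
g(t) = e^{tB}g_0 + \int_0^t e^{(t-s)B}\, \Gamma(g(s),g(s))\, ds
$$
in a small ball of $L^\infty(\R^+,H_{\ell,k})$.

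The first step is the spectral analysis of $B$. By Grad's decomposition $\cL = -\nu(v)\,\mathrm{Id} + K$, with collision frequency $\nu(v) \sim \langle v \rangle$ and $K$ compact on $L^2(M\,dv)$, $\cL$ is nonpositive with $(d+2)$-dimensional kernel $\mathrm{Span}\{1,v_1,\dots,v_d,|v|^2\}$ of collision invariants, and a spectral gap $\{\mathrm{Re}\,z \le -\delta\}$ on its orthogonal complement. Taking the Fourier transform in $x$ and perturbing in $|\xi|$ the family $\hat B(\xi) = \cL - i v \cdot \xi$, one locates $d+2$ smooth eigenvalues $\lambda_j(\xi) = i c_j |\xi| - \mu_j |\xi|^2 + \mathcal{O}(|\xi|^3)$ bifurcating from $0$ (the hydrodynamic modes), while the rest of the spectrum remains in $\{\mathrm{Re}\, z \le -\delta/2\}$ uniformly in $\xi$. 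This splits $e^{tB}$ into a hydrodynamic part whose decay in $x$ comes from dispersion of free transport (plus the parabolic smoothing $e^{-\mu_j |\xi|^2 t}$), and a transverse part which decays exponentially in time in $L^2$; both are then transported to the weighted $L^\infty_v$ setting by Grad-type pointwise estimates on $K$.

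The second step is to match the semigroup estimates with the nonlinear structure. The conditions $\ell > d/2$ and $k > d/2+1$ are tuned so that $H_{\ell,k}$ becomes an algebra under $x$-pointwise products via the Sobolev embedding $H^\ell_x \hookrightarrow L^\infty_x$, and so that the collision remainder satisfies the bilinear estimate
$$
\|\Gamma(g,g)\|_{\ell,k} \le C\,\|g\|_{\ell,k}^2,
$$
by standard Grad-type bounds on the gain and loss kernels against the Gaussian weight. Combined with the semigroup decay, this closes a contraction mapping for the Duhamel equation on a ball of radius $\mathcal{O}(a_0)$ in $L^\infty(\R^+,H_{\ell,k})$ provided $a_0$ is small enough; uniqueness and continuity in time follow in a standard way, and nonnegativity of $f = M(1+g)$ can be recovered from a maximum-principle argument on the Picard iterates.

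The hard part will be the semigroup decay estimate in the weighted $L^\infty_v H^\ell_x$ topology of $H_{\ell,k}$. The hydrodynamic modes do not enjoy a spectral gap at $\xi = 0$, so their control must be extracted from the $x$-dispersion of free transport, while the transverse part exploits the exponential decay from the spectral gap, which is naturally available in an $L^2$ framework and must be pushed to a weighted $L^\infty_v$ norm using the compactness and regularizing properties of $K$. Making dispersion, spectral gap, and the algebra property of $H_{\ell,k}$ cooperate inside a single small-data contraction is the technical core of Ukai's argument.
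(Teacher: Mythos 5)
Your proposal follows the same route the paper briefly indicates for Ukai's result: Duhamel formulation and Picard iteration in $H_{\ell,k}$, driven by a detailed spectral analysis of the semigroup generated by $v\cdot\nabla_x + \cL_M$ (Grad's splitting $\cL = -\nu\,\mathrm{Id} + K$, hydrodynamic branches bifurcating from zero, spectral gap for the remainder, then transfer from $L^2$ to weighted $L^\infty_v H^\ell_x$ using the regularizing properties of $K$). The paper merely cites this scheme without expanding it, and your fleshing-out of the spectral step, the role of $\ell>d/2$ and $k>d/2+1$, and the closing contraction is a faithful account of Ukai's argument.
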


 Such a  global existence result is based on Duhamel's
formula and on  Picard fixed point theorem. It requires a very precise study of the linearized collision operator~$\cL_M$ defined by
 $$\cL_M g :=-{2\over M} Q(M,Mg) \, ,$$
and more precisely of   the semi-group $U$ generated by
$$v\cdot\nabla_x+ \cL_M\,.
$$
The main disadvantage inherent to that strategy is that one cannot expect to extend such a result to classes of initial data with less regularity.

\subsection{Renormalized solutions}

The theory of renormalized solutions goes back to the late 80s and
is due to  R. DiPerna and  P.-L. Lions \cite{diperna-lions}. It holds for physically admissible initial data of arbitrary sizes,
but does not yield solutions that are known to
solve the Boltzmann equation in the usual weak sense.

Rather, it gives the existence of a global weak solution to
a class of formally equivalent initial-value problems.

\begin{Def}
A renormalized solution     of the Boltzmann equation {\rm(\ref{boltz-eq})}  relatively to the global equilibrium $M$ is a function
$f\in C(\R^+, L^1_{loc}(\R^d \times \R^d))$ such that
$$ H(f|M) (t) := \iint \int  \left( f \log {f\over M} -f+M \right) (t,x,v)
     \ dv \,  dx<+\infty \,,$$
which satisfies in the sense of distributions
\begin{equation}
    \label{boltz-renormalized}
\begin{array}l
   \displaystyle M \Big( \d_t + v \cdot \nabla_x  \Big) \Gamma\left({f\over M}\right)
    =   \Gamma'\left({f\over M}\right) Q(f,f) \quad \hbox{ on } \R^+\times \R^d \times \R^d\,, \\
   \displaystyle f_{|t=0} = f_0 \geq 0 \quad \hbox{ on } \R^d \times \R^d\,.
\end{array}
\end{equation}
 for any
$\Gamma\in C^1(\R^+)$ such that 
$|\Gamma'(z)| \leq { C/ \sqrt{1+z}}$.
 \end{Def}

\bigskip

With the above definition of renormalized solution relatively to $M$,  the following existence result holds~:

\begin{Thm}[\cite{diperna-lions}] \label{Boltz-existence2}
  Given any initial data $f_0$
satisfying
\begin{equation}
    H(f_0|M) = \int \int  \left( f_0 \log {f_0\over M} -f_0+M \right) (x,v)
     \ dv \,  dx<+\infty \, ,
\end{equation}
there exists a renormalized solution $f \in C(\R^+, L_{loc}^1(\R^d \times \R^d))$ relatively to $M$ to the Boltzmann equation {\rm(\ref{boltz-eq})}
with initial data $f_0$.

   Moreover, $f$ satisfies 

- the continuity equation
\begin{equation}
\label{mass-conservation}
 \d_t \int fdv +\nabla_x \cdot \int fvdv =0 \, ;
\end{equation}

- the momentum equation with defect measure
\begin{equation}
\label{momentum-conservation}
 \d_t \int fvdv +\nabla_x \cdot \int fv\otimes v dv +\nabla_x \cdot m=0
\end{equation}
where $m$ is a Radon measure on $\R^+\times \R^d$ with values in the nonnegative symmetric matrices;

- the entropy inequality
\begin{equation}
\label{entropy+defect}
\begin{array}r
\ds H(f|M) (t) + \int \Tr (m )(t) + \int_0^t \int D(f)(s,x)dsdx\\
\ds 
\leq H(f_0|M)
\end{array}
\end{equation}
where $\Tr (m)$ is the trace of the nonnegative symmetric matrix $m$, and  the entropy dissipation $D(f)$ is defined by {\rm(\ref{dissipation-def})}.
\end{Thm}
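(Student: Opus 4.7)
The plan is to obtain $f$ as a weak limit of solutions $f_n$ to a family of approximate problems in which the collision operator is rendered harmless, and then to exploit the entropy--entropy dissipation structure to pass to the limit in the \emph{renormalized} form. Concretely, I would first replace $b$ by a bounded, compactly supported truncation $b_n$ (e.g.\ $b_n = b \, \chi_n(|v-v_1|)$ with a cutoff for large and small relative velocities) so that the corresponding $Q_n$ is Lipschitz on bounded subsets of $L^1 \cap L^\infty$ with $M$-weight; combined with a mollification of the free transport and with initial data $f_{0,n} = \min(f_0, n) \, \chi_{|x| \leq n}$, a standard Picard / monotonicity scheme in the spirit of Theorem~\ref{Boltz-existence0} produces non-negative solutions $f_n \in C(\R^+, L^1)$ satisfying the local conservations and the entropy identity with equality.

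The second step is to harvest uniform bounds. The entropy dissipation identity yields, uniformly in $n$,
\begin{equation*}
\sup_{t \geq 0} H(f_n | M)(t) + \int_0^T \!\! \int D_n(f_n)(s,x)\, ds\, dx \leq H(f_0 | M).
\end{equation*}
Combined with conservation of mass, this gives uniform integrability of $\{f_n\}$ in every compact set of $\R^+ \times \R^d \times \R^d$ (Dunford--Pettis via the superlinear function $z \mapsto z\log z$), hence a subsequence $f_n \rightharpoonup f$ weakly in $L^1_{loc}$. Weak lower semicontinuity of $H$ and of $\iint D$ (the latter using convexity of $(x,y)\mapsto (x-y)\log(x/y)$) already gives the desired entropy inequality once we can pass to the limit in the equation.

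The main obstacle, as always with DiPerna--Lions, is to make sense of $Q(f_n,f_n)$ in the limit: weak $L^1$ convergence is far too weak to pass to the limit in a quadratic operator that is pointwise in $(t,x)$. The crucial trick is the renormalization: choosing $\Gamma$ with $|\Gamma'(z)| \lesssim 1/\sqrt{1+z}$ (for instance $\Gamma(z) = \log(1+z)$), one gets on the one hand the a priori control
\begin{equation*}
\Gamma'(f_n/M)\, Q_n^-(f_n,f_n) \lesssim \frac{f_n}{\sqrt{M+f_n}} \int f_{n,1}\, b_n\, dv_1\, d\omega,
\end{equation*}
which is equi-integrable thanks to the entropy bound on $f_n$ and the $L\log L$ bound on $\int f_n\, dv$. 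On the other hand, for the gain term one uses the pointwise identity $f'f'_1 - ff_1 = (f'f'_1 - ff_1) \cdot \mathbf 1_{\{f'f'_1 \leq A ff_1\}} + \cdots$ together with the entropy dissipation to show that the contribution of the region where $f'f'_1 \gg ff_1$ is small, while on the complementary region $Q_n^+(f_n,f_n)/(1+f_n/M)$ is dominated by a constant multiple of the loss term. The missing compactness in $(t,x)$ needed to identify weak limits of these nonlinear quantities is supplied by the velocity averaging lemma of Golse--Lions--Perthame--Sentis applied to the transport equation satisfied by $\Gamma(f_n/M)$: averages $\int \Gamma(f_n/M)\,\varphi(v)\,dv$ are strongly compact in $L^1_{loc}(dt\, dx)$, which, combined with Egorov and the equi-integrability above, lets one pass to the limit in each of the four terms $f f_1$, $f'f'_1$ appearing in $\Gamma'(f_n/M)\,Q_n(f_n,f_n)$ and obtain (\ref{boltz-renormalized}).

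The conservation laws and the defect measure are then obtained by the usual device: the continuity equation (\ref{mass-conservation}) follows by taking $\Gamma(z) = z$ in the limit (which is justified through a monotone class / truncation argument using the $L^1$ bound), while for the momentum equation the nonnegative Radon measure $m$ arises as the defect in passing to the limit in $\int f_n\, v\otimes v\, dv$: writing $f_n v\otimes v = f_n^{1/2}\cdot f_n^{1/2}v\otimes v$ and invoking weak-$\ast$ compactness of bounded measures, one extracts a limit that decomposes as $\int f\, v \otimes v\, dv + m$ with $m \geq 0$ by lower semicontinuity, and $\mathrm{Tr}(m) \leq \liminf \int f_n |v|^2\, dv - \int f |v|^2\, dv$. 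Inserting this into the energy identity and using the entropy dissipation bound produces (\ref{entropy+defect}). The step I expect to be by far the most delicate is the gain-term compactness in Step~3, which is really the heart of the DiPerna--Lions argument: controlling the part of $Q^+$ on the set where $f'f'_1$ is much larger than $ff_1$ requires a careful, quantitative use of the entropy dissipation together with velocity averaging.
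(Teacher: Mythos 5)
The paper does not prove this theorem: it is quoted directly from DiPerna--Lions \cite{diperna-lions}, and the only surrounding commentary is the short remark immediately after the statement, which identifies the two load-bearing ideas (weak stability inherited from the entropy inequality, and strong compactness of velocity averages via the Golse--Lions--Perthame--Sentis lemma \cite{GLPS}). Your sketch is a correct, considerably more detailed outline of the actual DiPerna--Lions argument, and it is consistent with those two points the paper singles out: you correctly locate the role of the renormalization (to tame the quadratic collision integral under mere weak $L^1$ convergence), the equi-integrability supplied by the entropy structure, the velocity-averaging step for strong $L^1_{loc}(dt\,dx)$ compactness of moments, the $A$-threshold split of the gain term as the hard step, and the emergence of the nonnegative matrix-valued defect $m$ from the lack of uniform integrability of $f_n|v|^2$.

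One small inaccuracy worth flagging: you invoke ``conservation of mass'' together with the entropy bound to get uniform integrability via $z\mapsto z\log z$. The relative entropy $H(f_n|M)$ alone controls $f_n\log f_n$ only after it is combined with \emph{moment} bounds in velocity (and, in the whole space, a confining moment in $x$ obtained from the free-transport dispersion estimate $\frac{d}{dt}\int f |x-vt|^2=0$); mass conservation by itself does not prevent escape of $f_n\log f_n$ to large $|v|$. This is standard and easily repaired, but as written the sentence would not quite close the Dunford--Pettis argument. The rest of your outline, including the remark that the gain-term compactness (the $f'f'_1$ region versus the $ff_1$ region, controlled quantitatively by the entropy dissipation) is the genuine heart of the proof, is accurate.
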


The weak stability of approximate solutions is inherited from the entropy inequality.
 In order to take limits in the renormalized Boltzmann equation, we have further
to obtain some strong compactness.  The crucial idea
here  is to use the velocity averaging lemma due to F. Golse, P.-L. Lions, B. Perthame
and R. Sentis \cite{GLPS}, stating that the moments in
$v$ of the  solution to some transport equation are  more regular than the function itself.

\begin{Rmk}\label{Linfty-rmk}
As we will see, the major weakness of the convergence theorem describing the Boltzmann equation as the low density limit of large systems of particles is the very short time on which it holds.
However, the present state of the art regarding the Cauchy theory for the Boltzmann equation makes it very difficult to improve.

Because of the scaling of the microscopic interactions, the conditioning on energy surfaces (see Chapter~{\rm\ref{convergenceresult}}) introduces strong spatial oscillations in the initial data. We therefore do not expect to get regularity so that we could take advantage of the perturbative theory of S. Ukai~\cite{ukai1,ukai2}. A coarse graining argument would be necessary to retrieve spatial regularity on the kinetic distribution, but we are not aware of any breakthrough in this direction.

As for using the DiPerna-Lions theory~\cite{diperna-lions}, the first step would be to understand the counterpart of renormalization at the level of the microscopic dynamics, which seems to be also a very challenging problem.
\end{Rmk}

\chapter{ Main results }

\section{Lanford and King's theorems}
\setcounter{equation}{0}
The main goal of this monograph is to prove the two following statements. 
We give here compact, and somewhat informal, statements of our two main results. Precise statements are given in Chapters~\ref{convergenceresult} and~\ref{convergence}  (see Theorem~\ref{main-thm} page~\pageref{main-thm} for the hard-spheres case, and~\ref{main-thmpotential} page~\pageref{main-thmpotential} for the potential case).  
 
The following statement concerns the case of hard spheres dynamics, and the main
ideas behind its proof go back to the fundamental work of Lanford~\cite{lanford}.
 \begin{Thm}\label{thm-cv-intro-HS} 
 Let $f_0:\R^{2d} \mapsto \R^+$ be a continuous density of probability such that
 $$\big\| f_0(x,v) \exp(\frac\beta 2|v|^2 ) \big\|_{L^\infty(\R^{2d})}<+\infty$$
 for some $\beta>0$.
 
  Consider the system of $N$ hard spheres of diameter $\eps$, initially distributed according to $f_0$ and ``independent",  governed by the system~{\rm(\ref{n1-HS})-(\ref{HS-BC})}. Then, in the Boltzmann-Grad limit~$N \to \infty, \, N\eps^{d-1}= 1$, its distribution function  converges to the solution  to the Boltzmann equation~{\rm(\ref{boltz-eq})} with the cross-section~$b(w,\omega):= ( \omega   \cdot w)_+$ and with initial data $f_0$, in the sense of observables. 
 \end{Thm}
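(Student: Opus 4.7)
The plan is to follow Lanford's strategy, based on the BBGKY hierarchy and its formal limit, the Boltzmann hierarchy. First, starting from the Liouville equation \eqref{liouville-HS} for $f_N$, I would derive the BBGKY hierarchy for the marginals $f_N^{(s)}(t,Z_s)$, $1 \leq s \leq N$. Integrating out the last $N-s$ variables produces a transport equation on $\cD_s$ together with boundary terms at $|x_i - x_j| = \eps$ involving $f_N^{(s+1)}$. Using the indistinguishability \eqref{sym} together with the elastic reflection rule \eqref{HS-BC}, these boundary terms can be reorganized into a collision operator $C_{s,s+1}$ that splits into a gain and a loss part, parametrized by an impact direction $\nu \in {\mathbf S}_1^{d-1}$ and the velocity $v_{s+1}$ of the added particle, with prefactor $(N-s)\eps^{d-1} = 1 + O(\eps)$ under the Boltzmann--Grad scaling.

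In parallel, I would write down the Boltzmann hierarchy, formally obtained from the BBGKY hierarchy by sending $\eps \to 0$, which collapses the sphere $|x_i - x_{s+1}| = \eps$ to the point $x_i$ while keeping the collision rate $O(1)$. Its solution, starting from tensorized initial data $f_0^{\otimes s}$, is precisely $f(t)^{\otimes s}$ where $f$ solves the Boltzmann equation \eqref{boltz-eq}; existence of such $f$ on a short interval $[0,T]$ is supplied by Theorem~\ref{Boltz-existence0}. Both hierarchies can be solved by iterating Duhamel's formula, yielding series indexed by ``collision trees'' of free-transport segments separated by applications of the collision operator. Using the Gaussian weight $\exp(\tfrac\beta2 |v|^2)$ as in Theorem~\ref{Boltz-existence0}, I would propagate a uniform-in-$N$ exponential bound on the marginals: the loss term is absorbed by a Gronwall estimate, the gain term picks up a factor controlled by the weight, and the combinatorics of trees of depth $n$ produces only geometric growth, so the series converges absolutely in a weighted norm on a time interval depending only on $\beta$ and $\|f_0 \exp(\tfrac\beta2|v|^2)\|_{L^\infty}$.

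On this short time interval I would then establish term-by-term convergence. Fix a collision tree of depth $n$; it determines a backward \emph{pseudo-trajectory} in which the root particle $z_1$ evolves backwards, and at each branching node a new particle is added at distance $\eps$ from an existing one with prescribed $(\nu,v)$, then all particles flow freely (or reflect at distance $\eps$) until the next node. As $\eps \to 0$, provided the newly created particles fly apart backwards, this pseudo-trajectory converges pointwise to a limit trajectory consisting of free flights punctuated by instantaneous deflections of the form \eqref{HS-BC}, which is exactly the pseudo-trajectory parametrizing the corresponding term in the Boltzmann Duhamel series, with cross-section $b(w,\omega) = (\omega \cdot w)_+$. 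Continuity of $f_0$ and of the test observable then transfer pointwise convergence to convergence of the integrals against $f_0^{\otimes s}$.

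The main obstacle, as announced in the abstract, is the rigorous handling of \emph{recollisions}: pseudo-trajectories where an added particle interacts again, at a later backward time, with an existing particle or with another added particle. On such configurations the $\eps$-dependent BBGKY pseudo-trajectory diverges from the Boltzmann limit trajectory, and the term-by-term convergence fails pointwise. The strategy is to quantify, by elementary geometric estimates in the $(x,v,\nu)$-space, that the set of collision parameters producing a recollision within the time window $[0,T]$ has measure vanishing with $\eps$; this requires excluding small neighborhoods of grazing impact angles and of small relative velocities, and taking advantage of the dispersion of free flight away from the excluded regions. Combined with the absolutely convergent Duhamel series, this yields convergence of $f_N^{(1)}$ to $f$ in the sense of observables on $[0,T]$. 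The delicate point is to make the recollision exclusion uniform in the tree depth $n$ so that the exceptional measure can be summed over $n$ within the radius of convergence of the series; this is where the detailed pathological-trajectory analysis advertised in the abstract enters.
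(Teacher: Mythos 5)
Your proposal follows the same Lanford strategy as the paper---Liouville equation to BBGKY hierarchy, Boltzmann hierarchy as formal limit, iterated Duhamel series with collision trees, uniform weighted-norm bounds, term-by-term convergence via pseudo-trajectories, and control of recollisions by excluding small sets in the $(\nu,v)$ parameters. The broad architecture is correct. Two genuine gaps remain, however, and a third point is a mischaracterization.

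\textbf{Missing conditioning of the initial data.} You integrate against $f_0^{\otimes s}$ as if the $N$-particle initial density could be taken tensorized. For hard spheres this is impossible: $f_0^{\otimes N}$ does not satisfy the exclusion constraint on $\cD_N$. One must instead take
\[
 f_{0,N}^{(N)}(Z_N) = \cZ_N^{-1}\,\indc_{Z_N \in \cD_N}\, f_0^{\otimes N}(Z_N)\,,
\qquad \cZ_N = \int_{\R^{2dN}}\indc_{Z_N \in \cD_N}\, f_0^{\otimes N}\,dZ_N\,,
\]
and then prove that the resulting marginals $f^{(s)}_{0,N}$ converge to $f_0^{\otimes s}$ locally uniformly in $\Omega_s$ as $N\e^{d-1}\equiv 1$, using a quantitative control on $\cZ_N^{-1}\cZ_{N-s}$ (Lemma~\ref{lem:bd-f0HS}, Proposition~\ref{init-cv1}). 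Without this step the claimed ``propagation of chaos'' has no precise meaning, and the Boltzmann series has no identified limit initial datum.

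\textbf{No stability of bad sets under microscopic spatial translations.} Your treatment of recollisions measures the set of $(\nu,v)$ parameters leading to a recollision ``within $[0,T]$,'' but this isn't the obstruction. The BBGKY pseudo-trajectory at the $k$-th branching point places the new particle at distance $\e$ from an existing one and, in earlier branches, particle positions drift by $O(\e)$ per collision from the limit Boltzmann pseudo-trajectory. After $k$ branches the two pseudo-trajectories differ by $O(k\e)$ in position. To iterate the exclusion argument along the tree, the excluded set of $(\nu,v)$ at each step must therefore be chosen \emph{uniformly over all configurations within a ball of radius $a\gg\e$} of the nominal Boltzmann configuration, and the exclusion must preserve the good-configuration property with a slightly degraded separation constant ($\e_0 \to \e_0/2$). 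This is the content of Proposition~\ref{geometric-prop} and is precisely what the paper flags as its main new contribution. A ``measure-zero at $\e=0$, hence small measure at small $\e$'' argument without this uniformity fails, because the bad sets are not simple thickenings of a fixed zero-measure set but depend on positions that themselves move with $\e$.

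\textbf{Mechanism of the uniform bound.} The a priori estimate is not a Gronwall absorption of the loss term: the hierarchy is infinite and the collision operator $\C_{s,s+1}$ has a factor growing like $s$, which no Gronwall inequality absorbs. The paper instead uses a Cauchy--Kowalevskaya-type device with time-decaying parameters $\beta(t)=\beta_0-\lambda t$, $\mu(t)=\mu_0-\lambda t$: the continuity estimate for $\C_{s,s+1}$ loses in both $\beta$ and $\mu$, and the time-decay compensates the loss before taking the supremum over $s$ (Lemma~\ref{lemukai}). This is why the existence time has the specific dependence on $\b_0,\mu_0$ of Remark~\ref{existencetime} rather than simply being inversely proportional to the initial norm as in Theorem~\ref{Boltz-existence0}.
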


The next theorem concerns the Hamiltonian case (with a repulsive potential), and important steps of the proof can be found in the thesis of King~\cite{K}.
 \begin{Thm}\label{thm-cv-intro} 
Assume that the repulsive potential $\Phi$ satisfies   Assumption~{\rm\ref{propertiesphi}} as well as the technical assumption~{\rm(\ref{strangeassumption})}.  Let $f_0:\R^{2d} \mapsto \R^+$ be a continuous density of probability such that
 $$\big\| f_0 \exp(\frac\beta 2|v|^2 ) \big\|_{L^\infty}<+\infty$$
 for some $\beta>0$.

Consider the system of $N$ particles, initially distributed according to $f_0$ and ``independent", governed by the system~{\rm(\ref{n1.1})}. Then, in the Boltzmann-Grad limit~$N \to \infty, \, N\eps^{d-1} = 1$, its distribution function  converges to the solution  to the Boltzmann equation~{\rm(\ref{boltz-eq})} with a bounded cross-section, depending on~$\Phi$ implicitly,  and with initial data $f_0$, in the sense of observables. 
 \end{Thm}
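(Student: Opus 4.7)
The plan is to follow the Lanford strategy as adapted by King to smooth short-range potentials, and to reduce the convergence to a uniform bound and a term-by-term analysis of the Duhamel series associated with the BBGKY hierarchy. First, starting from the rescaled Liouville equation~(\ref{Liouville}), I would derive the BBGKY hierarchy for the marginals $f_N^{(s)}(t,Z_s)$ obtained by integrating $f_N$ against $dz_{s+1}\cdots dz_N$. Because the particles interact only at distances at most $\eps$, a partial integration on the boundary of the interaction region produces, in addition to the free $s$-particle Hamiltonian transport operator $T_s$, a collision operator $C_{s,s+1}$ coupling $f_N^{(s)}$ to $f_N^{(s+1)}$. In the scaling $N\eps^{d-1}=1$, the expected limit as $N\to\infty$ is the Boltzmann hierarchy with operators $C^{B}_{s,s+1}$, whose cross-section $b(w,\omega)$ is derived from the scattering map of the two-body problem associated with $\Phi$. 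The construction of $b$ and its boundedness are handled in Chapter~\ref{scattering}, where Assumption~\ref{propertiesphi} and the technical condition~(\ref{strangeassumption}) are precisely what allows one to parametrize the collision integral by the deflection angle.

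Next, I would establish uniform short-time estimates on the iterated Duhamel expansion of both hierarchies. Using Gaussian-weighted norms of the form $\sup_{Z_s} |f^{(s)}(Z_s)|\,\exp(\mu s + \frac{\beta}{2}|V_s|^2)$ and a Cauchy--Kovalevskaya-type argument, I would prove that the series
\[
f_N^{(s)}(t) = \sum_{n \geq 0} \int_0^t\!\!\int_0^{t_1}\!\!\cdots\int_0^{t_{n-1}} T_s(t-t_1)\, C_{s,s+1}\, T_{s+1}(t_1-t_2)\cdots C_{s+n-1,s+n}\, f_N^{0,(s+n)} \, dt_n\cdots dt_1
\]
converges absolutely, uniformly in $N$, on a time interval $[0,T^\star]$ whose length depends only on $\beta$ and $\|f_0\exp(\frac{\beta}{2}|v|^2)\|_{L^\infty}$. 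The analogous series for the Boltzmann hierarchy, built from the limiting operators $C^{B}_{s,s+1}$ and from the tensorized datum $f_0^{\otimes s}$, converges under the same conditions and, by the uniqueness provided by Theorem~\ref{Boltz-existence0} together with the tensorization property, reconstructs the tensor product $f(t)^{\otimes s}$, where $f$ is the unique continuous Boltzmann solution.

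It then remains to prove term-by-term convergence. Each term of the two Duhamel series can be represented as an integral over \emph{pseudo-trajectories}: propagating backwards in time, every application of a collision operator adds a new particle with a prescribed relative position and velocity. For the BBGKY hierarchy the additional particle is placed at distance $\eps$ and the subsequent background dynamics is the full Hamiltonian flow at scale $\eps$; for the Boltzmann hierarchy the addition is pointwise, the velocities jump according to the scattering map, and the background is free transport. The task reduces to showing that, outside a small set of collision parameters, the BBGKY pseudo-trajectory stays $O(\eps)$-close to its Boltzmann counterpart and that each microscopic interaction converges to the corresponding scattering event. The main obstacle, and the novelty announced in the abstract, is the control of pathological pseudo-trajectories featuring \emph{recollisions}, in which a previously created particle re-enters an interaction zone and threatens to destroy the molecular-chaos structure. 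I would split the parameter space into a good set, on which straight-line dynamics away from interaction zones and the quantitative scattering estimates of Chapter~\ref{scattering} apply, and a bad set whose measure is shown to be $o(1)$ as $\eps\to 0$, uniformly in the admissible range of collision parameters. Combined with the uniform bound above, dominated convergence then yields convergence of the series term by term, hence convergence of the observables and the theorem.
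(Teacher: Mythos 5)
Your broad plan (uniform a priori bound on the Duhamel series plus term-by-term convergence via pseudo-trajectories and recollision control) is the right one, but the step in which you set up the BBGKY hierarchy hides the essential technical difficulty of the potential case, and as written it does not go through.

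You state that integrating the Liouville equation against $dz_{s+1}\cdots dz_N$ and ``integrating by parts on the boundary of the interaction region'' produces a collision operator $C_{s,s+1}$ coupling $f_N^{(s)}$ to $f_N^{(s+1)}$, as in the hard-sphere case. For the untruncated marginals this is false: the inter-particle force is smooth, so there is no spatial boundary to integrate against, and the hierarchy~(\ref{eq:untruncated-marg}) one obtains has a right-hand side of the form $\tfrac{N-s}{\eps}\sum_i\int \nabla\Phi(\tfrac{x_i-x_{s+1}}{\eps})\cdot\nabla_{v_i} f_N^{(s+1)}\,dz_{s+1}$, i.e.\ a bulk integral over the whole ball $B(x_i,\eps)$ involving velocity derivatives of the higher marginal, not a surface integral. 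To recover a hard-sphere-like boundary structure one has to work with the \emph{truncated} marginals $\widetilde f_N^{(s)}$ of~(\ref{t-marginal}), as King does. But truncated marginals are not marginals of one another (equation~(\ref{pbhierarchy})), so the Green's-formula computation yields a hierarchy that couples $\widetilde f_N^{(s)}$ to the whole family $\widetilde f_N^{(s+m)}$, $m=1,\dots,N-s$, through higher-order collision operators $\mathcal{C}_{s,s+m}$ indexed by $\eps$-clusters. A nontrivial part of the proof (Chapter~\ref{chaptercluster}) is precisely the cluster estimate showing these $m\geq 2$ contributions are $O(\eps^{m-1})$ and can be discarded in the limit. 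Since, contrary to hard spheres, configurations where three particles lie in a common interaction zone have positive measure, their negligibility cannot be waved away and your sketch omits it entirely.

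A related but smaller issue: the Gaussian weight $\exp(\mu s + \tfrac{\beta}{2}|V_s|^2)$ is not the right norm for the BBGKY level in the potential case. The $s$-particle flow is the Hamiltonian flow, which conserves the full energy $E_\eps(Z_s)=\tfrac{1}{2}|V_s|^2+\sum_{i<k}\Phi_\eps(x_i-x_k)$, not the kinetic energy alone; so the norm must be built on $\exp(\beta E_\eps(Z_s))$ as in~(\ref{norm:e-bpot}) for the transport operator to be an isometry. With a purely kinetic weight the basic identity~(\ref{continuitytransport}) fails and the Cauchy--Kovalevskaya argument does not close. (On the Boltzmann side your weight is fine, since there the flow is free transport.)

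Apart from these two points, the rest of your outline (construction of the bounded cross-section from the scattering map under assumption~(\ref{strangeassumption}), uniqueness of the tensorized Boltzmann solution, reduction to finitely many terms, good/bad partition of collision parameters with measure control of the bad set, dominated convergence) matches the paper's strategy in Chapters~\ref{scattering} and~\ref{pseudotraj}--\ref{convergenceproof}.
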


 \begin{Rmk}
Convergence in the sense of observables means that, for any test function~$\varphi $ in~$ C^0_c(\R^d_v)$, the corresponding observable
 $$\phi_\eps(t,x) :=\int f_\eps (t,x,v) \varphi(v) dv  \longrightarrow \phi(t,x) := \int f(t,x,v) \varphi(v) dv $$
 uniformly in $t$ and $x$. We indeed recall that the kinetic distribution cannot be measured, only averages can be reached by physical experiments~: this accounts for the terminology ``observables".
 
 In mathematical terms, this means that we establish only   weak convergence with respect to the $v$-variable. Such a convergence result does not exclude the existence of pathological behaviors, in particular dynamics  obtained by reversing the arrow of time and which are predicted by the (reversible) microscopic  system. We shall only prove that these behaviors have negligible probability in the limit $\eps \to 0$.
 \end{Rmk}

 \begin{Rmk}\label{independence-rmk}
 The initial  independence assumption has to be understood also asymptotically. It will be discussed with much details in Chapter~{\rm\ref{convergenceresult}} (see also Chapter~{\rm\ref{convergence}} in the case of a potential): it is actually related to some coarse-graining arguments which are rather not intuitive at first sight.
  
  For hard spheres, the exclusion obviously prevents independence for fixed $\eps$, but we expect to retrieve this independence as $\eps \to 0$  if we consider a fixed number $s$ of particles.
  The question is to deal with an infinite number of such particles.
  
  The case of the smooth Hamiltonian system could seem to be simpler insofar as particles can occupy the whole space. Nevertheless, in order to control the decay at large energies, we need to introduce some conditioning on energy surfaces, which is very similar to   exclusion. 
 \end{Rmk}

  \begin{Rmk}\label{potential-rmk}
 The technical assumption~{\rm(\ref{strangeassumption})} will be made explicit in Chapter~{\rm\ref{scattering}}~: it ensures that the deviation angle is a suitable parametrization of the collision, and more precisely that we can retrieve the impact parameter from both the ingoing velocity and the deviation angle. What we will use is the fact that the jacobian of this change of variables is bounded at least locally.
 
 Such an assumption is not completely compulsory for the proof. We can imagine of splitting the integration domain in many subdomains where the deviation angle is a good parametrization of the collision, but then we have to extend the usual definition of the cross-section. The important point is that the deviation angle cannot be a piecewise constant function of the impact parameter.
  \end{Rmk}

 \section{Background and references}
\setcounter{equation}{0}

The problem of asking for a rigorous derivation of the Boltzmann equation from the Hamiltonian dynamics goes back to Hilbert \cite{hilbert}, who suggested to use the Boltzmann equation as an intermediate step between the Hamiltonian dynamics and fluid mechanics, and who described this axiomatization of physics as a major challenge for mathematicians of the twentieth century.

We shall not give an exhaustive presentation of the studies  that have been carried out on this question but indicate some of the fundamental landmarks, concerning for most of them the case of hard spheres. First one should mention N. Bogoliubov~\cite{Bogoliubov}, M. Born, and H. S.   Green~\cite{Born}, J. G. Kirkwood~\cite{Kirkwook} and J. Yvan~\cite{Yvan}, who gave their names to the BBGKY hierarchy on the successive marginals, which we shall be using extensively in this study.   H. Grad was able to obtain  in~\cite{gradphys} a differential equation on the   first marginal which after some manipulations converges towards the Boltzmann equation.

The first mathematical result on this problem goes back to C. Cercignani~\cite{cercignanirigid} and O. Lanford \cite{lanford} who proved that the propagation of chaos should be established by a careful study of trajectories of a hard spheres system, and who exhibited -- for the first time --  the origin of irreversibility. The proof, even though incomplete, is therefore an important breakthrough. The limits of their methods, on which we will comment later on -- especially regarding the short time of convergence -- are still challenging questions.
 
 The argument of O.  Lanford was then revisited and completed in several works. Let us mention especially the contributions of K. Uchiyama~\cite{uchiyama}, C. Cercignani, R. Illner and M. Pulvirenti \cite{CIP} and H. Spohn \cite{S1} who introduced a mathematical formalism, in particular to get uniform a priori estimates for the  solutions to the BBGKY hierarchy which turns out to be  a  theory in the spirit of the Cauchy-Kowalewskaya theorem.

 The term-by-term convergence of the hierarchy in the Boltzmann-Grad scaling was studied in more details by C. Cercignani, V. I. Gerasimenko and D. I. Petrina \cite{CGP}~: they provide for the first time quantative estimates on the set of ``pathological trajectories", i.e. trajectories for which the Boltzmann equation does not provide a good approximation of the dynamics. What is not completely clear in this approach is the stability of the estimates under microscopic spatial translations.

 The method of proof  was then extended
 \begin{itemize}
 \item to the case when the initial distribution is close to vacuum, in which case global in time results may be proved \cite{CIP,IP1,IP};
 
 \item to the case when interactions are localized but not pointwise \cite{K}. Because multiple collisions are no longer negligible, this requires a careful study of clusters of particles.

 \end{itemize}
Many review papers deal with those different results, see \cite{EP,pulvirenti,villani} for instance.
  
\bigskip

    Let us now summarize the strategy of the proofs.  Their are two main steps:
\begin{itemize}
\item[(i)] a short time bound for the series expansion expressing the correlations of the system of $N$ particles  and the corresponding quantities of the Boltzmann equation;
\item[(ii)] the term by term convergence.
 \end{itemize}
 
  In the case of hard spheres, point (i) is just a matter of explicit estimates,
while point (ii) is usually considered as almost obvious (but deep).   Among experts in the field the hard sphere case is therefore considered to be completely solved.
However, we could not find a proof for the measure zero estimates (i.e. the control of recollisions) in the litterature. It might
be that to experts in the field such an estimate is easy, but from our point of view
it turned out to be quite delicate.

\begin{itemize}

\item For the Boltzmann dynamics, it  seems to be correct that a zero measure argument allows to control recollisions inasmuch as particles are pointwise.
\item For fixed $\eps$, we will see that the set of velocities leading to recollisions (even in the case of three particles) is small but not zero~:  this cannot be obtained by a straightforward thickening argument   without any {\bf geometrical information} on the limiting zero measure set. 
\item For the microscopic system of $N$ particles, collisional particles are at a distance $\eps$ from each other, we thus expect that even ``good trajectories"
deviate from trajectories associated to the Boltzmann dynamics. We shall therefore need some {\bf stability of ``pathological sets"} of velocities with respect to microscopic spatial translations, to be able to iterate the process.
\end{itemize}

   \section{New contributions}
   
    Our  goal here is to provide a  self-contained presentation, which includes all the details of the proofs, especially concerning   term-by-term  convergence which to our knowledge is not completely written anywhere, even in the hard-spheres case.

 \bigskip
 
  Part II is a review of known results in the case of hard spheres.
  Following  Lanford's strategy, we shall establish the starting hierarchy of
equations, providing a short time, uniform estimate. 
   
   We   focus especially on the {\bf definition of  functional spaces}: we shall see that the short time estimate is obtained as an analytical type result, meaning that we control all correlation functions together. The functional spaces we   consider are in some sense natural from the point of view of statistical physics, since they involve two parameters $\beta$ and $\mu$ (related to the inverse temperature and chemical potential) to control the growth of energy and of the number of particles. Nevertheless, instead of usual $L^1$ norms, we   use $L^\infty$ norms, which are needed to control collision integrals (see Remark~\ref{Linfty-rmk}).
   
   The second point we   discuss in details is the {\bf notion of  independence}. As noted in Remark~\ref{independence-rmk},  for any fixed $\eps>0$, because of the exclusion, particles cannot be independent. In the $2Nd$-dimensional phase-space, we shall see actually that the Gibbs measure has   support on    only a very small set. Careful estimates on the partition function show however that the marginal of order $s$ (for any fixed $s$) converges to some tensorized distribution, meaning that independence is recovered at the limit $\eps \to 0$.

\bigskip

Part III deals with the case of the Hamiltonian system, with a repulsive potential. It basically follows  King's thesis~\cite{K},  filling in some gaps.

   In the limit $\eps\to 0$ with $N\eps^{d-1}\equiv1$, we would like to obtain a kind of homogeneization result~: we want to average the motion over the small scales in $t$ and $x$, and replace the localized interactions by  pointwise collisions as in the case of hard spheres.
 We   therefore introduce an {\bf artificial boundary} (following~\cite{K}) so that
 \begin{itemize}
 \item
 on the exterior domain, the dynamics reduces to free transport,
 \item
 on the interior domain, the dynamics can be integrated in order to compute outwards boundary conditions
 in terms of the incoming flux. Note that such a scattering operator is relevant only if we can guarantee that there is no other particle involved in the interaction. 
  \end{itemize}
   
  An important point is therefore to control multiple collisions, which - contrary to the case of hard spheres - could happen for a non zero set of initial data. We however expect that they become negligible in the Boltzmann-Grad limit (as the probability of finding three particles having approximately  the same position tends to zero). {\bf Cluster estimates}, based on suitable partitions of the $2Nd$-dimensional phase-space and symmetry arguments, give the required asymptotic bound on multiple collisions.

   \bigskip
   
Part IV is the heart of our contribution, where we   establish the term-by-term convergence. Note that the arguments work  in the same way in both situations (hard spheres and potential case), up to some minor technical points due to the fact that, for the $N$-particle Hamiltonian system, pre-collisional and post-collisional configurations differ by their velocities but also by their microscopic positions and by some microscopic shift in time.
   
   However the two main difficulties are exactly the same:
   \begin{itemize}
   \item describing geometrically the set of ``pathological" velocities and deflection angles leading to possible recollisions, in order to get a {\bf quantitative estimate} of its measure;
   \item proving that this set is {\bf stable under small translations} of positions.
   \end{itemize}
   
   \medskip
   
   Note that the estimates we   establish depend only on the scattering operator, so that we have a rate of convergence which can be made explicit for instance in the case of hard spheres.
   
To control the set of recolliding trajectories by means of explicit estimates,
we   make use of properties of the cross-section which
are not guaranteed a priori for a generic repulsive potential. 
Assumption~(\ref{strangeassumption}) guarantees that these conditions are satisfied.


\part{The case of hard spheres}

\chapter{Microscopic dynamics and BBGKY hierarchy}\label{spheres}
\setcounter{equation}{0}

In this  chapter we define
 the~$N$-particle flow for hard spheres (introduced in Chapter~\ref{intro}), and write down the associated BBGKY hierarchy. Finally we present a formal derivation of the Boltzmann hierarchy, and the Boltzmann equation of hard spheres.  This chapter follows the classical approaches of~\cite{alexanderthesis}, \cite{CGP}, \cite{CIP}, \cite{lanford}, among others.

\section{The $N$-particle flow}\label{Nparticle}
We consider~$N$ particles in the space~$\R^d$, the motion of which is 
described by~$N$ positions~$(x_1,\dots,x_N)$ and~$N$ velocities~$(v_1,\dots,v_N)$, each in~$\R^{d}$. Denoting by~$Z_N:= (z_1,\dots,z_N)$ the set of particles, 
   each particle~$z_i:=(x_i,v_i) \in \R^{2d}$ is submitted to free flow
   \begin{equation}
\label{hard-spheres}
\begin{aligned}
\forall 1 \leq i \leq N \, , \quad {dx_i\over dt} =v_i \, , \quad {dv_i\over dt} = 0 \end{aligned}
\end{equation}
 on the domain\label{def:DNHS}
   $$
    \cD_N :=\Big \{ Z_N \in \R^{2dN}\,/\, \forall i\neq j,\, |x_i-x_j|>\eps \Big\}
   $$
    and bounces off the boundary~$\partial \cD_N$ according to the laws of elastic reflection: if $  |x_i-x_j|=\eps$
\begin{equation}
\label{hard-spheres-BC}
\begin{aligned}
v_i^{in} = v_i^{out} - \nu^{i,j} \cdot ( v_i^{out} -   v_j^{out} ) \, \nu^{i,j}&  \\
v_j^{in} = v_j^{out} + \nu^{i,j} \cdot ( v_i^{out} -   v_j^{out} )  \,\nu^{i,j} & \,  
\end{aligned}
\end{equation}
where~$\nu^{i,j}:=(x_i-x_j) / |x_i-x_j|$, and in the case when~$\nu^{i,j}\cdot (v_i^{in} -   v_j^{in} )<0$ (meaning that the ingoing velocities are precollisional).

Contrary to the potential case studied in Part III, 
it is not obvious to check that (\ref{hard-spheres}) defines  a global dynamics, at least for almost all initial data. 
Note indeed that this is not a simple consequence of the Cauchy-Lipschitz theorem since the boundary condition is not smooth, and even not defined for all configurations. We call  {\em pathological} a trajectory such that

- either there exists a collision involving more than two particles, or the collision is grazing (meaning that~$\nu^{i,j}\cdot (v_i^{in} -   v_j^{in} )=0$) hence the boundary condition is not well defined;

- or there are an infinite number of collisions in finite time so the dynamics cannot be globally defined.

In~\cite[Proposition~4.3]{alexander},  it is stated  that outside a negligible set of initial data there are no pathological trajectories; the complete proof is provided in~\cite{alexanderthesis}. Actually the setting of~\cite{alexanderthesis} is more complicated than ours since an infinite number of particles is considered. The arguments of~\cite{alexanderthesis} can however be easily adapted to our case to yield the following result, whose proof we detail for the convenience of the reader. 

\begin{prop}\label{flowwelldefined}
Let~$N,\eps$ be fixed. The set of initial configurations leading to a pathological trajectory   is of measure zero in~$\R^{2dN}$.
\end{prop}
We first prove  the following elementary lemma, in which we have used the following notation:
for any~$s \in \N^*$ and~$R > 0,$ we denote~$B_R^s  :=\displaystyle \{V_s \in \R^{ds}, \, |V_s| \leq R\}$ where~$|\cdot|$ is the euclidean norm; we often write~$B_R:=B_R^1$.\label{indexdefBR}
\begin{Lem}\label{sizehardspheres}
Let~$\rho,R>0$ be given, and~$\delta < \eps/2$. Define
$$
\begin{aligned}
 I:=\Big\{ Z_N \in B_\rho^{N} \times B_R^{N} \, /  \,  \mbox{one particle will collide with two others on the time interval}\, \, [0,\delta] \Big\} \, .
\end{aligned}
$$
Then~$
 |I  | \leq C(N,\eps,R) \, \rho^{d(N-2)} \delta^2 \, .
$
\end{Lem}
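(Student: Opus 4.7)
The plan is a union bound over the triple of particles involved (the ``central'' particle $i$ together with its two collision partners $j,k$), followed by an explicit change of variables that extracts a factor of $\delta$ from each of the two collision times.

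First I would write $I\subset\bigcup I_{i;j,k}$, where the union is taken over pairwise distinct $i,j,k\in\{1,\dots,N\}$ and $I_{i;j,k}$ is the set of $Z_N\in B_\rho^N\times B_R^N$ such that particle $i$ collides with both $j$ and $k$ on $[0,\delta]$. Since there are at most $N^3$ such triples, it suffices to bound one $I_{i;j,k}$ by a quantity of the form $C(\eps,R)\rho^{d(N-2)}\delta^2$. Splitting further according to the chronological order of the two collisions (factor $2$), I may assume that $i$ first collides with $j$ at some $\tau_1\in[0,\delta]$ and then with $k$ at some $\tau_2\in[\tau_1,\delta]$. By Fubini, the integration over velocities in $B_R^N$ gives a constant factor $|B_R|^N$, and the integration over the $N-2$ positions $x_\ell$, $\ell\notin\{j,k\}$, in $B_\rho$ gives a factor $|B_\rho|^{N-2}\le C(N)\rho^{d(N-2)}$. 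It remains to estimate the measure of $(x_j,x_k)\in B_\rho^2$ satisfying the two collision conditions, with all other data frozen.

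The key step is the change of variables $(x_j,x_k)\leftrightarrow(\tau_1,\nu_1,\tau_2,\nu_2)\in[0,\delta]\times\mathbf{S}_1^{d-1}\times[\tau_1,\delta]\times\mathbf{S}_1^{d-1}$, where $\nu_1,\nu_2$ are the collision vectors. Assuming for the moment that particle $i$ has no other collision during $[0,\delta]$, $i$ moves in a straight line up to $\tau_1$, so
$$x_j=x_i+\tau_1(v_i-v_j)+\eps\nu_1,$$
and the map $(\tau_1,\nu_1)\mapsto x_j$ has Jacobian $\eps^{d-1}|\nu_1\cdot(v_i-v_j)|$. After $\tau_1$, the reflection law~(\ref{HS-BC}) gives $i$ a new velocity $v_i'$ with $|v_i'|\le R\sqrt 2$ by energy conservation, and
$$x_k=x_i+\tau_1(v_i-v_i')+\tau_2(v_i'-v_k)+\eps\nu_2,$$
so $(\tau_2,\nu_2)\mapsto x_k$ (at fixed $\tau_1,\nu_1$) has Jacobian $\eps^{d-1}|\nu_2\cdot(v_i'-v_k)|$. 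The joint Jacobian matrix is block triangular, so the two Jacobians multiply. Integrating, and using $\int_{\mathbf{S}_1^{d-1}}|\nu\cdot w|\,d\nu\le C_d|w|$, we obtain
$$\big|\{(x_j,x_k)\in\R^{2d}\}\big|\le C_d\,\eps^{2(d-1)}|v_i-v_j||v_i'-v_k|\,\delta^2\le C_d'\,\eps^{2(d-1)}R^2\,\delta^2.$$
Combining with the earlier factors and the union bound gives $|I|\le C(N,\eps,R)\rho^{d(N-2)}\delta^2$.

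The main obstacle is that particle $i$ may collide with particles other than $j,k$ during $[0,\delta]$, so its trajectory is in general a broken line with more than two segments and the formulas above need refining. To handle this I would decompose $I_{i;j,k}$ further according to the complete sequence of collisions involving $i$: each free-flight segment of $i$ gives rise to the same Jacobian computation as above, and since the number of collisions of $i$ in $[0,\delta]$ admits an a priori bound depending only on $N$ and $R$ (elastic conservation of energy keeps speeds bounded by $R\sqrt{N}$, and configurations with infinitely many collisions in finite time form a null set by Proposition~\ref{flowwelldefined}), one only picks up a combinatorial factor polynomial in $N$ which gets absorbed in $C(N,\eps,R)$.
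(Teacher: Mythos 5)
Your approach is genuinely different from the paper's, and substantially more complicated; it also has a real gap.

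The paper proves this lemma with a \emph{static} estimate, requiring no change of variables, no Jacobians, and no analysis of collision sequences. Since $|V_N|\le R$ initially and $|V_N|$ is preserved by elastic reflection, every particle has speed bounded by $R$ for all time; hence during $[0,\delta]$ no particle moves more than $R\delta$. If $i$ touches both $j$ and $k$ at some times in $[0,\delta]$, it follows that at the \emph{initial} time both $|x_i-x_j|$ and $|x_i-x_k|$ lie in the thin shell $[\eps,\eps+2R\delta]$ (the lower bound since one starts in the closure of $\cD_N$). One then bounds directly the Lebesgue measure of the set of $Z_N$ satisfying two such shell conditions for some triple $\{i,j,k\}$: the two thin shells contribute $\big(C_d\,\eps^{d-1}R\delta\big)^2$, the remaining $N-2$ positions contribute $|B_\rho|^{N-2}$, the velocities contribute $|B_R^N|$, and the union over $O(N^3)$ triples gives the constant $C(N,\eps,R)$. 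That is the whole proof.

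Your change-of-variables argument, by contrast, requires parametrizing $(x_j,x_k)$ by the collision data $(\tau_1,\nu_1,\tau_2,\nu_2)$, and for that the trajectories of $i$, $j$ and $k$ between their mutual collisions must be explicitly known. You flag the difficulty when $i$ has extraneous collisions, but not the identical difficulty when $j$ or $k$ does: the formula $x_j = x_i + \tau_1(v_i-v_j)+\eps\nu_1$ already assumes $j$ flies freely up to $\tau_1$. The fix you propose is problematic in two ways. First, the appeal to Proposition~\ref{flowwelldefined} to discard configurations with infinitely many collisions on $[0,\delta]$ is circular: the proof of that proposition is precisely an iteration of the present lemma. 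Second, for finitely many collisions you invoke an a priori bound on the number of collisions that is ``polynomial in $N$''; no such bound is available for hard spheres in dimension $d\ge 2$ (the known uniform bounds are super-exponential in $N$), and making this rigorous would be a significant detour. Finally, a small slip: since $B_R^N$ denotes the Euclidean ball in $\R^{dN}$, kinetic energy conservation gives $|v'_i|\le R$, not merely $R\sqrt2$ — and it is precisely this global energy conservation that the paper exploits to keep every speed bounded by $R$ after an \emph{arbitrary} number of collisions, which is what makes the static argument work with no case analysis at all.
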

\begin{proof}
We   notice that~$I$ is a subset of 
$$
\begin{aligned}
  \Big\{Z_N \in B_\rho^{N} \times B_R^{N} \, /  \, \exists \{i,j,k\} \, \mbox{distinct}  \, , & \quad |x_i-x_{j}| \in [\eps,\eps+2R\delta] \quad  \mbox{and} \quad |x_i-x_{k}| \in [\eps,\eps+2R\delta] \Big\} \, ,
\end{aligned}
$$
and the lemma follows directly.
\end{proof}
\begin{proof}[Proof of Proposition~\ref{flowwelldefined}]
Let~$R>0$ be given and fix some time~$t>0$. Let~$\delta< \eps/2$  be a parameter such that~$t/\delta$ is an integer. 

Lemma~\ref{sizehardspheres} implies that there is a subset~$I_0(\delta, R)$ of~$B_R^{N} \times B_R^{N} $ of measure at most~$C (N,\eps,R)R^{d(N-2)} \delta^2 $ such that   any initial configuration belonging to~$(B_R^{N} \times B_R^{N}) \setminus I_0( \delta, R)$ generates a solution on~$[0,\delta]$ such that each particle encounters at most one other particle on~$[0,\delta]$.  Moreover up to removing a measure zero set of initial data each collision is non-grazing.

Now let us start again at  time~$\delta$. We recall that in the velocity variables, the ball of radius~$R$ in $\R^{dN}$ is stable by the flow, whereas the positions at  time~$\delta$  lie in the ball~$B_{R+R\delta}^{N}$. Let us apply Lemma~\ref{sizehardspheres} again to that new initial configuration space.
Since  the measure is invariant by the flow, we can construct  a subset~$I_1(\delta, R)$  of the initial positions~$B_R^{N} \times B_R^{N} $,  of size~$C(N,\eps,R) R^{d(N-2)}(1+\delta)^{d(N-2)} \delta^2$  such that outside~$I_0 \cup I_1(\delta, R)$, the flow starting from any initial point in~$B_R^{N} \times B_R^{N} $ is such that  each particle encounters at most one other particle on~$[0,\delta]$, and then  at most one other particle on~$[\delta,2\delta]$, again in a non-grazing collision. 
We repeat the procedure~$t/\delta$ times: we   construct a subset
$$\displaystyle I_\delta (t , R) := \bigcup_{j=0}^{t/\delta-1} I_j(\delta, R)$$ of~$B_R^{N} \times B_R^{N}$, of measure
$$
\begin{aligned}
|I_\delta (t , R) | & \leq C(N,\eps,R) R^{d(N-2)} \delta^2\sum_{j=0}^{t/\delta-1} (1+j\delta)^{d(N-2)} \\
& \leq C(N,R,t,\eps)\delta \, ,
\end{aligned}  
$$
such that for any initial configuration in~$B_R^{N} \times B_R^{N} $ outside that set, the flow is well-defined up to time~$t$. The intersection
$
\displaystyle I (t,R):=\bigcap_{\delta>0} I_\delta(t,R) $ is of measure zero, and   any initial configuration  in~$B_R^{N} \times B_R^{N} $  outside~$I(t,R)$ generates a well-defined flow until time~$t$. 
 Finally we consider the countable union of those zero measure  sets
$\displaystyle
I := \bigcup_n I(t_n,R_n)
$
where~$t_n$ and~$ R_n$ go to infinity, and any initial configuration in~$\R^{2dN}$ outside~$I$ generates a globally defined flow. 
The proposition is proved.
\end{proof}


\section{The Liouville equation and the BBGKY hierarchy}\label{liouvillebbgkyHS}
\setcounter{equation}{0}

According to Part I, Paragraph~\ref{Liouville-PartI}, the Liouville equation 
relative to the particle system~\eqref{hard-spheres} is
\begin{equation}\label{liouvillespheres}
\partial_t f_N +\sum_{i=1}^N v_i \cdot \nabla_{x_i} f_N  = 0 \quad \mbox{on} \quad  \cD_N
\end{equation}
with the boundary condition~$f_N(t,Z_N^{in}) = f_N(t,Z_N^{out}) $.  We recall the assumption that~$f_N$  is invariant by permutation in the sense of~(\ref{sym}),  meaning that the particles are indistinguishable.

%

The classical strategy to obtain asymptotically a kinetic equation such as (\ref{boltz-eq}) is to write the evolution equation for the first marginal of the distribution function $f_N$\label{indexdefmarginalbbgkyHS}, namely
$$
f_N^{(1)} (t,z_1):= \int_{\R^{2d(N-1)}} f_N(t,z_1,z_2, \dots,z_N) \indc_{Z_N \in  \cD_N}  \,  dz_2 \dots dz_N\, .
$$
The point to be noted is that the evolution of $f_N^{(1)}$ depends actually on $f_N^{(2)}$ because of the quadratic interaction imposed by the boundary condition. And in the same way, the equation on $f_N^{(2)}$ depends on $f_N^{(3)}$. Instead of a kinetic equation, we therefore obtain a hierarchy of equations involving all the marginals of $f_N$ 
\begin{equation}
\label{marginalHS}
 f_N^{(s)} (t,Z_s) :=    \int_{\R^{2d(N-s)}} f_N (t,Z_s,z_{s+1},\dots,z_N) \indc_{Z_N \in  \cD_N} \: dz_{s+1} \cdots dz_N \, .
 \end{equation}
 Notice that~$ f_N^{(s)} (t,Z_s) $ is defined on~${\mathcal D}_s$ only, and that
 \begin{equation}
\label{nicemarginalsHS}
  f_N^{(s)} (t,Z_s) = \int_{\R^{2d}} f_N^{(s+1)}  (t,Z_s,z_{s+1})\: dz_{s+1} \, .
\end{equation}
 Finally by integration of the boundary condition on~$f_N$ we find that~$f_N^{(s)} (t,Z_s^{in}) = f_N^{(s)} (t,Z_s^{out}) $.
  An equation for the    marginals is derived in weak form in Section~\ref{weakliouvilleHS}, and from that equation we derive formally the Boltzmann hierarchy in the Boltzmann-Grad limit (see Section~\ref{spheresboltz}).

\section{Weak formulation of Liouville's equation}\label{weakliouvilleHS}
\setcounter{equation}{0}

 Our goal in this section is to find the weak formulation of the system of equations satisfied 
 by the family of   marginals $\big(   f_N^{(s)}\big)_{1 \leq s \leq N}$ defined above in \eqref{marginalHS}. From now on we assume that~$  f_N $ decays at infinity in the velocity variable (the functional setting will be made precise in Chapter~\ref{existence}).

  Given a smooth, compactly supported  function~$\phi$   defined on~$\R_+ \times {\mathcal D}_s$ and satisfying the symmetry assumption~(\ref{sym}) as well as the boundary condition~$\phi (t,Z_s^{in}) = \phi (t,Z_s^{out}) $,  we have 
\begin{equation}\label{integralformulationfHS}
\begin{aligned}
\int_{ \R_+ \times  \R^{2dN} } \bigl(
\partial_t f_N + \sum_{i = 1}^N v_i \cdot \nabla_{x_i} f_N  \bigr)  \phi (t,Z_s)  \indc_{Z_N \in  \cD_N}   \:  d Z_N dt = 0 \, .
\end{aligned}
\end{equation}

We now use  integrations by parts to derive from (\ref{integralformulationfHS}) the weak form of the equation in the marginals~$  f_N^{(s)}.$ 
On the one hand an integration by parts in the time variable gives
$$ \begin{aligned}
\int_{ \R_+ \times   \R^{2dN} }  \partial_t   f_N (t,Z_N)  \phi (t,Z_s)  \displaystyle \indc_{Z_N \in  \cD_N}  \:  d Z_N dt& =  - \int_{  \R^{2dN} } f_N(0,Z_N) \phi(0,Z_s) \displaystyle  \indc_{Z_N \in  \cD_N}   \: dZ_N \\ & \qquad - \int_{ \R_+ \times   \R^{2dN} }  f_N (t,Z_N) \partial_t\phi (t,Z_s)  \displaystyle \indc_{Z_N \in  \cD_N} \:    d Z_N dt \, ,
\end{aligned}$$
hence, by definition of $  f_N^{(s)}$ in~(\ref{marginalHS}),
$$
 \begin{aligned}
 \int_{ \R_+ \times  \R^{2dN} }  \partial_t  f_N (t,Z_N)  \phi (t,Z_s)  \displaystyle \indc_{Z_N \in  \cD_N}  \:  d Z_N dt& =- \int_{\R^{2ds}}    f_N^{(s)}(0,Z_s) \phi(0,Z_s) \: dZ_s  \\
 & \qquad - \int_{ \R_+ \times \R^{2ds}}   f_N^{(s)}  (t,Z_s) \partial_t\phi (t,Z_s) \:    d Z_s dt \, .
\end{aligned}
$$
Now let us compute
$$\displaystyle   \sum_{i = 1}^N \int_{  \R^{2dN} } v_i \cdot \nabla_{x_i} f_N (t,Z_N) \phi  (t,Z_s)   \displaystyle  \indc_{Z_N \in  \cD_N}  \,dZ_N = \int_{  \R^{2dN} } 
  \mbox{div}_{X_N} \big ({V_N} \:  f_N   (t,Z_N) \big)  \phi  (t,Z_s)    \indc_{Z_N \in  \cD_N}  \,dZ_N$$
   using Green's formula. The boundary terms involve configurations with at least one pair $(i,j)$ satisfying~$|x_i - x_j| = \e.$ 
   According to Paragraph~\ref{Nparticle} we may neglect configurations where more than two particles collide at the same time, so the boundary condition is well defined. 
   For any~$i $ and~$ j $ in~$ \{1,\dots ,N\} $  we   denote
   $$
{\Sigma}_N (i,j) := \Big\{
X_N \in  \R^{2dN}, \,  |x_i - x_j| = \eps   \Big\}  \, ,
$$
and~$n^{i,j}$\label{indexdefnij} is the outward normal to~${\Sigma}_N (i,j) $ in~$\R^{dN}$. We   obtain by  Green's formula:
$$ \begin{aligned} 
& \sum_{i= 1}^N \int_{ \R_+ \times \R^{2dN}} 
 v_i\cdot \nabla_{x_i}   f_N  (t,Z_N) \phi  (t,Z_s)   \displaystyle \indc_{Z_N \in  \cD_N}  \,dZ_N \, dt \\
 &\qquad  = - \sum_{i=1}^s    \int_{ \R_+ \times \R^{2dN}}    f_N (t,Z_N)  v_i\cdot \nabla_{x_i} \phi  (t,Z_s)   \displaystyle  \indc_{Z_N \in  \cD_N}  \,dZ_N dt \\
 &\qquad      \quad 
 \displaystyle  {} +     \sum_{1 \leq i\neq j\leq N}  \int_{  \R_+ \times  \R^{dN}\times{\Sigma}_N  (i,j) }  
n^{i,j}  \cdot V_N \:   f_N (t,Z_N)\phi (t,Z_s)\:    d \sigma_N^{i,j} dV_N dt  \,,
\end{aligned}
$$ 
with~$d\sigma_N^{i,j}$ the surface measure on~${\Sigma}_N (i,j) $, induced by the Lebesgue measure. 
Now we split the last term into four parts:
$$
 \begin{aligned} 
&  \sum_{1 \leq i\neq j\leq N}  \int_{  \R_+ \times  \R^{dN}\times{\Sigma}_N  (i,j) }  
n^{i,j}  \cdot V_N \:   f_N (t,Z_N)\phi (t,Z_s)\:    d \sigma_N^{i,j} dV_N dt  \\
&\qquad        
 \displaystyle  {} =     \sum_{i=1}^s \sum_{j=s+1}^N  \int_{  \R_+ \times  \R^{dN}\times{\Sigma}_N  (i,j) }  
n^{i,j}  \cdot V_N \:   f_N (t,Z_N)\phi (t,Z_s)\:    d \sigma_N^{i,j} dV_N dt \\
 &\qquad      \quad 
 \displaystyle  {} +     \sum_{i=s+1}^N \sum_{j= 1}^s  \int_{  \R_+  \times  \R^{dN}\times{\Sigma}_N  (i,j) }  
n^{i,j}  \cdot V_N \:   f_N (t,Z_N)\phi (t,Z_s)\:    d \sigma_N^{i,j} dV_N dt \\
 &\qquad      \quad 
 \displaystyle  {} +   \sum_{1 \leq i\neq j \leq s}  \int_{  \R_+  \times  \R^{dN}\times{\Sigma}_N  (i,j) }  
n^{i,j}  \cdot V_N  \:   f_N (t,Z_N)\phi (t,Z_s)\:    d \sigma_N^{i,j} dV_Ndt  \\
 &\qquad      \quad 
 \displaystyle  {} +   \sum_{s+1 \leq i\neq j \leq N}  \int_{  \R_+ \times  \R^{dN}\times{\Sigma}_N  (i,j) }  
n^{i,j}  \cdot  V_N  \:   f_N (t,Z_N)\phi (t,Z_s)\:    d \sigma_N^{i,j}dV_N dt   \,.
\end{aligned}
$$The boundary condition on~$f_N$ and~$\phi$ imply that the two last terms of on the right-hand side are zero. By symmetry~\eqref{sym} and by definition of~$    f_N^{(s)}$, we can write
$$
\begin{aligned} 
&\sum_{i=1}^s \sum_{j=s+1}^N  \int_{  \R_+  \times  \R^{dN}\times{\Sigma}_N  (i,j) }  
n^{i,j}  \cdot V_N \:   f_N (t,Z_N)\phi (t,Z_s)\:    d \sigma_N^{i,j} dV_N dt \\
 &\qquad      \quad 
 \displaystyle  {} +     \sum_{i=s+1}^N \sum_{j= 1}^s  \int_{  \R_+ \times  \R^{dN}\times{\Sigma}_N  (i,j) }  
n^{i,j}  \cdot V_N \:   f_N (t,Z_N)\phi (t,Z_s)\:    d \sigma_N^{i,j} dV_N dt \\
& = -(N-s) \sum_{i=1}^s   \int_{ \R_+ \times  {\mathbf S}_\eps^{d-1}\times \R^d \times  \R^{2ds}   }  
{(x_{s+1}-x_i)\over |x_{s+1}-x_i|}  \cdot (v_{s+1}-v_i)   \:   f_N^{(s+1)} (t,Z_s,x_{s+1} ,v_{s+1}) \phi (t,Z_s)\:    dZ_s  d\sigma(x_{s+1})  d v_{s+1}dt\, \\
 & = -(N-s) \e^{d-1}\sum_{i=1}^s   \int_{ \R_+ \times  {\mathbf S}_1^{d-1}\times \R^d \times  \R^{2ds}   }  
\omega \cdot (v_{s+1}-v_i)   \:   f_N^{(s+1)} (t,Z_s,x_{i}+ \eps \omega,v_{s+1}) \phi (t,Z_s)\:    dZ_s  d \omega d v_{s+1}dt\, .
\end{aligned}
$$
 Finally we obtain
  $$
 \begin{aligned}
&\displaystyle  \sum_{i=  1}^N \int_{ \R_+ \times  \R^{2dN}} 
 v_i\cdot \nabla_{x_i}     f_N (t,Z_N)  \phi  (t,Z_s)    \displaystyle\indc_{Z_N \in  \cD_N} \,dZ_N \, dt  \\
& \displaystyle     = -\sum_{i=1}^s    \int_{ \R_+ \times \R^{2ds}}     f_N^{(s)} (t,Z_s)   v_i\cdot \nabla_{x_i} \phi  (t,Z_s)   \,dZ_s dt  \\
&\displaystyle   {}  - (N-s) \e^{d-1}\sum_{i=1}^s   \int_{ \R_+ \times  {\mathbf S}_1^{d-1}\times \R^d \times  \R^{2ds}   }  
\omega \cdot (v_{s+1}-v_i)   \:   f_N^{(s+1)} (t,Z_s,x_{i}+ \eps \omega,v_{s+1}) \phi (t,Z_s)\:    dZ_s  d \omega d v_{s+1}dt \,  .
  \end{aligned}
$$
 It remains to define the {\it collision operator}
\begin{equation}
\label{def:collisionop0}
\begin{aligned}
\big( {\mathcal C}_{s,s+1}  f_N^{(s+1)}\big)  (t,Z_{s}):=   (N-s) \e^{d-1}&\sum_{i=1}^s    \int_{ {\mathbf S}_1^{d-1}\times \R^d}  
  \omega \cdot (v_{s+1}-v_i) \\
&\quad {}\times {}   f_N^{(s+1)} (t,Z_s,x_{i}+ \eps \omega,v_{s+1})     d \omega d v_{s+1} \, , 
  \end{aligned}
\end{equation}
where recall that~${\mathbf S}_1^{d-1}$  is the unit sphere of~$\R^d$,   and in the end
   we obtain the weak formulation of  the BBGKY hierarchy
\begin{equation}
\label{BBGKYhierarchyspheres}
  \d_t   f_N^{(s)} +\sum_{1 \leq i \leq s} v_i\cdot \nabla_{x_i}   f_N^{(s)} =  {\mathcal C}_{s,s+1}   f_N^{(s+1)}  \quad   \mbox{in} \quad \R_+ \times {\mathcal D}_s \, ,
\end{equation}
with the boundary conditions~$f_N^{(s)}(t,Z_s^{in}) = f_N^{(s)}(t,Z_s^{out}) $.

In the integrand of the collision operators ${\mathcal C}_{s,s+1}$ defined in \eqref{def:collisionop0}, we now distinguish between pre- and post-collisional configurations, as we decompose
$$
  {\mathcal C}_{s,s+1} =   {\mathcal C}^{+}_{s,s+1} -   {\mathcal C}^-_{s,s+1}
$$
where 
 \begin{equation} \label{css+1+HS}
 {\mathcal C}_{s,s+1}^\pm   f^{(s+1)} = \sum_{ i = 1}^{s}   {\mathcal C}^{\pm,i}_{s,s+1}   f^{(s+1)}
 \end{equation} 
 the index $i$ referring to the index of the interaction particle among the $s$ ``fixed" particles, with the notation
 $$\begin{aligned}
 \big({\mathcal C}^{\pm,i}_{s,s+1}   f^{(s+1)} \big)(Z_s) :=   (N-s ) \eps^{d-1}\int_{{\bf S}_1^{d-1}\times\R^d}
(\omega  \cdot ( v_{s+1}  - v_i) )_\pm   f^{(s+1)}  (Z_s,x_i+\eps \omega  , v_{s+1})   \, d \omega dv_{s+1} \, ,
\end{aligned}$$
the index $+$ corresponding to post-collisional configurations and the index $-$ to pre-collisional configurations.

Denote by~${\bf \Psi}_s(t)$\label{HSflow} the~$s$-particle flow associated with the hard-spheres system, and by~$ {\bf T}_s$ the associated solution operator:
\begin{equation}\label{solutionoperatorHS}
  {\bf T}_s(t): \qquad f \in C^0({\mathcal D}_s;\R) \mapsto f({\bf \Psi}_s(-t,\cdot)) \in C^0({\mathcal D}_s;\R) \, .
 \end{equation}
 The time-integrated form of equation~(\ref{BBGKYhierarchyspheres}) is
\begin{equation}
\label{BBGKY-mildHS}
  f_N^{(s)} (t,Z_s)=   {\bf T}_s(t)  f_N^{(s)}(0,Z_s) +  \int_0^t {\bf T}_s(t - \t)   {\mathcal C}_{s,s+1}   f_N^{(s+1)} (\t,Z_s)\, d\t \, .
\end{equation}
 The {\it total flow} and {\it total collision} operators~${\mathbf  T}  $ and~${\mathbf  C} _N$ are defined on  finite sequences~$G_N= (g_s)_{1\leq s\leq N}$ as follows:
\begin{equation}
\label{bH-defHS}
\left\{\begin{aligned} & \forall s  \leq N \,,\,\,  \left( {\mathbf T}  (t) G_N\right)_s :=  {\mathbf T}_{s}(t) g_{s}\, , \\ & \forall \, s \leq N-1 \,, \,\, \left( {\mathbf C} _N G_N\right)_s :=  {\mathcal C}_{s,s+1} g_{s+1} \,, \quad \big( {\bf C}_N G_N\big)_N := 0\, .\end{aligned}\right.
\end{equation}
We finally define {\it mild solutions} to the BBGKY hierarchy~(\ref{BBGKY-mildHS}) to be solutions of  
\begin{equation} \label{BBGKY-mild-totaHS}   F_N (t) = {\mathbf T}  (t)    F_N (0) +  \int_0^t {\bf T}(t - \t)   {\bf C}_N    F_N (\t) \, d\t\,, \qquad   F_N = (  f^{(s)}_N)_{1 \leq s \leq N}\,.
\end{equation}

%
\section{The Boltzmann hierarchy and the Boltzmann equation}\label{spheresboltz}
\setcounter{equation}{0}
Starting from~(\ref{BBGKY-mild-totaHS}) we   now consider the limit~$N \to \infty$ under the Boltzmann-Grad scaling~$N\eps^{d-1} \equiv 1$, in order to derive formally the expected form of the  Boltzmann hierarchy.

Because of the scaling assumption~$N \eps^{d-1} \equiv 1$, the collision term~$   {\mathcal C}_{s,s+1}   f^{(s+1)} (Z_s) $ is   approximately equal to
$$
 \sum_{ i = 1}^s 
   \int_{  {\bf S}_1^{d-1} \times \R^d} 
\omega  \cdot (v_{s+1}-v_i)   f_N^{(s+1)}  (Z_s, x_i+\eps \omega  , v_{s+1})  \,  d \omega  dv_{s+1} 
 $$
 which we may split into two terms, depending on the sign of~$\omega   \cdot (v_{s+1}-v_i)$, as in~(\ref{css+1+HS}):
 $$
 \begin{aligned}
  &   \sum_{ i = 1}^s  \int_{  {\bf S}_1^{d-1} \times \R^d} 
\Bigl(\omega   \cdot (v_{s+1}-v_i)\Bigr)_+ \,   f_N^{(s+1)}  (Z_s, x_i+\eps \omega ,v_{s+1})  \,  d \omega    dv_{s+1}  \\
 & \quad - \sum_{ i = 1}^s 
   \int_{   {\bf S}_1^{d-1} \times \R^d}  \Bigl(\omega   \cdot (v_{s+1}-v_i)\Bigr)_- \,  f_N^{(s+1)}  (Z_s, x_i+\eps \omega , v_{s+1})  \,  d \omega    dv_{s+1} \, .
\end{aligned}
$$
Changing $\omega$ in $-\omega$ in the second term, we get 
$$
 \begin{aligned}
  &   \sum_{ i = 1}^s  \int_{  {\bf S}_1^{d-1} \times \R^d} 
\Bigl(\omega   \cdot (v_{s+1}-v_i)\Bigr)_+ \,   f_N^{(s+1)}  (Z_s, x_i+\eps \omega ,v_{s+1})  \,  d \omega    dv_{s+1}  \\
 & \quad - \sum_{ i = 1}^s 
   \int_{   {\bf S}_1^{d-1} \times \R^d}  \Bigl(\omega   \cdot (v_{s+1}-v_i)\Bigr)_+ \,  f_N^{(s+1)}  (Z_s, x_i-\eps \omega , v_{s+1})  \,  d \omega    dv_{s+1} \, .
\end{aligned}
$$
Recall  that pre-collisional particles are  particles~$(x_i,v_i)$ and $(x_{s+1}, v_{s+1})$ whose distance is decreasing up to collision time, meaning that for which $$(x_{s+1}-x_i)  \cdot (v_{s+1}-v_i)<0 \, .$$
With the above notation this means that
$$
\omega \cdot (v_{s+1}-v_i)<0 \, .
$$
 On the contrary the case when~$\omega \cdot (v_{s+1}-v_i)>0$  is called the post-collisional case; we recall that grazing collisions, satisyfing~$\omega \cdot (v_{s+1}-v_i)=0$ can be neglected (see Paragraph~\ref{Nparticle} above).
 
  Consider a set of particles~$Z_{s+1} = (Z_s, x_i + \eps \omega, v_{s+1})$ such that~$(x_i,v_i)$ and $(x_i + \eps \omega, v_{s+1})$ are post-collisional. We recall the boundary condition 
    $$
  f_N^{(s+1)}  (t,Z_s, x_i + \eps \omega, v_{s+1})  =   f_N^{(s+1)}  (t ,Z^*_s, x_i + \eps \omega, v^*_{s+1})  $$
where~$Z^*_s = (z_1,\dots,z_i^*, \dots z_s) $ and~$(v^*_i,v^*_{s+1})$ is the pre-image of~$(v_i,v_{s+1})$ by~(\ref{hard-spheres}):
\begin{equation}\label{defpreimagehardspheres}
\begin{aligned}
v_i^{*} := v_i - \omega \cdot ( v_i  -   v_{s+1} ) \, \omega&  \\
v_{s+1}^{*} := v_{s+1}+ \ \omega \cdot ( v_i  -   v_{s+1})  \, \omega \, ,
\end{aligned}
\end{equation}
while~$x_i^{*} := x_i$. In the following writing also~$x_{s+1}^{*} := x_{s+1}$ we shall use the notation
\begin{equation}\label{defscatteringhardspheres}
\begin{aligned}
 \sigma(z_i^* ,z_{s+1}^* )  := (z_i ,z_{s+1}) \, .
 \end{aligned}
\end{equation}
Then neglecting the small spatial translations in the arguments of~$  f_N^{(s+1)}$ and using the fact that~$  f_N^{(s+1)}$ is left-continuous in time  for all~$s$   we obtain the following asymptotic expression for the collision operator at the limit:
\begin{equation}
\label{Boltzmann-ophardspheres}
\begin{aligned}   {\mathcal C}^0_{s,s+1} &   f^{(s+1)} (t,Z_s) :=       \sum_{ i = 1}^s  \int \big( \omega   \cdot (v_{s+1}-v_i)  \big)_+\\
&\quad \times{} \Big(  f^{(s+1)}  (t,x_1,v_1,\dots ,x_i, v^*_i,\dots ,x_s,v_s, x_i, v^*_{s+1})    -     f^{(s+1)}  (t,Z_s, x_i, v_{s+1}) \Big) d \omega dv_{s+1} \, .
\end{aligned}
\end{equation}
The asymptotic dynamics are therefore governed by the following integral form of the Boltzmann hierarchy:
\begin{equation}
\label{mild-Boltzmann}
   f^{(s)} (t) = {\bf S}_s(t)  f^{(s)}_{0} + \int_0^t {\bf S}_{s}(t - \t)     {\mathcal C}^0_{s,s+1}    f^{(s+1)}  (\t) \, d\t\,,
 \end{equation}
 where~$ {\bf S}_s(t)$ denotes the~$s$-particle free-flow.

Similarly to~(\ref{bH-defHS}), we can define the total Boltzmann flow and collision operators~${\mathbf  S}  $ and~${\mathbf  C}  $  as follows:
\begin{equation}
\label{bH-defboltzmann}
\left\{\begin{aligned} & \forall s \geq 1\,,\,\,  \left( {\mathbf S}  (t) G \right)_s :=  {\mathbf S}_{s}(t) g_{s}\, , \\ & 
\forall \, s \geq 1 \,, \,\, \left(  {\bf C^0}    G \right)_s := {\mathcal C}^0_{s,s+1} g_{s+1} \,,  \end{aligned}\right.
\end{equation}
so that {\it mild solutions} to the Boltzmann hierarchy~(\ref{mild-Boltzmann}) are solutions of  
\begin{equation} \label{Boltzmann-mild-totaHS}   F (t) = {\mathbf S}  (t)    F  (0) +  \int_0^t {\bf S}(t - \t)   {\bf C^0}    F  (\t) \, d\t\,, \qquad   F  = (  f^{(s)})_{s \geq 1 }\,.
\end{equation}

 Note that if~$   f^{(s)} (t,Z_s) = \displaystyle \prod_{i = 1}^s f(t,z_i) $ (meaning~$f^{(s)}(t)$ is {\it tensorized}) then~$f$ satisfies the Boltzmann equation~(\ref{boltz-eq})-(\ref{boltz-operator}), where the cross-section is~$b(w,\omega):= \big( \omega   \cdot w \big)_+$.

\chapter{Uniform a priori estimates for the BBGKY and Boltzmann hierarchies}
\label{existence} 
\setcounter{equation}{0}

This chapter is devoted to the statement and proof of uniform a priori estimates for mild solutions to the BBGKY hierarchy, defined in~\eqref{BBGKY-mild-totaHS}, which we reproduce here:
 \begin{equation} \label{BBGKY-mild-total-HSbis}   F_N (t) = {\mathbf  T}  (t)    F_N (0) +  \int_0^t {\mathbf  T}(t - \t)   {\bf C}_N    F_N (\t) \, d\t\,, \qquad   F_N = (  f^{(s)}_N)_{1 \leq s \leq N} \, ,
\end{equation}
as well as     for the limit Boltzmann hierarchy defined in~(\ref{Boltzmann-mild-totaHS})
 \begin{equation}
\label{mild-Boltzmannbis}
  F  (t) = {\bf S} (t)  F (0) + \int_0^t {\bf S} (t - \t)    {\bf C^0}  F (\t) \, d\t \, , \qquad   F = (  f^{(s)} )_{s \geq 1  } \, . \end{equation}
Those results are obtained in Paragraphs~\ref{mainsteps} and~\ref{sec:cont-estHS} by use of a Cauchy-Kowalevskaya type argument in some adequate function spaces defined in Paragraph~\ref{sec:ex-BBGKYHS}.


\section{Functional spaces and statement of the results}\label{sec:ex-BBGKYHS}

\setcounter{equation}{0}

In order to obtain uniform a priori bounds  for mild solutions to the BBGKY hierarchy, we need to introduce some norms on the space of sequences $(   g^{(s)})_{s \geq 1}$.
Given $\e > 0,$ $\b > 0,$ an integer~$s \geq 1,$ and a continuous function $g_s:{\mathcal D}_s\to \R,$ we let
\begin{equation} \label{norm:e-b}
| g_s|_{\eps,s,\beta} :=\sup _{Z_s \in{\mathcal D}_s} \left( |g_s(Z_s)| \exp \big(\beta E_0(Z_s)\big)     \right) 
\end{equation}
where  $E_0$ is the free Hamiltonian: 
\begin{equation} \label{def:freehamiltonian} E_0(Z_s) :=\sum_{1 \leq i \leq s} {|v_i|^2\over 2} \, \cdotp
\end{equation}
Note that the dependence on~$\eps$ of the norm is through the constraint~$Z_s \in {\mathcal D}_s
$.
We also define,  for a continuous function $g_s:\R^{2ds}\to \R,$ 
\begin{equation} \label{norm:0-b}
| g_s|_{0,s,\beta} :=\sup _{Z_s \in \R^{2ds}} \left( |g_s(Z_s)| \exp \big(\beta E_0(Z_s)\big)    \right)  \, .
\end{equation}

\begin{Def} \label{def:functional-spacesHS} For $\e > 0$ and $\b > 0,$ we denote $X_{\e,s,\beta}$ the Banach space of continuous functions~$ {\mathcal D}_s\to \R$ with finite $|\cdot|_{\e,s,\b}$ norm, and similarly~$X_{0,s,\beta}$ is the Banach space of continuous functions~$\R^{2ds}\to \R$ with finite $|\cdot|_{0,s,\b}$ norm.
\end{Def}
 
For sequences of continuous functions $G = (g_s)_{s\geq1},$ with $g_s:  {\mathcal D}_s  \to \R,$ we let for $\e > 0,$ $\b > 0,$ and~$\mu \in \R,$ 
$$
\| G \|_{\eps, \beta,\mu}  :=\sup_{s\geq 1}\Big( | g_s|_{\eps,s,\beta} \exp( \mu s)  \Big) \, .
$$
We define similarly for~$G = (g_s)_{s\geq1},$ with $g_s: \R^{2ds}  \to \R,$
$$
\| G \|_{0, \beta,\mu}  :=\sup_{s\geq 1}\Big( | g_s|_{0,s,\beta} \exp( \mu s)  \Big) \, .
$$
\begin{Def} \label{def:functional-spaces2HS} For $\e \geq 0,$ $\b > 0,$ and $\mu \in \R,$ we denote ${\bf X}_{\e,\b,\mu}$ the Banach space of sequences of   functions~$G = (g_s)_{s \geq 1},$ with $g_s \in X_{\e,s,\b}$ and $\| G \|_{\e,\b,\mu} < \infty.$  
\end{Def}
The following inclusions hold:
\begin{equation} \label{inclusionsHS} \mbox{if $\b' \leq \b$ and $\mu' \leq \mu,$ then} \quad X_{\e,s,\b'} \subset X_{\e,s,\b} \, , \quad {\bf X}_{\e,\b',\mu'} \subset {\bf X}_{\e,\b,\mu} \, .
\end{equation}
\begin{Rmk}\label{physinterpretation}
These norms are rather classical in statistical physics (up to replacing the~$L^\infty$ norm by an~$L^1$ norm) , where probability measures are called ``ensembles".

 At the canonical level, the ensemble~$\indc_{Z_s \in {\mathcal D}_s}e^{-\beta E_0(Z_s)} dZ_s$ is a normalization of the Lebesgue measure, where~$\beta \sim \theta^{-1}$ (and~$\theta $ is the absolute temperature) specifies fluctuations of energy. The Boltzmann-Gibbs principle states that
the average value of any quantity in the canonical ensemble is its equilibrium value at temperature~$\theta $.

 The micro-canonical level consists in restrictions of the ensemble to energy surfaces. 
 
 At the grand-canonical level  the number of particles may vary, with variations indexed by chemical potential~$\mu \in \R$.
\end{Rmk}

Existence and uniqueness for \eqref{BBGKY-mild-total-HSbis} comes from the theory of linear transport equations which provides a unique, global solution to the Liouville equation~(\ref{liouvillespheres}). Nevertheless, in order to obtain a similar result for the limiting hierarchy~(\ref{mild-Boltzmannbis}), we need to obtain uniform a priori estimates  with respect to~$N$, on the marginals~$f_N^{(s)}$ for any fixed~$s$. We shall thus deal with both systems~\eqref{BBGKY-mild-total-HSbis} and~(\ref{mild-Boltzmannbis}) simultaneously, using analytical-type techniques which will provide short-time existence in the spaces of ${\bf X}_{\e,\b,\mu}$-valued functions of time (resp.~${\bf X}_{0,\b,\mu}$). Actually the parameters $\b$ and $\mu$ will themselves depend on time: in the sequel we choose for simplicity a linear dependence in time, though other, decreasing functions of time could be chosen just as well.  Such a time dependence on the parameters of the function spaces is a situation which occurs whenever continuity estimates involve a loss, which is the case here since the  continuity estimates on the collision operators lead to a deterioration in the parameters~$\beta$ and~$\mu$.
We refer to Section~\ref{commentscauchy} for some comments.

\begin{Def} \label{deffunctionspacesexistenceHS}
Given $T > 0$, a positive function ${\boldsymbol\beta}$  and a real valued function~${\boldsymbol \mu} $ defined on~$[0,T]$ we denote ${\bf X}_{\e,{\boldsymbol\beta},{\boldsymbol\mu}}$ the space of functions $G: t \in [0,T] \mapsto G(t) = (g_s(t))_{1 \leq s} \in {\bf X}_{\e,\b(t),\mu(t)},$ such that for all $Z_s \in \R^{2ds},$ the map $t \in [0,T] \mapsto g_s(t,Z_s)$ is measurable, and
 \begin{equation} \label{def:normzHS}   | \! \| G    | \! \| _{ \e,{\boldsymbol\beta},{\boldsymbol\mu}}
 := \sup_{0 \leq t \leq T} \| G(t) \|_{\e,\b(t),\mu(t)} < \infty \, .
 \end{equation}
 We define similarly
 $$
   | \! \| G    | \! \| _{ 0,{\boldsymbol\beta},{\boldsymbol\mu}}
 := \sup_{0 \leq t \leq T} \| G(t) \|_{0,\b(t),\mu(t)}  \, .
 $$
\end{Def}
We shall prove the following uniform bounds for the BBGKY hierarchy.
 \begin{Thm}[Uniform estimates for the BBGKY hierarchy]\label{existence-thmHS}
 Let~$\beta_0 >0$ and~$\mu_0 \in \R $ be given. There is a time~$T>0$  as well as
 two nonincreasing functions~${\boldsymbol\beta }>0$ and~${\boldsymbol\mu }$ defined on~$[0,T]$, satisfying~${\boldsymbol\beta }(0) = \beta_0 $ and~${\boldsymbol\mu}(0) = \mu_0 $, 
such that {in the Boltzmann-Grad scaling~$N \eps^{d-1} \equiv 1$}, any family of initial marginals $   F_N(0) =\big(   f_N^{(s)} (0) \big)_{1 \leq s \leq N}$  in~${\bf X}_{\eps,\beta_0,\mu_0}$ gives rise to a unique solution~$   F_N (t)= (   f_N^{(s)} (t))_{1 \leq s \leq N}$  in~${\bf X}_{\e,{\boldsymbol\beta},{\boldsymbol\mu}}$ to    the BBGKY hierarchy~{\rm(\ref{BBGKY-mild-total-HSbis})}    satisfying  the following bound:
         $$
  | \! \|   F_N   | \! \| _{ \e,{\boldsymbol\beta},{\boldsymbol\mu}}
\leq  2  \|{   F_N(0) }\|_{\eps,
      \b_0,\mu_0} \, .$$
    \end{Thm}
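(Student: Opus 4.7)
The proof will be a Cauchy--Kovalevskaya-type argument in the scale of Banach spaces $\{{\bf X}_{\eps,\beta,\mu}\}$, built on two main ingredients. The first is that the total free-flow operator ${\bf T}(t)$ acts as an isometry on each ${\bf X}_{\eps,\beta,\mu}$: since the hard-sphere dynamics ${\bf \Psi}_s$ conserves the kinetic energy $E_0$ (collisions being elastic) and stabilises the domain ${\mathcal D}_s$, the weights entering~(\ref{def:normzHS}) are preserved along the flow.

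The second ingredient is a one-step continuity estimate for the total collision operator, with a small loss in both parameters: for $\beta'<\beta$ and $\mu'<\mu$ in a bounded range, and uniformly in the Boltzmann--Grad scaling $N\eps^{d-1}=1$,
\[
\|{\bf C}_N G\|_{\eps,\beta',\mu'}\ \leq\ \frac{C_{\beta_0}}{(\mu-\mu')\sqrt{\beta-\beta'}}\,\|G\|_{\eps,\beta,\mu}.
\]
Starting from the formula~(\ref{def:collisionop0}), one uses: the scaling $(N-s)\eps^{d-1}\leq 1$; the pointwise bound $|\omega\cdot(v_{s+1}-v_i)|\leq|v_i|+|v_{s+1}|$; Gaussian integration in $v_{s+1}$ producing factors $\beta^{-d/2}$; absorption of the remaining pointwise factor $|v_i|$ into the weight $e^{-(\beta-\beta')|v_i|^2/2}$ at cost $(\beta-\beta')^{-1/2}$; and absorption of the linear-in-$s$ prefactor arising from $\sum_{i=1}^{s}$ via $\sup_{s\geq 1} s\,e^{-(\mu-\mu')s}\leq(e(\mu-\mu'))^{-1}$.

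With these two ingredients, I would set $\boldsymbol\beta(t)=\beta_0-\lambda t$ and $\boldsymbol\mu(t)=\mu_0-\lambda t$ for some $\lambda>0$ to be tuned, iterate the mild formulation~(\ref{BBGKY-mild-total-HSbis}), and bound each term of the resulting Neumann series $F_N(t)=\sum_{n\geq 0}I_n(t)$, where
\[
I_n(t)=\int_{0\leq t_n\leq\cdots\leq t_1\leq t}{\bf T}(t-t_1){\bf C}_N{\bf T}(t_1-t_2)\cdots{\bf C}_N{\bf T}(t_n)\,F_N(0)\,dt_1\cdots dt_n,
\]
by applying the continuity estimate at each of the $n$ collision operators with intermediate weights interpolating between $(\beta_0,\mu_0)$ at the innermost level and $(\boldsymbol\beta(t),\boldsymbol\mu(t))$ at the outermost. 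The simplex volume $t^n/n!$ is expected to tame the combinatorial growth, giving $\|I_n(t)\|_{\eps,\boldsymbol\beta(t),\boldsymbol\mu(t)}\leq (CT/\lambda)^n\|F_N(0)\|_{\eps,\beta_0,\mu_0}$; for $\lambda$ large and $T$ small enough that $CT/\lambda\leq 1/2$, the series converges absolutely in ${\bf X}_{\eps,\boldsymbol\beta,\boldsymbol\mu}$ and produces the unique mild solution satisfying $|\!\|F_N|\!\|_{\eps,\boldsymbol\beta,\boldsymbol\mu}\leq 2\|F_N(0)\|_{\eps,\beta_0,\mu_0}$. Uniqueness follows from the same contraction applied to the difference of two solutions.

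The main difficulty will be the calibration of the intermediate weights at each iteration level of the previous step: a literal time-adapted choice $(\boldsymbol\beta(t_k),\boldsymbol\mu(t_k))$ yields a kernel $\prod_{k}(t_{k-1}-t_k)^{-3/2}$ whose integral over the simplex diverges near adjacent times, while a uniform $n$-spacing of $[\boldsymbol\beta(t),\beta_0]$ produces combinatorial factors $\sim n^{3n/2}$ only partially absorbed by $n!$. A careful, iteration-dependent tuning of the spacings---in the spirit of Ovsyannikov--Nirenberg---is needed to balance these two effects; this is the technical heart of the argument, and is part of the price paid for using $L^\infty$-type weights rather than the $L^1$ norms more traditional in statistical physics (cf.~Remark~\ref{Linfty-rmk}).
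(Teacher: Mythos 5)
Your two basic ingredients are correct and match the paper's: the hard-sphere flow conserves energy and preserves the domain~$\D_s$, so ${\bf T}(t)$ is an isometry on each ${\bf X}_{\eps,\beta,\mu}$ (this is~(\ref{continuitytransportHS})); and the total collision operator does satisfy a continuity estimate with a loss in both parameters (Proposition~\ref{propcontinuityclustersHS}; the precise loss in~(\ref{continuityHS}) is $C_d(1+\beta^{-1/2})\bigl(\tfrac1{\beta-\beta'}+\tfrac1{\mu-\mu'}\bigr)$, close in spirit to what you wrote). The route you then propose---expand Duhamel into a Neumann series $\sum_n I_n(t)$ and tune the intermediate weight parameters at every iteration level---is exactly the pitfall you honestly flag at the end and do not resolve: a time-adapted assignment of weights makes the kernel non-integrable near clustered collision times, while a uniform $n$-spacing leaves $n^{O(n)}$ factors only partially killed by $n!$. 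This is the approach the paper explicitly declines to take: Section~\ref{commentscauchy} notes that the abstract Cauchy--Kowalevskaya arguments of Nirenberg~\cite{nirenberg} and Nishida~\cite{nishida} would rely on the loss estimate~(\ref{continuityHS}), and proposes a more elementary alternative.

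The paper, following Ukai~\cite{ukai}, proves a single operator bound instead of analysing a series. Define
\[
\mathfrak{L}\colon\ G_N\ \longmapsto\ \Bigl(t\mapsto\int_0^t{\bf T}(t-\tau)\,{\bf C}_N\,G_N(\tau)\,d\tau\Bigr),
\]
and set $\boldsymbol\beta(t)=\beta_0-\lambda t$, $\boldsymbol\mu(t)=\mu_0-\lambda t$. Lemma~\ref{lemukai} shows $\|\mathfrak{L}\|_{\mathcal{L}({\bf X}_{\eps,\boldsymbol\beta,\boldsymbol\mu})}\leq\tfrac12$ for an appropriate $\lambda$ and $T$; then $\mathrm{Id}-\mathfrak{L}$ is invertible in the Banach algebra and $F_N=(\mathrm{Id}-\mathfrak{L})^{-1}{\bf T}(\cdot)F_N(0)$ is the unique mild solution with the stated bound. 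The decisive trick is \emph{not} to apply Minkowski's inequality (which would move the supremum over $s$ inside the time integral and produce a divergent integral of $\tfrac1{\boldsymbol\beta(\tau)-\boldsymbol\beta(t)}$). Rather, for each fixed $s$ and $Z_s$ one substitutes the pointwise bound~(\ref{est-col:1HS}) and integrates in $\tau$ \emph{first}: the integrand carries a factor $\bigl(s\beta^{-1/2}+\sum_i|v_i|\bigr)e^{\lambda(\tau-t)(s+E_0(Z_s))}$, whose $\tau$-integral over $[0,t]$ is $O\bigl(\lambda^{-1}(s+E_0(Z_s))^{-1}\bigr)$ and so cancels the prefactor uniformly in $s$ and $Z_s$, producing the explicit constant~(\ref{defbarcHS}). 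No level-by-level calibration of weights is ever needed. So your gap is genuine, but it dissolves if you abandon the Neumann expansion in favour of this single fixed-point estimate with the time integral performed before the supremum in $s$.
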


\begin{Rem}\label{existencetime}
The proof of Theorem~{\rm\ref{existence-thmHS}} provides a lower bound of the time~$T$ on which one has a uniform bound, in terms of the initial parameters~$\beta_0$, $\mu_0$ and the dimension~$d$: one finds
\begin{equation} \label{lower-bd:T*}
  T \geq C_d e^{\mu_0} (1 + \b_0 ^{\frac12})^{-1} \max_{\beta \in [0, \b_0]} \beta e^{-\beta} (\b_0 - \beta)^\frac{d+1}2 \, , 
 \end{equation}
 where~$C_d$ is a constant depending only on~$d$.
 
  In particular if $d \ll \b_0,$ there holds $\displaystyle \max_{\beta \in [0, \b_0]} \beta e^{-\beta} (\b_0- \beta)^\frac{d+1}2= \b_0^ \frac{d+1}2 \big(1 + o(1)\big),$ hence an existence time of the order of $e^{\mu_0} \b_0 ^{d/2}.$ 
\end{Rem}
The proof  of Theorem~{\rm\ref{existence-thmHS}}  uses neither the  fact that the BBGKY hierarchy is closed by the transport equation satisfied by~$f_N$, nor possible cancellations of the collision operators. It only relies on crude estimates and in particular the limiting hierarchy satisfies the same result, proved similarly.
\begin{Thm}[Existence for the Boltzmann hierarchy]\label{existence-thmboltzmannHS}
 Let~$\beta_0 >0$ and~$\mu_0 \in \R $ be given. There is a time~$T>0$  as well as
 two nonincreasing functions~${\boldsymbol\beta }>0$ and~${\boldsymbol\mu }$ defined on~$[0,T]$, satisfying~${\boldsymbol\beta }(0) = \beta_0 $ and~${\boldsymbol\mu}(0) = \mu_0 $,
such that  any family of initial marginals $F(0) =\big(f^{(s)} (0) \big)_{ s \geq 1} $ in~$ {\bf X}_{0,\beta_0,\mu_0}$  gives rise to a unique solution~$   F  (t)= (   f^{(s)} (t))_{s \geq 1}$  in~${\bf X}_{0,{\boldsymbol\beta},{\boldsymbol\mu}}$ to      the Boltzmann hierarchy~{\rm (\ref{mild-Boltzmannbis})}, satisfying  the following bound:
         $$
   | \! \|   F    | \! \| _{ 0,{\boldsymbol\beta},{\boldsymbol\mu}}\leq  2  \|{   F(0) }\|_{0,
      \b_0,\mu_0} \, .$$
    \end{Thm}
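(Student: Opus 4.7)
The plan is to adapt the Cauchy--Kowalevskaya framework of Theorem~\ref{existence-thmHS} to the Boltzmann hierarchy, viewing mild solutions as fixed points of the Duhamel operator
\[
\mathcal L(F)(t):=\mathbf S(t)\,F(0)+\int_0^t \mathbf S(t-\tau)\,\mathbf C^0 F(\tau)\,d\tau
\]
in the scale of Banach spaces $\mathbf X_{0,\boldsymbol\beta,\boldsymbol\mu}$. The setting is actually a little simpler than for hard spheres: $\mathbf S(t)$ is a tensor product of free transports and hence preserves the free Hamiltonian $E_0$, so it acts as an isometry on each $X_{0,s,\beta}$; and since $X_{0,s,\beta}$ is built over all of $\R^{2ds}$ rather than $\mathcal D_s$, there are no boundary terms to handle.

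The first step is a continuity estimate for $\mathbf C^0$ with a controlled loss in $\beta$ and $\mu$: for $0<\beta'<\beta$ and $\mu'<\mu$, I would establish
\[
\|\mathbf C^0 G\|_{0,\beta',\mu'}\;\leq\; C_d\,e^{-\mu}\,\frac{1+\beta^{-(d+1)/2}}{(\beta-\beta')^{1/2}(\mu-\mu')}\,\|G\|_{0,\beta,\mu}.
\]
Four ingredients enter the bound on each $\mathcal C^0_{s,s+1}$: (i) the scattering map $\sigma$ of~(\ref{defscatteringhardspheres}) preserves $E_0$, so the Gaussian weight factors out of the integrand unchanged after the substitution $(v_i,v_{s+1})\mapsto(v_i^*,v_{s+1}^*)$; (ii) the velocity moment $\int|v_{s+1}|\,e^{-\beta|v_{s+1}|^2/2}\,dv_{s+1}$ is of order $\beta^{-(d+1)/2}$; (iii) the residual factor $|v_i|$ coming from $\omega\cdot(v_{s+1}-v_i)$ is absorbed through the pointwise inequality $\sqrt{E_0(Z_s)}\,e^{-(\beta-\beta')E_0(Z_s)}\le C(\beta-\beta')^{-1/2}$; (iv) the combinatorial factor $s$ from $\sum_{i=1}^s$ is swallowed at the level of sequence norms via $\sup_s s\,e^{-(\mu-\mu')s}\le[e(\mu-\mu')]^{-1}$.

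Next, with affine profiles $\boldsymbol\beta(t)=\beta_0-\lambda t$ and $\boldsymbol\mu(t)=\mu_0-\lambda' t$, contraction of $\mathcal L$ is established by expanding the Duhamel formula iteratively. A single-step estimate is insufficient because the losses in $\beta$ and $\mu$ conspire to produce a non-integrable singularity $(t-\tau)^{-3/2}$ in time; the remedy is to insert intermediate parameters between the time slices $\tau_1>\tau_2>\cdots>\tau_n$ appearing in the $n$-fold Duhamel iterate, chosen so that each elementary loss is a fractional power of $\tau_{k-1}-\tau_k$ and the resulting nested integrals can be summed via Beta-function combinatorics. Summing the series in~$n$, one obtains that $\mathcal L$ is a contraction on the closed ball of radius $2\|F(0)\|_{0,\beta_0,\mu_0}$ for $T$ satisfying~(\ref{lower-bd:T*}); the Banach fixed-point theorem then provides existence, uniqueness, and the stated a~priori bound.

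The main obstacle is exactly this combinatorial bookkeeping: the $\beta$- and $\mu$-losses must be distributed over many small steps rather than concentrated at a single Duhamel layer, and the intermediate time integrals must be arranged so that the naive divergent singularity is regularized by nested fractional integration. Optimizing the profiles $\boldsymbol\beta,\boldsymbol\mu$ against this combinatorial estimate is what yields the explicit lower bound of Remark~\ref{existencetime}. Apart from this balancing act, the argument is word-for-word the one used for Theorem~\ref{existence-thmHS}, with the simplifying features that there is no exclusion constraint and no boundary condition to impose on the marginals.
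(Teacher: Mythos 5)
Your fixed-point formulation and your single-layer continuity estimate for $\mathbf C^0$ are correct and essentially match the paper's Proposition \ref{propcontinuityBHS}, but you then diverge onto a route the paper explicitly declines to take, and for a reason worth understanding. You claim a single-step estimate for $\int_0^t \mathbf S(t-\tau)\,\mathbf C^0 F(\tau)\,d\tau$ must fail because the $\beta$- and $\mu$-losses produce a non-integrable singularity in $t-\tau$, and you repair this via a Nirenberg--Nishida-style iterative expansion with Beta-function combinatorics. That scheme would indeed prove the theorem, but it is the \emph{more} complicated route: the paper (Section~\ref{commentscauchy}) proves exactly the single-step contraction you claim is unavailable, via Ukai's argument (Lemma \ref{lemukai}, adapted verbatim).

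The gap in your reasoning is the interchange of $\sup$ and $\int dt$. The non-integrable singularity you fear arises only if you first apply the $\|\mathbf C^0\|_{\mathbf X_{\beta,\mu}\to\mathbf X_{\beta',\mu'}}$ loss estimate \emph{and then} Minkowski's inequality in time, i.e.\ if you bound the sup in $(s,Z_s)$ of a time integral by the time integral of a sup. The paper instead works pointwise in $Z_s$ and in $s$: it inserts the collision bound \eqref{est-col:BHS} under the time integral \emph{before} taking any supremum, yielding an integrand of the form $\bigl(s\beta^{-1/2}+\sum_i|v_i|\bigr)\,e^{\lambda(\tau-t)(s+E_0(Z_s))}$. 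The very factor $s+E_0(Z_s)$ that makes the prefactor large is simultaneously the exponential decay rate in $\tau$, so the time integral is bounded by $C/\lambda$ uniformly in $s$ and $Z_s$ with no singularity. Taking $\sup_s,\sup_{Z_s}$ only afterwards then gives a genuine contraction constant $<1$ for $\lambda T<\beta_0$ suitably chosen, and the fixed point follows from the Neumann series for $(\mathrm{Id}-\mathfrak L)^{-1}$ without any iterated Duhamel expansion. The paper even notes that the loss estimate \eqref{continuityHS} (your first displayed inequality, up to exponents) ``is not directly used in the following'' -- it is recorded only to clarify that the Nirenberg--Nishida route would require it but is not followed. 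So your plan is not wrong, but it imports machinery the problem does not need, and the claimed impossibility of a single-step bound is the step you should revisit.
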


\section{Main steps of the proofs}\label{mainsteps}

\setcounter{equation}{0}

The proofs of Theorems \ref{existence-thmHS} and~\ref{existence-thmboltzmannHS} are typical of analytical-type results, such as the classical Cauchy-Kowalevskaya theorem. We follow here Ukai's approach \cite{ukai}, which turns out to be remarkably short and self-contained.

Let us give the main steps of the proof: we start by noting that the conservation of energy for the~$s$-particle flow is reflected in identities
\begin{equation}\label{continuitytransportHS}
\begin{aligned}|{\mathbf  T}_s (t) g_s |_{\eps,s,\beta} = | g_s |_{\eps,s,\beta}\quad &\mbox{and} \quad 
\| {\mathbf  T} (t) G_N \|_{ \eps,\beta, \mu } = \| G_N \|_{ \eps,\beta, \mu} \,, \\
\end{aligned}
\end{equation}
for all parameters~$\beta >0,$ $\mu \in \R,$ and for all~$g_s \in X_{\eps,s,\beta},$    $G_N = (g_s)_{1 \leq s \leq N} \in {\bf X}_{\eps,\beta,\mu}$, and all $t \geq 0.$
Similarly,
\begin{equation}\label{continuitytransportHS0}
\begin{aligned}
|{\bf S}_s (t) g_{s} |_{0,s,\beta} = | g_s |_{s,\beta}\quad &\mbox{and} \quad 
\| {\bf S} (t) G \|_{ 0,\beta, \mu } = \| G \|_{ 0,\beta, \mu} \,,
\end{aligned}
\end{equation}
for all parameters~$\beta >0,$ $\mu \in \R,$ and for all~$g_s \in X_{0,s,\beta},$ $G =  (g_{s})_{s \geq 1  } \in {\bf X}_{0,\beta,\mu}$, and all $t \geq 0.$

Next assume that    in the Boltzmann-Grad scaling~$N \eps^{d-1} \equiv 1$, there holds the bound
\begin{equation} \label{est:duh-HCHS}
\forall \, 0< \eps \leq \eps_0 \, , \quad \Big   | \! \Big \| \int_0^t {\mathbf  T}(t-\tau ) {\bf C}_N G_N(\tau) \, d \tau\Big   | \! \Big \|_{ \eps,{\boldsymbol\beta},{\boldsymbol\mu}}\leq \frac12\, |\! \| G_N| \! \| _{ \eps,{\boldsymbol\beta},{\boldsymbol\mu}}\, ,
 \end{equation} 
 for some functions~${\boldsymbol\beta}$ and~${\boldsymbol\mu} $ as in the statement of Theorem~\ref{existence-thmHS}.
 Under~(\ref{est:duh-HCHS}),   the linear operator
 $$ {\mathfrak L}: \quad G_N \in {\bf X}_{\e,{\boldsymbol\beta},{\boldsymbol\mu}}\mapsto \left(t \mapsto  \int_0^t  {\mathbf  T}(t-\tau ) {\bf C}_N G_N(\tau) \, d \tau\right) \in {\bf X}_{\e,{\boldsymbol\beta},{\boldsymbol\mu}}$$
 is   linear continuous from ${\bf X}_{\e,{\boldsymbol\beta},{\boldsymbol\mu}}$ to itself with norm strictly smaller than one. In particular, the operator~${\rm Id} -  {\mathfrak L}$ is invertible in the Banach algebra~${\mathcal L}({\bf X}_{\e,{\boldsymbol\beta},{\boldsymbol\mu}}).$  Next given~$  F_N(0) \in {\bf X}_{\e,\b_0,\mu_0},$ by conservation of energy~\eqref{continuitytransportHS}, inclusions~\eqref{inclusionsHS} and decay of ${\boldsymbol\beta}$ and ${\boldsymbol\mu}$, there holds 
 $$
 \big( t \mapsto {\mathbf  T}(t)   F_N(0)\big) \in {\bf X}_{\e,{\boldsymbol\beta},{\boldsymbol\mu}}\, .
 $$  
Hence, there exists a unique solution $  F_N \in {\bf X}_{\e,{\boldsymbol\beta},{\boldsymbol\mu}}$ to $({\rm Id} -  {\mathfrak L})  F_N = {\mathbf  T}(\cdot)   F_N(0),$ an equation which is equivalent to \eqref{BBGKY-mild-total-HSbis}. 

The reasoning is identical for Theorem~\ref{existence-thmboltzmannHS}, replacing~(\ref{est:duh-HCHS})
by
\begin{equation}\label{est:duh-HCboltzmannHS}
\Big   | \! \Big \| \int_0^t {\bf S}(t-\tau) {\bf C}^0  G(\tau) \, d \tau\Big   | \! \Big \|_{ 0,{\boldsymbol\beta},{\boldsymbol\mu}}\leq \frac12\,      | \!   \| G  | \!   \|_{ 0,{\boldsymbol\beta},{\boldsymbol\mu}} \, .
 \end{equation}
The next section is devoted to the proofs of~(\ref{est:duh-HCHS}) and~(\ref{est:duh-HCboltzmannHS}).

\section{Continuity estimates} \label{sec:cont-estHS}
\setcounter{equation}{0}
In order to prove~(\ref{est:duh-HCHS}) and~(\ref{est:duh-HCboltzmannHS}), we first establish bounds, in the above defined functional spaces, for the collision operators defined in~\eqref{def:collisionop0} and~\eqref{Boltzmann-ophardspheres}, and for the total collision operators.
  In $\C_{s,s+1},$ the sum in $i$ over $[1,s]$ will imply a loss in $\mu,$ while the linear velocity factor will imply a loss in $\b.$ The losses are materialized in~\eqref{continuityHS}  below by inequalities $\b' < \b,$ $\mu' < \mu.$  
  
  The next statement concerns the BBGKY collision operator.
\begin{prop}\label{propcontinuityclustersHS}
Given $\b > 0$ and $\mu \in \R,$ for $1 \leq s \leq N-1,$ the collision operator~$\C_{s,s+1}$ satisfies the bound, for all $G_N = (g_s)_{1 \leq s \leq N} \in {\bf X}_{\e,\b,\mu}$ in the Boltzmann-Grad scaling~$N \e^{d-1} \equiv 1$,
\begin{equation} \label{est-col:1HS}
 \big| {\mathcal C}_{s,s+1} g_{s+1} (Z_s)\big| \leq     {C_{d}} \, {\b^{-\frac d2} } \Big( s\b^{-\frac12} + \sum_{1 \leq i \leq s} |v_i|\Big)  e^{- \b E_0(Z_s)}| g_{s+1}|_{\e,s+1,\b} \, ,
 \end{equation}
for some $C_{d} > 0$ depending only on $d$.

In particular, for all~$0< \beta' < \beta$ and $\mu' < \mu,$ the total collision operator ${\bf C}_N$ satisfies the bound 
\begin{equation}
\label{continuityHS}
\| {\mathbf C} _N G_N \|_{\eps,\beta', \mu'} \leq  C_d (1 + \b^{-\frac12}) \Big( \frac1{\b - \b'} + \frac1{\mu - \mu'} \Big)
  \| G_N \|_{\eps,\beta,\mu} 
\end{equation}
 in the Boltzmann-Grad scaling~$N \e^{d-1} \equiv 1$. 
\end{prop}
Estimate \eqref{continuityHS}, a continuity estimate with loss for the total collision operator ${\bf C}_N,$ is not directly used in the following. In the existence proof, we use instead the pointwise bound \eqref{est-col:1HS}.  Note that the more abstract (and therefore more complicated in our particular setting) approach of L. Nirenberg \cite{nirenberg} and T. Nishida \cite{nishida} would require   the loss estimate \eqref{continuityHS}. 

\begin{proof} 
Recall that as in~\eqref{def:collisionop0},
$$
\big( {\mathcal C}_{s,s+1}  g^{(s+1)}\big)  (t,Z_{s}):=\!    (N-s) \eps^{d-1}\sum_{i=1}^s    \int_{ {\mathbf S}_1^{d-1}\times \R^d}  \! \! \! 
  \omega \cdot (v_{s+1}-v_i)  \:  g^{(s+1)} (t,Z_s,x_{i}+ \eps \omega,v_{s+1})  
     d \omega d v_{s+1} \, .
$$
Estimating each term in the sum separately, regardless of 
possible cancellations between ``gain" and ``loss" terms,
it is obvious that
 $$
  | {\mathcal C}_{s,s+1} g_{s+1}(Z_s) |\leq  \k_d \e^{d-1}(N-s) | g_{s+1} |_{\eps,s+1,\beta} \sum_{1 \leq i \leq s} I_{i}(V_s)  \,,
  $$
  where~$\kappa_d$ is the volume\label{defkappad} of the unit ball of~$\R^d$, and where
  $$
   I_{i}(V_s):= \int_{\R^{d}} \big( |v_{s+1}| + |v_i|\big) \exp\Big(-\frac\beta2 \sum_{j=1}^{s+1} |v_j|^2 \Big) dv_{ s+1 } \, .
  $$
 Since a direct calculation gives
  $$
 I_{i}(V_s) \leq  C_d \, \b^{-\frac d2} \big(   \b^{-\frac12}+ |v_i| \big) \exp\Big(-\frac\beta2\sum_{1 \leq j \leq s} |v_j|^2 \Big) \, ,
$$
the result \eqref{est-col:1HS} is deduced directly in the Boltzmann-Grad scaling~$N \e^{d-1} \equiv 1$.

We turn to the proof of \eqref{continuityHS}. From the pointwise inequality (due to Cauchy-Schwarz)
\begin{equation}\label{pointwiseHS}
\sum_{1 \leq i \leq s} |v_i| \exp\Big(- (\g/2) \sum_{1 \leq j \leq s} |v_j|^2\Big) \leq s^{\frac12} (e \g)^{-1/2}\,, \qquad \g > 0\,,
\end{equation}
we deduce for the above velocity integral $I_{i}(V_s)$ the bound, for $0 < \b' < \b,$ 
$$ 
\sum_{1 \leq i \leq s} \exp\Big( (\b'/2) \sum_{1 \leq j \leq s} |v_j|^2\Big) I_{i}(V_s) \leq  C_d \, \b^{-\frac d2} \big( s \b^{-\frac12} + s^{\frac12}(\b - \b')^{-\frac12}\big) \, .
$$
  Since
  $$ \sup_{1 \leq s \leq N} \Big( \big( s\b^{-\frac12} + s^{1/2} (\b- \b')^{-\frac12}\big) e^{-(\mu - \mu') s} \Big) \leq e^{-1} (\mu - \mu')^{-1} \big(\beta^{-\frac12}+ 1 \big) + e^{-1} (\b- \b')^{-1}\, ,$$
we find \eqref{continuityHS}. 
Proposition \ref{propcontinuityclustersHS} is proved.
\end{proof}
 
A similar result holds for the limiting collision operator.

\begin{prop} \label{propcontinuityBHS}
Given $\b > 0,$ $\mu \in \R,$ the collision operator $\C^0_{s,s+1}$ satisfies the following bound, for all $g_{s+1} \in X_{0,s+1,\b}:$
\begin{equation} \label{est-col:BHS}
 \big| {\mathcal C}^0_{s,s+1} g_{s+1} (Z_s)\big| \leq C_{d} \b^{-\frac d2} \Big( s\b^{-\frac12} + \sum_{1 \leq i \leq s} |v_i|\Big)  e^{- \b E_0(Z_s)}| g_{s+1}|_{0,s+1,\b} \, ,
 \end{equation}
for some $C_{d} > 0$ depending only on $d.$ 
\end{prop}

\begin{proof}
 There holds
$$  \big| {\mathcal C}^0_{s,s+1} g_{s+1} (Z_s)\big| \leq \sum_{1 \leq i \leq s} \int_{{\bf S}^{d-1} \times \R^{d}} \big( |v_{s+1}| + |v_i|\big) \big(|g_{s+1}(v_i^*, v^*_{s+1})| +  |g_{s+1}(v_i,v_{s+1})| \big) d\omega dv_{s+1},$$
omitting most of the arguments of $g_{s+1}$ in the integrand. By definition of $|\cdot|_{0,s,\b}$ norms and conservation of energy \eqref{continuitytransportHS}, there holds
 $$ \begin{aligned} |g_{s+1}(v_i^*, v^*_{s+1})| +  |g_{s+1}(v_i,v_{s+1})| & \leq \big( e^{- \b E_0(Z_s^*)} + e^{-\b E_0(Z_s)} \big) |g_{s+1}|_{0,\b} \\ & = 2 e^{-\b E_0(Z_s)} |g_{s+1}|_{0,s+1,\b},
 \end{aligned}$$
 where $Z_s^*$ is identical to $Z_s$ except for $v_i$ and $v_{s+1}$ changed to $v_i^*$ and $v_{s+1}^*.$ This gives
 $$  \big| {\mathcal C}^0_{s,s+1} g_{s+1} (Z_s)\big| \leq C_d |g_{s+1}|_{0,s+1,\b} e^{-\b E_0(Z_s)} \sum_{1 \leq i \leq s} I_{i}(V_s) \, ,$$
 borrowing notation from the proof of Proposition \ref{propcontinuityclustersHS}, and we conclude as above. 
\end{proof}

Propositions~\ref{propcontinuityclustersHS} and~\ref{propcontinuityBHS} are the key to the proof of~(\ref{est:duh-HCHS})  and~(\ref{est:duh-HCboltzmannHS}). Let us first prove a continuity estimate based on Proposition \ref{propcontinuityclustersHS}, which implies directly~(\ref{est:duh-HCHS}).
\begin{lem} \label{lemukai}  
Let~$\beta_0>0$ and~$\mu_0 \in \R$ be given. For all~$\lambda>0$ and~$t >0$ such that~$ \lambda t < \beta_0 $,  there holds the bound
\begin{equation} \label{est:C1}
 e^{s (\mu_0-\lambda t)} \Big| \int_0^t {\mathbf  T}_s(t-\tau) \C_{s,s+1} g_{s+1}(\tau) \, d \tau\Big|_{\e,s,\b_0-\lambda t} \leq \bar c(\b_0,\mu_0, \lambda ,T)  |\! \| G_N| \! \| _{ \eps,{\boldsymbol\beta},{\boldsymbol\mu}}\, ,
 \end{equation}
 for all $G_N = (g_{s+1})_{1 \leq s \leq N} \in {\bf X}_{\e,{\boldsymbol\beta},{\boldsymbol\mu}},$ with   $ \bar c(\b_0,\mu_0, \lambda ,T)$ computed explicitly in~{\rm(\ref{defbarcHS})} below. In particular there is~$T>0$ depending only on~$\beta_0$ and~$\mu_0$ such that for an appropriate choice of~$\lambda $ in~$ (0, \beta_0 /T )$, there holds for all~$t \in [0,T]$
\begin{equation} \label{est:C1bis}
  e^{s (\mu_0-\lambda t)} \Big| \int_0^t {\mathbf  T}_s(t-\tau) \C_{s,s+1} g_{s+1}(\tau) \, d \tau\Big|_{\e,s,\b_0-\lambda t} \leq \frac12 \,  |\! \| G_N| \! \| _{ \eps,{\boldsymbol\beta},{\boldsymbol\mu}}\, .
  \end{equation}
 \end{lem}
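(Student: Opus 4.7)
The strategy is the standard one for analytical-type estimates: reduce to a pointwise bound on $\C_{s,s+1}g_{s+1}(\tau)$ via Proposition~\ref{propcontinuityclustersHS}, and then check that the time integration, weighted against the residual decays produced by the slow $\tau$-dependence of $\boldsymbol\beta$ and $\boldsymbol\mu$, delivers a constant that is uniform in~$s$. The one subtlety I anticipate is precisely that uniformity in~$s$: it hinges on the identity $e^{s\boldsymbol\mu(t)-(s+1)\boldsymbol\mu(\tau)}=e^{-\mu_0+\lambda\tau}\,e^{-s\lambda(t-\tau)}$, which converts the loss encoded in the linear decrease of $\boldsymbol\mu$ into a geometric-in-$s$ factor strong enough to beat the linear $s$-growth coming from Proposition~\ref{propcontinuityclustersHS}.

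Concretely, first I would use the isometry property~\eqref{continuitytransportHS} to strip $\mathbf T_s(t-\tau)$ from the norm. Then I would apply Proposition~\ref{propcontinuityclustersHS} with $\beta=\boldsymbol\beta(\tau)$, and multiply by $e^{\boldsymbol\beta(t)E_0(Z_s)}$ to produce the $|\cdot|_{\e,s,\boldsymbol\beta(t)}$ norm. Since $\boldsymbol\beta(\tau)-\boldsymbol\beta(t)=\lambda(t-\tau)>0$, this leaves a residual Gaussian factor $e^{-\lambda(t-\tau)E_0(Z_s)}$. Combining the elementary bounds $\sum_{i\le s}|v_i|\le\sqrt{2sE_0(Z_s)}$ and $\sup_{E\ge 0}\sqrt{E}\,e^{-aE}\le C\,a^{-1/2}$ converts the linear velocity factor into $\sqrt{s/(\lambda(t-\tau))}$, giving
\begin{equation*}
\big|\C_{s,s+1}g_{s+1}(\tau)\big|_{\e,s,\boldsymbol\beta(t)}\le C_d\,\boldsymbol\beta(\tau)^{-d/2}\Big(s\,\boldsymbol\beta(\tau)^{-1/2}+\sqrt{\tfrac{s}{\lambda(t-\tau)}}\Big)\big|g_{s+1}(\tau)\big|_{\e,s+1,\boldsymbol\beta(\tau)}.
\end{equation*}
Bounding $|g_{s+1}(\tau)|_{\e,s+1,\boldsymbol\beta(\tau)}\le e^{-(s+1)\boldsymbol\mu(\tau)}\,| \! \| G_N | \! \|_{\e,\boldsymbol\beta,\boldsymbol\mu}$ and multiplying by $e^{s\boldsymbol\mu(t)}$, the identity above exposes the crucial geometric factor $e^{-s\lambda(t-\tau)}$.

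The remaining step is the time integration. After the substitution $u=s\lambda(t-\tau)$ the two relevant integrals become genuinely $s$-independent: one has $\int_0^t s\,e^{-s\lambda(t-\tau)}\,d\tau\le 1/\lambda$ and, thanks to $\int_0^\infty u^{-1/2}e^{-u}\,du=\sqrt{\pi}$, $\int_0^t \sqrt{s/(\lambda(t-\tau))}\,e^{-s\lambda(t-\tau)}\,d\tau\le \sqrt{\pi}/\lambda$. This is precisely the step I flagged as delicate: a naive supremum $\sup_s s\,e^{-s\lambda(t-\tau)}\sim 1/(e\lambda(t-\tau))$ would leave a non-integrable singularity at $\tau=t$, whereas keeping $s$ inside the exponential and rescaling in time converts that $s$-factor into harmless $u$-moments of a Gaussian. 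Putting everything together yields~\eqref{est:C1} with the explicit constant
\begin{equation*}
\bar c(\beta_0,\mu_0,\lambda,T)=\frac{C_d}{\lambda}\,e^{-\mu_0}\,e^{\lambda T}\,(\beta_0-\lambda T)^{-d/2}\Big((\beta_0-\lambda T)^{-1/2}+\sqrt{\pi}\Big).
\end{equation*}
Finally, for the refinement~\eqref{est:C1bis}, I would pick for instance $\lambda=\beta_0/(2T)$: this forces $(\beta_0-\lambda T)^{-d/2}=(\beta_0/2)^{-d/2}$ and $1/\lambda=2T/\beta_0$, so $\bar c=O(T)$ with an implicit constant depending only on $d$, $\beta_0$ and $\mu_0$; shrinking $T$ one last time produces $\bar c\le 1/2$.
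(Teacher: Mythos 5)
Your proposal is correct and follows essentially the same route as the paper's own proof: strip $\mathbf T_s$ by the energy isometry, invoke the pointwise collision bound \eqref{est-col:1HS}, absorb the velocity sum via the Cauchy–Schwarz/Gaussian inequality \eqref{pointwiseHS} into the $\boldsymbol\beta$-loss, extract the geometric factor $e^{-s\lambda(t-\tau)}$ from the $\boldsymbol\mu$-loss, and integrate in $\tau$ before taking the supremum in $s$. The single subtlety you flag — that interchanging $\sup_s$ and $\int_0^t$ would produce a non-integrable $1/(t-\tau)$ singularity — is precisely the point the authors emphasize in Section~\ref{commentscauchy}, and your explicit constant $\bar c$ matches \eqref{defbarcHS} up to the normalization of $C_d$.
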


\begin{proof}
Let us define, for all~$\lambda>0$ and~$t >0$ such that~$ \lambda t < \beta_0 $,  the functions
\begin{equation}\label{notationweights}
\beta_0^\lambda(t):=\b_0-\lambda t \quad \mbox{and} \quad \mu_0^\lambda(t):=\mu_0-\lambda t \, .
\end{equation}
By conservation of energy \eqref{continuitytransportHS}, there holds the bound 
$$  
\begin{aligned}
 \Big| \int_0^t {\mathbf  T}(t-\tau ) \C_{s,s+1} g_{s+1}(\tau) \, d \tau \Big|_{\e,s,\beta_0^\lambda(t)} \leq \sup_{Z_s \in \R^{2ds}} \int_0^t e^{\beta_0^\lambda(t)E_0(Z_s)} \big| {\mathcal C}_{s,s+1} g_{s+1} (\tau,Z_s)\big| \, d \tau \, .
 \end{aligned}
$$ 
 Estimate \eqref{est-col:1HS} from Proposition \ref{propcontinuityclustersHS} gives
 $$ 
 \begin{aligned} 
& e^{\beta_0^\lambda(t) E_0(Z_s)} 
  \big| {\mathcal C}_{s,s+1} g_{s+1} (\tau,Z_s)\big| \\ & \leq 
  C_d  \, \big(\beta_0^\lambda(\tau)\big) ^{-\frac d2} | g_{s+1}(\tau)|_{\e,s+1,\beta_0^\lambda(\tau)} \Big( s(\beta_0^\lambda(\tau))^{-\frac 12} + \sum_{1 \leq i \leq s} |v_i|\Big)  e^{\lambda(\tau-t) E_0(Z_s)} \, .
 \end{aligned}
 $$
 By definition of norms $\| \cdot \|_{\e,\b,\mu}$ and  $ |\! \| \cdot| \! \| _{ \eps,{\boldsymbol\beta},{\boldsymbol\mu}}$ we have  
 \begin{equation}\label{estimategs+1t'}
\begin{aligned} 
 | g_{s+1}(\tau)|_{\e,s+1,\beta_0^\lambda(\tau)} & \leq e^{-(s+1)\mu_0^\lambda(\tau)} \| G_N(\tau) \|_{\e,\beta_0^\lambda(\tau),\mu_0^\lambda(\tau)}\\
 & \leq e^{-(s+1) \mu_0^\lambda(\tau)}    |\! \| G_N| \! \| _{ \eps,{\boldsymbol\beta},{\boldsymbol\mu}}\, .
 \end{aligned}
 \end{equation}
The above bounds yield, since~$\beta_0^\lambda$ and~$\mu_0^\lambda$ are nonincreasing,
$$
 \begin{aligned} &e^{s \mu_0^\lambda(t)} \Big| \int_0^t {\mathbf  T}(t-\tau)  \C_{s,s+1} g_{s+1}(\tau) \, d \tau \Big|_{\e,s,\b_0^\lambda(t)} \\ 
 &\quad  \leq  C_d |\! \| G_N| \! \| _{ \eps,{\boldsymbol\beta},{\boldsymbol\mu}}  e^{ - \mu_0^\lambda(T)} \big( \beta_0^\lambda(T)\big)^{-\frac d2} \sup_{Z_s \in \R^{2ds}} \int_0^t  \overline C(\tau,t,Z_s) \, d \tau\,, \end{aligned}
$$
 where, for $\tau \leq t,$ 
\begin{equation} \label{def:C2}
  \overline C(\tau,t,Z_s) := \Big( s(\b_0^\lambda(\tau))^{-\frac 12} + \sum_{1 \leq i \leq s} |v_i|\Big)  e^{\lambda (\tau-t) (s+ E_0(Z_s))}\,.
  \end{equation}
Since
 \begin{equation} \label{bd:tildeC2}
 \sup_{Z_s \in \R^{2ds}} \int_0^t \overline C(\tau,t ,Z_s) \, d \tau \leq  \frac{C_d}\lambda \Big( 1 +  \big (\beta_0^\lambda(T)\big)^{-\frac 12} \Big) \,,
  \end{equation}
 there holds finally
 $$
  \begin{aligned}  e^{s \mu_0^\lambda(t)} \Big| \int_0^t {\mathbf  T}(t-\tau)  \C_{s,s+1} g_{s+1}(\tau) \, d \tau \Big|_{\e,s,\b_0^\lambda(t)} 
  \leq \bar c(\b_0,\mu_0,\lambda,T)   |\! \| G_N| \! \| _{ \eps,{\boldsymbol\beta},{\boldsymbol\mu}}\,, \end{aligned}
 $$
 where,  with a possible change of the constant~$C_d$,
 \begin{equation}\label{defbarcHS}
\bar c(\b_0,\mu_0,\lambda,T) :=C_d \,  e^{ - \mu_0^\lambda(T)} \lambda^{-1}  \big(\beta_0^\lambda(T)\big)^{-\frac d2}  \Big( 1 + \big(\beta_0^\lambda(T)\big)^{-\frac 12}  \Big) \, .
 \end{equation} The result~(\ref{est:C1}) follows.
 To deduce~(\ref{est:C1bis}) we  need to find~$T>0$ and $\l > 0$ such that~$\lambda T < \beta_0$ and
  \begin{equation} \label{lambdaHS}   C_d (1+ (\b_0 - \l T)^{-\frac12}\big) e^{-\mu_0 + \l T} (\b_0 - \l T)^{-\frac d2} = \frac{\l }{2} \,\cdotp
 \end{equation}
 With $\beta := \l T \in (0, \b_0),$ condition \eqref{lambdaHS} becomes
 $$ \begin{aligned} 
 T & = C_d e^{\mu_0} \beta e^{-\beta} \frac{(\b_0 - \beta)^{\frac{d+1}2}}{1 + (\b_0 - \beta)^{\frac12}} \\
 &\geq  C_d e^{\mu_0} (1 + \b_0^{\frac12})^{-1} \beta e^{-\beta} (\b_0- \beta)^{\frac{d+1}2} \,, \end{aligned}$$
 up to changing the constant~$C_d$ and~(\ref{est:C1bis}) 
 follows. Notice that 
  \eqref{lower-bd:T*} is a consequence of this computation. 
\end{proof}

The proof of the corresponding result~(\ref{est:duh-HCboltzmannHS}) for the Boltzmann hierarchy is identical, since the estimates for $\C^0_{s,s+1}$ and $\C_{s,s+1}$ are essentially identical (compare estimate \eqref{est-col:1HS} from Proposition \ref{propcontinuityclustersHS} with estimate  \eqref{est-col:BHS} from Proposition \ref{propcontinuityBHS}).
\section{Some remarks on the strategy of proof}\label{commentscauchy}

\setcounter{equation}{0}

 The key in the   proof of~(\ref{est:duh-HCHS}) is \emph{not} to apply Minkowski's integral inequality, which would indeed lead here to 
 $$ \Big\| \int_0^t {\mathbf  T}(t-\tau) {\bf C}_N G_N(\tau) \, d \tau\Big\|_{\e,\b_0^\lambda(t),\mu_0^\lambda(t)} \leq \int_0^t \Big\| {\bf C}_N G_N(\tau)\Big\|_{\e,\b_0^\lambda(t),\mu_0^\lambda(t)} \, d \tau\,,$$
 by \eqref{continuitytransportHS}, and then to a divergent integral of the type
 $$ \begin{aligned} \Big\| \int_0^t {\mathbf  T}(t-\tau) {\bf C}_N G_N(\tau) \, d \tau\Big\|_{\e,\b_0^\lambda(t),\mu_0^\lambda(t)}& \leq  C \big(\b_0^\lambda(T),\mu_0^\lambda(T) \big)  \int_0^t 
 \Big( \frac1{\b_0^\lambda(\tau) - \b_0^\lambda(t)} + \frac1{\mu_0^\lambda(\tau) - \mu_0^\lambda(t)}  \Big)\,d \tau\,.\end{aligned}$$
The difference is that by Minkowski the upper bound appears as the time integral of a supremum in~$s,$ while in the proof of~(\ref{est:duh-HCHS}), the upper bound is a supremum in $s$ of a time integral. 
 
As pointed out in Section~\ref{sec:ex-BBGKYHS}, other proofs of Theorems~\ref{existence-thmHS} and~\ref{existence-thmboltzmannHS} can be devised, using tools inspired by the proof of the Cauchy-Kowalevskaya theorem: we recall for instance the approaches of~\cite{nirenberg} and~\cite{nishida}, as well as~\cite{lanford} and~\cite{K}.

\chapter{Statement of the convergence result}\label{convergenceresult}
\setcounter{equation}{0}

\label{convergence-statement} 

We state here our first main result,   describing convergence of mild solutions to the BBGKY hierarchy~(\ref{BBGKY-mildHS}) to mild solutions of the  Boltzmann hierarchy \eqref{mild-Boltzmann}. 
This result implies in particular Theorem~\ref{thm-cv-intro} stated in the Introduction page~\pageref{thm-cv-intro}.

The first part of this chapter is devoted to a precise description of Boltzmann initial data   which are {\it admissible}, i.e., which can be obtained as the limit of BBGKY initial data satisfying the required uniform bounds. 
This involves discussing  the notion of ``quasi-independence" mentioned in the Introduction, via a conditioning of the initial data.
Then we state the main convergence result (Theorem \ref{main-thm} page~\pageref{main-thm}) and sketch the main steps of its proof. 

\section{Quasi-independence}\label{admissibledata}
\setcounter{equation}{0}
In this paragraph we discuss the notion of ``quasi-independent" initial data. We first
define admissible Boltzmann initial data, meaning data which can be reached from BBGKY initial data (which are bounded families of   marginals) by a limiting procedure, and then
 show how to ``condition" the initial BBGKY initial data so as to converge towards admissible Boltzmann initial data.  Finally we characterize admissible Boltzmann initial data.

\subsection{Admissible Boltzmann data} \label{sec:adm-B}
 In the following we denote
$$\label{defOmegaN}
\Omega_s := \{Z_s \in \R^{2ds} \, , \,\, \forall i \neq j \, , \,  x_i \neq x_j\} \, .
$$
\begin{Def}[Admissible Boltzmann data] \label{def:adm-data}
Admissible Boltzmann data
are defined as families~$F_0 = (f^{(s)}_0)_{s \geq 1},$ with each~$f_0^{(s)}$  nonnegative, integrable and continuous over~$\Omega_s$, such that
\begin{equation} \label{limit:marginals}
 \int_{\R^{2d}} f^{(s+1)}_0(Z_s,z_{s+1})  \, dz_{s+1} = f^{(s)}_0(Z_s) \, , 
\end{equation}
and which are limits of BBGKY initial data~$  F_{0,N} = (  f^{(s)}_{0,N})_{1 \leq s \leq N}  \in {\bf X}_{\e,\b_0,\mu_0}$ in the following sense: for some~$  F_{0,N}$ satisfying
\begin{equation} \label{def:unif-dataHS} \begin{aligned}
 \sup_{N \geq 1} \|   F_{0,N} \|_{\e,\b_0,\mu_0} < \infty\,,\quad \mbox{for some $\b_0  > 0\,,$ $\mu_0 \in \R\,,$ as $N \e^{d-1} \equiv 1\,,$ } \end{aligned}
 \end{equation}
for each given~$s \in [1,N]$, the marginal of order~$s$ defined by
 \begin{equation} \label{def:adm-truncHS} 
   f_{0,N}^{(s)}(Z_s)  = \int_{  \R^{2d(N-s)}}  \indc_{Z_N \in {\mathcal D}_N }   f_{0,N}^{(N)} (Z_N) \,  dz_{s+1} \dots dz_N \,, \quad  1 \leq s < N\,, 
\end{equation}
converges   in the Boltzmann-Grad limit:
  \begin{equation} \label{def:cv-data}   f_{0,N}^{(s)} \longrightarrow f_0^{(s)}   \quad \mbox{as $N \to \infty$ with $N \e^{d-1} \equiv 1 \, ,$ locally uniformly in $\Omega_s \, .$}
 \end{equation}
 \end{Def}
 In this section we shall prove the following result.
\begin{prop} \label{cor:B-data} The set of admissible Boltzmann data, in the sense of Definition~{\rm\ref{def:adm-data}},  is the set of families of marginals $F_0$ as in~\eqref{limit:marginals} satisfying a uniform bound~$\| F_0 \|_{0,\b_0,\mu_0} < \infty$ for some~$\b_0  > 0$ and~$\mu_0 \in \R$.
\end{prop}

\subsection{Conditioning}
We first consider ``chaotic" configurations, corresponding to tensorized initial measures, or initial densities which are products of one-particle distributions:
\begin{equation} \label{def:init-g0}
f_0^{\otimes s}(Z_s)=\prod_{1 \leq i \leq s} f_0 (z_i) \, , \qquad 1 \leq s \leq N \, ,
\end{equation} where $f_0$ is nonnegative, normalized, and belongs to some $X_{0, 1,\b_0}$ space (see Definition \ref{def:functional-spacesHS} page \pageref{def:functional-spacesHS}): 
\begin{equation} \label{cond:f0}
f_0 \geq 0 \, , \quad \int_{\R^{2d}} f_0(z) dz =1 \, ,
\quad 
e^{\mu_0} | f_0|_{0,1,\beta_0}\leq 1  \quad \mbox{for some $\b_0 > 0 \,, \mu_0 \in \R \, .$}
\end{equation}
Such initial data are particularly meaningful insofar as they will produce the Boltzmann equation~(\ref{boltz-eq}), and we shall show  in Proposition~\ref{init-cv1}   that they are admissible.

We then consider the initial data~$\displaystyle \indc_{Z_N \in \D_N} f_0^{\otimes N}(Z_N)$, 
and the property of normalization is   preserved by introduction of the partition function
\begin{equation} \label{def:Z-f} {\mathcal Z}_N  := \int_{\R^{2dN}} 
 \indc_{Z_N \in \D_N} f_0^{\otimes N}(Z_N)\, dZ_{N}\,.
\end{equation}
{\it Conditioned datum built on $f_0$} is then defined as~$ {\mathcal Z}_N ^{-1}  \indc_{Z_N \in \D_N} f_0^{\otimes N}(Z_N).$
This operation is called {\it conditioning on energy surfaces}, and is a classical tool in statistical mechanics (see \cite{Georgii,LL5,LL9} for instance).

The partition function defined in \eqref{def:Z-f} satisfies the next result, which will be useful in the following.
\begin{Lem} \label{lem:bd-f0HS} Given $f_0$ satisfying \eqref{cond:f0}, there holds for $1 \leq s \leq N$ the bound 
$$  1 \leq {\mathcal Z}_N^{-1} {\mathcal Z}_{N-s} \leq \big( 1 - \e \k_d |f_0|_{L^\infty L^1}\big)^{-s},
$$ 
in the scaling $N \e^{d-1} \equiv 1,$ where $|f_0|_{L^\infty L^1}$ denotes the $L^\infty(\R^d_x,L^1(\R^d_v))$ norm of $f_0,$ and $\k_d$ denotes the volume of the unit ball in $\R^d.$\end{Lem}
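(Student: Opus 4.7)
The lower bound $1 \leq {\mathcal Z}_N^{-1} {\mathcal Z}_{N-s}$ is immediate and does not use the Boltzmann-Grad scaling: bounding $\indc_{Z_N \in {\mathcal D}_N}$ from above by $\indc_{(z_1,\dots,z_{N-s}) \in {\mathcal D}_{N-s}}$ (that is, dropping the exclusion constraints that involve the last $s$ particles) and then integrating out $z_{N-s+1},\dots,z_N$ using $\int f_0\,dz = 1$ yields ${\mathcal Z}_N \leq {\mathcal Z}_{N-s}$ at once.

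For the upper bound the plan is to argue by induction on $s$, reducing matters to the one-step inequality
$$ {\mathcal Z}_N \geq \big(1 - \e \kappa_d |f_0|_{L^\infty L^1}\big)\, {\mathcal Z}_{N-1}. $$
To establish this I would peel off the last variable: write
$$ {\mathcal Z}_N = \int \indc_{(z_1,\dots,z_{N-1}) \in {\mathcal D}_{N-1}} \prod_{i=1}^{N-1} f_0(z_i) \left( \int f_0(z_N) \prod_{j=1}^{N-1} \indc_{|x_j - x_N| > \e}\, dz_N \right) dz_1 \cdots dz_{N-1}. $$
The inner parenthesis equals $1 - \int f_0(z_N)\indc_{x_N \in \bigcup_{j=1}^{N-1} B(x_j,\e)}\,dz_N$. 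Performing the $v_N$-integration \emph{before} the $x_N$-integration bounds this error term by the Lebesgue measure of $\bigcup_{j=1}^{N-1} B(x_j,\e)$ times $|f_0|_{L^\infty L^1}$, hence by $(N-1)\kappa_d \e^d |f_0|_{L^\infty L^1}$. The Boltzmann-Grad identity $N\e^{d-1} = 1$ then supplies the decisive gain $(N-1)\e^d \leq N\e^d = \e$, producing the one-step bound uniformly in $(z_1,\dots,z_{N-1})$.

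Iteration is routine: the same estimate applied to the $(N-k)$-particle partition function gives ${\mathcal Z}_{N-k} \geq (1 - \e \kappa_d |f_0|_{L^\infty L^1}){\mathcal Z}_{N-k-1}$ for every $0 \leq k \leq s-1$, since $(N-k-1)\e^d \leq \e$ remains true throughout. Chaining these $s$ inequalities yields the desired bound. I do not anticipate any serious obstacle here; the two points requiring attention are (i) to integrate in velocity before position, so that the natural $L^\infty(\R^d_x;L^1(\R^d_v))$ norm of $f_0$ enters rather than the less favourable $L^\infty_{x,v}$ norm, and (ii) to exploit the scaling identity $N\e^{d-1} = 1$ to replace what would otherwise be a useless $O(N\e^d)$ correction by the controlled $O(\e)$ correction demanded by the statement.
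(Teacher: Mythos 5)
Your proof is correct and follows essentially the same route as the paper's: both isolate the $z_N$-integral, bound it from below by $1 - (N-1)\kappa_d\eps^d|f_0|_{L^\infty L^1}$ by integrating in velocity first, iterate down to ${\mathcal Z}_{N-s}$, and invoke $N\eps^{d-1}\equiv 1$ to turn the $(N-1)\eps^d$-type errors into the uniform $\eps$ of the statement. The only cosmetic difference is that you apply the scaling at each inductive step, whereas the paper retains the intermediate factors $1 - j\eps^d\kappa_d|f_0|_{L^\infty L^1}$ for $N-s\leq j\leq N-1$ and dominates them all by $1 - \eps\kappa_d|f_0|_{L^\infty L^1}$ in a single step at the end.
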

 
\begin{proof}  
We have  by definition
$$
 {\mathcal Z}_{s+1} =  \int_{\R^{2d(s+1)}}    \indc_{Z_{s+1} \in \D_{s+1}} \Big( \prod_{i=1}^s \indc_{|x_i-x_{s+1}| >\eps} \Big) f_0^{\otimes (s+1)}(Z_{s+1})  \, dZ_{s+1} \, .
$$ 
By Fubini, we deduce
 $$ \begin{aligned}  {\mathcal Z}_{s+1}  = \int_{\R^{2ds}} \left( \int_{\R^{2d}} \Big(\prod_{1 \leq i \leq s} \indc_{|x_i-x_{s+1} |>\eps}\Big)  f_0(z_{s+1})  dz_{s+1}\right)\indc_{Z_s \in \D_s} f_{0}^{\otimes s}(Z_s)  dZ_{s} \, .\end{aligned}$$
Since
 $$ \int_{\R^{2d}} \Big(\prod_{1 \leq i \leq s} \indc_{|x_i-x_{s+1} |>\eps}\Big)  f_0(z_{s+1})  dz_{s+1} \geq \|  f_0\|_{L^1} -  \k_d s \eps^d  |f_0|_{L^\infty L^1} \, ,$$
   we deduce from the above, by nonnegativity of $f^{\otimes s}_{0}$ and the fact that~$ \| f_0\|_{L^1}=1$ the lower bound
$$  {\mathcal Z}_{s+1} \geq
 {\mathcal Z}_{s} \big(1 -  \k_d s \eps^d  | f_0|_{L^\infty L^1}\big) \, ,
$$ 
implying by induction
$$ {\mathcal Z}_N  \geq {\mathcal Z}_{N-s}  \prod_{j = N-s}^{N-1} (1 - j \e^d \k_d |f_0|_{L^\infty L^1})  \geq {\mathcal Z}_{N-s} \big( 1 - \e \k_d |f_0|_{L^\infty L^1}\big)^s \, ,$$
where we used $s \leq N$ and the scaling $N \e^{d-1} \equiv 1.$  That proves the lemma.
\end{proof}

\subsection{Characterization of admissible Boltzmann initial data}
The aim of this paragaph is to prove Proposition~\ref{cor:B-data}.

Let us start by proving the following statement, which  provides examples of admissible Boltzmann initial data, in terms of tensor products.
\begin{prop}\label{init-cv1}  
Given $f_0$ satisfying  \eqref{cond:f0},  the data~$F_0 = (f_0^{\otimes s})_{s \geq 1}$ is admissible  in the sense of Definition~{\rm\ref{sec:adm-B}}.   
\end{prop}
  
 \begin{proof} Let us define, with notation~(\ref{def:Z-f}),
 $$
  f_{0,N}^{(N)} := {\mathcal Z}_N ^{-1}  \indc_{Z_N \in \D_N} f_0^{\otimes N}(Z_N)
 $$
 and let~$  F_{0,N} := \big(  f_{0,N}^{(s)}\big)_{s \leq N}$   be the set of its  marginals. In a first step we   prove they   satisfy uniform bounds as in~(\ref{def:unif-dataHS}).  In a second step, we prove the local uniform convergence to zero of~$ f_{0,N}^{(s)} - f^{\otimes s}_{0}$ in $\Omega_s$, as in~(\ref{def:adm-truncHS}).  

   {\it First step.}  We have clearly 
$$
\begin{aligned}
   f_{0,N}^{(s)}(Z_s) \leq {\mathcal Z}_N^{-1}   \indc_{Z_s \in {\mathcal D}_s}  f^{\otimes s}_{0}(Z_s)\int_{\R^{2d(N-s)}}
 \prod_{s+1\leq i<  j\leq N}   \indc_{|x_i-x_j| >\e}  \prod_{s+1\leq i\leq N}{}  f_0 (z_i) \,   dZ_{(s+1,N)} \,,
 \end{aligned}$$
  where we have used the notation
  $$\label{page:ij-measure}
  dZ_{(s+1,N)} :=dz_{s+1} \dots dz_N \, .
  $$
This gives
$$ 
\begin{aligned}
  f_{0,N}^{(s)} (Z_s) &\leq {\mathcal Z}_N^{-1} {\mathcal Z}_{N-s}   \indc_{Z_s \in {\mathcal D}_s}  f^{\otimes s}_{0}(Z_s)\\
 & \leq \big( 1 - \e \k_d |f_0|_{L^\infty L^1}\big)^{-s}   \indc_{Z_s \in {\mathcal D}_s}  f^{\otimes s}_{0}(Z_s)\,,
   \end{aligned}
   $$
the second inequality by Lemma \ref{lem:bd-f0HS}. 

By $2 x + \ln(1 - x) \geq 0$ for $x \in [0,1/2],$ there holds  \begin{equation} \label{ineq-elem}
 (1 - \e \k_d |f_0|_{L^\infty L^1})^{-s} \leq e^{2 s \e \k_d |f_0|_{L^\infty L^1}}\, , \qquad \mbox{if $2 \e \k_d |f_0|_{L^\infty L^1} < 1\, ,$}
 \end{equation}
so that for $N$ larger than some $N_0$ (equivalently, for $\e$ small enough),
$$   e^{s \mu'_0} \big|   f_{0,N}^{(s)}  \big|_{\e,s,\b_0} \leq e^{s (\mu'_0 + 2 \e \k_d |f_0|_{L^\infty L^1})} \big|   \indc_{Z_s \in {\mathcal D}_s}  f^{\otimes s}_{0}(Z_s) \big|_{\e,s,\b_0} \leq \Big( e^{ 2 \e \k_d |f_0|_{L^\infty L^1}} |f_0|_{0,1,\beta_0}\Big)^s \, 
.$$
Therefore, for any $\mu'_0 < \mu_0$ and for $\eps$ sufficiently small,
$$\begin{aligned} \sup_{N \geq N_1} \|   F_{0,N} \|_{\e,\b_0,\mu'_0}  < \infty\, , \end{aligned} $$
which of course implies the uniform bound $\displaystyle \sup_{N \geq 1} \|   F_{0,N} \|_{\e,\b_0,\mu'_0} < \infty.$ 
  
 \medskip
 
 {\it Second step.}   
  We compute for $s \leq N:$ 
$$  
 \begin{aligned} 
  f_{0,N}^{(s)} = {\mathcal Z}_N^{-1} \indc_{Z_s \in {\mathcal D}_s}f^{\otimes s}_{0} 
  \int_{\R^{2d(N-s)}}  \prod_{s+1 \leq i < j \leq N} \indc_{|x_i - x_j| > \eps}  \prod_{i \leq s <  j}  \indc_{|x_i - x_j| > \eps}   \prod_{s+1\leq i \leq N} f_0(z_i) \, dZ_{(s+1,N)} \, .
  \end{aligned}$$
  We deduce, by   symmetry, 
\begin{equation} \label{dec:tensor}   f_{0,N}^{(s)} = {\mathcal Z}_N^{-1} \indc_{Z_s \in {\mathcal D}_s}f^{\otimes s}_{0} 
  \Big( {\mathcal Z}_{N-s} - {\mathcal Z}^\flat_{(s+1,N)} \Big)
\end{equation} 
 with the notation
$$
 \begin{aligned} {\mathcal Z}^\flat_{(s+1,N)} = \int_{\R^{2d(N-s)}}
  \Big(1 - \prod_{i \leq s < j}\indc_{|x_i - x_j| > \eps}\Big)   \prod_{s+1 \leq i < j \leq N} \indc_{|x_i - x_j| > \eps} \prod_{s+1\leq i \leq N} f_0(z_i)\, dZ_{(s+1,N)} \, ,\end{aligned}
 $$
  so that ${\mathcal Z}^\flat_{(s+1,N)}$ is a function of $X_s$.
  
From there, the difference $\indc_{Z_s \in {\mathcal D}_s}f^{\otimes s}_{0} 
-  f_{0,N}^{(s)}$ decomposes as a sum:
\begin{equation}
\label{marginal-dec}
\begin{aligned} \indc_{Z_s \in {\mathcal D}_s}f^{\otimes s}_{0} 
 -   f_{0,N}^{(s)} & = \Big(1 - {\mathcal Z}_{N}^{-1} {\mathcal Z}_{N-s}\Big) \indc_{Z_s \in {\mathcal D}_s}f^{\otimes s}_{0} 
+  {\mathcal Z}_N^{-1} {\mathcal Z}^\flat_{(s+1,N)} \indc_{Z_s \in {\mathcal D}_s}f^{\otimes s}_{0} 
 \, .
\end{aligned}
\end{equation} 
By Lemma \ref{lem:bd-f0HS}, there holds $1 - {\mathcal Z}_{N}^{-1} {\mathcal Z}_{N-s} \to 0$ as $N \to \infty,$ for fixed $s.$ Since $f^{\otimes s}_0$ is uniformly bounded in $\Omega_s,$ this implies that the first term in the right-hand side of \eqref{marginal-dec} tends to 0 as $N \to \infty,$ uniformly in $\Omega_s.$
Besides, by
 $$\dsp{0 \leq  1 - \prod_{i \leq s < j}\indc_{|x_i - x_j| > \eps} \leq   \sum_{\begin{smallmatrix} 1 \leq i \leq s \\ s+1 \leq j \leq N \end{smallmatrix}} \indc_{|x_i-x_j|<\eps}\,,}$$
 we bound 
  $$ 
  \begin{aligned}
  {\mathcal Z}^\flat_{(s+1,N)} & \leq \sum_{1 \leq k \leq s} \int_{\R^{2d(N-s)}}  \Big(\sum_{s+1 \leq j \leq N} \indc_{|x_k-x_j|<\eps}\Big) \prod_{s+1 \leq i < j \leq N} \indc_{|x_i - x_j| > \eps} \prod_{s+1\leq i \leq N} f_0(z_i)   \,  dZ_{(s+1,N)}
\,   .
  \end{aligned}
$$
 Given $1 \leq k\leq s,$ there holds by symmetry  and Fubini,
 $$ \begin{aligned} \int_{\R^{2d(N-s)}} & \Big(\sum_{s+1 \leq j \leq N} \indc_{|x_k - x_j| < \e}\Big)   \prod_{s+1 \leq i < j \leq N} \indc_{|x_i - x_j| > \eps}  \prod_{s+1\leq i \leq N} f_0(z_i)\,  dZ_{(s+1,N)} \\ & \leq (N-s) \int_{\R^{2d}} \indc_{|x_i - x_{s+1}| < \e} f_0(z_{s+1}) dz_{s+1}  \\
 & \qquad \times \, \int_{\R^{2d(N-s-1)}}   \prod_{s+2\leq i < j \leq N} \indc_{|x_k - x_j| > \eps}\prod_{s+2\leq i \leq N} f_0(z_i) \, dZ_{(s+2,N)} \\ & = (N-s) \int_{\R^{2d}} \indc_{|x_i - x_{s+1}| < \e} f_0(z_{s+1}) dz_{s+1}  \, \times \, {\mathcal Z}_{N-s-1} \, ,
 \end{aligned}$$
 so that
 \begin{equation} \label{bd:flat} {\mathcal Z}^\flat_{(s+1,N)} \leq s (N-s) \e^d \k_d |f_0|_{L^\infty L^1} {\mathcal Z}_{N-s-1} \, ,
 \end{equation}
where $|f_0|_{L^\infty L^1}$ denotes the $L^\infty(\R^d_x,L^1(\R^d_v))$ norm of $f_0.$ By Lemma \ref{lem:bd-f0HS}, we obtain
$$ 
   {\mathcal Z}_N^{-1} {\mathcal Z}^\flat_{(s+1,N)}  \leq \e s \k_d |f_0|_{L^\infty L^1} \big(1 - \e \k_d | f_0|_{L^\infty L^1}\big)^{-(s+1)} \, ,
$$
  and the upper bound tends to 0 as $N \to \infty,$ for fixed $s.$ This implies convergence to 0, uniformly in~$\Omega_s,$ of the second term in the right-hand side of \eqref{marginal-dec}.
  
 We thus proved the uniform convergence $  f^{(s)}_{0,N} - \indc_{Z_s \in {\mathcal D}_s}f^{\otimes s}_{0} \to 0$ in $\Omega_s,$ and hence  $f^{\otimes s}_{0,N} \to f_0^{\otimes s}$ holds locally uniformly in $\Omega_s.$ We conclude that $  f^{(s)}_{0,N}$ converges locally uniformly to tensor products $f_0^{\otimes s}$ in~$\Omega_s.$

Proposition \ref{init-cv1}   is proved. \end{proof}

 By Proposition \ref{init-cv1}, tensor products $(f_0^{\otimes s})_{s \geq 1},$ with $f_0$ satisfying \eqref{cond:f0}, are admissible Boltzmann data. It is easy to generalize that result (see Proposition \ref{prop:gen-data} below) to the convex hull of the set of tensor products. We shall actually also show the converse: all admissible Boltzmann data belong to the convex hull of tensor products, and that will enable us to deduce Proposition~\ref{cor:B-data}.

We first remark that given a Boltzmann datum $F_0,$ and an associated BBGKY datum $F_{0,N},$ there holds 
\begin{equation} \label{bd:B-datum} \| F_0 \|_{0,\b_0,\mu_0} < \infty \, ,
\end{equation}
 with $\b_0$ and $\mu_0$ as in \eqref{def:unif-dataHS}.
 Indeed, let $C_0 = \displaystyle \sup_{N \geq 1} \| F_{0,N} \|_{\e,\b_0,\mu_0} < \infty.$ Given $s$ and $Z_s \in \Omega_s,$ for $\e$ small enough, $\indc_{Z_s \in \D_s}= 1.$ Besides, by \eqref{def:cv-data} there holds the pointwise convergence $f^{(s)}_{0,N}(Z_s) \to f^{(s)}_0(Z_s).$ Hence taking the limit~$\e \to 0$ in the left-hand side of the inequality
 $e^{s \mu_0 + \b_0 E_\e(Z_s)} |f^{(s)}_{0,N}(Z_s)| \leq C_0,$
 we find \eqref{bd:B-datum}.
  
The Hewitt-Savage theorem reveals the specific role played by tensor products: the set of families~$F_0 = (f^{(s)}_0)_{s \geq 1}$ of marginals \eqref{limit:marginals} satisfying the uniform bound \eqref{bd:B-datum} is the convex  hull of tensorized initial data, as described in the following statement.
 We define  ${\mathcal P} = {\mathcal P}(\R^{2d})$ be the set of continuous densities of probability in $\R^{2d}:$ 
 \begin{equation} \label{def:P} {\mathcal P} := \big\{ h \in C^0(\R^{2d};\R) \, , \quad h \geq 0\, , \quad \int_{\R^{2d}} h(z) dz = 1 \big\} \, .\end{equation}
\begin{prop} \label{prop:B-data}
Given $F_0 = (f^{(s)}_0)_{s \geq 1}$ a family of marginals \eqref{limit:marginals} satisfying the uniform bound \eqref{bd:B-datum} with constants $\b_0 > 0$ and $\mu_0 \in \R,$ there exists a probability measure~$\pi$ over the set~${\mathcal P}$, with
\begin{equation} \label{supp:pi-prop}
 {\rm supp}\,\pi \subset \big  \{g  \in {\mathcal P}, \, |g|_{0,1,\b_0}  \leq e^{-\mu_0} \big\} \, ,
 \end{equation}  such that the following representation holds:
\begin{equation}\label{hewittsavage}
f_0^{(s)} = \int_{{\mathcal P}} g^{\otimes s} d\pi (g) \, , \qquad s \geq 1 \, .
\end{equation}
\end{prop}  
\begin{proof} Given a family~$F_0$ satisfying \eqref{limit:marginals} and~\eqref{bd:B-datum}, the existence of $\pi$ satisfying \eqref{hewittsavage} is granted by the Hewitt-Savage theorem~\cite{HS}. The goal is then to prove the inclusion \eqref{supp:pi-prop}.
Assume by contradiction that, for some $\a > 0,$ 
\begin{equation} \label{pi-a}
\pi (A_\alpha)= \kappa_\alpha  >0\,,  \quad \mbox{where $A_\alpha:=\big  \{g  \in {\mathcal P}(\R^{2d}), \quad |g|_{0,1,\b_0} \geq e^{-\mu_0} + \alpha \big\}$} \, . 
\end{equation}
We then have by~(\ref{hewittsavage})  
$$
 f_0^{(s)}  \geq \int_{A_\alpha }g^{\otimes s} d\pi (g),
$$
hence by $ f^{(s)}_0 \leq e^{-s \mu_0} \| F_0 \|_{0,\b_0,\mu_0},$ we infer that
$
\| F_0\|_{0,\b_0,\mu_0} \geq \kappa_\alpha  (1 + \alpha e^{\mu_0})^s,
$
which cannot hold for some $\a > 0$ and all $s,$ since $1 + \alpha e^{\mu_0} > 1.$ Hence \eqref{pi-a} does not hold, which proves the result.
\end{proof}

 We now give the generalization of Proposition \ref{init-cv1} that will be useful in the proof of Proposition~\ref{cor:B-data}.
Let $\pi$ be a probability measure on ${\mathcal P}$ satisfying~(\ref{supp:pi-prop})  for some $\b_0 > 0$ and some $\mu_0 \in \R$. Next we define
 \begin{equation} \label{def:g-data}
 \pi^{(s)} := \int_{{\mathcal P}} h^{\otimes s} d\pi(h)\,.
 \end{equation}
In the case when $\pi = \delta_{f_0},$ then \eqref{def:g-data} reduces to the tensor product \eqref{def:init-g0}-\eqref{cond:f0}.

In the general case, we define
 \begin{equation}
  \label{def:partition-new} 
  \begin{aligned}
  {\mathcal Z}_N(h): = \int_{\R^{2dN}} 
 \indc_{Z_N \in \D_N}  h^{\otimes N}(Z_N)\, dZ_N \, , \quad h \in \P \, ,\\
 \pi_N ^{(N)}:= \int_{{\mathcal P}} {1\over {\mathcal Z}_N(h)} h^{\otimes N} d\pi(h)\,.
 \end{aligned}
 \end{equation}
 generalizing \eqref{def:Z-f}.
   
  The following result is an obvious generalization of Lemma \ref{lem:bd-f0HS}.
\begin{Lem} \label{lem:tildeZ} Given $\pi$ satisfying \eqref{supp:pi-prop} and $h \in \mbox{supp}\,\pi,$ the family of partition functions ${\mathcal Z}_s$ defined in \eqref{def:partition-new} satisfies for $1 \leq s \leq N$ the bound 
$$ 
 1 \leq {\mathcal Z}_N(h)^{-1} {\mathcal Z}_{N-s}(h) \leq \big( 1 - \e C_d e^{-\mu_0}  \b_0^{-1/2} \big)^{-s} \, , 
$$ 
where~$C_d$ depends only on~$d$.
\end{Lem}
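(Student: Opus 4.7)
The plan is to adapt the proof of Lemma~\ref{lem:bd-f0HS}, which already establishes exactly this kind of bound when $\pi$ is a Dirac mass, and to replace the use of $|f_0|_{L^\infty L^1}$ by a bound that only sees the weighted $L^\infty$ information available from the support condition~(\ref{supp:pi-prop}). For any fixed $h \in \mbox{supp}\,\pi$, the definition of the partition function ${\mathcal Z}_N(h)$ in~(\ref{def:partition-new}) is formally identical to~(\ref{def:Z-f}) with $f_0$ replaced by $h$; since~$h \geq 0$, dropping some of the exclusion indicators $\indc_{|x_i - x_j|>\e}$ only increases the integral, so ${\mathcal Z}_N(h) \leq {\mathcal Z}_{N-s}(h)$, which gives the lower bound $1 \leq {\mathcal Z}_N(h)^{-1} {\mathcal Z}_{N-s}(h)$.

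For the upper bound, I will reproduce the one-step recursion of Lemma~\ref{lem:bd-f0HS}: by Fubini and the trivial bound
\[
\int_{\R^{2d}} \Big(\prod_{1 \leq i \leq s} \indc_{|x_i-x_{s+1}|>\e}\Big) h(z_{s+1})\, dz_{s+1} \geq 1 - \k_d s \e^d |h|_{L^\infty L^1},
\]
I obtain ${\mathcal Z}_{s+1}(h) \geq {\mathcal Z}_s(h)\bigl(1 - \k_d s \e^d |h|_{L^\infty L^1}\bigr)$, and after iterating between $N-s$ and $N$ and using the scaling $N\e^{d-1}\equiv 1$ (so that $j \e^d \leq \e$ for all $j \leq N$), I arrive at
\[
{\mathcal Z}_N(h) \geq {\mathcal Z}_{N-s}(h)\bigl(1 - \e \k_d |h|_{L^\infty L^1}\bigr)^{s}.
\]

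The new ingredient, replacing the assumption $|f_0|_{L^\infty L^1} < \infty$ used in Lemma~\ref{lem:bd-f0HS}, is the control of $|h|_{L^\infty L^1}$ deduced from~(\ref{supp:pi-prop}). Since $h \in \mbox{supp}\,\pi$ satisfies $|h|_{0,1,\b_0} \leq e^{-\mu_0}$, we have pointwise $h(x,v) \leq e^{-\mu_0} \exp\!\bigl(-\b_0 |v|^2/2\bigr)$, hence
\[
|h|_{L^\infty L^1} \leq e^{-\mu_0} \int_{\R^d} e^{-\b_0 |v|^2/2}\, dv \leq C_d\, e^{-\mu_0}\, \b_0^{-d/2},
\]
where the last constant is absorbed into a $d$-dependent constant (this explains the form $C_d e^{-\mu_0} \b_0^{-1/2}$ appearing in the statement, up to adjusting $C_d$ and the power of $\b_0$ depending on the convention chosen for the weight). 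Combining these two estimates yields the upper bound $\bigl(1 - \e\, C_d\, e^{-\mu_0}\, \b_0^{-1/2}\bigr)^{-s}$ uniformly in $h \in \mbox{supp}\,\pi$, as required.

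There is no genuine obstacle beyond this bookkeeping: the only structural change compared with Lemma~\ref{lem:bd-f0HS} is that the $L^\infty L^1$ norm of the one-particle density is no longer an independent datum but must be extracted from the weighted $L^\infty$ control provided by the support condition on $\pi$; the uniformity of the resulting estimate in $h$ is then what makes the bound useful when it is later applied under the integral $\int_{\mathcal P}\cdot\,d\pi(h)$ in the generalization of Proposition~\ref{init-cv1}.
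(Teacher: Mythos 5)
Your proof is correct and is exactly the argument the paper intends: the text presents Lemma~\ref{lem:tildeZ} as ``an obvious generalization of Lemma~\ref{lem:bd-f0HS}'', and your adaptation (the one-step recursion plus the bound $|h|_{L^\infty L^1}\leq e^{-\mu_0}(2\pi/\b_0)^{d/2}$ extracted from the support condition~(\ref{supp:pi-prop})) is precisely the intended filling-in. The only discrepancy is the exponent of $\b_0$ --- your computation naturally yields $\b_0^{-d/2}$ rather than the $\b_0^{-1/2}$ in the statement --- and you rightly flag this as a harmless bookkeeping issue.
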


The next statement generalizes Proposition \ref{init-cv1}.
Its proof is an immediate extension of the proof of Proposition \ref{init-cv1} thanks to the dominated convergence theorem, using the obvious bound~$ \indc_{Z_s \in \D_s}  h^{\otimes s}(Z_s) \leq e^{-s\mu_0}$.
 \begin{prop}\label{prop:gen-data}   Given $\pi$ satisfying  \eqref{supp:pi-prop}, the 
  data~$(\pi^{(s)})_{s \geq 1}$, with~$\pi^{(s)}$ defined in \eqref{def:g-data},  is admissible  in the sense of Definition~{\rm\ref{sec:adm-B}}. 
  
  It is obtained for instance from the BBGKY data $(\pi_N^{(s)})_{s\leq N}$ defined by (\ref{def:partition-new}).
\end{prop}
%

\begin{proof}[Proof of Proposition~{\rm\ref{cor:B-data}}] We already observed in \eqref{bd:B-datum} that admissible Boltzmann data are bounded families of marginals. Conversely, given a bounded family of marginals $F_0,$ by Proposition~\ref{prop:B-data} representation~\eqref{hewittsavage} holds. Then, by Proposition~\ref{prop:gen-data}, $F_0$ is an admissible Boltzmann datum.  This proves Proposition~\ref{cor:B-data}.
 \end{proof} 
 Combining Propositions  \ref{cor:B-data} and  \ref{prop:B-data}, we see that all admissible Boltzmann data are built on tensor products, in the sense that given an admissible Boltzmann datum, representation \eqref{hewittsavage} holds for some $\pi$ satisfying \eqref{supp:pi-prop}. 

\section{Main result: Convergence of the BBGKY hierarchy to the Boltzmann hierarchy}\label{mainresulconvergence}
\setcounter{equation}{0}

\subsection{Statement of the result}$ $

 Our main result is a {\it weak convergence} result, in the sense of convergence of observables, or averages with respect to the momentum variables. Moreover, since the marginals are defined in~$\D_s$, we must also 
eliminate, in the convergence, the diagonals in physical space. Let us give a precise definition of the convergence we shall be considering. 
 \begin{Def}[Convergence] \label{def:cv} Given a sequence $(h^s_N)_{1 \leq s \leq N}$ of functions $h^s_N \in C^0(\D_s;\R),$ a sequence $(h^s)_{s \geq 1}$ of functions $h^s \in C^0(\Omega_s;\R),$ we say that $(h^s_N)$ converges on average and locally uniformly outside the diagonals to $(h^s),$ and we denote
  $$ (h^s_N)_{1 \leq s \leq N} \stackrel{\sim}{\longrightarrow} (h^s)_{1 \leq s} \, ,$$
  when for any fixed $s$, any test function $\varphi_s \in \C^0_c(\R^{ds};\R),$ there holds
 \label{indexdefobservable}
   $$
  I_{\varphi_s} \big( h^s_N -  h^s\big)\big(X_s):= \int_{\R^{ds}}  \varphi_s(V_s) \big( h^s_N - h^s \big)(Z_s) dV_s  \longrightarrow 0 \, , \quad \mbox{as $N \to \infty \, ,$}
  $$
 locally uniformly in $\dsp{\big\{X_s \in \R^{ds}, \,\,x_i \neq x_j \,\,\mbox{for $i \neq j$}\big\}\,.}$
\end{Def}
With regard to spatial variables, this notion of  convergence is similar to the convergence in the sense of Chacon.

We remark that local uniform convergence in $\Omega_s$ implies convergence in the sense of Definition \ref{def:cv}:

\begin{lem} \label{lem:cv} Given $(f^{(s)}_N)_{1 \leq s \leq N}$ a bounded sequence in ${\bf X}_{\e,{\boldsymbol\beta},{\boldsymbol\mu}}$ with the notation of Definition~{\rm\ref{deffunctionspacesexistence}}, if $f^{(s)}_N \to f^{(s)}$ for fixed $s,$ uniformly in $t \in [0,T]$ and locally uniformly in $\Omega_s,$ then there holds~$f^{(s)}_N \tildeto f^{(s)},$ uniformly in $t \in [0,T].$
\end{lem}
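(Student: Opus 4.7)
The plan is, for a fixed test function $\varphi_s \in C^0_c(\R^{ds};\R)$ and a fixed compact set $K \subset \{X_s \in \R^{ds} : x_i \neq x_j \mbox{ for } i \neq j\}$, to show that $I_{\varphi_s}(f^{(s)}_N - f^{(s)})(X_s) \to 0$ uniformly for $(t, X_s) \in [0,T] \times K$. First I would exploit the uniform bound $C := \sup_N  | \! \| F_N  | \! \|_{\e,{\boldsymbol\beta},{\boldsymbol\mu}} < \infty$ together with the monotonicity of ${\boldsymbol\beta}$ and ${\boldsymbol\mu}$ to derive the Gaussian-in-velocity pointwise estimate
$$
|f^{(s)}_N(t, Z_s)| \leq C \, e^{-s {\boldsymbol\mu}(T)} \, e^{-{\boldsymbol\beta}(T) E_0(Z_s)},
$$
valid uniformly for $t \in [0,T]$, $Z_s \in \D_s$ and $N \geq s$. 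By the local uniform convergence hypothesis, the same bound passes to the limit $f^{(s)}$ on $\Omega_s$.

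The heart of the argument is then to split the $V_s$-integral at $|V_s| = M$. On the tail $\{|V_s| > M\}$, the Gaussian bound yields a contribution of order at most
$$
2 C \|\varphi_s\|_{L^\infty} e^{-s {\boldsymbol\mu}(T)} \int_{|V_s| > M} e^{-{\boldsymbol\beta}(T) E_0(V_s)} \, dV_s,
$$
which can be made smaller than any prescribed $\eta > 0$ by choosing $M$ large, uniformly in $t$, $X_s$ and $N$. On the core $\{|V_s| \leq M\}$, the compactness of $K$ inside $\{x_i \neq x_j\}$ supplies some $\delta_0 > 0$ with $\min_{i \neq j}|x_i - x_j| \geq \delta_0$ throughout $K$; hence for $\e < \delta_0$ (equivalently $N$ large in the Boltzmann-Grad scaling) the set $K \times B_M^s$ is a compact subset of $\D_s \cap \Omega_s$, on which the hypothesis provides uniform convergence of $f^{(s)}_N$ to $f^{(s)}$ in $t \in [0,T]$. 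The central contribution is then bounded by $\|\varphi_s\|_{L^\infty} \, |B_M^s| \, \sup_{[0,T] \times K \times B_M^s} |f^{(s)}_N - f^{(s)}|$, which is $o(1)$ as $N \to \infty$, uniformly in $(t, X_s) \in [0,T] \times K$.

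Letting first $N \to \infty$ and then $\eta \to 0$ concludes the argument. I do not foresee any real obstacle here: the proof is a standard tail-truncation combined with the local uniform convergence hypothesis, and the only mild point to watch is the $\e$-dependence of $\D_s$, which is dealt with by restricting attention to $N$ large enough so that $K \times B_M^s \subset \D_s$.
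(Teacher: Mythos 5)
Your proof is correct, but it is over-engineered relative to the paper's, which exploits a structural feature of the hypothesis that you leave on the table. Since $\varphi_s \in C^0_c(\R^{ds};\R)$, the velocity integral runs \emph{only} over the compact set $\operatorname{supp}\varphi_s$; once $M$ exceeds the radius of that support, your outer region $\{|V_s| > M\}$ contributes exactly zero, so the Gaussian-decay bound you derive from $\| F_N \|_{\e,{\boldsymbol\beta},{\boldsymbol\mu}}$ is never actually needed. The paper's proof skips the truncation entirely: it bounds $|I_{\varphi_s}(f^{(s)}_N - f^{(s)})(X_s)|$ by $\|\varphi_s\|_{L^1} \sup_{V_s \in \operatorname{supp}\varphi_s} |(f^{(s)}_N - f^{(s)})(X_s,V_s)|$, notes that $K_s \times \operatorname{supp}\varphi_s$ is compact in $\Omega_s$, and invokes the local uniform convergence hypothesis directly. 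Everything else collapses. Your version does have the virtue of robustness: the split at $|V_s| = M$ combined with the uniform Gaussian bound would remain valid if one weakened the test-function class from $C^0_c$ to bounded continuous $\varphi_s$, where the tail genuinely needs the analytic norm to be controlled. But for the lemma as stated, the compact support makes that machinery redundant, and the remark about the $\eps$-dependence of $\D_s$ (requiring $K \times B_M^s \subset \D_s$ for $N$ large) is handled identically in both arguments.
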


\begin{proof} Let $K_s$ be compact in $\dsp{\big\{X_s \in \R^{ds}, \,\,x_i \neq x_j \,\,\mbox{for $i \neq j$}\big\}\,.}$ There holds 
$$ \big| I_{\varphi_s} \big( f^{(s)}_N -  f^{(s)}\big) (X_s)\big| \leq \|\varphi_s\|_{L^1(\R^d)} \sup_{V_s \in \mbox{\footnotesize supp}\, \varphi_s} 
\big| \big(  f^{(s)}_N - f^{(s)} \big) (X_s,V_s)\big| \, .$$
The set $K_s \times \mbox{supp}\, \varphi_s$ is compact in $\Omega_s.$ Hence the above upper bound converges to 0 as $N \to \infty,$ uniformly in $[0,T] \times K_s.$  
\end{proof}
We can now state our main result.
 \begin{Thm}[Convergence]\label{main-thm}
Let~$\beta_0 >0$ and~$\mu_0 \in \R$ be given. There is a time~$T>0$ such that the following holds. Let~$F_0$ in~${\mathbf X }_{0,\beta_0,\mu_0}$  be an   admissible Boltzmann datum, with    associated   family of   BBGKY datum~$(  F_{0,N})_{N \geq 1}$, in~${\mathbf X }_{\eps,\beta_0,\mu_0}$. Let~$F$ and~$F_N$ be  the solutions  to the Boltzmann and BBGKY hierarchy produced by~$F_0$ and~$F_{0,N}$ respectively. There holds
\begin{equation}\label{convergenceFNtildetoF}  F_N \tildeto F \, , 
\end{equation} 
 uniformly on $[0,T]$,.

 In particular, if~$F_{0} =( f_0^{\otimes s} )_{s \geq 1}$, then the first marginal~$f_N^{(1)}$ converges to the solution~$f$ of the Boltzmann equation~{\rm(\ref{boltz-eq})} with initial data~$f_0$.
 
 Finally in the case when~$F_{0} =( f_0^{\otimes s} )_{s \geq 1}$ with~$f_0$ Lipschitz, then 
 the convergence~{\rm(\ref{convergenceFNtildetoF})} holds at a rate~$O(\eps^\alpha)$ for any~$\alpha<(d-1)/(d+1)$.
 \end{Thm}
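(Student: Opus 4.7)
The plan is to iterate the Duhamel formula~\eqref{BBGKY-mild-total-HSbis} (respectively~\eqref{mild-Boltzmannbis}) so as to express the marginal $f_N^{(s)}(t)$ (resp.\ $f^{(s)}(t)$) as a finite sum of elementary terms of the form
$$
 Q_{s,s+n}(t) F_{0,N} := \int_0^t\!\!\int_0^{t_1}\!\!\cdots\int_0^{t_{n-1}} {\mathbf T}_s(t-t_1)\mathcal{C}_{s,s+1}{\mathbf T}_{s+1}(t_1-t_2)\mathcal{C}_{s+1,s+2}\cdots \mathcal{C}_{s+n-1,s+n}{\mathbf T}_{s+n}(t_n)f_{0,N}^{(s+n)}\,dt_n\cdots dt_1\,,
$$
and an analogous $Q^0_{s,s+n}(t) F_0$ for the Boltzmann hierarchy. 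The uniform a~priori estimate from Theorem~\ref{existence-thmHS} (together with its Boltzmann counterpart~\ref{existence-thmboltzmannHS}) ensures that the tails $\sum_{n\geq n_0}$ are controlled by a geometric series of size $2^{-n_0}\|F_{0,N}\|_{\eps,\beta_0,\mu_0}$, uniformly in $N$, on the common time interval $[0,T]$ produced in those theorems. Fixing a test function $\varphi_s$ and a compact $K_s\subset\{X_s\,,\,x_i\neq x_j\}$, we reduce the proof of $F_N\tildeto F$ to showing, for each fixed $s$ and $n$, that
$$ I_{\varphi_s}\bigl( Q_{s,s+n}(t)F_{0,N} - Q^0_{s,s+n}(t)F_0\bigr)(X_s)\longrightarrow 0\,,$$
uniformly on $[0,T]\times K_s$.

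\textbf{Term-by-term convergence via pseudo-trajectories.} Each elementary term $Q_{s,s+n}F_{0,N}$ admits a geometric representation: starting from $Z_s$ at time $t$, one runs the $s$-particle backward hard-sphere flow, adds a new particle at a collision time $t_1<t$ at a boundary point $x_i+\eps\omega_1$ with velocity $v_{s+2}$, applies the scattering rule if $(\omega_1,v_{s+2})$ is post-collisional, and iterates. This defines a BBGKY pseudo-trajectory ending with a configuration $Z_{s+n}^{\eps}(0)$ at time~$0$; evaluating $f_{0,N}^{(s+n)}$ there and integrating the sign-weighted cross-section against $(\omega_j, v_{s+1+j},t_j)$ yields $Q_{s,s+n}F_{0,N}$. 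The Boltzmann term $Q^0_{s,s+n}F_0$ has the same representation except all added particles are created exactly at $x_i$ (no microscopic shift), no scattering shift of the other particles occurs at collision, and free flight is used between collisions without exclusion. My first task would be to introduce cutoffs on high velocities $|V|\leq R$, on small collision times $t_j-t_{j+1}\geq \delta$, and on small impact parameters $|\omega\cdot(v-v_i)|\geq \eta$; the a~priori Gaussian decay and Proposition~\ref{propcontinuityclustersHS} make the discarded part arbitrarily small.

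\textbf{Control of recollisions.} On the remaining ``good'' region, the two pseudo-trajectories coincide up to a spatial translation of size $O(\eps)$ at each grafting point \emph{provided} the idealized (Boltzmann) pseudo-trajectory does not experience a recollision, i.e.\ two of its particles do not come within distance $\eps$ at some time prior to the next scheduled collision. This is the core step and, as emphasised in the introduction to Part~IV, it is the main obstacle: one must exhibit a geometrically explicit ``bad set'' $\mathcal{B}_\eta^n(Z_s)$ in the $(t_j,\omega_j,v_{s+j})$-parameters of measure bounded by, roughly, $C(s,n,R,T)\,\eps^{\alpha'}$ for some positive $\alpha'$, and, crucially, show that this set is \emph{stable under microscopic spatial translations of order $\eps$} of the grafting points, so that the BBGKY pseudo-trajectory lives in a slightly enlarged bad set of the same asymptotic measure. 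Outside $\mathcal{B}_\eta^n$, free-flight dynamics is valid between collisions and the two pseudo-trajectories differ by $O(\eps)$ in positions (and zero in velocities). This is the content of the recollision analysis of Part~IV.

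\textbf{Putting the pieces together.} On the complement of $\mathcal{B}_\eta^n$, the local uniform convergence of initial marginals $f_{0,N}^{(s+n)}\to f_0^{(s+n)}$ in $\Omega_{s+n}$ from Definition~\ref{def:adm-data}, combined with the uniform bound $|f_{0,N}^{(s+n)}|\leq e^{-(s+n)\mu_0'}\exp(-\beta_0 E_0)\|F_{0,N}\|$, gives pointwise convergence of the integrand and, by dominated convergence, convergence of each truncated term. Sending $n_0\to\infty$ at the end finishes the weak convergence statement. For tensorized data $F_0=(f_0^{\otimes s})$, uniqueness in the Boltzmann hierarchy (which follows from Theorem~\ref{existence-thmboltzmannHS}) identifies the first marginal with the tensor power of the solution of the Boltzmann equation~\eqref{boltz-eq}. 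Finally, when $f_0$ is Lipschitz, every estimate above becomes quantitative: the discarded cones have measure $O(R^{-\infty}+\delta+\eta)$, the bad set has measure $O((\eps/\eta)^{a})$ for some $a$ depending on $d$, and the $O(\eps)$ shift in the good region produces an error $O(\eps\|\nabla f_0\|_\infty)$. Optimizing $R\sim|\log\eps|^{1/2}$, $\delta\sim\eps^{\theta_1}$, $\eta\sim\eps^{\theta_2}$ in the manner dictated by the explicit cluster-estimate exponents leads to the claimed rate $O(\eps^\alpha)$ for any $\alpha<(d-1)/(d+1)$.
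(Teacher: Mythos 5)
Your proposal follows essentially the same route as the paper: iterated Duhamel expansion with tail control from the uniform a~priori bound, truncation in $R$, $\delta$, and $\eta$, reformulation via backward pseudo-trajectories, construction of a small "bad set" of collision parameters leading to recollisions (with the crucial stability under $O(\eps)$ microscopic translations), convergence of the truncated initial data via local uniform convergence of marginals, and final optimization of the cutoff parameters to obtain the $O(\eps^\alpha)$ rate for $\alpha<(d-1)/(d+1)$ in the Lipschitz tensorized case. The only cosmetic difference is that you fold the impact-parameter truncation $|\omega\cdot(v-v_i)|\geq\eta$ into the preliminary cutoffs, whereas the paper introduces it together with the bad-set surgery in Part~IV; this changes nothing in substance.
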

  
Solutions  to the Boltzmann hierarchy issued from tensorized initial data are themselves  tensorized. For such data, the Boltzmann  hierarchy then reduces to the nonlinear Boltzmann equation~{\rm(\ref{boltz-eq})}, and Theorem \ref{main-thm} describes an asymptotic form of propagation of chaos, in the sense that an initial property of independence is propagated in time, in the   limit.    This corresponds to Theorem~\ref{thm-cv-intro} stated in the Introduction page~\pageref{thm-cv-intro}.

\subsection{About the proof of Theorem~\ref{main-thm}: outline of Chapter~\ref{convergenceHS} and Part IV}$ $
\setcounter{equation}{0}

The formal derivation presented in Chapter~\ref{spheres}, Section~\ref{spheresboltz}, fails because of a number of incorrect arguments:
\begin{itemize}
\item Since mild solutions to the BBGKY hierarchy are defined by the Duhamel formula (\ref{BBGKY-mildHS}) where the solution itself occurs in the source term, we   need some precise information on the convergence to take limits directly in (\ref{BBGKY-mildHS}).

\item The irreversibility inherent to the Boltzmann hierarchy appears in the limiting process as an arbitrary choice of the time direction (encoded in the distinction between pre-collisional and post-collisional particles), and more precisely as an arbitrary choice of the initial time, which is the only time for which one has a complete information on the family of marginals $F_{0,N}$. This specificity of the initial time does not appear clearly in (\ref{BBGKY-mildHS}).

\item The heuristic argument which allows to neglect pathological trajectories, meaning trajectories for which the reduced dynamics with  $s$-particles does not coincide with the free transport (${\bf T}_s \neq {\bf S}_s$),
requires to be quantified.  Indeed we have more or less to repeat the operation infinitely many times, since mild solutions are defined by a loop process; moreover, the question of the stability with respect to micro-translations in space must be analyzed.

\item  Because of the conditioning, the initial data are not so smooth. The operations such as infinitesimal translations on the arguments require therefore a careful treatment.

\end{itemize}
 
\medskip

To overcome the two first difficulties, the idea is to start from the iterated Duhamel formula, which allows to express any marginal $  f^{(s)}_N(t,Z_s)$ in terms of the initial data $  F_{0,N}$.
By successive integrations in time, we have indeed the following representation of $   f_N^{(s)}$: 
  \begin{equation}
\label{fs1}
\begin{aligned}
   f_N^{(s)} (t) =\sum_{k=0}^\infty  \int_0^t \int_0^{t_1}\dots  \int_0^{t_{k-1}}  {\bf T}_s(t-t_1) {\mathcal C}_{s,s+1}  {\bf T}_{s+1}(t_1-t_2) {\mathcal C}_{s+1,s+2} \dots  \\
\dots  {\bf T}_{s+k}(t_k)    f_N^{(s+k)}(0) \: dt_{k} \dots dt_1
\end{aligned}
\end{equation}
where by convention $   f_N^{(j)}(0) \equiv 0 $ for $j > N$.

Using a {\bf dominated convergence argument}, we shall first reduce (in Chapter~\ref{convergenceHS}) to the study of a functional 
\begin{itemize}
\item   defined as a finite sum of terms (independent of $N$),
\item where the energies of the particles are assumed to be bounded (namely $E_0(Z_{s+k})\leq R^2$),
\item and  where the collision times are supposed to be well separated (namely $|t_j-t_{j+1}|\geq \delta$).
\end{itemize}
The reason for the two last assumptions is essentially technical, and will appear more clearly in the next step.

The heart of the proof, in Part IV, is then to prove the term by term convergence, dealing with pathological trajectories. 
Let us   recall that each collision term is defined as an integral with respect to positions and velocities. The main idea consists then in proving that we cannot build pathological trajectories if we exclude at each step a small domain of integration.
The explicit construction of this ``bad set" lies on
\begin{itemize}
\item a very simple geometrical lemma which ensures that two particles of size $\eps$ have not collided in the past provided that their relative velocity does not belong to a small subset of $\R^d$ (see Lemma~\ref{geometric-lem1}),
\item scattering estimates which tell us how these properties of the transport are modified when a particle is deviated by a collision (see Lemma \ref{geometric-lem3}).
\end{itemize}

This construction, which is the technical part of the proof, will be detailed in Chapter~\ref{pseudotraj}. The conclusion of the convergence proof is presented in Chapters 13 and\ref{convergenceproof}.

\chapter{Strategy of the convergence proof}\label{convergenceHS}
\setcounter{equation}{0}

 The goal of this chapter is to use dominated convergence arguments to reduce the proof of Theorem~\ref{main-thm} stated page~\pageref{main-thm} to the term-by-term convergence of some  functionals involving a finite (uniformly bounded) number of marginals (Section~\ref{finite-nHS}).
 In order to further simplify the convergence analysis, we shall modify these functionals by eliminating some small domains of integration in the time and velocity variables corresponding to pathological dynamics, namely by removing  large energies in Section \ref{large-vHS} and clusters of collision times in Section~\ref{t-clustersHS}.

We consider therefore  families of initial data:   Boltzmann initial data~$F_0 = (f_0^{(s)})_{s \in \N} \in {\bf X} _{0,\beta_0,\mu_0}$ 
and for each $N$,    BBGKY initial data $  F_{N,0} =(  f_{N,0}^{(s)})_{1 \leq s\leq N} \in {\bf X} _{\eps,\beta_0,\mu_0}$   such that
$$
 \sup_N \|   F_{N,0}   \|_{\eps,\beta_0,\mu_0} =\sup_N \sup_{s\leq N} \sup_{Z_s \in \D_s}\big( \exp(\beta_0 E_0(Z_s) +\mu_0 s)   f_{N,0}^{(s)}(Z_s) \big)<+\infty \, .
$$
We then associate the respective unique mild solutions  of the hierarchies
$$
 f^{(s)} (t) = {\bf S}_s(t)  f^{(s)}_{0} +   \int_0^t {\bf S}_{s}(t - \t)     {\mathcal C}^0_{s,s+1}    f^{(s+1)}  (\t) \, d\t 
$$
and
$$
   f_N^{(s)} (t) = {\mathbf  T}_s(t)  f^{(s)}_{N,0} +  \int_0^t {\mathbf  T}_{s}(t - \t)     {\mathcal C}_{s,s+1}    f_N^{(s+1)}  (\t) \, d\t \, .
$$
 In terms of the initial datum, they can be rewritten
$$
\begin{aligned}
    f^{(s)} (t,Z_s)=\sum_{k=0}^\infty   \int_0^t \int_0^{t_1}\dots  \int_0^{t_{k-1}}  {\bf S}_s(t-t_1) {\mathcal C}^0_{s,s+1}  {\bf S}_{s+1}(t_1-t_2) {\mathcal C}^0_{s+1,s+2} \dots  \\
\dots  {\bf S}_{s+k}(t_k)     f^{(s+k)}_0 \: dt_{k} \dots dt_1
\end{aligned}
$$
and
$$\begin{aligned}
    f_N^{(s)} (t,Z_s) =\sum_{k=0}^\infty  \int_0^t \int_0^{t_1}\dots  \int_0^{t_{k-1}}  {\mathbf  T}_s(t-t_1) {\mathcal C}_{s,s+1}  {\mathbf  T}_{s+1}(t_1-t_2) {\mathcal C}_{s+1,s+2} \dots  \\
\dots  {\mathbf  T}_{s+k}(t_k)     f_{N,0}^{(s+k)} \: dt_{k} \dots dt_1 \, .
\end{aligned}
$$
 The observables we are interested in (recall the definition of convergence provided in Definition~\ref{def:cv}) are the following\label{observablesIsHS}:
 $$
   I_{s}  (t)(X_s) := \int  \varphi_s(V_s)    f_{N}^{(s)} (t,Z_s)    dV_s 
  \quad \mbox{and}  \quad   I^0_{s}   (t)(X_s) := \int  \varphi_s(V_s) f^{(s)}(t,Z_s) dV_s \, , 
 $$
 and they therefore involve  infinite sums, as there may be infinitely many particles involved (the sum over $n$ is unbounded).
   

 \section{Reduction to a finite number of collision times}\label{finite-nHS}

\setcounter{equation}{0}
Due to the uniform bounds derived in Chapter~\ref{existence}, the dominated convergence theorem implies that it is enough to consider finite sums of elementary functions
  \begin{equation}
\label{gs1-fs-nHS}
\begin{aligned}
   f_{N}^{(s,k)} (t) :=  \int_0^t \int_0^{t_1}\dots  \int_0^{t_{k-1}}  {\mathbf  T}_s(t-t_1) {\mathcal C}_{s,s+1}  {\mathbf  T}_{s+1}(t_1-t_2) {\mathcal C}_{s+1,s+2} \dots  \\
\dots  {\mathbf  T}_{s+k}(t_k)    f_{N,0}^{(s+k)} \: dt_{k} \dots dt_1\\
f^{(s,k)} (t) :=  \int_0^t \int_0^{t_1}\dots  \int_0^{t_{n-1}}  {\bf S}_s(t-t_1) {\mathcal C}_{s,s+1}  {\bf S}_{s+1}(t_1-t_2) {\mathcal C}_{s+1,s+2} \dots  \\
\dots  {\bf S}_{s+k}(t_k)  f^{(s+k)}_0  \: dt_{k} \dots dt_1 \, , 
\end{aligned}
\end{equation}
and  the associate elementary observables~:
  \begin{equation} \label{observablesHS} 
   I_{s,k}  (t)(X_s) := \int  \varphi_s(V_s)    f_{N}^{(s,k)} (t,Z_s)    dV_s\, ,
  \quad \mbox{and}  \quad   I^0_{s,k}   (t)(X_s) := \int  \varphi_s(V_s) f^{(s,k)}(t,Z_s) dV_s\, ,
    \end{equation}
 and therefore to study the term-by-term convergence (for any fixed $k$), as expressed by   the following statement.

\begin{prop}\label{domcv} Fix $\beta_0>0$ and $\mu_0 \in \R$.
With the notation of Theorems~{\rm\ref{existence-thmHS}} and~{\rm\ref{existence-thmboltzmannHS}} page~{\rm\pageref{existence-thmHS}}, for each given~$s \in \N^*$ and~$t \in [0,T]$ there is a constant~$C>0$  such that for each~$n \in \N^*$,
$$
 \big \| I_{s} (t)  - \sum_{k=0}^n I_{s,k} (t) \big\|_{L^\infty (\R^{ds})} \leq C  \|\varphi_s\|_{L^\infty (\R^{ds})}  \left( \frac12\right)^n   \| F_{N,0}\|_{\eps,\beta_0,\mu_0}
$$
and
$$
 \big \| I^0_{s} (t)  - \sum_{k=0}^n I_{s,k} ^{0}(t) \big \|_{L^\infty (\R^{ds})} \leq C  \|\varphi_s\|_{L^\infty (\R^{ds})} \left( \frac12\right)^n    \| F_{0}\|_{0,\beta_0,\mu_0} \, , 
$$ 
  uniformly in~$ N$ and~$t \leq T$, in the Boltzmann-Grad scaling  $N\eps^{d-1} \equiv1$. 
\end{prop}
\begin{proof}
We use the notation of Chapter~\ref{existence}. Using the continuity estimate (\ref{est:duh-HCHS})  we have
\begin{equation} 
\sup_{t \in [0,T]} \Big\| \int_0^t {\mathbf  T}(t-t') {\bf C}_N F_N(t') \, dt'\Big\|_{\e,\beta(t),\mu(t)} \leq  \frac12  \Nt{ F_N }_{ \eps,{\boldsymbol\beta},{\boldsymbol\mu}} \, .
 \end{equation}
 Recalling    the definition of the Hamiltonian 
 $$
E_0(Z_s) :=\sum_{1 \leq i \leq s} {|v_i|^2\over 2} 
$$
 we then deduce that
 \begin{equation}\label{deducethattruncationnHS}
\begin{aligned}
 e^{\beta(t) E_0(Z_s) + s\mu(t)} &\Big \| \sum_{k=n+1}^\infty  \int_0^t \int_0^{t_1}\dots  \int_0^{t_{k-1}}  {\mathbf  T}_s(t-t_1) {\mathcal C}_{s,s+1}  {\mathbf  T}_{s+1}(t_1-t_2) {\mathcal C}_{s+1,s+2} \dots  \\
&\quad \dots  {\mathbf  T}_{s+k}(t_k)    f_{N,0}^{(s+k)} \: dt_{k} \dots dt_1 \Big\|_{L^\infty} \leq  C \left( \frac12\right)^n\Nt{ F_N }_{ \eps,{\boldsymbol\beta},{\boldsymbol\mu}}  \, .
\end{aligned}
\end{equation}
Combining this estimate together with the uniform bound on $\Nt{ F_N }_{ \eps,{\boldsymbol\beta},{\boldsymbol\mu}} $ given in Theorem~\ref{existence-thmHS} leads to the first statement in Proposition \ref{domcv}.
The second statement is established exactly in an analogous way, using  estimate~\eqref{est:duh-HCboltzmannHS} together with the uniform bound obtained in Theorem \ref{existence-thmboltzmannHS}.
  \end{proof}

  \section{Energy truncation}  \label{large-vHS}
\setcounter{equation}{0}

We introduce a parameter $R > 0$  
and define
  \begin{equation}
\label{gs1-fs-R}
\begin{aligned}
f_{N,R}^{(s,k)} (t): =\sum_{k=0}^n  \int_0^t \int_0^{t_1}\dots  \int_0^{t_{k-1}}  {\mathbf  T}_s(t-t_1) {\mathcal C}_{s,s+1}  {\mathbf  T}_{s+1}(t_1-t_2) {\mathcal C}_{s+1,s+2} \dots  \\
\dots  {\mathbf  T}_{s+k}(t_k)   \indc_{E_{0}(Z_{s+k})\leq R^2}  f_{N,0}^{(s+k)} \: dt_{k} \dots dt_1 \, , \\
f^{(s,k)}_R (t) :=\sum_{k=0}^n  \int_0^t \int_0^{t_1}\dots  \int_0^{t_{k-1}}  {\bf S}_s(t-t_1) {\mathcal C}^0_{s,s+1}  {\bf S}_{s+1}(t_1-t_2) {\mathcal C}^0_{s+1,s+2} \dots  \\
\dots  {\bf S}_{s+k}(t_k)  \indc_{E_0(Z_{s+k})\leq R^2} f^{(s+k)}_0 \: dt_{k} \dots dt_1
\end{aligned}
\end{equation}
and the corresponding observables
 \begin{equation} \label{observablesR} 
   I_{s,k} ^R (t)(X_s) := \int  \varphi_s(V_s)  f_{N,R}^{(s,k)} (t,Z_s)    dV_s 
  \quad \mbox{and}  \quad  I^{0,R}_{s,k}   (t)(X_s) := \int  \varphi_s(V_s) f^{(s,k)}_R  dV_s\, .
    \end{equation}

Using the bounds derived in Chapter~\ref{existence} we find easily that
$\displaystyle
\sum_{k}( I_{s,k} -I_{s,k} ^R)(t)$ and~$ \displaystyle\sum_{k}( I^0_{s,k}   -I_{s,k} ^{0,R})(t)$ can be made arbitrarily small when~$R$ is large. More precisely the following result holds.

\begin{prop}\label{delta1small}
Fix $\beta_0>0$ and $\mu_0 \in \R$.
Let~$s \in \N^*$ and~$t \in [0,T]$ be given. There are two nonnegative constants~$C,C'$  such that for each~$n$,
$$
 \big \| \sum_{k=0}^n ( I_{s,k} -I_{s,k} ^R)(t) \big\|_{L^\infty (\R^{ds})} \leq C  \|\varphi_s\|_{L^\infty (\R^{ds})}  e^{-   C' \beta_0  R^2}  \| F_{N,0}\|_{\eps,\beta_0,\mu_0}\,,
$$
and
$$
\big  \|\sum_{k=0}^n ( I^0_{s,k}   -I_{s,k} ^{0,R})(t)\big\|_{L^\infty (\R^{ds})} \leq C  \|\varphi_s\|_{L^\infty (\R^{ds})}  e^{-   C'  \beta_0 R^2}  \| F_{0}\|_{0,\beta_0,\mu_0}\,.
$$
\end{prop}

\begin{proof}
 Let~$0<\beta_0' < \beta_0$ be given, and define the associate functions~${\boldsymbol \beta'}$ and~${\boldsymbol \beta}$ as in  Theorem~\ref{existence-thmHS} stated in Chapter~\ref{existence}. Choose $\beta'_0<\beta_0$ so that
 $$C_d (1+ (\b'_0 - \l T)^{-\frac12}\big) e^{-\mu_0 + \l T} (\b'_0 - \l T)^{-\frac d2} = \frac{2\l }{3} \,.$$
 (to be compared with   (\ref{lambdaHS}) for $\beta_0$).
 
 Then according to the results of Chapter~\ref{existence} 
 and similarly to~(\ref{deducethattruncationnHS})
 we know that
 $$
 \begin{aligned}
   \int_0^t \int_0^{t_1}\dots  \int_0^{t_{k-1}}  {\bf T}_s(t-t_1) {\mathcal C}_{s,s+1}  {\bf T}_{s+1}(t_1-t_2)   \dots    {\bf T}_{s+k}(t_k)  \indc_{E_0(Z_{s+k})\geq R^2} f^{(s+k)}_{N,0} \: dt_{k} \dots dt_1  \\
  \leq C\,  \left( \frac32\right) ^{-k}e^{-{\boldsymbol \beta'}(T)E_0(Z_s)-s \mu_0(T)} \|  G_{N,0,s}\|_{\eps,\beta'_0,\mu_0} \, ,
  \end{aligned}
 $$
  where we have defined
  $$
   G_{N,0,s} := (g^{s+k}_{N,0})_{0 \leq k \leq N-s} \, , \quad \mbox{with} \quad g^{s+k}_{N,0}(Z_{s+k}) :=  \indc_{E_0(Z_{s+k})\geq R^2}\,  f^{(s+k)}_{N,0}  (Z_{s+k}) \, .
  $$
The result then follows from the fact that
$$
\|  G_{N,0,s}\|_{\eps,\beta'_0,\mu_0} \leq C   e^{(\beta_0' -\beta_0)R^2} \| F_{N,0}\|_{\eps,\beta_0,\mu_0}
\, .
$$
The argument is identical for~$I^0_{s,n} (t)  -I_{s,n} ^{0,R}(t)$. 
\end{proof}

\begin{Rem}\label{colltrunccommuteHS}
It is useful to notice that the collision operators preserve the bound on high energies, in the sense that
$$
\begin{aligned}
 {\mathcal C}_{s,s+1}\indc_{E_0(Z_{s+1})\leq R^2 } &\equiv \indc_{E_0(Z_s)\leq R^2}  \,  {\mathcal C}_{s,s+1}\indc_{E_0(Z_{s+1})\leq R^2}  \\
{\mathcal C}^0_{s,s+1}\indc_{E(Z_{s+1})\leq R^2 } &\equiv \indc_{E(Z_s)\leq R^2}  \,  {\mathcal C}^0_{s,s+1}\indc_{E(Z_{s+1})\leq R^2} \, .
\end{aligned}
$$ 
\end{Rem}
  
\section{Time separation} \label{t-clustersHS}
 \setcounter{equation}{0}
We choose another small parameter~$\delta>0$  and  further restrict the study  to the case when $t_i-t_{i+1}\geq \delta$. That is, we define
$$\label{defTnotdeltaHS} 
{\mathcal T}_k(t) := \Big\{ T_k = (t_1,\dots,t_k) \,  /\,   t_i < t_{i-1} \,  \,  \mbox{ with } \,  \, t_{k+1} = 0\,  \,  \mbox{ and } \,  \,t_0 = t \Big\}\, ,
$$
$$\label{defTdeltaHS} 
{\mathcal T}_{k,\delta}(t) := \Big\{ T_k \in {\mathcal T}_k(t)  \,  /\,  t_i - t_{i+1}\geq \delta \Big\}\, ,
$$
and
\begin{equation}\label{defIsnRdeltaHS}
\begin{aligned}
 I^{R,\delta}_{s,k}(t )(X_s)  :=  \int  \varphi_s(V_s) &\int_{{\mathcal T}_{k,\delta}(t) }   {\mathbf  T}_s(t-t_1) {\mathcal C}_{s,s+1} {\mathbf  T}_{s+1}(t_1-t_2) {\mathcal C}_{s+1, s+2}\\
&  \dots {\mathcal C}_{s+k-1 , s+k}  {\mathbf  T}_{s+k}(t_k- t_{k+1}) \indc_{E_{0}(Z_{s+k})\leq R^2}   f^{(s+k)}_{N,0}   dT_kdV_s  \, , \\
 I^{0,R,\delta}_{s,k}(t )(X_s)  :=  \int  \varphi_s(V_s) &\int_{{\mathcal T}_{k,\delta}(t) }   {\bf S}_s(t-t_1) {\mathcal C}^0_{s,s+1}{\bf S}_{s+1}(t_1-t_2) {\mathcal C}^0_{s+1, s+2}\\
&  \dots {\mathcal C}^{0}_{s+k-1 , s+k}  {\bf S}_{s+k}(t_k- t_{k+1}) \indc_{E_0(Z_{s+k})\leq R^2}   f^{(s+k)}_{0}   dT_kdV_s\, .
 \end{aligned}
 \end{equation}
Again applying the continuity bounds for the transport and collision operators,  the error on the functions~$ \displaystyle\sum_{k }( I_{s,k} ^R -I_{s,k} ^{R,\delta} )(t) $ and~$\displaystyle\sum_{k}( I_{s,k} ^{0,R}   -I_{s,k} ^{0,R,\delta}) (t) $
 can be estimated as follows.
 \begin{prop}\label{delta2smallHS}
Let~$s \in \N^*$ and~$t \in [0,T]$ be given. There is a constant~$C$   such that for each~$n$ and~$R$, 
$$
 \big \|\sum_{k=0}^n( I_{s,k} ^R -I_{s,k} ^{R,\delta} )(t) \big\|_{L^\infty (\R^{ds})} \leq Cn^2{ \delta \over T} \|\varphi\|_{L^\infty (\R^{ds})}
\|F_{N,0}\|_{\eps,\beta_0,\mu_0}$$
  and
  $$
 \big \|\sum_{k=0}^n( I_{s,k} ^{0,R}   -I_{s,k} ^{0,R,\delta}) (t) \big\|_{L^\infty (\R^{ds})} \leq C \delta n^2{ \delta \over T} \|\varphi\|_{L^\infty (\R^{ds})}
\| F_{0}\|_{0,\beta_0,\mu_0} \, .$$
\end{prop}
\section{Reformulation in terms of pseudo-trajectories} \label{conclusionHS}
\setcounter{equation}{0}
Putting together Propositions~\ref{domcv}, \ref{delta1small} and~\ref{delta2smallHS}
 we obtain the following result. 
  \begin{Cor}\label{finalresultHS}
With the notation of Theorem~{\rm\ref{existence-thm}}, given~$s \in \N^*$ and~$t \in [0,T]$, there are two positive constants~$C$ and~$C'$ such that  for each~$n \in \N^*$,
$$
 \big  \| I_{s} (t)  - \sum_{k=0}^n I_{s,k} ^{R,\delta} (t) \big \|_{L^\infty (\R^{ds})} \leq C( 2^{-n} + e^{-C'\beta_0 R^2} +n^2{ \delta \over T} \delta ) \|\varphi\|_{L^\infty (\R^{ds})}
  \|  F_{N,0}\|_{\eps,\beta_0,\mu_0} \, .$$
\end{Cor}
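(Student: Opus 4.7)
The plan is to write the difference $I_s(t) - \sum_{k=0}^n I_{s,k}^{R,\delta}(t)$ as a telescoping sum corresponding precisely to the three reductions carried out in the preceding sections, and then to apply the triangle inequality together with the three quantitative estimates already established. More explicitly, I would decompose
\begin{equation*}
I_s(t) - \sum_{k=0}^n I_{s,k}^{R,\delta}(t) = \Big(I_s(t) - \sum_{k=0}^n I_{s,k}(t)\Big) + \sum_{k=0}^n \big(I_{s,k}(t) - I_{s,k}^R(t)\big) + \sum_{k=0}^n \big(I_{s,k}^R(t) - I_{s,k}^{R,\delta}(t)\big).
\end{equation*}

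The first bracket is controlled by Proposition \ref{domcv}, which bounds the tail of the iterated Duhamel series by $C\,2^{-n}\|\varphi_s\|_{L^\infty}\|F_{N,0}\|_{\e,\beta_0,\mu_0}$, as a direct consequence of the short-time continuity estimate \eqref{est:duh-HCHS} combined with the a priori bound from Theorem \ref{existence-thmHS}. The second sum is the loss due to restricting to the energy shell $\{E_0(Z_{s+k})\leq R^2\}$, controlled by Proposition \ref{delta1small} via a factor $e^{-C'\beta_0 R^2}$ obtained by trading a small amount of the Gaussian weight $\beta_0 \to \beta_0'$ against the indicator $\indc_{E_0(Z_{s+k})\geq R^2}$. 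The third sum is the error due to removing configurations in which two consecutive collision times are closer than $\delta$, controlled by Proposition \ref{delta2smallHS} via a factor $C n^2 \delta/T$.

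After applying the triangle inequality and summing these three contributions, one obtains directly the bound
\begin{equation*}
\Big\| I_s(t) - \sum_{k=0}^n I_{s,k}^{R,\delta}(t) \Big\|_{L^\infty(\R^{ds})} \leq C\Big(2^{-n} + e^{-C'\beta_0 R^2} + n^2 \delta/T\Big) \|\varphi_s\|_{L^\infty}\|F_{N,0}\|_{\e,\beta_0,\mu_0},
\end{equation*}
uniformly in $N$ in the Boltzmann-Grad scaling $N\e^{d-1}\equiv 1$, which is the content of the corollary.

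There is essentially no obstacle here, since the three ingredients are already proved: the corollary is purely a bookkeeping statement gathering them in a single inequality. The only minor point worth noting is that the parameters $n$, $R$ and $\delta$ are independent and can be optimized a posteriori (typically one will choose, further on, $n \to \infty$, $R \to \infty$ slowly, and $\delta \to 0$ fast enough compared to $n^2/T$); thus no compatibility condition between them is required at this stage. The estimate is stated for the BBGKY observables, but the analogous bound for the Boltzmann observables $I_s^0(t) - \sum_{k=0}^n I_{s,k}^{0,R,\delta}(t)$ follows in exactly the same way by invoking the second parts of Propositions \ref{domcv}, \ref{delta1small} and \ref{delta2smallHS}.
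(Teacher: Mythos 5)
Your argument is exactly the one the paper uses: Corollary~\ref{finalresultHS} is stated immediately after the sentence ``Putting together Propositions~\ref{domcv}, \ref{delta1small} and~\ref{delta2smallHS} we obtain the following result,'' with no further proof, and your telescoping decomposition with the triangle inequality is precisely that bookkeeping. The three error contributions $2^{-n}$, $e^{-C'\beta_0 R^2}$, and $n^2\delta/T$ match the three propositions, so the proposal is correct and identical in approach.
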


 In the same way as in~(\ref{css+1+HS}) we now decompose the Boltzmann collision operators~(\ref{Boltzmann-ophardspheres}) into
 $$
 \displaystyle
  {\mathcal C}^0_{s,s+1} =   {\mathcal C}^{0,+}_{s,s+1} - {\mathcal C}^{0,-}_{s,s+1} \, ,
$$
where the index $+$ corresponding to post-collisional configurations and the index $-$ to pre-collisional configurations.
By definition of the collision cross-section for hard spheres, we have
$$
 \begin{aligned}\big({\mathcal C}^{0,-,m}_{s,s+1}   f^{(s+1)}  \big)(Z_s)&:=    \int_{ {\bf S}_{1}^{d-1} \times \R^d}  b(v_{s+1}-v_m , \omega )   f^{(s+1)}  (Z_s, x_m, v_{s+1}) \, d\omega  dv_{s+1} \\
 &= \int_{ {\bf S}_{1}^{d-1} \times \R^d}  ((v_{s+1}-v_m)\cdot \omega)_-  f^{(s+1)}  (Z_s, x_m, v_{s+1}) \, d \omega dv_{s+1} \quad \mbox{and}
\\
\big({\mathcal C}^{0,+,m}_{s,s+1}   f^{(s+1)}  \big)(Z_s)& := \int_{ {\bf S}_{1}^{d-1} \times \R^d} \! \! \! \! \! 
b(v_{s+1}-v_m , \omega )   f^{(s+1)}  ( z_1, \dots ,x_m, v^*_m,\dots ,z_s, x_m, v^*_{s+1}) \, d\omega  dv_{s+1} \\
& =  \int_{ {\bf S}_{1}^{d-1} \times \R^d}   ((v_{s+1}-v_m)\cdot \omega)_+   f^{(s+1)}  ( z_1, \dots ,x_m, v^*_m,\dots ,z_s, x_m, v^*_{s+1}) \,d \omega dv_{s+1}\, .
\end{aligned}
$$

\medskip
The elementary BBGKY and Boltzmann observables we are interested in can therefore be decomposed as 
 \begin{equation}\label{formulafEHS}
  \begin{aligned}
  I_{s,k}^{R,\delta}(t)(X_s)  &=   \sum_{J,M} \Big( \prod_{i=1}^k j_i\Big) I_{s,k} ^{R,\delta} (t,J,M)(X_s) \quad \mbox{and} \\
  I_{s,k}^{0, R,\delta}(t)(X_s) &=  \sum_{J,M}  I_{s,k} ^{0,R,\delta}  (t,J,M)(X_s)
    \end{aligned}
\end{equation} 
  where the {\it elementary functionals}  $I_{s,k}^{R,\delta} (t,J,M) $  are defined by
   \begin{equation} \label{elementaryHS} 
   \begin{aligned}
   I_{s,k} ^{R,\delta}  (t,J,M)(X_s) := \int  \varphi_s(V_s) \int_{{\mathcal T}_{k,\delta}(t) }   &{\mathbf  T}_s(t-t_1){\mathcal C}_{s,s+1}^{j_1,m_1} {\mathbf  T}_{s+1}(t_1-t_2) {\mathcal C}^{j_2,m_2}_{s+1, s+2}\\
   &\dots  {\mathbf  T}_{s+k}(t_k- t_{k+1})\indc_{E_0(Z_{s+k})\leq R^2}   f^{(s+k)}_{N,0}   dT_k dV_s\, ,
  \\   I^{0, R,\delta}_{s,k}   (t,J,M)(X_s) := \int  \varphi_s(V_s) \int_{{\mathcal T}_{k,\delta}(t) }   &{\bf S}_s(t-t_1) {\mathcal C}_{s,s+1}^{0,j_1,m_1} {\bf S}_{s+1}(t_1-t_2) {\mathcal C}^{0,j_2,m_2}_{s+1, s+2}\\
   &\dots{\bf S}_{s+k}(t_k- t_{k+1})  \indc_{E_0(Z_{s+k})\leq R^2} f^{(s+k)}_{0}   dT_k dV_s\, ,
   \end{aligned}
    \end{equation}
 with
  $$
 J := (j_1,\dots,j_k) \in \{+,-\}^k \,  \,  \mbox{ and } \,  \, M := (m_1,\dots,m_k)  \,  \,  \mbox{ with } \,  \,m_i \in \{1,\dots , s+i-1\}\, .
$$

\bigskip
\bigskip

Each one of the  functionals $I_{s,k}^{R,\delta} (t,J,M)$ and $I_{s,k}^{0,R,\delta} (t,J,M)$ defined in~(\ref{elementaryHS}) can be viewed as the observable associated with some dynamics, which of course is not the actual dynamics in physical space since
\begin{itemize}
\item the total number of particles is not conserved;
\item
 the distribution does even not remain nonnegative because of the sign of loss collision operators.
 \end{itemize}
 This explains the terminology of ``pseudo-trajectories" we choose to describe the process.
 
In this   formulation, the characteristics associated with the operators~$ {\mathbf  T}_{s+i} (t_{i}-t_{i+1}) $ and~$ {\bf S}_{s+i} (t_{i}-t_{i+1}) $ are followed {\it backwards} in time between two consecutive times~$t_{i}$ and~$t_{i+1}$, and collision terms (associated with~${\mathcal C}_{s+i,s+i+1}$ and~${\mathcal C}_{s+i,s+i+1}^0$) are seen as {\it  source terms}, in which, in the words of Lanford~\cite{lanford}, ``additional particles" are ``adjoined" to the marginal.

\bigskip

The main heuristic idea is that for the BBGKY hierarchy, in the time interval~$[t_{i+1},t_{i}]$ between two collisions~${\mathcal C}_{s+i-1,s+i }$ and~${\mathcal C}_{s+i,s+i+1}$, the particles should not interact in general so trajectories should correspond to the free flow~${\bf S}_{s+i}$. On the other hand at a collision time~$t_i$, the velocities of the two particles in interaction are liable to change. This is depicted
in Figure~\ref{schema}.

\begin{figure}[h] 
   \centering
   \includegraphics[width=4.2in]{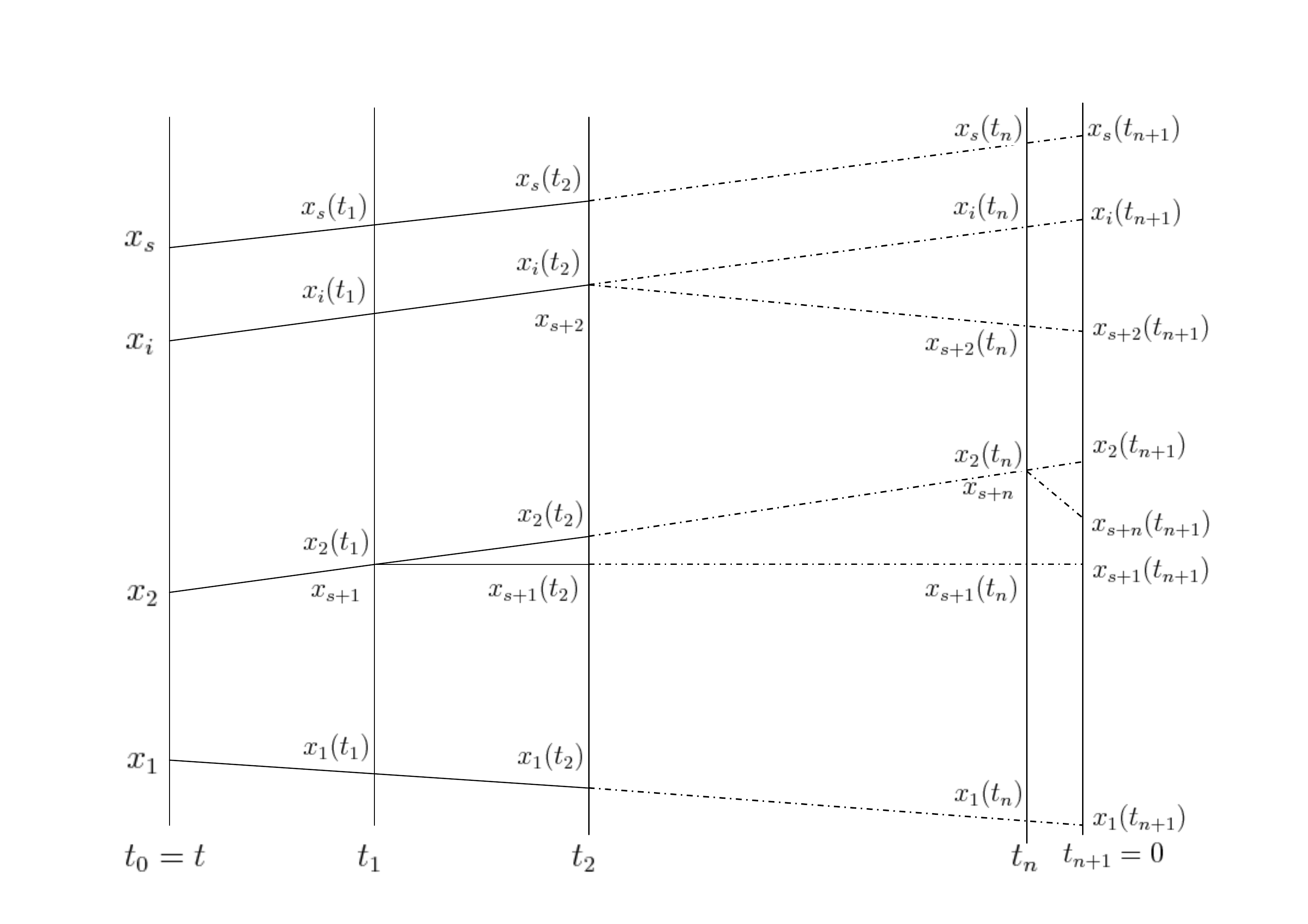} 
\caption{\label{schema}Pseudo-trajectories}
 \end{figure}

 At this stage however, we still cannot study directly the convergence of~$I_{s,k} ^{R,\delta} (t,J,M) -I_{s,k} ^{0,R,\delta} (t,J,M) $ since the transport operators ${\mathbf  T}_k$  do not coincide everywhere with the free transport operators ${\bf S}_k$, which means -- in terms of pseudo-trajectories --  that there are  recollisions.   
 We shall thus prove that these recollisions arise only for a few pathological pseudo-trajectories, which can be eliminated by additional truncations of the domains of integration. This is the goal of Part IV.


\part{The case of short range potentials}

\chapter{Two-particle interactions}\label{scattering}
\setcounter{equation}{0}


In the case when the microscopic interaction between particles is governed by a short-range repulsive potential, collisions are no more instantaneous and pointwise, and they possibly involve more than two particles. Our analysis in Chapter~\ref{convergence}  shows however that the low density limit $N\eps^{d-1} \equiv 0$ requires only a description of two-particle interactions, at the exclusion of more complicated interactions.

In  this chapter we  therefore study precisely, following the lines of~\cite{cercignani}, the Hamiltonian system \eqref{n1.1} 
 for~$N=2$.
 The study of the reduced motion is carried out in Section~\ref{reducedmotion}, while the  scattering map is introduced in Section~\ref{sec:scattering-map}, and the cross-section, which will play in important role in the Boltzmann hierarchy, is described in Section~\ref{scatteringcrosssection}.
  
 \section{Reduced motion}\label{reducedmotion}
\setcounter{equation}{0}
 We first define a notion of pre- and post-collisional particles, by analogy with the dynamics of hard spheres.
\begin{Def} \label{def:pre-post-col} Two particles~$z_1,z_2$ are said to be {\it pre-collisional} if their distance is~$\eps$ and   decreasing:
 $$|x_1-x_2|=\eps \, , \qquad (v_1 - v_2) \cdot (x_1 - x_2) <0 \, .$$
 Two particles~$z_1,z_2$ are said to be {\it post-collisional} if their distance is~$\eps$ and   increasing:
 $$|x_1-x_2|=\eps \, , \qquad (v_1 - v_2) \cdot (x_1 - x_2) >0 \, .$$
\end{Def}
 We consider here only two-particle systems, and show in Lemma \ref{reduced-lem} that, if $z_1$ and $z_2$ are pre-collisional at time $t_-,$ then there exists a {post-collisional} configuration~$z'_1,z'_2$, attained at $t_+ > t_-.$
 Since $\nabla \Phi (x/\eps)$ vanishes on $\{ |x| \geq \e \},$ the particles $z_1$ and $z_2$ travel at constant velocities~$v'_1$ and~$v'_2$ for ulterior ($t > t_+$) times. 

 
  Momentarily changing back the macroscopic scales of \eqref{n1.1} to the microscopic scales of \eqref{n1} by defining~$\tau :=(t-t_-)/\eps$ and $y(\t) := x(\t)/\eps,$ $w(\t) = v(\t),$ we find that the two-particle dynamics is governed by the equations
 \begin{equation} \label{2part}\left\{
\begin{aligned}
\displaystyle \frac  { d y_1 }{d \tau } & = \displaystyle w_1 \, , \quad  \frac  { d y_2 }{d \tau} = \displaystyle w_2\,,  \\
\displaystyle  \frac  { d w_1 }{d \tau} & = \displaystyle - \nabla \Phi \left( y_1 - y_2 \right) =- \frac  { d w_2 }{d \tau}\,,
\end{aligned}
\right.
\end{equation}
whence the conservations
 \begin{equation} \label{conservation-laws}
\displaystyle  \frac  { d  }{d \tau}(w_1+w_2)= 0\,,\qquad
\displaystyle \frac  { d  }{d \tau} \left( \frac14 (w_1+w_2)^2+\frac14(w_1-w_2)^2 + \Phi(y_1-y_2)\right) =0\,.
\end{equation}

 From \eqref{conservation-laws} we also  deduce
 that the center of mass has a uniform, rectilinear motion:
  \begin{equation} \label{rectiunif}
  ( y_1 +y_2)(\tau) =( y_1 +y_2)(0) + \tau(  w_1+w_2)\,,   \end{equation}
   and 
  that pre- and post-collisional velocities are linked by the classical relations  
\begin{equation}\label{classical}
w'_1+w'_2=w_1+w_2,\quad |w'_1|^2+|w'_2|^2=|w_1|^2+|w_2|^2\,.
\end{equation}

A consequence of \eqref{2part} is that $(\delta y, \delta w) := (y_1 - y_2, w_1 - w_2)$ solves
\begin{equation}
\label{reduced}
\displaystyle  \frac  { d  }{d \tau}\delta y= \delta w\,,\qquad
\displaystyle \frac  { d  }{d \tau} \delta w =-2 \nabla \Phi (\delta y)\,.
\end{equation}
In the following we denote by~$\phi_t: \R^{2d} \to \R^{2d}$ the flow of \eqref{reduced}.

 We notice that, $\Phi$ being radial, there holds
$$ {d\over d\tau} (\delta y \wedge \delta w)=\delta w \wedge \delta w -2 \delta y \wedge \nabla \Phi (\delta y) = 0\,,$$
implying that, if the initial angular momentum $\delta y_0 \wedge \delta w_0$ is non-zero, then $\delta y$ remains for all times in the hyperplane orthogonal to $\delta y_0\wedge \delta w_0.$ 
In this hyperplane, introducing polar coordinates $(\rho,\varphi) $ in~$ \R_+ \times {\bf S}_1^{1},$ such that
$$
\delta y =\rho e_\rho \quad \mbox{and} \quad \delta w = \dot \rho e_\rho + \rho \dot \varphi e_\varphi
$$
the conservations of energy and angular momentum take the form 
$$\begin{aligned}
\frac12 (\dot \rho^2 +(\rho \dot \varphi)^2 )+ 2\Phi(\rho)  = \frac12 |\delta w_0|^2\, ,\\
\rho^2 |\dot \varphi| = |\delta y_0\wedge \delta w_0|\,,
\end{aligned}
$$
implying $\rho > 0$ for all times, and  
\begin{equation} \label{dot-rho}
 \dot \rho^2 + \Psi(\rho, \cE_0, {\mathcal J}_0) = \cE_0\,, \qquad \Psi := \frac{\cE_0 {\mathcal J}_0^2}{\rho^2} + 4 \Phi(\rho)  \,, 
 \end{equation}
 where we have defined
 \begin{equation} \label{def:impact-parameter}
\cE_0 := |\delta w_0|^2 \quad \mbox{and} \quad {\mathcal J}_0:=| \delta y_0\wedge \delta w_0|/|\delta  w_0| =: \sin \a \, , 
 \end{equation}
 which are respectively (twice) the energy and  the impact parameter, $\pi -\alpha$ being the angle between~$\delta w_0$ and~$\delta y_0$ (notice that~$\alpha \geq \pi/2$ for pre-collisional situations). In the limit case when~$\alpha = 0$, the movement is confined to a line since~$\dot \varphi \equiv 0$.

We consider the sets corresponding to pre- and post-collisional configurations:
\begin{equation} \label{def:hemispheres} {\mathcal S}^\pm := \big\{ (\delta y, \delta w) \in {\bf S}_1^{d-1} \times \R^d \,  \big / \,    \pm \delta y \cdot \delta w > 0 \big\}\,.
\end{equation}   
In polar coordinates  
pre-collisional configurations correspond to~$\rho = 1$ and~$\dot \rho <0$ while post-collisional configurations   correspond to~$\rho = 1$ and~$\dot \rho >0$. 


\begin{Lem}[Description of the reduced motion]\label{reduced-lem}
For the differential equation \eqref{reduced} with pre-collisional datum~$(\delta y_0,\delta w_0) \in {\mathcal S}^-,$ there holds $|\delta y(\t)| \geq \rho_*$ for all $\t \geq 0,$ with the notation
\begin{equation} \label{rho*}
 \rho_* = \rho_*(\cE_0, {\mathcal J}_0) := \max\big\{ \rho \in (0,1) \,  \big / \,  \Psi(\rho, \cE_0, {\mathcal J}_0) = \cE_0 \big\}\, ,
\end{equation}
 and for $\t_*$ defined by
 \begin{equation} \label{tau-star} 
 \tau_*:= 2 \int_{\rho_{*}}^1 \left( \cE_0 - \Psi(\rho, \cE_0, {\mathcal J}_0) \right)^{-1/2} d\rho\, , 
 \end{equation}
 the configuration is post-collisional ($\rho = 1,$ $\dot \rho > 0$) at $\t = \t_*.$ 
 \end{Lem}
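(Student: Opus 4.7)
The plan is to reduce the problem to the one-dimensional motion of $\rho=|\delta y|$ governed by the energy identity \eqref{dot-rho}, namely $\dot\rho^2 = \mathcal{E}_0 - \Psi(\rho,\mathcal{E}_0,\mathcal{J}_0)$. All sign changes of $\dot\rho$ can only occur at roots of $\Psi(\cdot)=\mathcal{E}_0$, so the analysis reduces to locating such roots and integrating the separated ODE; the angular variable $\varphi$ can be recovered a posteriori from conservation of angular momentum.

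First I would pin down $\rho_*$. By Assumption~\ref{propertiesphi}, $\Phi(1)=0$ and $\Phi$ is unbounded near $0$, so $\Psi(1)=\mathcal{E}_0\mathcal{J}_0^2\leq\mathcal{E}_0$ while $\Psi(\rho)\to+\infty$ as $\rho\to 0^+$. Moreover $\Psi'(\rho) = -2\mathcal{E}_0\mathcal{J}_0^2/\rho^3 + 4\Phi'(\rho)$, with $\Phi'(\rho)<0$ on $(0,1)$ since $\Phi$ is radial, nonincreasing and $\nabla\Phi$ vanishes only on $|x|=1$. Hence $\Psi$ is strictly decreasing on $(0,1)$, so $\Psi=\mathcal{E}_0$ has a unique root $\rho_*\in(0,1)$ (in the nondegenerate case $\mathcal{J}_0<1$; the grazing case $\mathcal{J}_0=1$ is trivial with $\rho_*=1$, $\tau_*=0$), and it is a simple zero with $\Psi'(\rho_*)<0$. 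The maximality prescribed in \eqref{rho*} is automatic.

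With the initial conditions $\rho(0)=1$ and $\dot\rho(0)<0$ (pre-collisional), set $\tau_{1/2}:=\inf\{\tau>0:\rho(\tau)=\rho_*\}$. On $[0,\tau_{1/2})$ one has $\rho>\rho_*$, hence $\dot\rho=-\sqrt{\mathcal{E}_0-\Psi(\rho)}$ is strictly negative; separation of variables then yields
\[
\tau_{1/2} \;=\; \int_{\rho_*}^{1}\frac{d\rho}{\sqrt{\mathcal{E}_0-\Psi(\rho,\mathcal{E}_0,\mathcal{J}_0)}},
\]
which is finite thanks to the simple zero at $\rho_*$ (the integrand behaves like $(\rho-\rho_*)^{-1/2}$ near $\rho_*$). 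This already gives $|\delta y(\tau)|\geq\rho_*$ on $[0,\tau_{1/2}]$. For the rebound, $\ddot\rho(\tau_{1/2})=-\Psi'(\rho_*)/2>0$ forces $\rho$ to leave $\rho_*$ upward, and since \eqref{reduced} is conservative and $\dot\rho(\tau_{1/2})=0$, the function $\tilde\rho(\tau):=\rho(2\tau_{1/2}-\tau)$ has the same Cauchy data at $\tau_{1/2}$, so by uniqueness $\rho(2\tau_{1/2}-\tau)=\rho(\tau)$. At $\tau_*:=2\tau_{1/2}$, which coincides with \eqref{tau-star}, one thus has $\rho(\tau_*)=1$ and $\dot\rho(\tau_*)=-\dot\rho(0)>0$, i.e., a post-collisional configuration. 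For $\tau>\tau_*$, $\rho>1$ so $\nabla\Phi$ vanishes and $\rho$ continues to grow, keeping $\rho\geq\rho_*$ for all $\tau\geq 0$.

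The main obstacle, in my view, is the strict monotonicity step $\Psi'<0$ on $(0,1)$: without it one would have to rule out tangential contacts with the level $\mathcal{E}_0$ or asymptotic capture orbits case by case, and the integral defining $\tau_*$ could diverge. The time-reversal step relies on uniqueness at a turning point, which is ensured by $\Phi\in C^2$ on $\{0<|x|<1\}$; once the radial picture is known, the full planar motion is reconstructed from $\rho(\tau)$ together with the conservation $\rho^2|\dot\varphi|=|\delta y_0\wedge\delta w_0|$, which determines $\varphi$ up to a sign fixed by the initial data.
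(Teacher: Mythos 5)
Your proof is correct, and it takes a genuinely different route from the paper on the two delicate points: (i) how one excludes a spurious sign change of~$\dot\rho$ before the turning point, and (ii) how one knows the trajectory actually reaches $\rho=1$ again in finite time rather than relaxing towards some intermediate radius. The paper works directly with the first-order relation $\dot\rho=\iota(\rho)\sqrt{\mathcal E_0-\Psi(\rho)}$, invokes Darboux's theorem to argue that the sign function $\iota$ can only switch on the level set $\{\Psi=\mathcal E_0\}$, establishes $\rho\geq\rho_*$ by an open/closed connectedness argument, and rules out $\rho<1$ for all~$\tau$ by a monotone-bounded-convergence contradiction. You instead pass to the second-order equation $\ddot\rho=-\tfrac12\Psi'(\rho)$, obtain $\ddot\rho(\tau_{1/2})>0$ from $\Psi'(\rho_*)<0$ to force the rebound, and then use time reversibility plus Cauchy--Lipschitz uniqueness at the turning point (where $\dot\rho=0$) to get $\rho(\tau_*)=1$ in one stroke, with $\tau_*=2\tau_{1/2}$ matching \eqref{tau-star}. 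Both use the same input from Assumption~\ref{propertiesphi}, namely $\Phi'<0$ on $(0,1)$, to conclude $\partial_\rho\Psi\neq 0$ (you state the stronger fact $\partial_\rho\Psi<0$ everywhere, from which maximality and uniqueness of the root $\rho_*$ follow immediately). Your approach is the more classical ``effective one-dimensional potential'' argument familiar from mechanics textbooks and yields the formula for $\tau_*$ by explicit separation of variables; the paper's approach stays at the level of the first-order energy identity and never writes out $\ddot\rho$. One small point worth making explicit in your write-up: the inf defining $\tau_{1/2}$ should be argued finite (which it is, since $\rho$ is decreasing and bounded below, its limit cannot exceed $\rho_*$ without keeping $|\dot\rho|$ bounded away from zero, and the resulting $\tau_{1/2}=\int_{\rho_*}^1(\mathcal E_0-\Psi)^{-1/2}d\rho$ converges because $\rho_*$ is a simple zero of $\mathcal E_0-\Psi$).
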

 
 \begin{proof}  Solutions to \eqref{dot-rho} satisfy $
  \dot \rho = \iota(\rho) \big(\cE_0 - \Psi(\rho)\big)^{1/2},$ with $\iota(\rho) = \pm 1,$ possibly changing values only on $\{ \Psi = \cE_0 \},$ by Darboux's theorem (a derivative function satisfies the intermediate value theorem). The initial configuration being pre-collisional, there holds initially $\iota = -1,$ corresponding to a decreasing radius.  
  The existence of~$\rho_*$ satisfying~(\ref{rho*}) is then easily checked: we have~$|\delta y_0| = 1$ and~$\delta y_0 \cdot \delta w_0 \neq 0,$  so there holds $\Psi(1,\cE_0,{\mathcal J}_0) < \cE_0$, and~$\Psi$ is increasing as~$\rho$ is decreasing.   The set $\{ \t \geq 0, \rho(\t) \geq \rho_*\}$ is closed by continuity. It is also open:
  since~$\Phi$ is nonincreasing, then~$\d_\rho \Psi \neq 0$ everywhere and in particular at~$(\rho_*,\cE_0,{\mathcal J}_0)$. So~$\cE_0 - \Psi$ changes sign at~$\rho_*,$ which forces, by \eqref{dot-rho}, the sign function $\iota$ to jump from $-$ to~$+$ as~$\rho$ reaches the value~$\rho_*$ from above. This proves $\rho \geq \rho_*$ by connexity. The minimal radius~$\rho = \rho_*$ is attained at $\t_*/2,$ where~$\t_*$ is defined by \eqref{tau-star}, the integral being finite since~$\d_\rho \Psi$ does not vanish. Assume finally that for all~$\t > 0,$ there holds~$\rho(\t) < 1.$ Then on~$[\t_*/2,+\infty),$~$\rho$ is increasing and bounded, hence converges to a limit radius, which contradicts the definition of~$\rho_*.$ This proves $\rho = 1$ at~$\t = \t_{*},$ a time at which~$\dot \rho > 0,$ since $\iota$ has jumped exactly once, by definition of $\rho_*.$  
     \end{proof}
 
\begin{figure}[h]
\begin{center}
\scalebox{0.3}{\includegraphics{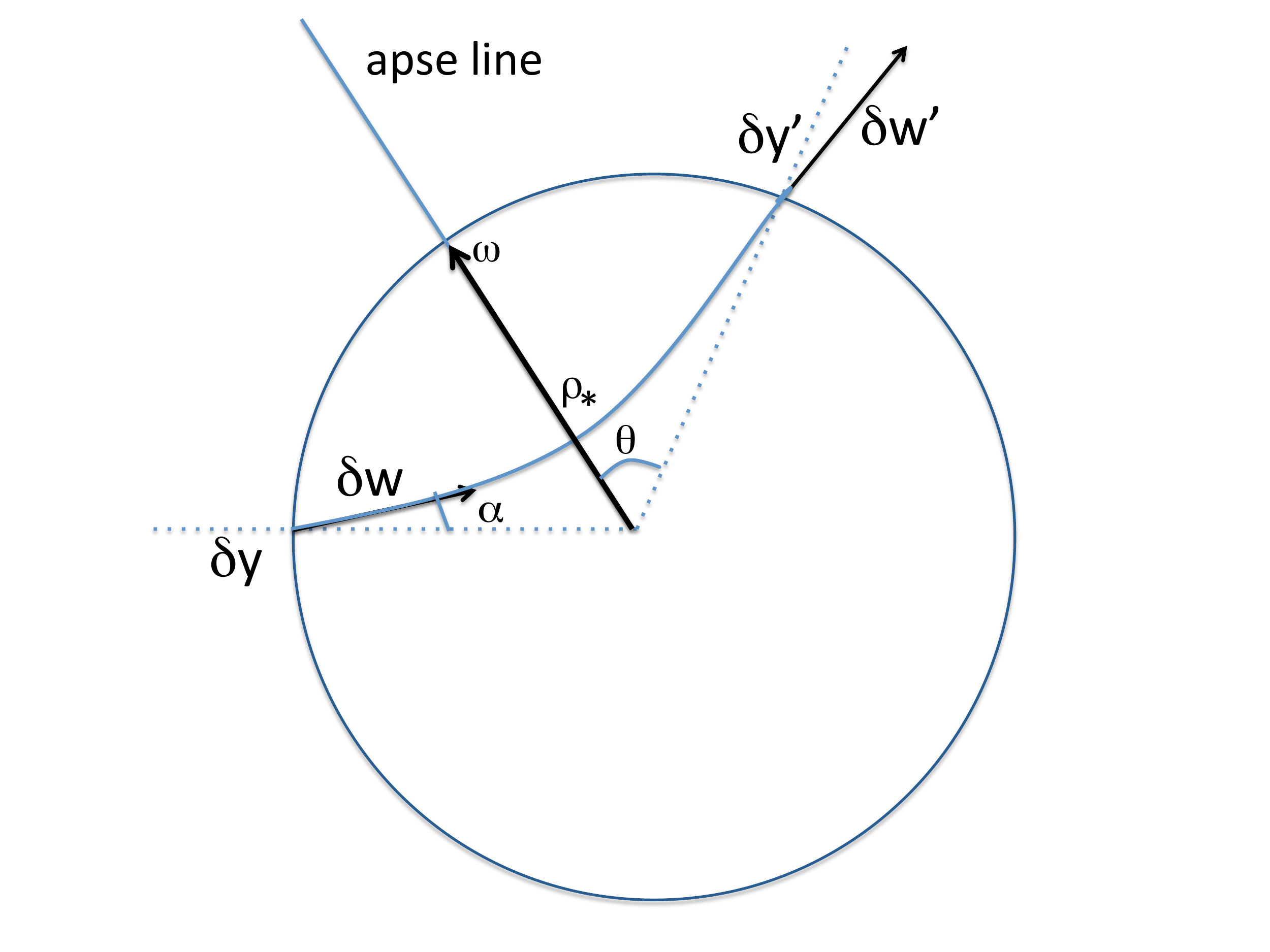}}
\caption{\label{reduced-figure}Reduced dynamics}
\end{center}
\end{figure}


 The reduced dynamics is pictured on Figure \ref{reduced-figure}, where the half-deflection angle $\theta$ is the integral of the angle $\varphi$ as a function of $\rho$ over $[\rho_*,1]:$ 
\begin{equation} \label{theta}
 \theta =\int_{\rho_{*}}^1 \frac{\cE_0^{1/2} {\mathcal I}_0}{\rho^2} \left(\cE_0 - \Psi(\rho,\cE_0,{\mathcal I}_0)\right)^{-1/2} 
d\rho\, ,\end{equation}

With the initialization choice $\varphi_0 = 0,$ the post-collisional configuration is $(\rho, \varphi)(\t_*) = (1, 2 \theta);$ it can be deduced from the pre-collisional configuration by symmetry with respect to the apse line, which by definition is the line through the origin and the point of closest approach~$(\delta y(\t_*/2), \delta w(\t_*/2)).$  The direction of this line is denoted~$\omega \in {\bf S}_1^{d-1}.$\label{indexdefomega}


 \section{Scattering map} \label{sec:scattering-map}
 \setcounter{equation}{0}

We shall now define a microscopic scattering map  that sends pre- to post-collisional configurations:
$$    (\delta y_0, \delta w_0)  \in {\mathcal S}^- \mapsto (\delta y(\t_*), \delta w(\t_*)) = \phi_{\t_*}(\delta y_0,\delta w_0) \in {\mathcal S}^+\, .$$
  By uniqueness of the trajectory of \eqref{reduced} issued from $(\delta y_0, \delta w_0)$ (a consequence of the regularity assumption on the potential, via the Cauchy-Lipschitz theorem), the scattering is one-to-one. It is also clearly onto.

Back in the macroscopic variables, we now define a corresponding scattering operator for the two-particle dynamics. In this view, we introduce the sets
$$ \begin{aligned} {\mathcal S}^\pm_\e := \Big\{ (z_1,&\, z_2) \in \R^{4d} \, \big /\,   |x_1 - x_2 | = \e \, ,   \:  \pm (x_1 - x_2) \cdot (v_1 - v_2) > 0  \Big\}\, .
\end{aligned}$$
We define, as in~(\ref{def:impact-parameter}),
\begin{equation} \label{def:en-impact}
\cE_0 = |v_1 - v_2|^2 \quad \mbox{and} \quad {\mathcal J}_0~:= \frac{|(x_1 - x_2) \wedge (v_1 - v_2)|}{\varepsilon |v_1 - v_2|} =: \sin \a \, .
\end{equation}
 \begin{Def}[Scattering operator] \label{scatteringbbgky}
 The {\em scattering operator} is defined as
 $$ {\sigma}_\e: 
(x_1,v_1,x_2,v_2)  \in {\mathcal S}^-_\e \mapsto (x'_1,v'_1,x'_2,v'_2) \in {\mathcal S}^+_\e\, ,$$
 where
\begin{equation}
\label{scattering-def1}
\begin{aligned}
x'_1&:= \frac12 (x_1+x_2) +\frac{\eps \tau_*}2 (v_1+v_2)+\frac\eps2 \delta y(\tau_*)= -x_1 +  \omega\cdot (x_1-x_2) \omega+\frac{\eps \tau_*}2 (v_1+v_2)\, , \\ 
x'_2& := \frac12 (x_1+x_2) +\frac{\eps \tau_*}2 (v_1+v_2)-\frac\eps2 \delta y(\tau_*) = -x_2  - \omega\cdot (x_1-x_2) \omega+\frac{\eps \tau_*}2 (v_1+v_2)\, , \\
v'_1 & :=\frac12(v_1+v_2) + \frac12 \delta w(\tau_*) =v_1 -\omega\cdot (v_1-v_2) \: \omega\, , \\
 v'_2&:=\frac12(v_1+v_2) - \frac12 \delta w(\tau_*) = v_2+\omega\cdot (v_1-v_2) \: \omega \, ,
\end{aligned}
\end{equation}
where $\t_*$ is the microscopic interaction time, as defined in Lemma {\rm \ref{reduced-lem}}, $(\delta y(\t_*), \delta w(\t_*))$ is the microscopic post-collisional configuration: $(\delta y(\t_*), \delta w(\t_*)) = \phi_{\t_*}( (x_1 - x_2)/\e, v_1 -v_2),$ and $\omega$ is the direction of the apse line. 
Denoting by~$\nu:=(x_1-x_2)/|x_1-x_2|$ we also define
$$ \label{scatteringsigma0}
\sigma_0 (\nu,v_1,v_2):=(\nu',v'_1,v'_2) \, .
$$
\end{Def}

The above description of $(x'_1,v'_1)$ and $(x'_2,v'_2)$ in terms of $\omega$ is deduced from the identities 
 $$
 \delta v(\tau_*) =  \delta v_0  -2 \omega\cdot   \delta v_0\:\omega   \quad \mbox{and} \quad   \delta y(\tau_*)
 = - \delta y_0 + 2 \omega\cdot   \delta y_0\: \omega
  $$
  in the reduced microscopic coordinates.

By $\d_\rho \Psi \neq 0$ in $(0,1)$ and the implicit function theorem, the map $(\cE,{\mathcal J}) \to \rho_*(\cE,{\mathcal J})$ is $C^2$ just like $\Psi.$ Similarly, $\t_* \in C^2.$ By Definition \ref{scatteringbbgky} and $C^1$ regularity of $\nabla \Phi$ (Assumption \ref{propertiesphi}), this implies that the scattering operator $\sigma_\e$ is $C^1,$ just like the flow map~$\phi$ of the two-particle scattering. The scattering $\sigma_\e$ is also bijective, for the same reason that the microsopic scattering  is bijective.

\begin{prop}\label{scatteringestimates}
Let~$R > 0$ be given and
consider 
$$
  {\mathcal S}^\pm_{\e,R}    := \Big\{ (z_1,z_2) \in \R^{4d}\ \,  \big / \,    |x_1-x_2| = \eps \, , \,  |(v_1,v_2)| = R \, , \,    \pm \:  (v_1 - v_2) \cdot (x_1-x_2) >0 \Big\}\, . 
$$
The scattering operator $\sigma_\eps$ is a bijection from~$  {\mathcal S}^-_{\e,R}   $ to $  {\mathcal S}^+_{\e,R}   $. 

The macroscopic time of interaction~$ \e \t_*,$ where $\t_*$ is defined in \eqref{tau-star}, is uniformly bounded  on compact sets of~$\R^+\setminus\{0\} \times [0,1]$, as a function of~${\mathcal E}_0$ and~$\cJ_0$.
  \end{prop}
\begin{proof} 
We already know that $\sigma_\e$ is a bijection from~$  {\mathcal S}^- _\e $ to ~$  {\mathcal S}^+_\e.$ By \eqref{classical}, it also preserves the velocity bound. Hence $\sigma_\e$ is bijective $  {\mathcal S}^-_{\e,R} \to {\mathcal S}^+_{\e,R}.$ 
Now given $\cE_0 > 0$ and~$\cJ_0 \in [0,1],$ we shall show that~$\tau_* $ can be bounded by a constant depending only on~$\cE_0 $. Since~$\Phi(\rho_* ) \leq \cE_0/4$, then~$\rho_*  \geq \Phi^{-1} ( \cE_0/4) 
$. Let us then define~$i_0\in (0,1)$ by
$$
i_0:=\frac1{2 \sqrt 2} \Phi^{-1}\big(\frac{ \cE_0} 4\big)\,  ,
$$ 
so that
$
\rho_*^2 \geq 8 i_0  ^2.
$

On the one hand it is easy to see, after a change of variable in the integral, using
$$
\frac d{d\rho}( \cE_0 - \Psi(\cE_0,{\mathcal J}_0,\rho) ) = \frac{2 \cE_0 {\mathcal J}_0^2}{\rho^3} - 4 \Phi'(\rho) \geq  \frac{2 \cE_0 {\mathcal J}_0^2}{\rho^3}\geq2 \cE_0 {\mathcal J}_0^2 \, ,
$$
 that there holds the bound 
$$
\begin{aligned}
\tau_*  &\leq \frac1{  \cE_0{\mathcal J}_0^2}\int_0^{\cE_0(1-{\mathcal J}_0^2)} \frac{dy}{\sqrt y}  \leq\frac { 2 \sqrt{1-{\mathcal J}_0^2}}{ {\mathcal J}_0^2 \sqrt{\cE_0}} \,\cdotp
\end{aligned}
$$
$-$ So if~${\mathcal J}_0 \geq i_0$, we   find that
$$
\tau_* \leq  \frac2 { \sqrt{\cE_0}i_0^2}  =   \frac{16}{ \sqrt{\cE_0} \big(\Phi^{-1}\big(\frac{ \cE_0} 4\big)\big) ^2} \, \cdotp
$$
$-$ On the other hand for~${\mathcal J}_0 \leq i_0$  we define~$\gamma :=   \Phi^{-1}  ({ \cE_0} / {8} )$ and we cut the integral defining~$\tau_*$ into two parts:
$$
\tau_*= \tau_*^{(1)} + \tau_*^{(2)}  \quad \mbox{with} \quad  \tau_*^{(1)} =2\int_{\rho_{*}}^{\gamma}  \left( \cE_0 - \Psi(\cE_0,{\mathcal J}_0,\rho) \right)^{-1/2}  d\rho \, .
$$
Notice that since~$\rho_*^2 \geq 8 i_0  ^2  $ and~${\mathcal J}_0 \leq i_0$, then~$  {\cE_0} / 4 -  { \cE_0  \cJ_0 ^2}/ {4\rho_*^2} \geq {7\cE_0} /  {32} \geq {\cE_0} /  {8} $ so
$$ 
\rho_* = \Phi^{-1} \Big ( 
\frac {\cE_0} 4 - \frac{ \cE_0  \cJ_0 ^2}{4\rho_*^2}
\Big) \leq   \Phi^{-1}  \Big ( \frac { \cE_0} {8} \Big) = \gamma \, .
$$

The first integral~$ \tau_*^{(1)}$ is estimated using the fact that~$\Phi'$ does not vanish outside~$1$ as stated in Assumption~\ref{propertiesphi}: defining
$$
M(\Phi):= \inf_{   i_0 \leq \rho \leq \gamma } |\Phi'(\rho)|>0 \, ,
$$ 
we find that on~$[i_0,\gamma]$,
$$
\frac d{d\rho}( \cE_0 - \Psi(\cE_0,{\mathcal J}_0,\rho) ) = \frac{2 \cE_0 {\mathcal J}_0^2}{\rho^3} - 4 \Phi'(\rho) \geq    4 M(\Phi)
$$
so
$$
 \tau_*^{(1)} \leq  \frac{ \big( \cE_0/2 - \cE_0{\mathcal J}_0^2/ \gamma^2 \big)^\frac12}{ M(\Phi)} \leq \frac{\sqrt{\cE_0}}{\sqrt 2 M(\Phi)} \,  \cdotp
$$
For the second integral we estimate simply 
$$
\tau_*^{(2)}  \leq \frac2 {\big(  \cE_0/2- \cE_0{\mathcal J}_0^2/ \gamma^2\big)^\frac12} \leq 
\frac2{\big(  \cE_0/2- \cE_0/8  \big)^\frac12} = \frac{4 \sqrt 2}{\sqrt{ 3\cE_0} }   \, \cdotp
$$
The result follows. \end{proof}
\begin{Rem} If~$\Phi$ is of the type~$\displaystyle \frac1{\rho^s} \exp(-\frac 1{1-\rho^2})$ 
then the proof of Proposition~{\rm\ref{scatteringestimates}} shows that~$\tau_*$   may be bounded from above by a constant of the order of~$C /\sqrt{e_0} (1+\log e_0)$ if~$\cE_0 \geq e_0$.
\end{Rem}

  \section{Scattering cross-section and the Boltzmann collision operator}\label{scatteringcrosssection}
  \setcounter{equation}{0}
The scattering operator in Definition~\ref{scatteringbbgky} is parametrized  by the impact parameter and the two ingoing (or outgoing) velocities. However in the Boltzmann limit the impact parameter cannot be observed:  the  observed quantity is the {\it deflection angle} or {\it scattering angle},   defined as the angle between ingoing and outgoing relative velocities. 
The next paragraph defines that angle   as well as the scattering cross-section, and 
the following paragraph defines the Boltzmann collision operators using that formulation.

\subsection{Scattering cross-section}
 With notation from the previous paragraphs, the deflection angle is equal to $\pi - 2 \Theta$ where $\Theta := \alpha + \theta,$ the angle $\alpha$ being defined in~\eqref{def:en-impact} and $\theta$ being defined in~\eqref{theta}, so that  $$
  \Theta = \Theta(\cE_0,\cJ_0) :=     \arcsin  \cJ_0 +  \cJ_0 \int_{\rho_*}^1 \frac{d\rho}
  {
  \sqrt{1-\frac{4 \Phi (\rho)} {\mathcal E_0} 
  - \frac{ \cJ_0^2}{\rho^2}} } \, \cdotp
  $$
The following result, and its proof, are due to~\cite{pulvirentiss}: 

\begin{Lem}\label{assumptiontheta}
Under Assumption {\rm \ref{propertiesphi}}, assume moreover that  for all~$\rho \in (0,1)$,
\begin{equation}\label{strangeassumption}
\rho \Phi''(\rho) + 2 \Phi'(\rho) \geq 0 \, .
\end{equation}
Then for all~$\cE_0>0$, the function~$ \cJ_0   \mapsto \Theta(\cE_0, \cJ_0) \in [0,\pi/2]$ satisfies
$\Theta(\cE_0,0) = 0$ and is strictly monotonic: $\partial_{\cJ_0} \Theta>0$ for all~$\cJ_0 \in (0,1)$. Moreover, it satisfies
$$
\displaystyle\lim_{\cJ_0 \to 0} \,  \partial_{\cJ_0} \Theta \in (0,\infty]\quad \mbox{and} \quad \displaystyle\lim_{\cJ_0 \to 1} \, \partial_{\cJ_0} \Theta = 0 \, .$$
\end{Lem}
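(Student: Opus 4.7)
The assumption $\rho\Phi''(\rho) + 2\Phi'(\rho) \geq 0$ is equivalent to $(\rho^2 \Phi'(\rho))' \geq 0$, and combined with $\Phi'(1) = 0$ and $\Phi' \leq 0$ on $(0,1)$ it forces $\Phi'' \geq -2\Phi'/\rho \geq 0$, so $\Phi$ is convex. In particular the function $K(\rho) := \rho^2(\cE_0 - 4\Phi(\rho))$ has strictly positive derivative $K'(\rho) = 2\rho(\cE_0 - 4\Phi(\rho) - 2\rho\Phi'(\rho))$ on $[\rho_*, 1]$ (since $\cE_0 - 4\Phi \geq 0$ there and $\Phi' < 0$), so $K$ is a $C^2$ diffeomorphism from $[\rho_*, 1]$ onto $[\cE_0\cJ_0^2, \cE_0]$.

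The first step I would take is to desingularize the integral defining $\theta := \Theta - \arcsin\cJ_0$. Using the identity
\[
1 - \frac{\cJ_0^2}{\rho^2} - \frac{4\Phi(\rho)}{\cE_0} = \frac{K(\rho) - \cE_0 \cJ_0^2}{\rho^2 \cE_0},
\]
the change of variable $u^2 = K(\rho)/\cE_0 - \cJ_0^2$, sending $[\rho_*, 1]$ bijectively onto $[0, \sqrt{1 - \cJ_0^2}]$, yields
\[
\Theta(\cE_0, \cJ_0) = \arcsin \cJ_0 + \cJ_0 \cE_0 \int_0^{\sqrt{1 - \cJ_0^2}} h\bigl(\rho(u^2 + \cJ_0^2)\bigr)\, du,
\]
where $h(\rho) := [\rho^2(\cE_0 - 4\Phi(\rho) - 2\rho\Phi'(\rho))]^{-1}$ and $\rho(\bar u)$ is defined implicitly by $K(\rho) = \cE_0 \bar u$. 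The integrand is now regular on the whole interval and depends on $\cJ_0$ only through the combination $u^2 + \cJ_0^2$. The vanishing $\Theta(\cE_0, 0) = 0$ is then immediate from the prefactor $\cJ_0$.

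Next I would differentiate under the integral sign. Since $\Phi(1) = \Phi'(1) = 0$ gives $h(1) = 1/\cE_0$, the boundary contribution at $u = \sqrt{1 - \cJ_0^2}$ combines with $(\arcsin \cJ_0)'$ to produce the clean term $\sqrt{1 - \cJ_0^2}$; converting back to the $\rho$ variable one obtains
\[
\partial_{\cJ_0}\Theta = \sqrt{1 - \cJ_0^2} + \sqrt{\cE_0} \int_{\rho_*}^1 \frac{1/\rho + \cJ_0^2 \cE_0\, h'(\rho)}{\sqrt{K(\rho) - \cE_0 \cJ_0^2}}\, d\rho.
\]
The first summand is manifestly positive. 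For the integral, the key observation is the identity $h(\rho) K'(\rho) = 2/\rho$, which is tailor-made for an integration by parts on the $h'$ piece: the $(\rho - \rho_*)^{-1/2}$ boundary divergence at $\rho = \rho_*$ cancels \emph{exactly}, by virtue of $h(\rho_*) K'(\rho_*) = 2/\rho_*$, against the singular part of the resulting integrand $(1/\rho)(K - \cE_0 \cJ_0^2)^{-3/2}$. The hypothesis $\rho \Phi'' + 2\Phi' \geq 0$ then enters to fix the sign of the finite-part remainder and yield $\partial_{\cJ_0}\Theta > 0$. I expect this step --- converting a pointwise inequality on $\Phi$ into a sign statement for a delicate regularized integral, with careful bookkeeping of cancellations near $\rho_*$ --- to be the main technical obstacle.

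For the limits: as $\cJ_0 \to 1$, one has $\rho_*(\cJ_0) \to 1$, so the integration range collapses while $\sqrt{1 - \cJ_0^2} \to 0$, giving $\partial_{\cJ_0}\Theta \to 0$. As $\cJ_0 \to 0$, $\rho_* \to \rho_0$ where $4\Phi(\rho_0) = \cE_0$; the factor $\cJ_0^2$ kills the $h'$-contribution, and one is left with
\[
\lim_{\cJ_0 \to 0} \partial_{\cJ_0}\Theta = 1 + \sqrt{\cE_0} \int_{\rho_0}^1 \frac{d\rho}{\rho \sqrt{K(\rho)}},
\]
which lies in $(0, +\infty]$; under Assumption~\ref{propertiesphi} one has $\Phi'(\rho_0) < 0$, hence $K'(\rho_0) > 0$, the singularity of the integrand at $\rho_0$ is integrable, and this limit is in fact finite.
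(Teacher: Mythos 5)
Your reduction is correct as far as it goes: the identity $h(\rho)K'(\rho)=2/\rho$, the substitution $u^2=K(\rho)/\cE_0-\cJ_0^2$, the cancellation producing the clean boundary term $\sqrt{1-\cJ_0^2}$, and the resulting formula
\[
\partial_{\cJ_0}\Theta=\sqrt{1-\cJ_0^2}+\sqrt{\cE_0}\int_{\rho_*}^1\frac{1/\rho+\cJ_0^2\,\cE_0\,h'(\rho)}{\sqrt{K(\rho)-\cE_0\cJ_0^2}}\,d\rho
\]
all check out, and the two limits are handled correctly \emph{once positivity of the derivative is granted}. But the central assertion of the lemma --- that \eqref{strangeassumption} forces $\partial_{\cJ_0}\Theta>0$ --- is exactly the step you defer to ``the main technical obstacle,'' and nothing in your write-up establishes it. The difficulty is genuine: the hypothesis $\rho\Phi''+2\Phi'\geq0$ gives no sign for $h'$ (writing $h=1/D$ with $D=K-2\rho^3\Phi'$ one gets $D'=2\rho(\cE_0-4\Phi)-10\rho^2\Phi'-2\rho^3\Phi''$, whose sign is not determined by the hypothesis), and after the integration by parts you sketch the remaining integrand carries the factor $2\cE_0\cJ_0^2-K(\rho)$, which changes sign on $(\rho_*,1)$. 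So the ``finite-part remainder'' is a genuinely signed quantity, and you have not shown that \eqref{strangeassumption} controls it. As it stands the proof of monotonicity --- the whole content of the lemma --- is missing.

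The way out (and the paper's route, following Pulvirenti--Saffirio--Simonella) is to desingularize with a different variable: set $\sin^2\varphi=4\Phi(\rho)/\cE_0+\cJ_0^2/\rho^2$ instead of parametrizing by $K(\rho)/\cE_0$. This turns $\theta$ into an integral over $\varphi\in[\arcsin\cJ_0,\pi/2]$ with a regular integrand, and differentiating in $\cJ_0$ then yields, besides the boundary contribution, an integrand equal to a manifestly nonnegative prefactor times
\[
\rho\Phi''(\rho)+2\Phi'(\rho)+\frac{\rho^3}{\cE_0\cJ_0^2}\big(\Phi'(\rho)\big)^2\,,
\]
which is $\geq0$ pointwise, exactly by \eqref{strangeassumption}, and strictly positive on a set of positive measure since $\Phi'\neq0$ in the interior. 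Positivity is then read off term by term, with no integration by parts and no bookkeeping of cancellations near $\rho_*$. Two smaller remarks: the convexity of $\Phi$ you derive at the outset is never used; and you assume $\Phi'(1)=0$ without comment, whereas the behaviour as $\cJ_0\to1$ depends on this (if $\Phi'(1)\neq0$ the limit of $\partial_{\cJ_0}\Theta$ is $+\infty$ rather than $0$).
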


\begin{proof} An energy $\cE_0 > 0$ being fixed, the limiting values~$\Theta (\cE_0,0) = 0$ and~$\Theta (\cE_0,1) =   \pi/2$ are found by direct computation. To prove monotonicity, the main idea of~\cite{pulvirentiss} is to use the change of variable  
$$
\sin^2 \varphi:=\frac{4 \Phi (\rho)}{\cE_0} + \frac {\cJ_0^2}{\rho^2} 
$$
which yields
$$
\Theta(\cE_0,\cJ_0) =    \arcsin  \cJ_0 + \displaystyle \int_{   \arcsin  \cJ_0}^\frac\pi2 \frac{ \sin \varphi }{  \frac {\cJ_0}{\rho} - \frac{2 \rho  \Phi'(\rho)}{\cE_0\cJ_0} }\, d\varphi \,.
$$
Computing the derivative of this expression with respect to~$\cJ_0$ gives
$$
\begin{aligned}
\frac{\partial\Theta}{\partial \cJ_0} (\cE_0,\cJ_0) & =  
\frac1{\sqrt{1-\cJ_0^2}} \left(1-\frac{\cE_0\cJ_0^2 }{\cE_0\cJ_0^2 - \Phi'(1)} \right) \\
& \,  + \int_{\arcsin \cJ_0}^{\frac\pi{2}}  \frac {\cE_0^2\cJ_0^2 \rho^4\sin \varphi}{ (\cJ_0^2  \cE_0 - \rho^3 \Phi' (\rho))^3}
\left(
\rho\Phi'' (\rho) +2 \Phi' (\rho) + \frac{\rho^3}{\cE_0 \cJ_0^2}( \Phi' (\rho))^2 
\right)\, d\varphi
\end{aligned}
$$
where~$\varphi$ is defined by
$$
\sin^2 \varphi = \frac{\cJ_0^2}{\rho^2}+ \frac{2 \Phi (\rho)}{\cE_0}  \, \cdotp
$$
In view of the formula giving~$\partial_{\cJ_0} \Theta$, it  turns out assumption \eqref{strangeassumption} implies~$\partial_{\cJ_0} \Theta>0$ for all~$\cJ_0 \in (0,1),$ and also the limits
$$
\displaystyle \lim_{\cJ_0 \to 0} \,  \partial_{\cJ_0} \Theta \in (0,\infty]\quad \mbox{and}
\displaystyle\lim_{\cJ_0 \to 1} \, \partial_{\cJ_0} \Theta = 0
$$
as soon as~$\Phi'(1 )= 0$ (if not then~$\displaystyle\lim_{\cJ_0 \to 1} \partial_{\cJ_0} \Theta = \infty$).
The result follows.
\end{proof}
\begin{Rem}
Note that one can construct examples that violate assumption~\eqref{strangeassumption} and for which monotonicity fails, regardless of convexity properties of the potential $\Phi$ {\rm(\cite{pulvirentiss})}.
\end{Rem}
By Lemma~\ref{assumptiontheta},  for each~$\cE_0$ we can locally invert the map $\Theta(\cE_0,\cdot),$ and thus define~$ \cJ_0$ as a smooth function of~$\cE_0$ and~$\Theta.$ This enables us to  define a scattering cross-section (or {\it collision kernel}), as follows. 

\begin{figure}[h]
\begin{center}
\scalebox{0.5}{\includegraphics{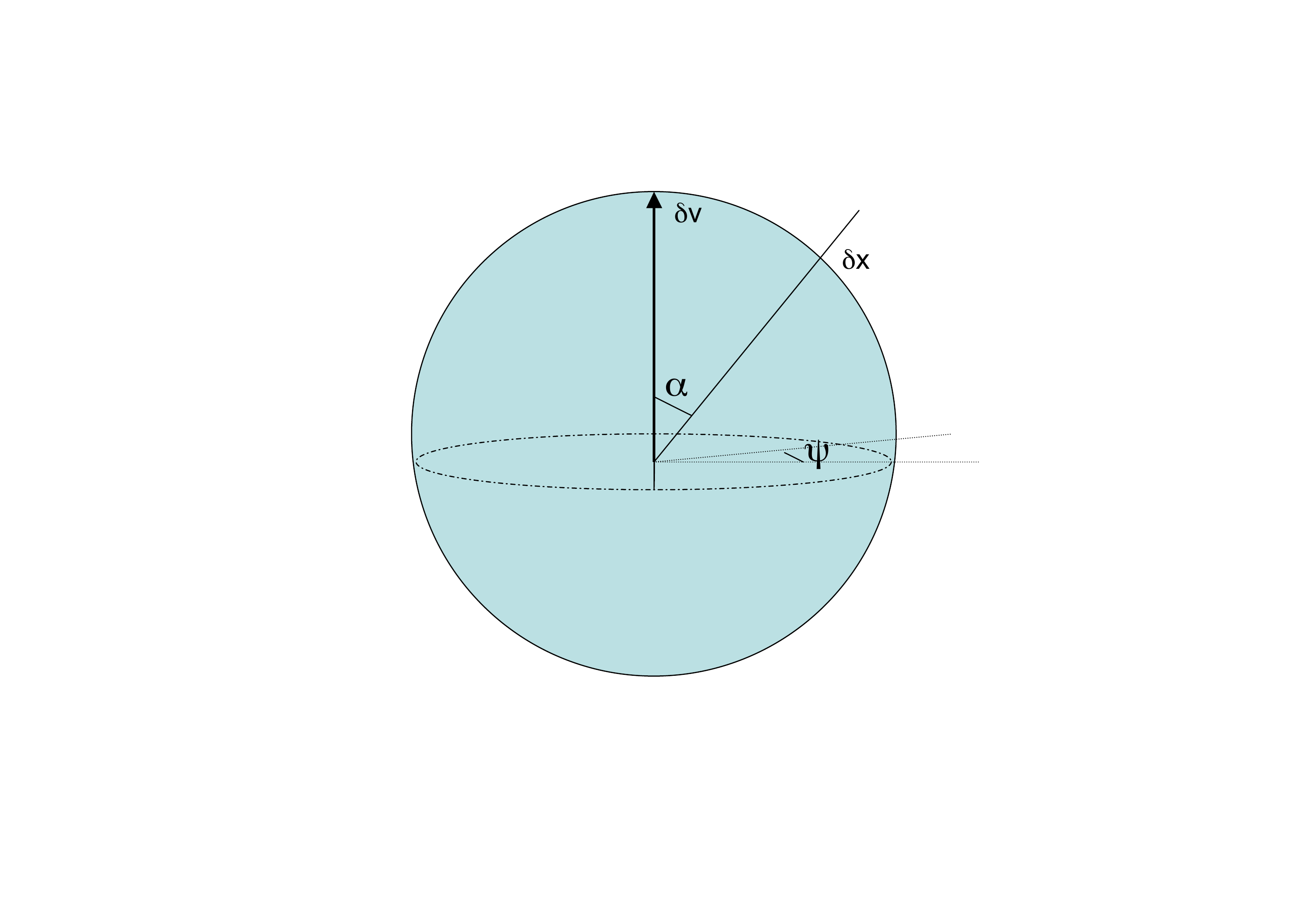}}
\caption{Spherical coordinates\label{coordinates}}
\end{center}
\end{figure}
%
%

For fixed $x_1,$ we denote $d\sigma_1$ \label{def:d-sigma1} the surface measure on the sphere $\{y \in \R^d, |y - x_1| = \e\},$ to which~$x_2$ belongs. We can parametrize the sphere by $(\a,\psi),$ with $\psi \in {\bf S}_1^{d-2},$ where $\a$ is the angle defined in~\eqref{def:en-impact}. There holds
 $$ d\sigma_1 =  \e^{d-1} (\sin \a)^{d-2} d\a d\psi  \, .$$
The direction of the apse line is $\omega = (\Theta,\psi),$ so that, denoting $d\omega$ the surface measure on the unit sphere, there holds 
\begin{equation} \label{domega} d\o = (\sin \Theta)^{d-2} d\Theta d\psi\, . 
\end{equation}
By definition of $\a$ in \eqref{def:en-impact}, there holds
 $$ (x_1-x_2) \cdot (v_1-v_2) =\eps |v_1-v_2|\cos \alpha\, ,$$
 so that
$$\begin{aligned}
\frac1\e \,  (x_1-x_2) \cdot (v_1-v_2) \: d\sigma_1 &=  \eps ^{d-1}|v_1-v_2|\cos \alpha \:  (\sin \alpha )^ {d-2}\: d\alpha d\psi  \\ &= \eps^{d-1} |v_1-v_2| \: \cJ_0^{d-2}d\cJ_0 d\psi\, ,
 \end{aligned}
$$
where in the second equality we used the definition of $\cJ_0$ in \eqref{def:en-impact}. 
This gives
\begin{equation} \label{crosssectionlatlong}
\frac1\e \, (x_1-x_2) \cdot (v_1-v_2) \: d\sigma_1=\eps^{d-1} |v_1-v_2| \cJ_0^{d-2}\d_\Theta \cJ_0 \: d\Theta d\psi\, ,
 \end{equation}
wherever $\d_\Theta \cJ_0$ is defined, that is, according to Lemma \ref{assumptiontheta}, for $\cJ_0 \in [0,1).$ 

\begin{Def} \label{def:b} The scattering cross-section is defined for $|w| > 0$ and $\Theta \in (0,\pi/2]$ by~$\cJ_0^{d-2} \d_\Theta \cJ_0 ( {\sin \Theta})^{2-d}$. In the following we shall use the notation
 \begin{equation}\label{defcross-section}
 b(w, \Theta) :=   |w| \cJ_0^{d-2} \d_\Theta \cJ_0 ( {\sin \Theta})^{2-d}\,  ,
  \end{equation}
and abusing notation we shall write~$b( w, \Theta) = b(w, \omega ).$
 \end{Def}  
By Lemma~{\rm\ref{assumptiontheta}}, the cross-section~$b$ is a locally bounded function of the relative velocities and scattering angle.

\subsection{Scattering cross-section}\label{scatteringcrosssectionpotentialcase}
The relevance of $b$ is made clear in the  derivation of the Boltzmann hierarchy, where we shall   use the identity
 \begin{equation} \label{jac}
  \frac1\e \,  (x_1-x_2) \cdot (v_1-v_2) \: d\sigma_1 = \e^{d-1} b(v_1 - v_2,\o) d\o\,,
 \end{equation}
 derived from \eqref{domega}, \eqref{crosssectionlatlong} and Definition \ref{def:b}. As in Chapter~\ref{spheres} (see in particular Paragraph~\ref{spheresboltz}), we can formally derive the Boltzmann collision operators using this formulation: we thus define
 \begin{equation}
\label{Boltzmann-opscatfacteurdimpactchapter}
\begin{aligned}&  {\mathcal C}^0_{s,s+1}    f^{(s+1)} (t,Z_s) :=       \sum_{ i = 1}^s  \int \indc_{\nu\cdot (v_{s+1}-v_i)>0} \, \nu   \cdot (v_{s+1}-v_i)  \\
&\qquad \times{} \Big(  f^{(s+1)}  (t,x_1,v_1,\dots ,x_i, v^*_i,\dots ,x_s,v_s, x_i, v^*_{s+1})    -     f^{(s+1)}  (t,Z_s, x_i, v_{s+1}) \Big) d\nu dv_{s+1} \, ,
\end{aligned}
\end{equation}
where~$(v^*_i,v^*_{s+1})$ is obtained from~$(v_i,v_{s+1})$ by applying the inverse scattering operator~$\sigma_0^{-1}$:
$$
\sigma_0^{-1} \big(\nu ,v_i,v_{s+1} \big) =  \big( \nu,v^*_i,v^*_{s+1} \big) \, .
$$
This
  can also be written using the cross-section:
 \begin{equation}
\label{Boltzmann-opscatchapter}
\begin{aligned}& {\mathcal C}^0_{s,s+1}    f^{(s+1)} (t,Z_s) :=       \sum_{ i = 1}^s  \int  \,  b(v_1 - v_2,\o)  \\
& \qquad \times{} \Big(  f^{(s+1)}  (t,x_1,v_1,\dots ,x_i, v^*_i,\dots ,x_s,v_s, x_i, v^*_{s+1})    -     f^{(s+1)}  (t,Z_s, x_i, v_{s+1}) \Big) d\omega dv_{s+1} \, .
 \end{aligned}
\end{equation} 
  
 \begin{Rem} It is not possible to define an integrable cross-section if the potential is not compactly supported, no matter how fast it might be decaying. This issue is related to the occurrence of grazing collisions and discussed in particular in \cite{villani}, Chapter~{\rm 1}, Section~{\rm 1.4}. 
 However it is still possible to study the limit towards the Boltzmann equation, if one is ready to change the formulation of the Boltzmann equation by renouncing to the cross-section formulation~{\rm(\cite{pulvirentiss})}.
 
 The question of the convergence to Boltzmann in the case of long-range potentials is a challenging open problem; it was considered by L. Desvillettes and M. Pulvirenti in  \cite{desvillettespulvirenti} and  L. Desvillettes and V. Ricci   in~\cite{desvillettesricci}.
  \end{Rem}

\chapter{Truncated marginals and the BBGKY hierarchy}
\setcounter{equation}{0}
\label{BBGKY}

Our starting point in this first part is the Liouville equation \eqref{Liouville} satisfied by the $N$-particle distribution function~$f_N.$ We reproduce here equation \eqref{Liouville}: 
\begin{equation} \label{liouville-h}
\partial_t f_N +\sum_{1 \leq i \leq N } v_i \cdot \nabla_{x_i} f_N  -\sum_{1 \leq i \neq j \leq N} \frac1\eps \nabla\Phi \left({x_i - x_j \over \eps}\right)\cdot \nabla_{v_i} f_N  = 0 \, .
\end{equation}
The arguments of $f_N$ in \eqref{liouville-h} are $(t,Z_N) \in \R_+ \times \Omega_N,$ where we recall that
$$
\Omega_N:= \Big \{Z_N \in \R^{2dN} \, , \, \forall i \neq j \, , \,  x_i \neq x_j\Big\} \, .
$$

As recalled in Part II, Chapter~\ref{spheres}, the classical strategy to obtain a kinetic equation  is to write the evolution equation for the first marginal of the distribution function $f_N$\label{indexdefmarginalbbgky}, namely
$$
f_N^{(1)} (t,z_1):= \int_{\R^{2d(N-1)}} f_N(t,z_1,z_2, \dots,z_N) \,  dz_2 \dots dz_N\, ,
$$
which leads to the study of the hierarchy of equations involving all the marginals of $f_N$ 
\begin{equation}
\label{marginal}
 f_N^{(s)} (t,Z_s) :=    \int_{\R^{2d(N-s)}} f_N (t,Z_s,z_{s+1},\dots,z_N)\: dz_{s+1} \cdots dz_N \, .
 \end{equation}
 
 In Section~\ref{truncatedmarginals}
 it is shown that due to the presence of the potential, and contrary to the hard-spheres case described in Paragraph~\ref{liouvillebbgkyHS},
 it is necessary to truncate  those marginals away from the set~$\Omega_N$.    An equation for the {\it truncated marginals} is derived in weak form in Section~\ref{weakliouville}. 
 In order to introduce adequate collision operators, the notion of cluster is introduced and described in Section~\ref{sec:def-closure}, following the work of F. King~\cite{K}. Then
 collision operators are introduced in Section~\ref{collisionliouville}, and finally the integral formulation of the equation is written in Section~\ref{mildliouville}.


 \section{Truncated marginals}\label{truncatedmarginals}
\setcounter{equation}{0}
From \eqref{liouville-h}, we deduce by integration that the {\it untruncated marginals} defined in~(\ref{marginal}) solve
\begin{equation}\label{eq:untruncated-marg}
\begin{aligned}
 \partial_tf_N^{(s)} (t,Z_s)+  \sum_{i=1}^sv_i \cdot \nabla_{x_i}f_N^{(s)}  (t,Z_s)- \frac1{\eps} \sum_{i,j=1 \atop i \neq j}^s  \nabla\Phi \left(\frac{x_i - x_j}\eps \right)\cdot \nabla_{v_i} f_N^{(s)}  (t,Z_s)& \\
= \frac{N-s}\eps \sum_{i=1}^s\int \nabla\Phi \left(\frac{x_i - x_{s+1}}\eps \right)\cdot \nabla_{v_{i}} f_N^{(s+1)}  (t,Z_{s},z_{s+1} )\: dz_{s+1} \, .&
\end{aligned}
\end{equation}
There are several differences between \eqref{eq:untruncated-marg} and the BBGKY hierarchy for hard spheres \eqref{def:collisionop0}-\eqref{BBGKYhierarchyspheres}. One is that the transport operator in the left-hand side of \eqref{eq:untruncated-marg} involves a force term. Another is that the integral term in the right-hand side of \eqref{eq:untruncated-marg} involves velocity derivatives. Also, that integral term  is a linear integral operator acting on higher-order marginals, just like \eqref{def:collisionop0}, but, contrary to~\eqref{def:collisionop0}, is {\it not} spatially localized, in the sense that the integral in~$x_{s+1}$ is over the whole ball $B(x_i,\e),$ as opposed to an integral over a sphere in \eqref{def:collisionop0}.

This leads us to distinguish spatial configurations in which interactions do take place from spatial configurations in which particles are pairwise at a distance greater than $\e,$ by truncating off the interaction domain $\big\{ Z_N, \, \mbox{$|x_i - x_j| \leq \e$ for some $i \neq j$} \big\}$ in the integrals defining the marginals. For the resulting truncated marginals, collision operators will appear as integrals over a piece of the boundary of the interaction domain, just like in the case of hard spheres. The scattering operator of Chapter \ref{scattering} (Section \ref{sec:scattering-map}) will then play the role that the boundary condition plays in the case of hard spheres in Chapter \ref{spheres}.

Suitable quantities to be studied are therefore not the marginals defined in~(\ref{marginal}) but rather the {\it truncated marginals}
 \begin{equation}
 \label{t-marginal0}
 \widetilde  f_N^{(s)} (t,Z_s) :=    \int_{\R^{2d(N-s)}} f_N (t,Z_s,z_{s+1},\dots,z_N) \displaystyle \prod_{ i\in  \{1,\dots, s\} \atop j\in  \{s+1,\dots , N\}} \!\!\!\! \indc_{|x_i - x_j| > \eps}
\: dz_{s+1} \cdots dz_N \, , 
\end{equation}
where~$|\cdot|$ denotes the euclidean norm.
Notice that
 $$
 (\widetilde f_N^{(1)} - f_N^{(1)}) (t,z_1) = \int_{\R^{2d(N-1)}} f_N (t,z_1,z_{2},\dots,z_N)  (1-\prod_{  j\in  \{2,\dots , N\}} \indc_{|x_1 - x_j| > \eps})
\: dz_{2} \cdots dz_N
$$
so that 
\begin{equation}
\label{error-tilde}
 \|( \widetilde f_N^{(1)} - f_N^{(1)} )(t)\| _{L^\infty(\R^{2d})} \leq C (N-1) \eps^d \| f_N^{(2)}(t)\|_{L^\infty(\Omega_2)} \, .
 \end{equation}
We therefore expect both functions to have the same asymptotic behaviour in the Boltzmann-Grad limit~$N \eps^{d-1} \equiv 1$. This is indeed proved in Lemma \ref{lem:trunc-untrunc}.

Given $1 \leq i < j \leq N,$ we   recall that~$dZ_{(i,j)}$ denotes the $2d(j-i+1)$-dimensional Lebesgue measure~$dz_i dz_{i+1} \dots dz_j,$ and~$dX_{(i,j)}$ the~$d(j-i+1)$-dimensional Lebesgue measure~$dx_i dx_{i+1} \dots dx_j.$ \label{page:ij-measureX}We also define
\begin{equation} \label{def:DNS}
{\mathcal D}^s_N:= \Big\{
X_N \in  \R^{dN}, \: \forall (i,j) \in[1,s]\times [s+1,N]\, , \: |x_i - x_j| > \eps 
\Big\}\, ,
\end{equation}
where~$[1,s]$ is short for $[1,s] \cap \N = \{ k \in \N, \, 1 \leq k \leq s\}.$ 
Then the truncated marginals~(\ref{t-marginal0}) may be formulated as follows:
 \begin{equation}
 \label{t-marginal}
 \widetilde  f_N^{(s)} (t,Z_s)  =    \int_{\R^{2d(N-s)}} f_N (t,Z_s,z_{s+1},\dots,z_N) \indc_{X_N \in {\mathcal D_N^s}}  
 \: dZ_{s+1,N }\, . 
\end{equation}

The key in introducing the truncated  marginals \eqref{t-marginal}, following King \cite{K}, is that it allows for a derivation of a hierarchy that is similar to the case of hard spheres. The main drawback is that contrary to the hard-spheres case in~(\ref{nicemarginalsHS}), truncated marginals are not actual {\it marginals}, in the sense that 
\begin{equation} \label{pbhierarchy} \widetilde f_N^{(s)} (Z_s) \neq \int_{\R^{2d}} \indc_{X_{s+1} \in B} \widetilde f_N^{(s+1)} (Z_s, z_{s+1}) \, dz_{s+1} \, ,
\end{equation}
for any $B \subset \R^{d(s+1)},$ in particular if $B = \R^{d(s+1)},$ simply because $\D_N^s$ is {\it not} included in $\D^{s+1}_N.$ Indeed, conditions $|x_j - x_{s+1}| > \e,$ for $j \leq s,$ hold for $X_N \in \D^s_N,$ but not necessarily for $X_N \in \D^{s+1}_N.$ Furthermore, $\D^s_N$ intersects all the $\D^{s+m}_N,$ for $m \in [1,N-s].$ 
A consequence is the existence of higher-order interactions between truncated marginals, as seen below in \eqref{w-BBGKY}. Proposition \ref{propcontinuityclusters} in Chapter~\ref{chaptercluster} states however that these higher-order interactions are negligible in the Boltzmann-Grad limit.

\section{Weak formulation of Liouville's equation}\label{weakliouville}
\setcounter{equation}{0}

 Our goal in this section is to find the weak formulation of the system of equations satisfied 
 by the family of truncated marginals $\big( \widetilde f_N^{(s)}\big)_{s \in [1,N]}$ defined above in \eqref{t-marginal}. The strategy will be similar to that followed in Chapter~\ref{spheres} in the hard-spheres case. From now on we assume that~$  f_N $ decays at infinity in the velocity variable.

 Given a smooth, compactly supported  function~$\phi$   defined on~$\R_+ \times \R^{2ds} $ and satisfying the symmetry assumption~(\ref{sym}),  we have 
\begin{equation}\label{integralformulationf}
\begin{aligned}
\int_{ \R_+ \times  \R^{2dN} } \Bigl(
\partial_t f_N + \sum_{i = 1}^N v_i \cdot \nabla_{x_i} f_N  - \frac1{\eps}\sum_{i = 1}^N  \sum_{j \neq i} \nabla\Phi  \left(\frac{x_i - x_j}\eps \right)\cdot \nabla_{v_i} f_N 
\Bigr)(t,Z_N) \\
{}\times \phi (t,Z_s)  \displaystyle \indc_{X_N \in {\mathcal D_N^s}}  \:  d Z_N dt = 0 \, .
\end{aligned}
\end{equation}
Note that in the above double sum in~$i$ and~$j$, all the terms vanish except when $(i,j) \in[1,s]^2$ and when~$(i,j) \in  [s+1,N]^2,$ by assumption on the support of $\Phi.$

We now use  integrations by parts to derive from (\ref{integralformulationf}) the weak form of the equation in the marginals~$\widetilde f_N^{(s)}.$ 
On the one hand an integration by parts in the time variable gives
$$ \begin{aligned}
\int_{ \R_+ \times   \R^{2dN} }  \partial_t   f_N (t,Z_N)  \phi (t,Z_s)  \displaystyle \indc_{X_N \in {\mathcal D_N^s}} \:  d Z_N dt& =  - \int_{  \R^{2dN} } f_N(0,Z_N) \phi(0,Z_s) \displaystyle \indc_{X_N \in {\mathcal D_N^s}}  \: dZ_N \\ & \qquad - \int_{ \R_+ \times   \R^{2dN} }  f_N (t,Z_N) \partial_t\phi (t,Z_s)  \displaystyle \indc_{X_N \in {\mathcal D_N^s}} \:    d Z_N dt \, ,
\end{aligned}$$
hence, by definition of $\widetilde f_N^{(s)},$
$$
 \begin{aligned}
 \int_{ \R_+ \times  \R^{2dN} }  \partial_t  f_N (t,Z_N)  \phi (t,Z_s)  \displaystyle \indc_{X_N \in {\mathcal D_N^s}} \:  d Z_N dt& =- \int_{\R^{2ds}} \widetilde  f_N^{(s)}(0,Z_s) \phi(0,Z_s) \: dZ_s  \\
 & \qquad - \int_{ \R_+ \times \R^{2ds}} \widetilde f_N^{(s)}  (t,Z_s) \partial_t\phi (t,Z_s) \:    d Z_s dt \, .
\end{aligned}
$$
Now let us compute
$$\displaystyle   \sum_{i = 1}^N \int_{  \R^{2dN} } v_i \cdot \nabla_{x_i} f_N (t,Z_N) \phi  (t,Z_s)   \displaystyle \indc_{X_N \in {\mathcal D_N^s}}  \,dZ_N = \int_{  \R^{2dN} } 
  \mbox{div}_{X_N} \big ({V_N} \:  f_N   (t,Z_N) \big)  \phi  (t,Z_s)    \displaystyle \indc_{X_N \in {\mathcal D_N^s}} \,dZ_N$$
   using Green's formula. The boundary of ${\mathcal D}_N^s$ is made of configurations with at least one pair $(i,j)$, satisfying~$1 \leq i \leq s$ and~$s + 1 \leq j \leq N$, with~$|x_i - x_j| = \e.$ 
   
Let us define, for any couple~$(i,j) \in [1,N]^2$,
\begin{equation}\label{defSigmaNij}
 \begin{aligned}
 {\Sigma}_N^s (i,j) := \Big\{
X_N \in  \R^{dN}, \quad& |x_i - x_j| = \eps\\
& \mbox{and} \quad \forall (k,\ell) \in[1,s]\times [s+1,N] \setminus\{i,j\}, \: |x_k - x_\ell| > \eps  \Big\} \, .
\end{aligned}
\end{equation}
  We 
 notice  that~${\Sigma}_N^s   (i,j)$ is a submanifold of
 $\big\{
X_N \in  \R^{dN}, \,  |x_i - x_j| = \eps \big\},$
which is a smooth,  codimension~1 manifold of $\R^{dN}$ (locally isomorphic to the space~${\mathbf S}_\eps^d \times \R^{d(N-1)}$), and we denote by~$d\sigma_N^{i,j}$ its surface measure, induced by the Lebesgue measure. 
Configurations with more than one {\it collisional} pair, i.e., $(i,j)$ and $(i',j')$ with $1 \leq i,i' \leq s,$ $s + 1 \leq j,j' \leq N$, with $|x_i - x_j| = |x_{i'} - x_{j'}| = \e,$ and~$\{i,j\} \neq \{i',j'\},$ are subsets of submanifols of $\R^{dN}$ of codimension at least two, and therefore contribute nothing to the boundary terms. 
 Denoting~$n^{i,j}$ the outward normal to~${\Sigma}_N^s   (i,j)$ we therefore obtain by  Green's formula:
$$ \begin{aligned} 
& \sum_{i= 1}^N \int_{ \R_+ \times  \R^{2dN}} 
 v_i\cdot \nabla_{x_i}  f_N  (t,Z_N) \phi  (t,Z_s)   \displaystyle \indc_{X_N \in {\mathcal D_N^s}}  \,dZ_N \, dt  \\
& \qquad = - \sum_{i=1}^s    \int_{ \R_+ \times  \R^{2dN}}   f_N   (t,Z_N)  v_i\cdot \nabla_{x_i} \phi  (t,Z_s)   \displaystyle \indc_{X_N \in {\mathcal D_N^s}}   \,dZ_N dt \\
  &    \qquad  \qquad  
 \displaystyle  +   \sum_{1 \leq i \neq j \leq N}  \int_{  \R_+\times \R^{dN} \times{\Sigma}_N^s  (i,j)}  
n^{i,j} \cdot V_N  \:   f_N (t,Z_N)\phi (t,Z_s)\:    d \sigma_N^{i,j} dV_N dt \,.
\end{aligned}
$$
By symmetry \eqref{sym}  and recalling that~$\nu^{i,j} = (x_i-x_j)/|x_i-x_j|$  this gives
$$ \begin{aligned} 
& \sum_{i= 1}^N \int_{ \R_+ \times \R^{2dN}} 
 v_i\cdot \nabla_{x_i}   f_N  (t,Z_N) \phi  (t,Z_s)   \displaystyle \indc_{X_N \in {\mathcal D_N^s}} \,dZ_N \, dt \\
 &\qquad  = - \sum_{i=1}^s    \int_{ \R_+ \times \R^{2dN}}    f_N (t,Z_N)  v_i\cdot \nabla_{x_i} \phi  (t,Z_s)   \displaystyle \indc_{X_N \in {\mathcal D_N^s}}  \,dZ_N dt \\
 &\qquad      \quad 
 \displaystyle  {} +  (N-s)\sum_{i=1}^s  \int_{  \R_+ \times \R^{dN} \times{\Sigma}_N^s  (i,j)}  
\frac{\nu^{i,s+1} }{\sqrt2} \cdot (v_{s+1}-v_i) \:   f_N (t,Z_N)\phi (t,Z_s)\:     d \sigma_N^{i,j} dV_N dt \,,
\end{aligned}
$$ 
so finally by definition of~$ \widetilde  f_N^{(s)}$, we obtain
 \begin{equation} \label{eq:s} 
 \begin{aligned}
&\displaystyle  \sum_{i=  1}^N \int_{ \R_+ \times  \R^{2dN}} 
 v_i\cdot \nabla_{x_i}     f_N (t,Z_N)  \phi  (t,Z_s)    \displaystyle \indc_{X_N \in {\mathcal D_N^s}}\,dZ_N \, dt  \\
& \displaystyle \quad   = -\sum_{i=1}^s    \int_{ \R_+ \times \R^{2ds}}  \widetilde  f_N^{(s)} (t,Z_s)   v_i\cdot \nabla_{x_i} \phi  (t,Z_s)   \,dZ_s dt  \\
&\displaystyle \qquad {}  +   (N-s)  \sum_{i=1}^s  \int_{  \R_+ \times \R^{dN} \times{\Sigma}_N^s  (i,j)}  
\frac{\nu^{i,s+1} }{\sqrt2}   \cdot (v_{s+1}-v_i) \:   f_N (t,Z_N)\phi (t,Z_s)\:     d \sigma_N^{i,j} dV_N dt \,  .
  \end{aligned}
 \end{equation}
 Now let us consider the contribution of the potential in~(\ref{integralformulationf}). 
We split the sum as follows:
  $$
   \begin{aligned}
&  \frac1{\eps} \sum_{i} \sum_{j \neq i}  \int_{\R_+ \times  \R^{2dN}}
\nabla\Phi \left(\frac{x_i - x_j}\eps \right)\cdot \nabla_{v_i}  f_N (t,Z_N) \phi (t,Z_s)   \displaystyle \indc_{X_N \in {\mathcal D_N^s}}\:    d Z_N dt \\
&\quad  =
  \frac1{\eps}  \sum_{i,j = 1 \atop j \neq i}^s  \int_{\R_+ \times  \R^{2dN}}
\nabla\Phi \left(\frac{x_i - x_j}\eps \right)\cdot \nabla_{v_i}  f_N (t,Z_N) \phi (t,Z_s)   \displaystyle \indc_{X_N \in {\mathcal D_N^s}}\:    d Z_N dt \\
  & \qquad +  \frac1{\eps}  \sum_{i,j = s+1 \atop j \neq i}^N  \int_{\R_+ \times  \R^{2dN}}
  \nabla\Phi \left(\frac{x_i - x_j}\eps \right)\cdot \nabla_{v_i}  f_N (t,Z_N) \phi (t,Z_s)  \displaystyle \indc_{X_N \in {\mathcal D_N^s}} \:    d Z_N dt  \, .
 \end{aligned}  $$
We notice that the second term in the right-hand side vanishes identically. It follows that 
   $$
  \begin{aligned}
&  \frac1{\eps} \sum_{i} \sum_{j \neq i}  \int_{\R_+ \times   \R^{2dN}}
\nabla\Phi \left(\frac{x_i - x_j}\eps \right)\cdot \nabla_{v_i}  f_N (t,Z_N) \phi (t,Z_s)   \displaystyle \indc_{X_N \in {\mathcal D_N^s}}\:    d Z_N dt \\
&\quad  =
   -  \frac1{\eps} \sum_{i,j = 1 \atop j \neq i}^s \int_{\R_+ \times \R^{2ds}}  
\nabla\Phi \left(\frac{x_i - x_j}\eps \right)\cdot \nabla_{v_i} \phi (t,Z_s) \widetilde f_N^{(s)}  (t,Z_s) \:       d Z_s dt 
 \end{aligned}
  $$
  so in the end
   we obtain
 \begin{equation} \label{1-BBKGY} \begin{aligned}
& \int_{\R_+ \times \R^{2ds}} \widetilde f_N^{(s)} (t,Z_s)    \Big( \d_t \phi + \mbox{div}_{X_s} \: (V_s  \phi) -
 \displaystyle  \frac1{\eps} \sum_{i,j = 1 \atop j \neq i}^s  \nabla\Phi \left(\frac{x_i - x_j}\eps \right)\cdot \nabla_{v_i} \phi  \Big) (t,Z_s)\:    d Z_s dt  \\ & = -  \int_{\R^{2ds}}  \widetilde f_N^{(s)}(0,Z_s) \phi(0,Z_s) \: dZ_s  \\ & 
 \quad -  (N-s)\sum_{i=1}^s   \int_{ \R_+ \times \R^{dN} \times {\Sigma}_N^s   (i,s+1) }  
\frac{\nu^{i,s+1} }{\sqrt2}  \cdot (v_{s+1}-v_i)  \:   f_N (t,Z_N) \phi (t,Z_s)\:    d \sigma_N^{i,s+1} dV_Ndt \, .\end{aligned}
 \end{equation}
 
 \begin{Rmk}\label{multiple}
Using the weak form of Liouville's equation, we see that configurations in which there would be two pre-or post-collisional pairs,  can be neglected (they occur as a boundary integral on a zero measure subset of $\d {\mathcal D}^s_N$) . 
\end{Rmk}

 \section{Clusters}\label{sec:def-closure}
 \setcounter{equation}{0}
We want to analyze the second term on the right-hand side of~(\ref{1-BBKGY}). 
 We notice that in the   space integration   the variables~$x_{s+2}, \dots, x_N$ are integrated over~$\R^{d(N-s-1)}$ (with the restriction that they must be at a distance at least~$\eps$ from~$X_s$) whereas~$x_{s+1}$
   must lie in the  sphere centered at~$x_i$ and of radius~$\eps$.   It is therefore natural to try to express that contribution in terms  of the marginal~$\widetilde f_N^{(s+1)}(Z_{s+1})$. 
   However as pointed out in~(\ref{pbhierarchy}),   
   $$
   \int \widetilde f_N^{(s+1)}(Z_{s+1}) \,  dz_{s+1} \neq \widetilde f_N^{(s)} (Z_s) \, .
   $$
     The difference between those two terms is that on the one hand 
  $$\forall X_N \in {\mathcal D} _N^{s+1} \, ,\quad \hbox{ one has } |x_j - x_{s+1}| > \eps \hbox{ for all } j\geq s+2 \, ,$$ which is not the case for~$X_N \in {\mathcal D} _N^{s}$, and on the other hand
  $$\forall X_N \in {\mathcal D} _N^{s} \, ,\quad \hbox{ one has } |x_j - x_{s+1}| > \eps \hbox{ for all } j\leq s \, ,$$ a condition which does not appear in the definition of~${\mathcal D} _N^{s+1} $.

  This leads to the following definition.
  \begin{Def}[$\eps$-closure]\label{cluster}
Given a subset~$X_N = \{ x_1,\dots,x_N\} $ of~$ \R^{dN}$ and an integer~$s $ in~$ [1,N],$  the~$\eps$-closure~$E(X_s,X_N) $ of~$X_s$ in~$X_N$ is defined as the intersection of all subsets $Y$ of~$X_N$ which contain~$X_s$ and satisfy the separation condition
\begin{equation} \label{cond:separation}\forall y \in Y \, ,\quad \forall x \in X_N \setminus Y \, , \quad |x-y|>\eps\,.
\end{equation}
We denote $|E(X_s,X_N)|$ the cardinal of $E(X_s,X_N).$\end{Def}

Now let us introduce  the following notation, useful in situations where $X_N$ belongs to~$\Sigma^s_N(i,s+1),$ defined in \eqref{defSigmaNij}.
\begin{nota} If $X_{s+m} = E(X_s,X_{s+m} )$ and if for some  integers~$j_0 \leq s < k_0 \leq s+m,$ there holds~$|x_j - x_k| > \e$ for all $(j,k) \in [1,s] \times [s+1,s+m] \setminus \{(j_0,k_0)\},$ then we say that $E(X_s,X_{s+m})$ has a {\it weak link} at $(j_0,k_0),$ and we denote $X_{s+m} = E_{\langle j_0, k_0 \rangle}(X_s,X_{s+m}).$
\end{nota}
Moreover the following notion, following King \cite{K}, will turn out to be very useful.
 \begin{Def}[Cluster] \label{definitionclusterking}
A cluster of base~$X_s=\{x_1,\dots,x_s\} $  and length $m$ is any point~$\{x_{s+1},\dots,x_{s+m}\} $ in~$\R^{dm}$  such that
$ E(X_s, X_{s+m}) =X_{s+m}\,.$ We denote $\Delta_m(X_s)$ the set of all such clusters.
\end{Def}
The  proof of the following lemma  is completely elementary.
\begin{lem} \label{lem:cluster-eq}
The following equivalences hold, for $m \geq 1:$
\begin{equation} \label{equiv-cluster1}
 \Big( E(X_s,X_N) = X_{s+m} \Big) \iff \Big( \mbox{$E(X_s,X_{s+m}) = X_{s+m}$ and $X_N  \in \D^{s+m}_N$} \Big) \, ,
 \end{equation}
  \begin{equation} \label{equiv-cluster2}
 \left( \begin{aligned} E(X_s,X_N) & = X_{s+m} \\ X_N & \in \Sigma_N^s(i,s+1) \end{aligned} \right) \iff \left( \begin{aligned} E_{\langle i,s+1\rangle}(X_s,X_{s+m}) & = X_{s+m} \\ X_N & \in \D^{s+m}_N \\ |x_i - x_{s+1}| & = \e  \end{aligned}\right) \, ,
 \end{equation}
 as well as the implication, for $m \geq 2,$
 \begin{equation} \label{equiv-cluster3}
 \Big( E_{\langle i,s+1\rangle}(X_s,X_{s+m}) = X_{s+m} \Big) \implies \Big( \big\{x_{s+2},\dots,x_{s+m}\big\} \in \Delta_{m-1}(x_{s+1})\Big) \, .
  \end{equation}\end{lem}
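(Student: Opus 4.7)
The plan is to unpack Definition \ref{cluster} as follows: since the intersection of any family of subsets of $X_N$ satisfying the separation condition \eqref{cond:separation} still satisfies it (if $y\in Y_1\cap Y_2$ and $x\in X_N\setminus(Y_1\cap Y_2)$, then $x$ lies outside at least one of $Y_1,Y_2$, so $|x-y|>\e$), the $\e$-closure $E(X_s,X_N)$ is characterised as the \emph{minimal} subset of $X_N$ containing $X_s$ and satisfying \eqref{cond:separation}. With this reformulation, all three statements become purely set-theoretic transfers of the separation property between three nested ambient sets ($X_{s+m}\subset X_N$ and $\{x_{s+1},\dots,x_{s+m}\}\subset X_{s+m}$).

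For \eqref{equiv-cluster1}, the forward direction is immediate: separation of $X_{s+m}$ from $X_N\setminus X_{s+m}$ gives $|x_j-x_k|>\e$ for all $j\in[1,s+m]$, $k\in[s+m+1,N]$, hence $X_N\in\D^{s+m}_N$; and if $Y\subsetneq X_{s+m}$ contained $X_s$ and satisfied separation in $X_{s+m}$, then together with $X_N\in\D^{s+m}_N$ it would satisfy separation in $X_N$, contradicting minimality of $X_{s+m}$ as $E(X_s,X_N)$. For the reverse direction, $X_N\in\D^{s+m}_N$ provides separation of $X_{s+m}$ from $X_N\setminus X_{s+m}$, so $X_{s+m}$ is a candidate in $X_N$; any strictly smaller candidate $Y\subsetneq X_{s+m}$ would, by the same $\D^{s+m}_N$ bound, satisfy separation inside $X_{s+m}$ and contradict $E(X_s,X_{s+m})=X_{s+m}$.

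For \eqref{equiv-cluster2} I would use \eqref{equiv-cluster1} to exchange $E(X_s,X_N)=X_{s+m}$ with ``$E(X_s,X_{s+m})=X_{s+m}$ and $X_N\in\D^{s+m}_N$'', and then observe that the definition of $\Sigma_N^s(i,s+1)$ in \eqref{defSigmaNij} is precisely the specification that the unique $[1,s]\times[s+1,N]$ pair violating the strict inequality $|x_k-x_\ell|>\e$ is $(i,s+1)$, with $|x_i-x_{s+1}|=\e$. Splitting the range $\ell\in[s+1,N]$ as $[s+1,s+m]\cup[s+m+1,N]$ and using $X_N\in\D^{s+m}_N$ for the second piece, the remaining condition on $[1,s]\times[s+1,s+m]\setminus\{(i,s+1)\}$ is exactly the decoration ``weak link at $(i,s+1)$'' in the notation $E_{\langle i,s+1\rangle}$; both implications follow.

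For \eqref{equiv-cluster3}, I would argue by contradiction: if $\{x_{s+2},\dots,x_{s+m}\}\notin\Delta_{m-1}(x_{s+1})$, then by the minimality characterisation there exists $Y\subsetneq\{x_{s+1},\dots,x_{s+m}\}$ with $x_{s+1}\in Y$ and $Y$ separated from $\{x_{s+1},\dots,x_{s+m}\}\setminus Y$. Setting $Y':=X_s\cup Y$, I would verify that $Y'\subsetneq X_{s+m}$ still satisfies separation in $X_{s+m}$: the pairs with both endpoints in $\{x_{s+1},\dots,x_{s+m}\}$ are handled by assumption on $Y$; for a cross pair with one endpoint $x_k\in X_s$ and the other $x_\ell\in\{x_{s+1},\dots,x_{s+m}\}\setminus Y$, the condition $x_{s+1}\in Y$ forces $\ell\neq s+1$, so $(k,\ell)\neq(i,s+1)$ and separation comes from the weak-link hypothesis. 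This contradicts $E_{\langle i,s+1\rangle}(X_s,X_{s+m})=X_{s+m}$. The only delicate point, which I view as the main (mild) obstacle, is precisely this careful bookkeeping that the excluded pair $(i,s+1)$ never appears among the cross pairs one needs to control, and this is exactly where the condition $x_{s+1}\in Y$ (rather than some arbitrary element of $\{x_{s+1},\dots,x_{s+m}\}$) is used.
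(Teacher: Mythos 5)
Your proof is correct. The paper itself gives no argument here -- it simply declares the lemma ``completely elementary'' -- so there is no official proof to compare against; your reformulation of $E(X_s,X_N)$ as the \emph{minimal} superset of $X_s$ satisfying the separation condition (justified by stability of that condition under intersections) is the right way to make the three statements routine, and your handling of the one genuinely delicate point in \eqref{equiv-cluster3} (that $x_{s+1}\in Y$ forces the cross pairs $(k,\ell)$ to avoid the weak link $(i,s+1)$) is exactly what is needed.
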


\begin{figure}[h]
\begin{center}
\scalebox{0.45}{\includegraphics{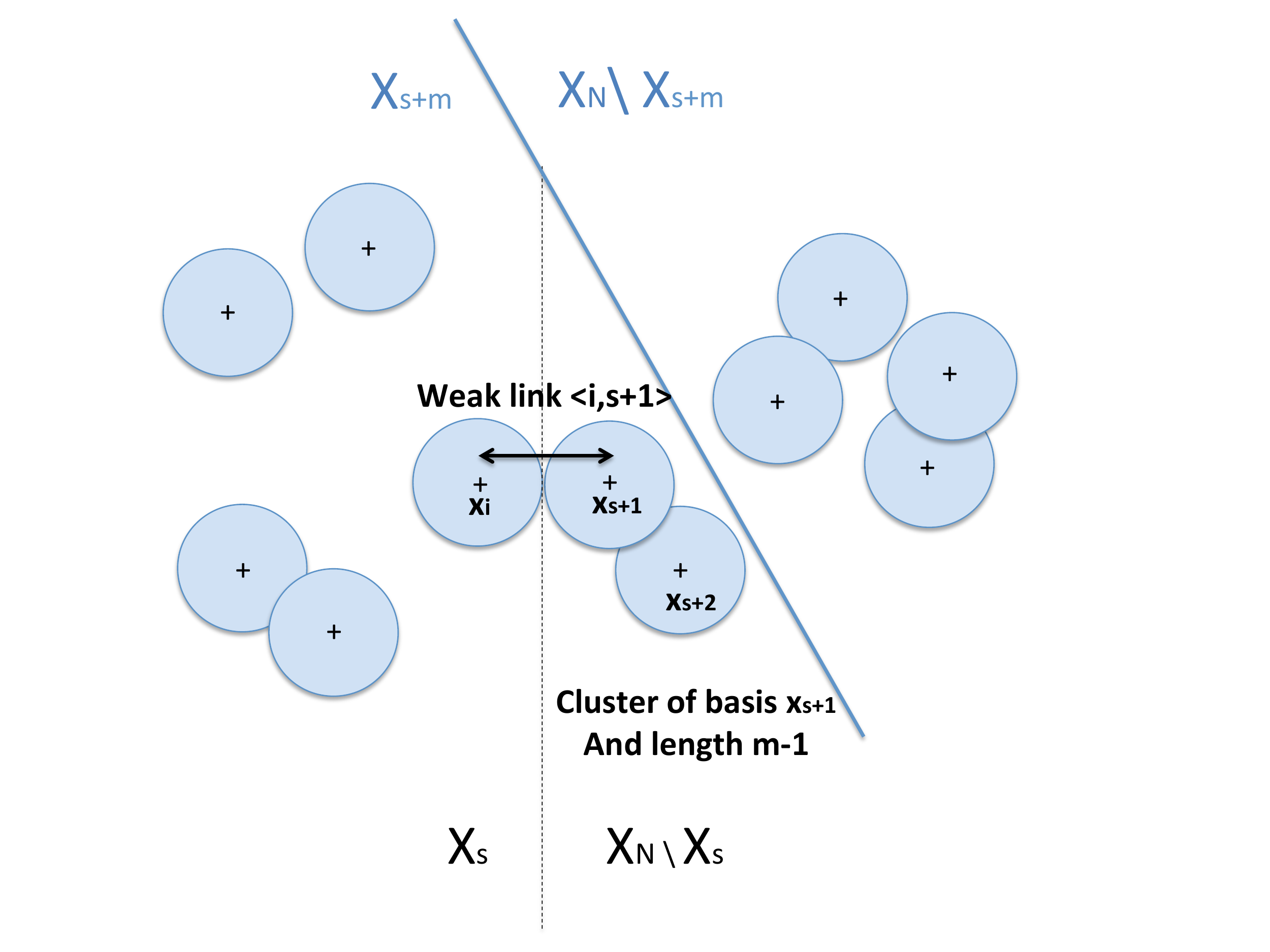}}
\caption{\label{cluster-figure}Clusters with weak links}
\end{center}
\end{figure}

\section{Collision operators}\label{collisionliouville}
 \setcounter{equation}{0}

With the help of the notions introduced in Section \ref{sec:def-closure}, we now can reformulate the boundary integral in~(\ref{1-BBKGY}).

Given $1 \leq s \leq N-1$ and $X_N$ in~$  {\Sigma}^s_N(i,s+1)$, there holds $|x_{s+1} - x_i| = \eps,$ so that~$x_{s+1}$ belongs to~$E(X_s,X_N),$ implying $|E(X_s,X_N)| \geq s+1.$ We decompose $\Sigma^s_N(i,s+1)$ into a disjoint union over the possible cardinals of the $\e$-closure of $X_s$ in $X_N:$  
 \begin{equation} \label{partition} \Sigma^s_N(i,s+1) = \dsp{  \displaystyle \bigcup_{1 \leq m \leq N-s}} \Big( \Sigma^s_N(i,s+1) \bigcap \big\{ Y_N, \,\, |E(Y_s,Y_N)| = s+m \big\}\Big) \, , 
 \end{equation}
 implying
$$
 \begin{aligned} 
& \int_{\R^{dN} \times {\Sigma}^s_N   (i,s+1)}   
\nu^{i,{s+1}}  \cdot (v_{s+1}-v_i)  \:   f_N (Z_N) \phi (Z_s)\:    d \sigma_N^{i,s+1}dV_N
 \\ & = \sum_{1 \leq m \leq N-s} \int_{\R^{dN}\times {\Sigma}^s_N   (i,s+1) }
  \indc_{ |E(X_s,X_N)| = s+m} \, \nu^{i,{s+1}}  \cdot (v_{s+1}-v_i)  \: f_N (Z_N) \phi (Z_s)\:    d \sigma_N^{i,s+1} dV_N\, .
   \end{aligned}
   $$
By assumption of symmetry \eqref{sym} for $f_N$ and $\phi,$ if $|E(X_s,X_N)| = s+m,$ we can index the particles so that $E(X_s,X_N) = X_{s+m}:$  we obtain
\begin{equation} \label{use:sym} 
\begin{aligned} 
& \int_{\R^{dN} \times {\Sigma}^s_N   (i,s+1) } \hspace{-10pt} \indc_{|E(X_s,X_N)| = s+m} \,  \nu^{i,{s+1}}  \cdot (v_{s+1}-v_i)  \:  f_N (Z_N) \phi (Z_s)\:    d \sigma_N^{i,s+1}dV_N \\ 
 &\quad = C_{N-s-1}^{m-1} \int_{\R^{dN} \times {\Sigma}^s_N   (i,s+1) } \indc_{E(X_s,X_{N}) = X_{s+m}} \nu^{i,{s+1}}  \cdot (v_{s+1}-v_i)  \:   f_N (Z_N) \phi (Z_s)\:    d \sigma_N^{i,s+1} dV_N\, .
 \end{aligned}
\end{equation}
We use equivalence \eqref{equiv-cluster2} from Lemma \ref{lem:cluster-eq} and Fubini's theorem to write
$$ \begin{aligned} & \int_{\R^{dN} \times {\Sigma}^s_N (i,s+1)} \hspace{-10pt}
 \indc_{E(X_s, X_N) = X_{s+m}} 
\nu^{i,{s+1}}  \cdot (v_{s+1}-v_i) f_N (Z_{N}) \phi (Z_s)      d\sigma_N^{i,{s+1}}dV_N \\ 
& \quad = \sqrt 2 \int_{S_\e(x_i) \times \R^d } 
 \nu^{i,{s+1}}  \cdot (v_{s+1}-v_i) \phi(Z_s) \\ & \qquad \qquad \qquad \times \left( \int _{\R^{2d(m-1)}}
 \indc_{E_{\langle i,s+1\rangle}(X_s,X_{s+m}) = X_{s+m}} f^{(s+m)}_N(Z_{s+m}) dZ_{(s+1,s+m)}\right) d\s_i(x_{s+1}) \, ,\end{aligned}
$$
with~$ d\sigma_i$\label{dsigmaSeps} the surface measure on~$S_\eps(x_i)  :=\big \{x \in \R^d, \, |x-x_i| = \eps\big\}.$ With \eqref{equiv-cluster3}, if $m \geq2,$ then the above integral over $\R^{2d(m-1)}$ appears as an integral over $\Delta_{m-1}(x_{s+1}).$ 
We also remark that in the case~$m=1,$ we have a simple description of $E_{\langle i,s+1\rangle}(X_s,X_{s+1}) = X_{s+1}:$
\begin{equation} \label{for-c1}
 \Big( \indc_{E_{\langle i,s+1\rangle}(X_s,X_{s+1}) = X_{s+1}} \neq 0 \Big) 
 \iff \left(\begin{aligned}
 |x_i - x_{s+1}| \leq \e & \quad \mbox{} \\  |x_j - x_{s+1}| > \e & \quad \mbox{for $j \in [1,s] \setminus \{i\}$}
\end{aligned}\right).
\end{equation}

This leads to the following definition of the collision term of order $m \geq 1,$ for $s + m \leq N:$   we define
   \begin{equation} \label{css+k} \begin{aligned}
   {\mathcal C}_{s,s+m}  \widetilde f_N^{(s+m)} (Z_s)
    & := m C_{N-s}^m \sum_{ i = 1}^s \int_{S_\e(x_i) \times \R^d }
     \nu^{{s+1},i}  \cdot (v_{s+1}-v_i)  \\
    & \qquad \times G^{(m-1)}_{\langle i,s+1\rangle}(f^{(s+m)}_N)(Z_{s+1})\, d\s_i(x_{s+1}) dv_{s+1}\,,
   \end{aligned}\end{equation}
   where for $m = 1,$ by \eqref{for-c1}:
\begin{equation} \label{def:g0} 
  G^{(0)}_{\langle i,s+1\rangle}(\widetilde f^{(s+1)}_N)(Z_{s+1}) := \Big(\prod_{\begin{smallmatrix} 1 \leq j \leq s \\ j \neq i \end{smallmatrix}} \indc_{|x_{s+1}- x_j| > \e} \Big) \widetilde f^{(s+1)}_N(Z_{s+1})\,,
\end{equation}
and for $m \geq 2:$  
 \begin{equation} \label{def:g} \begin{aligned}
  G^{(m-1)}_{\langle i,s+1\rangle}& (\widetilde f^{(s+m)}_N)(Z_{s+1}) \\& := \int _{\Delta_{m-1}(x_{s+1}) \times \R^{d(m-1)}} \indc_{E_{\langle i, s+1\rangle}(X_s,X_{s+m}) = X_{s+m}} \widetilde f^{(s+m)}_N(Z_{s+m}) dZ_{(s+2,s+m)}\,. \end{aligned}
  \end{equation}
The complex-looking indicator function $\indc_{E_{\langle i, s+1\rangle}(X_s,X_{s+m}) = X_{s+m}}$ will, in the   estimates of the next chapters, be simply bounded from above by one. This will be the case for instance in an estimate showing that higher-order collision operators \eqref{def:g} are negligible in the thermodynamical limit; this estimate is \eqref{est-col:1} in Proposition \ref{propcontinuityclusters}. 
One should notice on the other hand that the operator~${\mathcal C}_{s,s+1} $ is very similar to the corresponding collision operator~(\ref{def:collisionop0}) in the hard-spheres situation. 

With $(N-s) C_{N-s-1}^{m-1} = m C_{N-s}^m,$ we can now reformulate \eqref{1-BBKGY} into 
\begin{equation} \label{2-BBKGY} \begin{aligned}
\int_{\R_+ \times \R^{2ds}} & \widetilde f_N^{(s)} (t,Z_s)    \Big( \d_t \phi + \mbox{div}_{X_s} \: (V_s  \phi)  -
 \displaystyle  \frac1{\eps} \sum_{i,j = 1 \atop j \neq i}^s  \nabla \Phi\left(\frac{x_i - x_j}\eps \right)\cdot \nabla_{v_i} \phi  \Big) (t,Z_s)\:    d Z_s dt  \\ & + \int_{\R^{2ds}} \widetilde f_N^{(s)}(0,Z_s) \phi(0,Z_s) \: dZ_s =  \sum_{m = 1}^{N-s} \int_{\R^+ \times \R^{2ds}} \phi(t,Z_s) {\mathcal C}_{s,s+m} \widetilde f_N^{(s+m)}(t,Z_s) \, dt dZ_s\,,
 \end{aligned}
 \end{equation}
 so that $\widetilde f^{(s)}_N$ appears as a (formal) weak solution to
 \begin{equation} \label{w-BBGKY}
  \d_t \widetilde f_N^{(s)} +\sum_{1 \leq i \leq s} v_i\cdot \nabla_{x_i} \widetilde f_N^{(s)} - \frac1{\eps} \sum_{1 \leq i \neq j \leq s} \nabla \Phi\left(\frac{x_i - x_j}\eps \right)\cdot \nabla_{v_i} \widetilde f_N^{(s)} =  \sum_{m = 1}^{N-s} {\mathcal C}_{s,s+m} \widetilde f_N^{(s+m)}  \,  .
  \end{equation}

\section{Mild solutions}\label{mildliouville}
\setcounter{equation}{0}

We   now  define the integral formulation of \eqref{w-BBGKY}.
Denote by~${\bf \Phi}_s(t)$ the~$s$-particle Hamiltonian flow, and by~$ {\bf H}_s$ the associated solution operator:
\begin{equation}\label{solutionoperator}
  {\bf H}_s(t): \qquad f \in C^0(\Omega_s;\R) \mapsto f({\bf \Phi}_s(-t,\cdot)) \in C^0(\Omega_s;\R) \, .
 \end{equation}
 The time-integrated form of equation~(\ref{w-BBGKY}) is
\begin{equation}
\label{BBGKY-mild}
\widetilde f_N^{(s)} (t,Z_s)=   {\bf H}_s(t)\widetilde f_N^{(s)}(0,Z_s) +  \sum_{m = 1}^{N-s} \int_0^t {\bf H}_s(t - \t)   {\mathcal C}_{s,s+m}\widetilde  f_N^{(s+m)} (\t,Z_s)\, d\t \, .
\end{equation}
 The {\it total flow} and {\it total collision} operators~${\mathbf  H}  $ and~${\mathbf  C} _N$ are defined on  finite sequences~$G_N= (g_s)_{1\leq s\leq N}$ as follows:
\begin{equation}
\label{bH-def}
\left\{\begin{aligned} & \forall s  \leq N \,,\,\,  \left( {\mathbf H}  (t) G_N\right)_s :=  {\mathbf H}_{s}(t) g_{s}\, , \\ & \forall \, s \leq N-1 \,, \,\, \left( {\mathbf C} _N G_N\right)_s :=\sum_{m=1}^{N-s} {\mathcal C}_{s,s+m} g_{s+m} \,, \quad \big( {\bf C}_N G_N\big)_N := 0\, .\end{aligned}\right.
\end{equation}
We define {\it mild solutions} to the BBGKY hierarchy~(\ref{BBGKY-mild}) to be solutions of  
\begin{equation} \label{BBGKY-mild-total} \widetilde F_N (t) = {\mathbf H}  (t)  \widetilde F_N (0) +  \int_0^t {\bf H}(t - \t)   {\bf C}_N  \widetilde F_N (\t) \, d\t\,, \qquad \widetilde F_N = (\widetilde f^{(s)}_N)_{1 \leq s \leq N}\,.
\end{equation}

\begin{Rmk}
At this stage, the use of weak formulations could seem a little bit suspicious since they are used essentially as a technical artifice to go from the Liouville equation {\rm(\ref{Liouville})} to the mild form of the BBGKY hierarchy {\rm(\ref{BBGKY-mild})}. In particular, this allows to ignore pathological trajectories as mentioned in Remark~{\rm\ref{multiple}}.
Nevertheless, the existence of mild solutions to the BBGKY hierarchy  provides the existence of weak solutions to the BBGKY hierarchy, and in particular to the Liouville equation (which is nothing else than the last equation of the hierarchy). The classical uniqueness result for kinetic transport equations then implies that the object we consider, that is the family of truncated marginals, is uniquely determined (almost everywhere). 
\end{Rmk} 

\section{The limiting Boltzmann hierarchy}
\setcounter{equation}{0}

The limit of the BBGKY collision operators~(\ref{css+k}) was obtained formally in Section~\ref{scatteringcrosssectionpotentialcase}, following the formal derivation of the hard-spheres case in  Paragraph~\ref{spheresboltz}, assuming higher-order interactions can be neglected. We recall the form of the collision operator as given in~(\ref{Boltzmann-opscatchapter}):
\begin{eqnarray*}
 &&   {\mathcal C}^0_{s,s+1}    f^{(s+1)} (t,Z_s) :=       \sum_{ i = 1}^s  \int  \,  b(v_1 - v_2,\o) \nonumber\\
&&\quad \times{} \Big(  f^{(s+1)}  (t,x_1,v_1,\dots ,x_i, v^*_i,\dots ,x_s,v_s, x_i, v^*_{s+1})    -     f^{(s+1)}  (t,Z_s, x_i, v_{s+1}) \Big) d\omega dv_{s+1} \, .
\end{eqnarray*} 
where~$(v^*_i,v^*_{s+1})$ is obtained from~$(v_i,v_{s+1})$ by applying the inverse scattering operator~$\sigma_0^{-1}$ defined in Definition~\ref{scatteringbbgky} and~$b(w,\omega)$ is the cross-section given by Definition~\ref{def:b}.

The asymptotic dynamics are therefore governed by the following integral form of the Boltzmann hierarchy:
\begin{equation}
\label{mild-Boltzmannpotential}
   f^{(s)} (t) = {\bf S}_s(t)  f^{(s)}_{0} + \int_0^t {\bf S}_{s}(t - \t)     {\mathcal C}^0_{s,s+1}    f^{(s+1)}  (\t) \, d\t\,,
 \end{equation}
 where~$ {\bf S}_s(t)$ denotes the~$s$-particle free-flow.

Similarly to~(\ref{bH-defHS}), we can define the total Boltzmann flow and collision operators~${\mathbf  S}  $ and~${\mathbf  C}  $  as follows:
\begin{equation}
\label{bH-defboltzmannpotential}
\left\{\begin{aligned} & \forall s \geq 1\,,\,\,  \left( {\mathbf S}  (t) G \right)_s :=  {\mathbf S}_{s}(t) g_{s}\, , \\ & 
\forall \, s \geq 1 \,, \,\, \left(  {\bf C^0}    G \right)_s := {\mathcal C}^0_{s,s+1} g_{s+1} \,,  \end{aligned}\right.
\end{equation}
so that {\it mild solutions} to the Boltzmann hierarchy~(\ref{mild-Boltzmannpotential}) are solutions of  
\begin{equation} \label{Boltzmann-mild-tota}   F (t) = {\mathbf S}  (t)    F  (0) +  \int_0^t {\bf S}(t - \t)   {\bf C^0}    F  (\t) \, d\t\,, \qquad   F  = (  f^{(s)})_{s \geq 1 }\,.
\end{equation}

 Note that if~$   f^{(s)} (t,Z_s) = \displaystyle \prod_{i = 1}^s f(t,z_i) $ (meaning~$f^{(s)}(t)$ is {\it tensorized}) then~$f$ satisfies the Boltzmann equation~(\ref{boltz-eq})-(\ref{boltz-operator}), with the cross-section~$b(w,\omega)$ given by Definition~\ref{def:b}.

\chapter{Cluster estimates and uniform a priori estimates}\label{chaptercluster}
\setcounter{equation}{0}

\label{continuity-collision}

In view of proving the existence of mild solutions to the BBGKY hierarchy (\ref{BBGKY-mild}), we need continuity estimates on the linear collision operators ${\mathcal C}_{s,s+m}$ defined in~(\ref{css+k})-\eqref{def:g0}-\eqref{def:g}, and the total collision operator ${\bf C}_N$ defined in \eqref{bH-def}. 

We first note that, by definition,  the operator ${\mathcal C}_{s,s+m}$ involves  only configurations with clusters of length $m$.
Classical computations of statistical mechanics, presented  in Section \ref{sec:clusters},  show that the probability of finding such clusters is exponentially decreasing with $m.$ 

It is then natural to introduce functional spaces encoding the decay with respect to energy and the growth with respect to the order of the marginal (see Section~\ref{sec:norms}, where norms are introduced, generalizing the norms introduced in Chapter~\ref{existence} for the hard spheres case). In these appropriate functional spaces, we can establish uniform continuity estimates for the BBGKY collision operators (Section~\ref{sec:cont-est}).  These will enable us in Section~\ref{sec:continuity-col-B}   to obtain directly   uniform bounds for the hierarchy   as in Chapter~\ref{existence}.


\section{Cluster estimates} \label{sec:clusters}
\setcounter{equation}{0}
A point $X_s \in \R^{ds}$ being given, we recall that $\Delta_m(X_s)$ is the set of all clusters of base $X_s$ and length~$m$ (this notation is introduced in Definition \ref{definitionclusterking} page~\pageref{definitionclusterking}).
\begin{Lem}\label{estimationsgrandcanoniques}
 For any symmetric function $\varphi$ on $\R^{Nd},$ any $s \in [1,N-1],$ any $X_s \in \R^{ds},$ the following identity holds:
\begin{equation}
\label{cluster-dec}
\begin{aligned}
  \int_{\R^{(N-s)d}} & \varphi(X_N)  dX_{(s+1,N)}  = \int_{\R^{d(N-s)}} \indc_{X_N \in \D_N^s} \,  \varphi(X_N) \, dX_{(s+1,N)} \\
& + \sum_{m=1}^{N-s} C^m_{N-s}\int_{\Delta_m(X_s)} \left(\int_{\R^{d(N-s-m)}} \indc_{X_N \in\D_N^{s+m}}   \, \varphi (X_N) \,  dX_{(s+m+1,N)} \right) dX_{(s+1,s+m)} \, ,
\end{aligned}
\end{equation}
implying, for $\zeta>0,$
\begin{equation}
\label{cluster-est1}
{1\over m!} \int_{\Delta_m(X_s)} dX_{(s+1,s+m)}  \leq \zeta^{-m} \exp \big(\zeta \k_d (s+m) \eps^d \big)
\end{equation}
and
\begin{equation}
\label{cluster-est2}
\sum_{m\geq 1} \frac{\zeta^{m+1} \exp \big(-\zeta \k_d (m+1)  \eps^d\big)}{m!} \int _{\Delta_m(x_1)} dX_{(2,m+1)} \leq \zeta \big(1- \exp \big(-\zeta \k_d \eps^d \big)\big) \, ,
\end{equation}
where $\k_d$ is the volume of the unit ball in $\R^d.$ \label{ref:kappa-d}
\end{Lem}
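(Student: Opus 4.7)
The three statements are intertwined: \eqref{cluster-dec} is a combinatorial identity, from which both \eqref{cluster-est1} and \eqref{cluster-est2} will follow by applying it to a product test function and then passing to a grand-canonical generating function in $N$. I would treat them in that order.

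For \eqref{cluster-dec}, the strategy is to stratify $\R^{d(N-s)}$ according to the cardinality of the $\varepsilon$-closure $E(X_s, X_N) \in \{s, s+1, \ldots, N\}$. The extremal value $s$ corresponds exactly to $X_N \in \mathcal{D}_N^s$. For $|E(X_s, X_N)| = s+m$ with $m \geq 1$, the symmetry of $\varphi$ absorbs the choice of which $m$ among the $N-s$ last particles lie in the closure into the binomial factor $C^m_{N-s}$; equivalence \eqref{equiv-cluster1} from Lemma~\ref{lem:cluster-eq} then rewrites the event $E(X_s, X_N) = X_{s+m}$ as the conjunction of $X_{s+m} \in \Delta_m(X_s)$ and $X_N \in \mathcal{D}_N^{s+m}$. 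Summing over $m$ yields the identity.

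For the two estimates, the plan is to insert into \eqref{cluster-dec} the test function $\varphi(X_N) = \prod_{i=s+1}^N \indc_{x_i \in \Lambda}$, where $\Lambda \subset \R^d$ is a bounded set of volume $V$, chosen large enough to contain $X_s$ and the $\varepsilon$-neighborhoods involved. Every integral in \eqref{cluster-dec} then becomes a polynomial in $V$: one finds $\int \indc_{X_N \in \D^s_N} \varphi \, dX_{(s+1,N)} = (V - W_0(X_s))^{N-s}$, where $W_0(X_s)$ denotes the volume of $\Lambda \cap \bigcup_{i=1}^s \{x : |x - x_i| \leq \varepsilon\}$, and analogously $\int \indc_{X_N \in \D^{s+m}_N} \varphi \, dX_{(s+m+1,N)} = (V - W(X_{s+m}))^{N-s-m}$ with the corresponding $W(X_{s+m})$. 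Multiplying by $\zeta^{N-s}/(N-s)!$ and summing over $N \geq s$ (Fubini being legitimate by nonnegativity) converts each polynomial into an exponential and produces the grand-canonical identity
\[
1 \;=\; e^{-\zeta W_0(X_s)} + \sum_{m \geq 1} \frac{\zeta^m}{m!} \int_{\Delta_m(X_s) \cap \Lambda^m} e^{-\zeta W(X_{s+m})}\, dX_{(s+1,s+m)}.
\]

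From this identity, both estimates are immediate consequences. For \eqref{cluster-est1}, nonnegativity of each summand yields $\frac{\zeta^m}{m!} \int e^{-\zeta W(X_{s+m})}\, dX \leq 1$; combining with the union bound $W(X_{s+m}) \leq \kappa_d (s+m) \varepsilon^d$ and letting $\Lambda \nearrow \R^d$ by monotone convergence produces \eqref{cluster-est1}. For \eqref{cluster-est2}, specializing to $s=1$ gives $W_0(\{x_1\}) = \kappa_d \varepsilon^d$, so the identity reduces to $1 - e^{-\zeta \kappa_d \varepsilon^d} = \sum_{m \geq 1} \frac{\zeta^m}{m!} \int_{\Delta_m(x_1)} e^{-\zeta W(X_{m+1})}\, dX$; applying once more the bound $W \leq \kappa_d (m+1) \varepsilon^d$ and multiplying by $\zeta$ produces \eqref{cluster-est2}. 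The main obstacle is thus the derivation of the grand-canonical identity (essentially a grand-canonical cluster expansion); the two quantitative estimates are one-line consequences, relying crucially on the nonnegativity of the integrand which allows bounding individual terms of a convergent nonnegative series by the total sum.
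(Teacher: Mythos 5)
Your proposal is correct and follows essentially the same route as the paper. In the paper's own proof, identity~\eqref{cluster-dec} is established exactly as you describe, by stratifying over $|E(X_s,X_N)|$, using symmetry and equivalence~\eqref{equiv-cluster1}; the two estimates are then derived by working in the grand-canonical ensemble, which amounts precisely to your generating-function step — plugging the product indicator into~\eqref{cluster-dec}, weighting by $\zeta^{N-s}/(N-s)!\,e^{-\zeta|\Lambda|}$, and summing over $N$ — followed by the identical final bounds $|\Lambda \cap B_\eps(X_{s+m})| \leq \k_d (s+m) \eps^d$, nonnegativity term-by-term, and monotone convergence as $\Lambda \nearrow \R^d$.
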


\begin{proof} The first identity (\ref{cluster-dec}) is obtained by a simple partitioning argument, which extends the splitting used to define ${\mathcal C}_{s,s+m}$ in~(\ref{css+k})  in the previous chapter.
We recall that, given any $X_s \in \R^{ds}$,  the family
$$\Big \{ (x_{s+1},\dots, x_N)\,,\,  | E(X_s,X_N) |=s+m \Big\} \quad \hbox{ for } \,  0\leq m\leq N-s\, ,$$
is a partition of $\R^{(N-s)d}$.
Then we use the symmetry assumption, as we did in \eqref{use:sym}, to find
 $$ \int_{\R^{(N-s)d}} \varphi(X_N)  dX_{(s+1,N)} = \sum_{0 \leq m \leq N-s} C_{N-s}^m \int_{\R^{(N-s)d}} \indc_{E(X_s,X_{N}) = X_{s+m}} \varphi(X_N) dX_{(s+1,N)}\, .$$
  It then suffices to use equivalence \eqref{equiv-cluster1} from Lemma \ref{lem:cluster-eq}, noting that the set of all $(x_{s+1}, \dots, x_{s+m}) $ in~$ \R^{md}$ such that $E(X_s,X_{s+m}) = X_{s+m}$ coincides with $\Delta_m(X_s).$  This proves~(\ref{cluster-dec}).

Estimates (\ref{cluster-est1}) and (\ref{cluster-est2}) come from the counterpart of (\ref{cluster-dec}) at the grand canonical level, i.e. when the activity $\zeta^{-1} \sim e^\mu$ is fixed, rather than the total number $N$ of particles (we refer to~Remark~\ref{physinterpretation} for comments on this terminology).

For any bounded $\Lambda \subset \R^d$, the associated grand-canonical ensemble for $n$ non-interacting particles is defined as the probability measure with density 
$$
\varphi_n(X_n) := {\zeta^n \exp(-\zeta |\Lambda|) \over n!} \prod_{1 \leq i \leq n}\indc_{x_i \in \Lambda}  \, .
$$
The $s$-point correlation function $g_s$ and  the truncated $s$-point correlation function $\widetilde g_s$ are defined by
$$\begin{aligned}
g_s(X_s)& :=\sum_{n=s}^\infty{n!\over (n-s)!} \int_{\R^{(n-s)d} } \varphi_n (X_n)dX_{(s+1,n)} \, , \\ 
\widetilde g_s(X_s)&:=\sum_{n=s}^\infty{n!\over (n-s)!} \int_{\R^{(n-s)d}} \indc_{X_n \in \D_n^s} \varphi_n (X_n) dX_{(s+1,n)}\, .
\end{aligned}
$$
We compute
$$ \int_{\R^{(n-s)d}} \varphi_n (X_n)dX_{(s+1,n)} = \zeta^s \exp\big(-\zeta |\Lambda|\big) \frac{(\zeta |\Lambda|)^{n-s}}{n!} \prod_{1 \leq i \leq s} \indc_{x_i \in \Lambda} \, ,$$
so that
\begin{equation}
\label{tilde-gj-eq}
\begin{aligned}
  g_s(X_s)
= \zeta^s \exp\big(-\zeta |\Lambda|\big) \sum_{k=0}^\infty \frac{(\zeta |\Lambda|)^k}{k!} \prod_{1 \leq i \leq s} \indc_{\Lambda}(x_i)=\zeta^s \prod_{1 \leq i \leq s} \indc_{x_i \in \Lambda} \, . 
\end{aligned}
\end{equation}
Similarly, by definition of $\D_n^s$ in \eqref{def:DNS}, 
 $$ 
 \int_{\R^{(n-s)d}} \indc_{X_n \in \D_n^s} \prod_{s+1 \leq j \leq n} \indc_{x_i \in \Lambda} \, dX_{(s+1,n)}   = \big| \Lambda \cap  {}^cB_\e(X_s) \big| \, ,
 $$
where we denote $\displaystyle B_\eps (X_s):=\bigcup_{1 \leq i\leq s} B_\eps(x_i),$ with $B_\eps(x_i) := \big \{y \in \R^{d},\, \, |y-x_i| \leq \e \big\}$.\label{indexdefBepsx}
 This implies 
$$
\begin{aligned} 
\widetilde g_s(X_s) &= \zeta^s \exp\big( - \zeta |\Lambda| \big) \sum_{n \geq s} \frac{\big( \zeta |\Lambda \cap  {}^c B_\e(X_s)\big|\big)^{n-s}}{(n-s)!} \prod_{1 \leq i \leq s} \indc_{x_i \in \Lambda} \, .
\end{aligned}
$$
Since~$|\Lambda| - |\Lambda \cap  {}^cB_\e(X_s)| = | \Lambda \cap B_\e(X_s)|$, we obtain
\begin{equation} \label{gj-eq}
 \widetilde g_s(X_s) = \zeta^s \exp\big(-\zeta | \Lambda \cap B_\eps (X_s)|\big) \, .  \end{equation} 
Besides, by \eqref{cluster-dec}, 
$$
\begin{aligned}
 g_s (X_s) 
& = \widetilde g_s(X_s) \\  
 & \quad + \sum_{n =s}^{ \infty} \sum_{m=1} ^{ n-s} \frac{n! C^m_{n-s}}{(n-s)!} \int_{\Delta_m(X_s)} \left(\int_{\R^{(n-s-m)d}} \indc_{X_n \in \D_n^{s+m}} g_s(X_n)  \, dX_{(s+m+1,n)} \right) dX_{(s+1,s+m)} \, .
\end{aligned}
$$ 
By Fubini, we get
$$ \begin{aligned}
 \sum_{n =s}^{ \infty} &\sum_{m=1} ^{ n-s} \frac{n! C^m_{n-s}}{(n-s)!} \int_{\Delta_m(X_s)} \left(\int_{\R^{(n-s-m)d}} \indc_{X_n \in \D_n^{s+m}} \varphi_n(X_n)  \, dX_{(s+m+1,n)} \right) dX_{(s+1,s+m)} \\ & =  \sum_{n =s}^{ \infty} \sum_{m=1} ^{ n-s}  \frac{n!}{(k-s)!(n-k)!} \int_{\Delta_{k-s}(X_s)} \left(\int_{\R^{(n-k)d}} \indc_{X_n \in \D_n^{k}} \varphi_n(X_n)  \, dX_{(k+1,n)}\right) dX_{(s+1,k)} \\ 
 & = \sum_{k = s+1}^\infty \frac{1}{(k-s)!} \sum_{n = k}^\infty \frac{n!}{(n-k)!}  \int_{\Delta_{k-s}(X_s)} \left(\int_{\R^{(n-k)d}} \indc_{X_n \in \D_n^{k}} \varphi_n(X_n) \, dX_{(k+1,n)}\right) dX_{(s+1,k)} \\ & =  \sum_{k = s+1}^\infty\frac{1}{(k-s)!}  \int_{\Delta_{k-s}(X_s)} \widetilde g_k(X_k) dX_{(s+1,k)} \, .
 \end{aligned}
 $$
We have proved that
 \begin{equation}
\label{cluster-dec-gc}
\begin{aligned}
 g_s (X_s)
&= \widetilde g_s  (X_s)+ \sum_{k=s+1}^\infty  {1\over (k-s)!}  \int_{\Delta_{k-s}(X_s)} g_k(X_k) dX_{(s+1,k)}\,  .
  \end{aligned}
\end{equation}
We now show how identities \eqref{tilde-gj-eq}-(\ref{gj-eq})-(\ref{cluster-dec-gc}) imply the bounds \eqref{cluster-est1}-\eqref{cluster-est2}. 

We first retain only the contribution of $k = s+m$ in the right-hand side of \eqref{cluster-dec-gc}. 
We have 
 $$ \zeta^s \geq \frac{1}{m!} \int_{\Delta_m(X_s)} \zeta^{s+m} \exp\big(-\zeta |\Lambda \cap B_\eps (X_{s+m})|\big) \, dX_{(s+1,s+m)}\, ,$$
and now $|\Lambda \cap B_\e(X_{s+m})| \leq \k_d \e^d (s+m)$ implies \eqref{cluster-est1}. 
 
We finally fix an integer $K \geq 2$ and choose $s = 1$ in \eqref{cluster-dec-gc}. Then $$ \zeta - \zeta \exp\big(-\zeta |\Lambda \cap B_\e(x_1)|\big) \geq \sum_{ k =2}^{ K} \int_{\Delta_{k-1}(x_1)} \zeta^k \exp\big(-\zeta |B_\eps (X_k)|\big)\, dX_{(2,k)}\, ,$$
 and bounding the volumes of balls from above, we find
 $$ \zeta\big(1 - \exp(-\zeta \k_d \e^d)\big) \geq \sum_{ k =1}^{ K-1}\frac{\zeta^{k+1}}{k!} \exp\big(-\zeta \k_d (k+1) \e^{d}\big) \int_{\Delta_k(x_1)} dX_{(2,k+1)}\, .$$
 It then suffices to let $K \to \infty$ to find \eqref{cluster-est2}. 
This ends the proof of Lemma~\ref{estimationsgrandcanoniques}.
\end{proof}


\section{Functional spaces} \label{sec:norms}
\setcounter{equation}{0}

To show the convergence of the series defining mild solutions (\ref{BBGKY-mild}) to the BBGKY hierarchy, we need to introduce some norms on the space of sequences $(\widetilde  f^{(s)})_{s \geq 1}$.
Given $\e > 0,$ $\b > 0,$ an integer $s \geq 1,$ and a continuous function $g_s: \Omega_s \to \R,$ we let
\begin{equation} \label{norm:e-bpot}
| g_s|_{\eps,s,\beta} :=\sup _{Z_s \in \Omega_s} \left( |g_s(Z_s)| \exp \big(\beta E_\eps(Z_s)\big)\right) 
\end{equation}
where for $\e > 0,$ the function $E_\eps$ is the $s$-particle Hamiltonian\label{indexdefPhieps}
\begin{equation} \label{def:hamiltonian}
 E_\eps(Z_s) :=\sum_{1 \leq i \leq s} {|v_i|^2\over 2}  +\sum_{ 1\leq i<k \leq s} \Phi_\eps (x_i-x_k) \, , \quad \mbox{with} \quad  \Phi_\eps (x) :=   \Phi \left(\frac x \eps\right) \,.
\end{equation}
Notice that this norm does coincide with its counterpart defined in Paragraph~\ref{sec:ex-BBGKYHS} in the limit described in Remark~\ref{limithardspherespotential}.

\begin{Def} \label{def:functional-spaces} For $\e > 0$ and $\b > 0,$ we denote $X_{\e,s,\beta}$ the Banach space of continuous functions~$\Omega_s \to \R$ with finite $|\cdot|_{\e,s,\b}$ norm.\end{Def}

By Assumption \ref{propertiesphi}, for $\e > 0$ (and $\b > 0$) there holds $\exp(\b E_\e(Z_s)) \to \infty$ as $Z_s$ approaches $\d \Omega_s.$ This implies for $g_s \in X_{\e,s,\b}$ the existence of an extension by continuity: $\bar g_s \in C^0(\R^{2ds};\R)$ such that~$\bar g_s \equiv 0$ on $\d\Omega_s,$ and $\bar g_s \equiv g$ on $\Omega_s.$

For sequences of functions $G = (g_s)_{s\geq1},$ with $g_s: \Omega_s \to \R,$ we let for $\e > 0,$ $\b > 0,$ $\mu \in \R,$
 \label{normepsbetamu}
$$
\| G \|_{\eps, \beta,\mu}  :=\sup_{s\geq 1}\Big( | g_s|_{\eps,s,\beta} \exp( \mu s)  \Big) \, .
$$

\begin{Def} \label{def:functional-spaces2} For $\e \geq 0,$ $\b > 0,$ and $\mu \in \R,$ we denote ${\bf X}_{\e,\b,\mu}$ the Banach space of sequences~$G = (g_s)_{s \geq 1},$ with $g_s \in X_{\e,s,\b}$ and $\| G \|_{\e,\b,\mu} < \infty.$  
\end{Def}
As in~(\ref{inclusionsHS}), he following inclusions hold:
\begin{equation} \label{inclusions} \mbox{if $\b' \leq \b$ and $\mu' \leq \mu \, ,$ then} \quad X_{\e,s,\b'} \subset X_{\e,s,\b} \, , \quad {\bf X}_{\e,\b',\mu'} \subset {\bf X}_{\e,\b,\mu} \, .
\end{equation}
Finally similarly to Definition~\ref{deffunctionspacesexistenceHS} we define norms of time-dependent functions as follows.
\begin{Def} \label{deffunctionspacesexistence}
Given $T > 0$, a positive function ${\boldsymbol\beta}$  and a real valued function~${\boldsymbol \mu} $ defined on~$[0,T]$ we denote ${\bf X}_{\e,{\boldsymbol\beta},{\boldsymbol\mu}}$ the space of functions $G: t \in [0,T] \mapsto G(t) = (g_s(t))_{1 \leq s} \in {\bf X}_{\e,\b(t),\mu(t)},$ such that for all $Z_s \in \R^{2ds},$ the map $t \in [0,T] \mapsto g_s(t,Z_s)$ is measurable, and
 \begin{equation} \label{def:normz}   | \! \| G    | \! \| _{ \e,{\boldsymbol\beta},{\boldsymbol\mu}}
 := \sup_{0 \leq t \leq T} \| G(t) \|_{\e,\b(t),\mu(t)} < \infty \, .
 \end{equation}
\end{Def}
Notice that the following conservation of energy properties hold, as for~(\ref{continuitytransportHS}):
\begin{equation}\label{continuitytransport}
\begin{aligned}|{\mathbf  H}_s (t) g_s |_{\eps,s,\beta} = | g_s |_{\eps,s,\beta}\quad &\mbox{and} \quad 
\| {\mathbf  H} (t) G_N \|_{ \eps,\beta, \mu } = \| G_N \|_{ \eps,\beta, \mu}\,,
\end{aligned}
\end{equation}
for all parameters~$\beta >0,$ $\mu \in \R,$ and for all~$g_s \in X_{\eps,s,\beta},$   $G_N = (g_s)_{1 \leq s \leq N} \in {\bf X}_{\eps,\beta,\mu}$, and all $t \geq 0.$


\section{Continuity estimates} \label{sec:cont-est}
\setcounter{equation}{0}

We now establish bounds, in the above defined functional spaces, for the collision operators defined in~\eqref{css+k}-\eqref{def:g}, and for the total collision operator ${\bf C}_N$ defined in~\eqref{bH-def}.  

Notice that in the case when~$m=1$ the estimates are the same as in Chapter~\ref{existence}: in particular thanks to~(\ref{continuitytransport}) the following bound holds:
\begin{equation} \label{est:C1pot}
 e^{s (\mu_0-\lambda t)} \Big| \int_0^t {\bf H}_s(t-\tau) \C_{s,s+1} g_{s+1}(\tau) \, d \tau\Big|_{\e,s,\b_0-\lambda t} \leq \bar c(\b_0,\mu_0, \lambda ,T)  |\! \| G_N| \! \| _{ \eps,{\boldsymbol\beta},{\boldsymbol\mu}}\, ,
 \end{equation}
 for all $G_N = (g_{s+1})_{1 \leq s \leq N} \in {\bf X}_{\e,{\boldsymbol\beta},{\boldsymbol\mu}},$ with   $ \bar c(\b_0,\mu_0, \lambda ,T)$ computed explicitly in~{\rm(\ref{defbarcHS})}.

The following statement is the analogue of Proposition~\ref{propcontinuityclustersHS} in the hard spheres case, but in the present situation higher order correlations must be taken into account.
\begin{prop}\label{propcontinuityclusters}
Given $\b > 0$ and $\mu \in \R,$ for $m \geq 1$ and $1 \leq s \leq N-m,$ the collision operators~$\C_{s,s+m}$ satisfy the bounds, for all $G_N = (g_s)_{1 \leq s \leq N} \in { \bf X}_{\e,\b,\mu},$ 
\begin{equation} \label{est-col:1}
 \big| {\mathcal C}_{s,s+m} g_{s+m} (Z_s)\big| \leq \e^{m-1} C_{d}  e^{m \k_d}  (\b/C_{d} )^{-\frac{md}2} \Big( s\b^{-\frac12} + \sum_{1 \leq i \leq s} |v_i|\Big)  e^{- \b E_\e(Z_s)}| g_{s+m}|_{\e,s+m,\b} \, ,
 \end{equation}
for some $C_{d} > 0$ depending only on $d.$ 

If $\e < C_d e^{\mu} \beta^{\frac d2},$ then for all~$0< \beta' < \beta$ and $\mu' < \mu,$ the total collision operator ${\bf C}_N$ satisfies the bound 
\begin{equation}
\label{continuity}
\| {\mathbf C} _N G_N \|_{\eps,\beta', \mu'} \leq  C_d (1 + \b^{-\frac12}) \Big( \frac1{\b - \b'} + \frac1{\mu - \mu'} \Big)
  \| G_N \|_{\eps,\beta,\mu} \, .
\end{equation}
\end{prop}

Considering the case $m > 1$ in \eqref{est-col:1}, for which the upper bound is $O(\e),$ we see that higher-order interactions are negligible in the Boltzmann-Grad limit (provided~(\ref{est-col:1},) can be summed over~$m$, which is possible for~$\eps$ small enough). 

\begin{proof} We shall only consider the case $m \geq 2,$ as the case~$m=1$ is dealt with exactly as in the proof of Proposition~\ref{propcontinuityclustersHS}. From the definition of $G^{(m-1)}_{\langle i,s+1\rangle}$ in \eqref{def:g}, we obtain 
$$  
   \big| G^{(m-1)}_{\langle i,s+1\rangle}(g_{s+m})(Z_{s+1})   \big| \leq | g_{s+m}|_{\e,s+m,\b} \int_{\Delta_{m-1}(x_{s+1}) \times \R^{d(m-1)}} \exp\big(-\b E_\e(Z_{s+m})\big) dZ_{(s+2,s+m)} \, ,
$$ 
  where the norm $|\cdot |_{\e,s,\b}$ is defined in \eqref{norm:e-bpot}, and the Hamiltonian $E_\e$ is defined in \eqref{def:hamiltonian}. For the collision operator defined in \eqref{css+k}, this implies the bound 
\begin{equation}\label{pointwisevelocity}
  | {\mathcal C}_{s,s+m} g_{s+m}(Z_s) |\leq   m C_{N-s }^{m}  | g_{s+m} |_{\eps,s+m,\beta} \times \sum_{1 \leq i \leq s} I_{i,m}(V_s) \times J_{i,m}(X_s) \,,
  \end{equation}
  where $I_{i,m}$ is the velocity integral
  $$ I_{i,m}(V_s) := \int_{\R^{dm}} \big( |v_{s+1}| + |v_i|\big) \exp\Big(-\frac\beta2 \sum_{j=1}^{s+m} |v_j|^2 \Big) dV_{(s+1,s+m)} \, ,$$
  and $J_{i,m}$ is the spatial integral
  $$ J_{i,m}(X_s) := \int_{S_\e(x_i) \times \Delta_{m-1}(x_{s+1})} \exp\Big(-\beta \sum_{1 \leq j < k \leq s+m} \Phi_\e(x_j - x_k)\Big) d\s(x_{s+1}) dX_{(s+2,s+m)} \, .$$
The velocity integral is a product of Gaussian integrals and can be exactly computed, as in the hard-spheres case:
 \begin{equation} \label{gaussians} 
 I_{i,m}(V_s) \leq ( \b/C_{d})^{-\frac{md}2}\Big(  |v_i| +\beta^{-\frac12}\Big) \exp\Big(-\frac\beta2\sum_{1 \leq j \leq s} |v_j|^2 \Big) \, .
 \end{equation}
For the spatial integral, there holds 
$$ \begin{aligned}
 J_{i,m}(X_s) & \leq  \exp \Big(-\beta  \sum_{1\leq j<k\leq s}\Phi_\eps (x_j-x_k) \Big) |S_\eps(x_i)| \times  \sup_{x} \int_{\Delta_{m-1} (x )} dX_{(1,m-1)} \\
 &\leq \exp \Big(-\beta  \sum_{1\leq j<k\leq s}\Phi_\eps (x_j-x_k) \Big) \times \k_d \eps^{d-1} \times \Big( (m-1)! \, \eps^{(m-1)d} \exp(m \k_d)\Big)\,, 
 \end{aligned}$$
where in the last bound we used identity \eqref{cluster-est1} from Lemma \ref{estimationsgrandcanoniques} with $s = 1$ and $\zeta = \e^{-d}.$
This implies
$$
 \begin{aligned} | {\mathcal C}_{s,s+m} g_{s+m} (Z_s) | \leq C_{d}  \e^{m-1} & \big( (N-s) \e^{d-1} \big)^m e^{m \k_d}( \b/C_{d})^{-\frac{md}2} \Big( s \b^{-\frac12} + \sum_{1 \leq i \leq s} |v_i|\Big) \\ & \quad \times e^{- \b E_\e(Z_s)} | g_{s+m}|_{\e,s+m,\b}\,.\end{aligned}
 $$
In the Boltzmann-Grad scaling~$N \e^{d-1} \equiv 1,$ this gives \eqref{est-col:1}. Above and in the following, $C_{d}$ denotes a positive constant which depends only on $d,$ and which may change from line to line. 

We turn to the proof of \eqref{continuity}, which is similar to the proof of~(\ref{continuityHS}) up to the control of higher correlations. From the pointwise inequality~(\ref{pointwiseHS}) we deduce for the above velocity integral $I_{i,m}(V_s)$ the bound, for $0 < \b' < \b,$ 
$$ 
\sum_{1 \leq i \leq s} \exp\Big( (\b'/2) \sum_{1 \leq j \leq s} |v_j|^2\Big) I_{i,m}(V_s) \leq 
C_{d} ( \b/C_{d})^{-\frac{md}2}\Big( s \b^{-\frac12} + s^{\frac12}(\b - \b')^{-\frac12}\big) \, .
$$
  From the above bound in $J_{i,m}(X_s),$ we deduce immediately, for $0 < \b' < \b,$ 
  $$ \max_{1 \leq i \leq s} \exp\Big(\b' \sum_{1 \leq j < k \leq s} \Phi_\e(x_j - x_k)\Big) J_{i,m}(X_s) \leq \k_d (m-1)! \, e^{m \k_d} \e^{md - 1}\,.$$ 
    With \eqref{pointwisevelocity}, these bounds yield, in the Boltzmann-Grad scaling,
 $$ \begin{aligned}
 e^{\b' E_\e(Z_s) + \mu's} \big| \C_{s,s+m} g_{s+m} (Z_s)\big|  & \leq \e^{m-1} C_{d}( \b/C_{d})^{-\frac{md}2} e^{m \k_d}  e^{\mu's}  \big(s \b^{-\frac12} + s^{\frac12} (\b - \b')^{-\frac12}\big) \\ & \qquad \times |g_{s+m}|_{\e,s+m,\b} \, .\end{aligned}$$
Summing over $m,$ we finally obtain, for ${\bf C}_N$ defined in \eqref{bH-def},
   $$ \begin{aligned}
  \| {\bf C}_N G_N \|_{\e,\b',\mu'} \leq C_d \| G_N \|_{\e,\b,\mu} & \sup_{1 \leq s \leq N} \Big( \big( s\b^{-\frac12} + s^{\frac12} (\b- \b')^{-\frac12}\big) e^{-(\mu - \mu') s} \Big) \\ & \qquad \times \sum_{1 \leq m \leq N-s} e^{-m(\mu - \k_d)} \e^{m-1} \ ( \b/C_{d})^{-\frac{md}2} \, . 
  \end{aligned}
  $$
  If $\e$ is small enough so that $\e e^{\k_d - \mu} (C_{d}/\b)^{d/2} < 1,$ then the above series is convergent, and
$$ \sum_{1 \leq m \leq N-s} e^{-m(\mu - \k_d)} \e^{m-1} (C_{d}/\b)^{md/2} \leq \frac{e^{\k_d - \mu} (C_{d}/\b)^{d/2}}{1 - \e e^{\k_d - \mu} (C_{d}/\b)^{d/2}} \, \cdotp$$
We conclude as in the proof of Proposition \ref{propcontinuityclustersHS}. 
Proposition \ref{propcontinuityclusters} is proved.
\end{proof}

\begin{Rmk} We do not use the extra decay provided by the contribution of the potential in the exponential of the Hamiltonian. This is quite obvious in the bound for $J_{i,m}(X_s)$ in the proof of Proposition {\rm \ref{propcontinuityclusters},} where we bound $e^{-\beta \sum_{1 \leq j < k \leq s+m} \Phi_\e(x_j - x_k)}$ by $e^{-\beta \sum_{1 \leq j < k \leq s} \Phi_\e(x_j - x_k)}.$ Then, we might be tempted to replace $E_\e$ by the free Hamiltonian $E_0$ in the definition of the functional spaces. The kinetic energy, however, is not a conserved quantity, so that in $X_{0,s,\b}$ there is no analogue of~{\rm(\ref{continuitytransport})}. 
\end{Rmk} 

This leads to the following lemma, which is the key to the proof of the uniform bound stated in Theorem {\rm \ref{existence-thm}} in the next paragraph. It is the analogue of Lemma \ref{lemukai}.
 \begin{lem}\label{lemukaipotential}
 Let~$\beta_0>0$ and~$\mu_0 \in \R$ be given.  There is~$T>0$ depending only on~$\beta_0$ and~$\mu_0$ such that for an appropriate choice of~$\lambda $ in~$ (0, \beta_0 /T )$, there holds for all~$t \in [0,T]$
\begin{equation} \label{est:C1bispotential}
    \Big\| \int_0^t {\mathbf  H}(t-\tau) {\mathbf C}_NG_N   (\tau) \, d \tau\Big\|_{\e,\b_0-\lambda t,\mu_0-\lambda t} \leq \frac12 \,  |\! \| G_N| \! \| _{ \eps,{\boldsymbol\beta},{\boldsymbol\mu}}\, .
  \end{equation}
\end{lem}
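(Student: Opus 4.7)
The plan is to closely mirror the proof of Lemma~\ref{lemukai}, with the crucial extra task of summing over the order~$m$ of the higher-order collision operators~$\C_{s,s+m}$ appearing in~${\bf C}_N$. The geometric factor~$\e^{m-1}$ in the pointwise bound~\eqref{est-col:1} is what will save us for small~$\e$.

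First, I would use conservation of energy for the Hamiltonian flow (the analogue~\eqref{continuitytransport} of~\eqref{continuitytransportHS}) to write, with the notation $\beta_0^\lambda(t):=\b_0-\lambda t$ and $\mu_0^\lambda(t):=\mu_0-\lambda t$ of~\eqref{notationweights},
\[
\Big|\int_0^t {\bf H}(t-\tau){\bf C}_N G_N(\tau)\,d\tau\Big|_{\e,s,\beta_0^\lambda(t)}\leq\sup_{Z_s\in\Omega_s}\sum_{m=1}^{N-s}\int_0^t e^{\beta_0^\lambda(t)E_\e(Z_s)}\big|\C_{s,s+m}g_{s+m}(\tau,Z_s)\big|\,d\tau.
\]
Applying the pointwise bound~\eqref{est-col:1} from Proposition~\ref{propcontinuityclusters} at temperature~$\beta_0^\lambda(\tau)$, and using, exactly as in~\eqref{estimategs+1t'}, that
$| g_{s+m}(\tau)|_{\e,s+m,\beta_0^\lambda(\tau)}\leq e^{-(s+m)\mu_0^\lambda(\tau)}\Nt{G_N}_{\e,{\boldsymbol\beta},{\boldsymbol\mu}}$, one gets an integrand of the form
\[
C_d\,\e^{m-1}(\beta_0^\lambda(\tau)/C_d)^{-md/2}e^{m\k_d}\Big(s(\beta_0^\lambda(\tau))^{-1/2}+\sum_{i=1}^s|v_i|\Big)e^{-(s+m)\mu_0^\lambda(\tau)}e^{\lambda(\tau-t)E_\e(Z_s)}\Nt{G_N}.
\]

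Next I would multiply by $e^{s\mu_0^\lambda(t)}$, use monotonicity of $\beta_0^\lambda$ and $\mu_0^\lambda$ to replace running parameters by their values at~$T$, and apply the time-integral bound~\eqref{bd:tildeC2} (whose proof only uses the factor $e^{\lambda(\tau-t)(s+E_\e(Z_s))}$ multiplying a linear-in-velocity weight, and hence is unchanged here). This yields
\[
e^{s\mu_0^\lambda(t)}\Big|\int_0^t{\bf H}(t-\tau){\bf C}_N G_N(\tau)\,d\tau\Big|_{\e,s,\beta_0^\lambda(t)}\leq \bar c(\b_0,\mu_0,\lambda,T)\,\Nt{G_N}\,A_\e(T),
\]
where $\bar c(\b_0,\mu_0,\lambda,T)$ is the constant~\eqref{defbarcHS} from Lemma~\ref{lemukai} and
\[
A_\e(T):=\sum_{m=1}^{N-s}\Big(\e\, e^{\k_d-\mu_0^\lambda(T)}(C_d/\beta_0^\lambda(T))^{d/2}\Big)^{m-1}.
\]

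The last step is to pick the parameters. I would first choose $T>0$ and $\l\in(0,\b_0/T)$ exactly as in the end of the proof of Lemma~\ref{lemukai} but demanding~$\bar c(\b_0,\mu_0,\l,T)\leq \tfrac14$ instead of~$\tfrac12$; this is possible by the same analysis of condition~\eqref{lambdaHS}, up to a constant. Having fixed~$T,\l$, the quantities $\beta_0^\lambda(T)$ and $\mu_0^\lambda(T)$ are fixed as well, so one can then impose the smallness condition
\[
\e\,e^{\k_d-\mu_0^\lambda(T)}(C_d/\beta_0^\lambda(T))^{d/2}\leq\tfrac12,
\]
which makes the geometric series $A_\e(T)$ bounded by~$2$. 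Taking the supremum in~$s$ then yields~\eqref{est:C1bispotential}. The main obstacle, and the only genuinely new point compared to Lemma~\ref{lemukai}, is to ensure that the smallness condition on~$\e$ needed for the geometric sum can be imposed consistently with the Boltzmann--Grad scaling~$N\e^{d-1}\equiv1$ used in Proposition~\ref{propcontinuityclusters}; but since $\beta_0$ and $\mu_0$ are fixed and~$\e\to 0$ along this scaling, it suffices to work with~$N$ larger than a threshold depending only on~$\b_0$ and~$\mu_0$, which is harmless for the uniform bound.
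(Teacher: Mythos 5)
Your proof is correct and follows essentially the same route as the paper: apply the pointwise bound \eqref{est-col:1}, use \eqref{estimategs+1t'} with $s+m$ in place of $s+1$, reduce to the time-integral bound \eqref{bd:tildeC2} for~$\overline C$, and handle the sum over~$m$ as a geometric series that converges once $\e$ is below a threshold depending only on $\b_0$ and $\mu_0$ (through $\b_0^\lambda(T)$ and $\mu_0^\lambda(T)$). The paper organizes the bookkeeping slightly differently, writing $\sum_m C_m$ with $C_m := C_d \e^{m-1} e^{m(\k_d - \mu_0^\lambda(T))}(C_d/\b_0^\lambda(T))^{md/2}$ and imposing the smallness condition on $\e$ before choosing~$\l$ so that $2\bar c(\b_0,\mu_0,\l,T) \leq 1/2$, whereas you factor out~$\bar c$ and demand $\bar c \leq 1/4$ plus $A_\e(T) \leq 2$; these differ only in how the $e^{\k_d}$ and related constants are absorbed and are interchangeable.
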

 \begin{proof} 
We follow closely the proof of Lemma \ref{lemukai}. The difference is that here we take into account higher-order collision operators $\C_{s,s+m},$ with $m \geq 2.$ 
Using notation~(\ref{notationweights}),
 Estimate \eqref{est-col:1} from Proposition \ref{propcontinuityclusters} gives
 $$
  \begin{aligned}
 & e^{\beta_0^\lambda(t)E_\e(Z_s)}  \big| {\mathcal C}_{s,s+m} g_{s+m} (t',Z_s)\big| \\ & \leq \e^{m-1} C_d  e^{m \k_d} (2\pi/\beta_0^\lambda(t'))^{md/2} | g_{s+m}(t')|_{\e,s+m,\beta_0^\lambda(t')} \Big( s(\beta_0^\lambda(t'))^{-d/2} + \sum_{1 \leq i \leq s} |v_i|\Big)  e^{\lambda(t'-t) E_\e(Z_s)} \,.
 \end{aligned}
 $$
 Using also~(\ref{estimategs+1t'}) with~$s+1$ replaced by~$s+m$, we get
 \begin{equation} \label{min2}
 \begin{aligned} & \Big\| \int_0^t {\bf H}(t-t') {\bf C}_N G_N(t') \, dt'\Big\|_{\e,\beta_0^\lambda(t),\mu_0^\lambda(t)} \\ &\qquad \leq   |\! \| G_N| \! \| _{ \eps,{\boldsymbol\beta},{\boldsymbol\mu}} \Big(\sum_{1 \leq m \leq N-s}  C_m\Big) \sup_{Z_s \in \R^{2ds}} \int_0^t \overline C(t,t',Z_s) \, dt'\,, \end{aligned}
 \end{equation}
 where
 $  C_m := C_d \e^{m-1} e^{m (\k_d -\mu_0^\lambda(T))} (C_d/\beta_0^\lambda(T))^{md/2},$
 and $  \overline C$ is defined in \eqref{def:C2} and satisfies \eqref{bd:tildeC2} which we recall here:
  \begin{equation} \label{bd:tildeC2pot}
 \sup_{Z_s \in \R^{2ds}} \int_0^t \overline C(\tau,t ,Z_s) \, d \tau \leq  \frac{C_d}\lambda \Big( 1 +  \frac 1 {   (\beta_0^\lambda(T))^{d/2} } \Big) \,,
  \end{equation}
Under the assumption that 
\begin{equation}\label{estimateeps0}
\e_0 e^{\k_d -\mu_0^\lambda(T)} (2\pi/\beta_0^\lambda(T))^{d/2} < 1 /2\, ,
\end{equation}
 we find
  \begin{equation} \label{bd:tildeC1} \sum_{1 \leq m \leq N-s} C_m \leq2 C_d e^{- \mu_0^\lambda(T)}(\beta_0^\lambda(T))^{-d/2}  \, .
 \end{equation}
The upper bounds in \eqref{bd:tildeC2pot}  and \eqref{bd:tildeC1}  are independent of $s,$ and their product is equal to~$2\bar c( \b_0,\mu_0,\lambda,T).$ It then suffices to choose~$\lambda$ so that~$2\bar c( \b_0,\mu_0,\lambda,T) \leq 1/2$ and taking the supremum in $s$ in \eqref{min2} then yields the result.
\end{proof}  


 \section{Uniform bounds for the BBGKY and Boltzmann hierarchies} \label{sec:continuity-col-B}  
The results of the previous section enable us, exactly as in the hard spheres case page~\pageref{existence-thmHS}, to deduce directly the following bounds on the BBGKY hierarchy defined in~(\ref{BBGKY-mild-total}) page~\pageref{BBGKY-mild-total}. 
\begin{Thm}[Uniform bounds for the BBGKY hierarchy]\label{existence-thm}
  Let~$\beta_0 >0$ and~$\mu_0 \in \R $ be given. There is a time~$T>0$  as well as
 two nonincreasing functions~${\boldsymbol\beta }>0$ and~${\boldsymbol\mu }$ defined on~$[0,T]$, satisfying~${\boldsymbol\beta }(0) = \beta_0 $ and~${\boldsymbol\mu}(0) = \mu_0 $, 
such that {in the Boltzmann-Grad scaling~$N \eps^{d-1} \equiv 1$}, any family of initial marginals $ \widetilde  F_N(0) =\big( \widetilde  f_N^{(s)} (0) \big)_{1 \leq s \leq N}$  in~${\bf X}_{\eps,\beta_0,\mu_0}$ gives rise to a unique solution~$ \widetilde  F_N (t)= (  \widetilde f_N^{(s)} (t))_{1 \leq s \leq N}$  in~${\bf X}_{\e,{\boldsymbol\beta},{\boldsymbol\mu}}$ to    the BBGKY hierarchy~{\rm(\ref{BBGKY-mild-total})}    satisfying  the following bound:
         $$
  | \! \| \widetilde  F_N   | \! \| _{ \e,{\boldsymbol\beta},{\boldsymbol\mu}}
\leq  2  \|{  \widetilde F_N(0) }\|_{\eps,
      \b_0,\mu_0} \, .$$
    \end{Thm}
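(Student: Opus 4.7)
The plan is to mirror the Cauchy-Kowalevskaya / Ukai-type argument already sketched in Section~\ref{mainsteps} for the hard-spheres case, since all the hard work for the potential case has been carried out in Lemma~\ref{lemukaipotential} of the preceding section. Specifically, I would fix $\beta_0 > 0$, $\mu_0 \in \R$, and choose $T > 0$ together with $\lambda \in (0,\beta_0/T)$ so that Lemma~\ref{lemukaipotential} applies. I would then define the nonincreasing functions $\boldsymbol{\beta}(t) := \beta_0 - \lambda t$ and $\boldsymbol{\mu}(t) := \mu_0 - \lambda t$ on $[0,T]$ (as in \eqref{notationweights}), and work in the Banach space $\mathbf{X}_{\e,\boldsymbol{\beta},\boldsymbol{\mu}}$ from Definition~\ref{deffunctionspacesexistence}.

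Next, I would introduce the Duhamel operator
\[
\mathfrak{L}\colon \quad G_N \in \mathbf{X}_{\e,\boldsymbol{\beta},\boldsymbol{\mu}}\ \longmapsto\ \Big(t \mapsto \int_0^t \mathbf{H}(t-\tau)\,\mathbf{C}_N G_N(\tau)\,d\tau \Big),
\]
and observe two things. First, by the conservation-of-energy identity \eqref{continuitytransport} for the Hamiltonian flow $\mathbf{H}(t)$, combined with the inclusions \eqref{inclusions} and the monotonicity of $\boldsymbol{\beta}$ and $\boldsymbol{\mu}$, the free-flow part $t \mapsto \mathbf{H}(t)\widetilde F_N(0)$ belongs to $\mathbf{X}_{\e,\boldsymbol{\beta},\boldsymbol{\mu}}$ and satisfies $|\!\|\mathbf{H}(\cdot)\widetilde F_N(0)|\!\|_{\e,\boldsymbol{\beta},\boldsymbol{\mu}} \leq \|\widetilde F_N(0)\|_{\e,\beta_0,\mu_0}$. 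Second, Lemma~\ref{lemukaipotential} says exactly that $\mathfrak{L}$ is a bounded linear operator on $\mathbf{X}_{\e,\boldsymbol{\beta},\boldsymbol{\mu}}$ with operator norm bounded by $1/2$, provided $\e$ is small enough for the smallness condition \eqref{estimateeps0} to hold, which is automatic in the Boltzmann-Grad scaling $N\e^{d-1}\equiv 1$ for $N$ large.

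The mild-solution formulation \eqref{BBGKY-mild-total} then reads $(\mathrm{Id} - \mathfrak{L})\widetilde F_N = \mathbf{H}(\cdot)\widetilde F_N(0)$. Since $\|\mathfrak{L}\|_{\mathcal{L}(\mathbf{X}_{\e,\boldsymbol{\beta},\boldsymbol{\mu}})} \leq 1/2$, the operator $\mathrm{Id} - \mathfrak{L}$ is invertible in the Banach algebra $\mathcal{L}(\mathbf{X}_{\e,\boldsymbol{\beta},\boldsymbol{\mu}})$ by a Neumann series argument, with $\|(\mathrm{Id}-\mathfrak{L})^{-1}\| \leq 2$. This produces a unique $\widetilde F_N \in \mathbf{X}_{\e,\boldsymbol{\beta},\boldsymbol{\mu}}$ solving the mild BBGKY hierarchy, with the uniform bound
\[
|\!\|\widetilde F_N|\!\|_{\e,\boldsymbol{\beta},\boldsymbol{\mu}} \leq \|(\mathrm{Id}-\mathfrak{L})^{-1}\|\cdot |\!\|\mathbf{H}(\cdot)\widetilde F_N(0)|\!\|_{\e,\boldsymbol{\beta},\boldsymbol{\mu}} \leq 2\|\widetilde F_N(0)\|_{\e,\beta_0,\mu_0},
\]
as claimed.

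The only genuinely new obstacle compared with the hard-spheres Theorem~\ref{existence-thmHS} was handling the higher-order collision operators $\mathcal{C}_{s,s+m}$ for $m \geq 2$ present in $\mathbf{C}_N$ (see \eqref{bH-def}), which are absent in the hard-spheres case. This was the main analytical task already resolved in Proposition~\ref{propcontinuityclusters} and Lemma~\ref{lemukaipotential}: the cluster bound \eqref{cluster-est1} from Lemma~\ref{estimationsgrandcanoniques} shows that the $m$-th order term carries a small factor $\e^{m-1}$, so that summing the geometric series in $m$ is possible under \eqref{estimateeps0}, and everything reduces to the same Ukai-type contraction scheme as in the hard-spheres setting. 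Hence the proof of Theorem~\ref{existence-thm} itself is a one-line contraction-mapping argument, with the \emph{real} work having been done upstream in Section~\ref{sec:cont-est}.
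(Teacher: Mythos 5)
Your argument is correct and matches the paper's own proof essentially verbatim: the authors state after Lemma~\ref{lemukaipotential} that Theorem~\ref{existence-thm} follows ``exactly as in the hard spheres case,'' i.e. by the Neumann-series inversion of $\mathrm{Id}-\mathfrak{L}$ in $\mathbf{X}_{\e,\boldsymbol{\beta},\boldsymbol{\mu}}$ presented in Section~\ref{mainsteps}, with the contraction bound supplied by Lemma~\ref{lemukaipotential} and the invariance of the free-flow part under $\mathbf{H}$ supplied by \eqref{continuitytransport} together with the inclusions \eqref{inclusions}. You also correctly identify the only new ingredient over the hard-spheres proof, namely the cluster estimate \eqref{cluster-est1} used in Proposition~\ref{propcontinuityclusters} to sum the higher-order collision operators under the smallness condition \eqref{estimateeps0}.
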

    In the case of the Boltzmann hierarchy associated with the collision operator~(\ref{Boltzmann-opscatfacteurdimpactchapter}), the same existence result as in Theorem~\ref{existence-thmboltzmannHS} holds, again with the same proof.  
\begin{Thm}[Existence for the Boltzmann hierarchy]\label{existence-thmboltzmann}
Assume the potential satisfies Assumption~{\rm\ref{propertiesphi}}.  Let~$\beta_0 >0$ and~$\mu_0 \in \R $ be given. There are a time~$T>0$  as well as
 two nonincreasing functions~${\boldsymbol\beta }>0$ and~${\boldsymbol\mu }$ defined on~$[0,T]$, satisfying~${\boldsymbol\beta }(0) = \beta_0 $ and~${\boldsymbol\mu}(0) = \mu_0 $,
such that  any family of initial marginals $F(0) =\big(f^{(s)} (0) \big)_{ s \geq 1} $ in~$ {\bf X}_{0,\beta_0,\mu_0}$  gives rise to a unique solution~$   F  (t)= (   f^{(s)} (t))_{s \geq 1}$  in~${\bf X}_{0,{\boldsymbol\beta},{\boldsymbol\mu}}$ to      the Boltzmann hierarchy~{\rm (\ref{mild-Boltzmannbis})}, satisfying  the following bound:
         $$
   | \! \|   F    | \! \| _{ 0,{\boldsymbol\beta},{\boldsymbol\mu}}\leq  2  \|{   F(0) }\|_{0,
      \b_0,\mu_0} \, .$$
    \end{Thm}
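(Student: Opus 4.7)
The plan is to transpose the Ukai-style Cauchy–Kowalewski fixed point scheme used for the hard-spheres Boltzmann hierarchy in Theorem \ref{existence-thmboltzmannHS} to the potential setting. As was already observed in Chapter \ref{BBGKY} for the BBGKY case, once appropriate continuity estimates are available for the collision operator, the remainder of the argument is mechanical. The two inputs I would establish are the free-flow isometry $\|\mathbf{S}(t)G\|_{0,\beta,\mu} = \|G\|_{0,\beta,\mu}$, immediate from the conservation of kinetic energy along straight lines, and a pointwise bound on $\C^0_{s,s+1}$ analogous to Proposition \ref{propcontinuityBHS}.

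For the continuity estimate I would use the $\nu$-parametrization \eqref{Boltzmann-opscatfacteurdimpactchapter} of the collision operator rather than the cross-section form \eqref{Boltzmann-opscatchapter}. The kernel in that parametrization is $\nu\cdot(v_{s+1}-v_i)$, dominated by $|v_i|+|v_{s+1}|$, and the inverse scattering $\sigma_0^{-1}$ is elastic: by the conservation law \eqref{classical}, $|v_i^*|^2 + |v_{s+1}^*|^2 = |v_i|^2 + |v_{s+1}|^2$, so the total kinetic energy of the arguments of $g_{s+1}$ is unchanged between the gain and loss terms. Both can then be bounded by $e^{-\beta E_0(Z_s)-\beta|v_{s+1}|^2/2}\,|g_{s+1}|_{0,s+1,\beta}$, and integrating the resulting Gaussian in $v_{s+1}$ reproduces word-for-word the bound \eqref{est-col:BHS}:
$$\bigl|\C^0_{s,s+1} g_{s+1}(Z_s)\bigr| \leq C_d \beta^{-d/2}\Bigl(s\beta^{-1/2} + \sum_{1\leq i\leq s} |v_i|\Bigr)e^{-\beta E_0(Z_s)}|g_{s+1}|_{0,s+1,\beta}.$$

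With this estimate in hand, the rest of the proof is a verbatim copy of Lemma \ref{lemukai} applied to $\mathbf{C}^0$ in place of $\mathbf{C}_N$. Setting $\boldsymbol\beta(t) := \beta_0 - \lambda t$ and $\boldsymbol\mu(t) := \mu_0 - \lambda t$, the Duhamel operator $\mathfrak{L} : G \mapsto \int_0^\cdot \mathbf{S}(\cdot-\tau)\mathbf{C}^0 G(\tau)\,d\tau$ satisfies
$$\bigl|\!\bigl\|\mathfrak{L}G\bigr|\!\bigr\|_{0,\boldsymbol\beta,\boldsymbol\mu} \leq \bar c(\beta_0,\mu_0,\lambda,T)\,\bigl|\!\bigl\| G\bigr|\!\bigr\|_{0,\boldsymbol\beta,\boldsymbol\mu}$$
with $\bar c$ given by \eqref{defbarcHS}; choosing $T$ small and $\lambda$ satisfying \eqref{lambdaHS} forces $\bar c\leq 1/2$. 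A Neumann series then inverts $\mathrm{Id}-\mathfrak{L}$ on ${\bf X}_{0,\boldsymbol\beta,\boldsymbol\mu}$, and combined with $\|\mathbf{S}(\cdot)F(0)\|_{0,\boldsymbol\beta,\boldsymbol\mu}\leq \|F(0)\|_{0,\beta_0,\mu_0}$ (by \eqref{continuitytransport} and the monotonicity of the weights) yields the unique mild solution and the factor-$2$ bound.

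The only subtlety worth emphasizing, and the reason why the statement assumes only \ref{propertiesphi} and not the additional condition \eqref{strangeassumption} of Chapter \ref{scattering}, is that the estimate above is performed on the $\nu$-parametrized form of $\C^0_{s,s+1}$ and never invokes the cross-section $b(w,\omega)$ of Definition \ref{def:b}. In particular, the monotonicity result of Lemma \ref{assumptiontheta} and the change of variables it enables are needed only in the convergence analysis, not in the a priori bound, so no structural obstruction arises here beyond routine bookkeeping.
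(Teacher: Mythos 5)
Your proof is correct and follows the same route the paper intends: the paper itself defers to ``again with the same proof'' as Theorem~\ref{existence-thmboltzmannHS}, which is precisely the Ukai-style scheme you reproduce (free-flow isometry \eqref{continuitytransportHS0}, the pointwise collision bound of Proposition~\ref{propcontinuityBHS}, and Lemma~\ref{lemukai}). Your observation that the estimate should be run on the $\nu$-parametrized form \eqref{Boltzmann-opscatfacteurdimpactchapter} — using only elasticity \eqref{classical} and the compactness of ${\mathbf S}_1^{d-1}$, never the cross-section of Definition~\ref{def:b} — is exactly the right way to see why the theorem assumes only Assumption~\ref{propertiesphi} and not \eqref{strangeassumption}, a point the paper leaves implicit.
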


\chapter{Convergence result and strategy of proof}\label{convergence}\label{Convergence}
\setcounter{equation}{0}

 The main goal of this chapter is to 
reduce the proof of Theorem~\ref{thm-cv-intro} stated page~\pageref{thm-cv-intro}  to the term-by-term convergence of some  functionals involving a finite (uniformly bounded) number of marginals  with only first-order collisions, bounded energies and a finite number of collision times, exactly as was performed 
in Chapter~\ref{convergenceHS} (see Section~\ref{finite-m}). 

Before doing so we define, as in the hard spheres case, the notion of admissible initial data in Section~\ref{admissible-pot}.  We  give the precise version of  Theorem~\ref{thm-cv-intro}  in Section~\ref{potentialboltz}.

\section{Admissible initial data}\label{admissible-pot}
\setcounter{equation}{0}

The characterization of admissible initial data  is   very similar  to the  hard spheres case studied in Paragraph~\ref{sec:adm-B}. The only new aspect concerns the fact that marginals have been truncated, and that feature will be dealt with in this section.
\begin{Def}[Admissible Boltzmann data] \label{def:adm-datapot}
Admissible Boltzmann data
are defined as families~$F_0 = (f^{(s)}_0)_{s \geq 1},$ with each~$f_0^{(s)}$  nonnegative, integrable and continuous over~$\Omega_s$, such that
\begin{equation} \label{limit:marginalspot}
 \int_{\R^{2d}} f^{(s+1)}_0(Z_s,z_{s+1})  \, dz_{s+1} = f^{(s)}_0(Z_s) \, , 
\end{equation}
and which are limits of BBGKY initial data~$\widetilde F_{0,N} = (\widetilde f^{(s)}_{0,N})_{1 \leq s \leq N}  \in {\bf X}_{\e,\b_0,\mu_0}$ in the following sense: it is assumed that
\begin{equation} \label{def:unif-datapot} \begin{aligned}
 \sup_{N \geq 1} \| \widetilde F_{0,N} \|_{\e,\b_0,\mu_0} < \infty\,,\quad \mbox{for some $\b_0  > 0\,,$ $\mu_0 \in \R\,,$ as $N \e^{d-1} \equiv 1\,,$ } \end{aligned}
 \end{equation}
and that for each given~$s \in [1,N]$, the truncated marginal of order~$s$ defined by
 \begin{equation} \label{def:adm-truncpot} 
  \widetilde f_{0,N}^{(s)}(Z_s)  = \int_{  \R^{2d(N-s)}}  \indc_{\D^s_N}(X_N)   f_{0,N}^{(N)} (Z_N) dZ_{(s+1,N)}\,, \quad  1 \leq s < N\,, 
\end{equation}
converges in the Boltzmann-Grad limit:
\begin{equation} \label{def:cv-datapot} \widetilde f_{0,N}^{(s)} \longrightarrow f_0^{(s)} \, , \quad \mbox{ as $N \to \infty$ with $N \e^{d-1} \equiv 1 \, ,$ locally uniformly in $\Omega_s \, .$}
 \end{equation}
   \end{Def}
 The following result is proved very similarly to Proposition~\ref{cor:B-data}.
\begin{prop} \label{cor:B-datapot} The set of admissible Boltzmann data, in the sense of Definition~{\rm\ref{def:adm-datapot}},  is the set of families of marginals $F_0$ as in~\eqref{limit:marginalspot} satisfying a uniform bound~$\| F_0 \|_{0,\b_0,\mu_0} < \infty$ for some~$\beta_0>0$ and~$\mu_0 \in \R$.
\end{prop}
We shall not give the proof of that result, as the only difference with Proposition~\ref{cor:B-data} lies in the presence of a truncation in the marginals, whose effect disappears asymptotically as stated in the following lemma. 

\begin{lem} \label{lem:trunc-untrunc} Given $\widetilde F_{0,N} = (\widetilde f^{(s)}_{0,N})_{1 \leq s \leq N}$ satisfying \eqref{def:unif-datapot} and \eqref{def:adm-truncpot} from Definition~{\rm \ref{def:adm-datapot}}, with associated family $  F_{0,N} = (  f^{(s)}_{0,N})_{1 \leq s \leq N}$ of untruncated marginals: 
 \begin{equation} \label{def:lem-untrunc}  
  f_{0,N}^{(s)}(Z_s) = \int_{  \R^{2d(N-s)}} f_{0,N}^{(N)} (Z_N) \,  dZ_{(s+1,N)}\,, \quad  1 \leq s < N \, , \quad Z_s \in \Omega_s\,, \qquad \widetilde f^{(N)}_{0,N} = f^{(N)}_{0,N}\,,
 \end{equation}
there holds the convergence
 $$ f^{(s)}_{0,N} -\widetilde f^{(s)}_{0,N} \longrightarrow 0 \, ,\qquad \mbox{ for fixed $s \geq 1 \, ,$ as $N \to \infty$ with $N \e^{d-1} \equiv 1 \, ,$ uniformly in $\Omega_s \, .$}$$
\end{lem}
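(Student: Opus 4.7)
\textbf{Proof plan for Lemma \ref{lem:trunc-untrunc}.} The strategy is to write the difference $D^{(s)} := f^{(s)}_{0,N} - \widetilde f^{(s)}_{0,N}$ as an explicit series indexed by clusters, and then to bound each term using the uniform bound on $\widetilde F_{0,N}$ together with the cluster estimates of Lemma \ref{estimationsgrandcanoniques}. Precisely, for $Z_s \in \Omega_s$,
$$ D^{(s)}(Z_s) = \int_{\R^{2d(N-s)}} f^{(N)}_{0,N}(Z_N)\bigl(1 - \indc_{X_N \in \D^s_N}\bigr)\, dZ_{(s+1,N)}\,. $$
Applying the partition identity \eqref{cluster-dec} from Lemma \ref{estimationsgrandcanoniques}, for fixed $V_N$, to the function $\varphi(X_N) := f^{(N)}_{0,N}(Z_N)$ (which is symmetric in the last $N-s$ position variables), and integrating in $V_{(s+1,N)}$, I would obtain the exact representation
$$ D^{(s)}(Z_s) = \sum_{m=1}^{N-s} C_{N-s}^m \int_{\Delta_m(X_s)} \int_{\R^{dm}} \widetilde f^{(s+m)}_{0,N}(Z_{s+m})\, dV_{(s+1,s+m)}\, dX_{(s+1,s+m)}\,. $$

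With this identity in hand, the second step is to apply the uniform bound $\widetilde f^{(s+m)}_{0,N}(Z_{s+m}) \leq C_0\,e^{-\mu_0(s+m) - \beta_0 E_\e(Z_{s+m})}$, valid on $\Omega_{s+m}$, where $C_0 := \sup_N \|\widetilde F_{0,N}\|_{\e,\beta_0,\mu_0}$. Using $\Phi \geq 0$ from Assumption \ref{propertiesphi} to bound $E_\e(Z_{s+m}) \geq E_\e(Z_s) + \tfrac12\sum_{j=s+1}^{s+m}|v_j|^2$, the velocity integrals factorize into Gaussians and yield
$$ D^{(s)}(Z_s) \leq C_0 \, e^{-\beta_0 E_\e(Z_s) - \mu_0 s}\sum_{m=1}^{N-s} C_{N-s}^m\, e^{-\mu_0 m}\, (2\pi/\beta_0)^{dm/2} \int_{\Delta_m(X_s)} dX_{(s+1,s+m)}\,. $$

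The third step is to control the cluster volume: applying estimate \eqref{cluster-est1} with the natural choice $\zeta = \e^{-d}$ gives $\int_{\Delta_m(X_s)} dX_{(s+1,s+m)} \leq m!\, \e^{dm}\,e^{\kappa_d(s+m)}$, and combined with $C_{N-s}^m m! \leq (N-s)^m \leq N^m$ and the Boltzmann--Grad scaling $N\e^{d-1} \equiv 1$ (so that $N\e^d = \e$), this reduces the series to a geometric one with ratio $\eta := \e\, e^{\kappa_d - \mu_0}(2\pi/\beta_0)^{d/2}$. For $\e$ sufficiently small ($\eta < 1/2$) we conclude
$$ D^{(s)}(Z_s) \leq 2 C_0\, \eta\, e^{-\beta_0 E_\e(Z_s) - (\mu_0 - \kappa_d) s}\,, $$
which tends to $0$ uniformly in $Z_s \in \Omega_s$ as $N \to \infty$, proving the lemma.

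The only genuine difficulty in this scheme is identifying the correct combinatorial identity in the first step: a naive union-bound pointwise inequality $1 - \indc_{X_N \in \D^s_N} \leq \sum_{i\leq s, j>s}\indc_{|x_i-x_j|\leq\e}$ followed by iteration produces factorial losses $(s+m)^m$ that cannot be absorbed in the Boltzmann--Grad scaling. It is essential to use the disjoint cluster partition \eqref{cluster-dec}, which replaces the overcounting by a clean sum indexed by the cardinal $s+m$ of the $\e$-closure $E(X_s, X_N)$ and ultimately yields only the harmless combinatorial factor $m!$ that is exactly compensated by $m!^{-1}\int_{\Delta_m} \leq \e^{dm}e^{\kappa_d(s+m)}$.
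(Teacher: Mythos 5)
Your proof is correct and follows essentially the same route as the paper: the cluster partition identity \eqref{cluster-dec} applied to $f^{(N)}_{0,N}$ to express the difference as a sum over $\Delta_m(X_s)$ of truncated marginals, the uniform bound on $\|\widetilde F_{0,N}\|_{\e,\beta_0,\mu_0}$ combined with $\Phi \geq 0$ to factor out the Gaussian velocity integrals, and the cluster volume estimate \eqref{cluster-est1} with $\zeta = \e^{-d}$ to produce a convergent geometric series whose ratio is $O(\e)$ in the Boltzmann--Grad scaling. Your closing remark about why a naive union bound would fail is a sound observation, though it is not part of the paper's own argument.
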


\begin{proof}We apply identity \eqref{cluster-dec} from Lemma \ref{estimationsgrandcanoniques} to $f^{(N)}_{0,N},$ and obtain after integration in the velocity variables
 \begin{equation} \label{f-tilde-f}
\begin{aligned}
f_{0,N}^{(s)}(Z_s) - \widetilde f_{0,N}^{(s)}(Z_s) = \sum_{m=1}^{N-s} C^m_{N-s}\int_{\Delta_m(X_s) \times \R^{dm}} \widetilde f_{0,N}^{(s+m)} (Z_{s+m}) dZ_{(s+1,s+m)} \, .
\end{aligned}
\end{equation}
 Then, denoting $C_0 = \displaystyle \sup_{M \geq 1} \| F_{0,M}\|_{\e,\b_0,\mu_0},$ a finite number by assumption, from
  $$ \begin{aligned} f^{(s+m)}_{0,N}(Z_{s+m}) & \leq \exp\big(- \mu_0 (s+m) -\b_0 E_\e(Z_{s+m})\big) C_0 \\ & \leq \exp\Big( -\mu_0 (s+m) - (\b_0/2) \sum_{1 \leq i \leq s} |v_i|^2\Big) C_0\,,
  \end{aligned}$$ we deduce, first by integrating the velocity gaussians and then by using the cluster bound \eqref{cluster-est1} in Lemma \ref{estimationsgrandcanoniques} with $\zeta = \eps^{-d},$ the bound
 $$ \begin{aligned} \int_{\Delta_m(X_s) \times \R^{dm}} f_{0,N}^{(s+m)} (Z_{s+m}) dZ_{(s+1,s+m)} & \leq (C_d/\b_0)^{md/2} e^{-\mu_0(s + m)} C_0 \int_{\Delta_m(X_s)} dX_{(s+1,s+m)} \\ & \leq  m!  (C_d/\b_0)^{md/2}\e^{md}  e^{(\k_d - \mu_0)(s + m)} C_0 \, . \end{aligned}$$
If $2 \e e^{\k_d - \mu_0} (C_d/\b_0)^{d/2} < 1,$ then 
$$  \sum_{m=1}^{N-s} C^m_{N-s}  m!  (C_d/\b_0)^{md/2}\e^{md}  e^{(\k_d - \mu_0)(s + m)} \leq  e^{(\k_d - \mu_0)s} \sum_{m\geq 1} \big(2 \e e^{\k_d - \mu_0} (C_d/\b_0)^{d/2}\big)^m \longrightarrow 0$$
as $\e \to 0,$ implying $f_{0,N}^{(s)} -\widetilde  f_{0,N}^{(s)} \longrightarrow 0$ for fixed $s,$ uniformly in $\Omega_s.$ 
\end{proof}
\begin{Rmk} \label{rem:for-cor} We can reproduce the above proof in the case of a time-dependent family of bounded marginals, i.e., $F_N \in {\bf X}_{\e,{\boldsymbol\beta},{\boldsymbol\mu}},$ with $\displaystyle \sup_{N \geq 1}    |\! \| F_N| \! \| _{ \eps,{\boldsymbol\beta},{\boldsymbol\mu}}< \infty,$  with the notation of Definition~{\rm\ref{def:functional-spaces}}.
This gives uniform convergence to zero, in time $t \in [0,T]$ and in space $X_s \in \Omega_s,$ of the difference between truncated and untruncated marginals:~$\widetilde f^{(s)}_N - f^{(s)}_N \to 0.$ 
\end{Rmk}

We consider therefore  families of initial data:   Boltzmann initial data~$F_0 = (f_0^{(s)})_{s \in \N}$ such that
$$
\|  F_{0} \|_{0,\beta_0,\mu_0} = \sup_{s\in \N} \sup_{Z_s}\big( \exp(\beta_0 E_0 (Z_s) +\mu_0 s)   f_{0}^{(s)}(Z_s) \big)<+\infty
$$
and for each $N$,    BBGKY initial data $\widetilde F_{N,0} =(\widetilde f_{N,0}^{(s)})_{1 \leq s\leq N}$   such that
$$
\sup_N\| \widetilde F_{N,0} \|_{\eps,\beta_0,\mu_0} =\sup_N \sup_{s\leq N} \sup_{Z_s}\big( \exp(\beta_0 E_\eps(Z_s) +\mu_0 s) \widetilde f_{N,0}^{(s)}(Z_s) \big)<+\infty \, ,
$$
satisfying~(\ref{def:adm-truncpot}) and~(\ref{def:cv-datapot}). 
These give rise to a unique, uniformly bounded solution~$\widetilde F_N$ to the BBGKY hierarchy thanks to Theorem~\ref{existence-thm} page~\pageref{existence-thm}, and to  a unique solution~$F$ to the Boltzmann hierarchy thanks to  Theorem~\ref{existence-thmboltzmann} page~\pageref{existence-thmboltzmann}.


\section{Convergence to the Boltzmann hierarchy}\label{potentialboltz}
\setcounter{equation}{0}
  Our main result is the following.
 \begin{Thm}[Convergence]\label{main-thmpotential}
 Assume the potential satisfies Assumption~{\rm\ref{propertiesphi}} as well as~{\rm(\ref{strangeassumption})}.  Let~$\beta_0 >0$ and~$\mu_0 \in \R$ be given. There is a time~$T>0$ such that the following holds. For any admissible Boltzmann datum~$F_0$ in~${\mathbf X }_{0,\beta_0,\mu_0}$     associated with a family~$( \widetilde F_{0,N})_{N \geq 1}$ of   BBGKY data in~${\mathbf X }_{\eps,\beta_0,\mu_0}$,   the solution~$\widetilde F_N$ to the BBGKY hierarchy satisfies, in the sense of Definition~{\rm\ref{def:cv}},
 $$  \widetilde F_N \tildeto F   $$  
 uniformly on $[0,T]$,
 where~$F$ is the solution to the Boltzmann hierarchy with data~$F_0$.
 \end{Thm}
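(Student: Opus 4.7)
The plan is to adapt the strategy developed for hard spheres in Chapters \ref{convergenceHS} and Part IV to the potential setting, exploiting the fact (stressed in Remark \ref{limithardspherespotential} and in the outline at the end of Section \ref{mainresulconvergence}) that the two cases share the same asymptotic structure. Using the uniform a priori bounds of Theorems \ref{existence-thm} and \ref{existence-thmboltzmann}, a dominated convergence argument, analogous to Proposition \ref{domcv}, first reduces the proof to term-by-term convergence of finite sums of iterated Duhamel terms: for fixed $s$ and $\varphi_s$, the observable $\int \varphi_s(V_s)\widetilde f^{(s)}_N(t,Z_s)\,dV_s$ is expanded via the mild formulation (\ref{BBGKY-mild}) as a series over the number $k$ of collisions and the cluster sizes $m_1,\dots,m_k$, truncated at some $n$ with error $O(2^{-n})$. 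The decisive input from Proposition \ref{propcontinuityclusters} is that $\C_{s,s+m}$ for $m\geq 2$ carries a prefactor $\eps^{m-1}$, so higher-order operators are negligible in the Boltzmann--Grad scaling and only $\C_{s,s+1}$ survives. On the Boltzmann side, the same reduction yields the corresponding finite sum of elementary terms involving $\C^0_{s,s+1}$.

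Next I would replace the truncated marginals $\widetilde f^{(s)}_N$ by the untruncated ones $f^{(s)}_N$ thanks to Lemma \ref{lem:trunc-untrunc} and Remark \ref{rem:for-cor}, and then introduce, exactly as in Propositions \ref{delta1small} and \ref{delta2smallHS}, an energy cutoff $E_0(Z_{s+k})\leq R^2$ with error $O(e^{-C\beta_0 R^2})$ and a separation condition $t_i-t_{i+1}\geq\delta$ with error $O(n^2\delta/T)$. After these truncations one is left with elementary functionals $I^{R,\delta}_{s,k}(t,J,M)$ built from pseudo-trajectories with first-order collisions only, velocities in a ball of radius $R$, and well-separated collision times, together with their Boltzmann analogues $I^{0,R,\delta}_{s,k}(t,J,M)$; the task is to prove that the difference converges to $0$ locally uniformly outside the diagonals of $\Omega_s$.

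For each such elementary term I would compare the BBGKY pseudo-trajectory, evolving backwards under the Hamiltonian flow $\mathbf H_{s+i}$ between collision times and undergoing at each time $t_i$ a scattering produced by $\C_{s+i-1,s+i}^{j_i,m_i}$ at position $x_{m_i}+\eps\nu$, with the Boltzmann pseudo-trajectory evolving backwards under free flow $\mathbf S_{s+i}$ and undergoing the Boltzmann collision $\C^{0,j_i,m_i}_{s+i-1,s+i}$ at position $x_{m_i}$. The results of Chapter \ref{scattering} are the essential ingredients: Proposition \ref{scatteringestimates} bounds the microscopic interaction time $\eps\tau_*$ uniformly on energy shells, and Lemma \ref{assumptiontheta} (which uses assumption (\ref{strangeassumption})) guarantees that the impact parameter is a $C^1$ function of the deflection angle, so the BBGKY collision integral can be reparametrized by $\omega$ and compared directly to the Boltzmann operator (\ref{Boltzmann-opscatchapter}) with cross-section $b(w,\omega)$ of Definition \ref{def:b}. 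Outside a small ``pathological'' set of impact parameters and velocities, the two pseudo-trajectories remain $O(\eps)$-close in space and $O(\eps\tau_*)$-close in time throughout $[0,t]$, so the uniform continuity of $f_0^{(s+k)}$ on compact subsets of $\Omega_{s+k}$ (from admissibility and (\ref{def:cv-datapot})) yields convergence of each elementary observable.

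The hard part, and the genuine novelty of the paper as advertised in Section \ref{mainresulconvergence} and at the end of Chapter \ref{convergenceHS}, is the construction of the pathological set. One must prove (i) a \emph{quantitative} geometric bound: the set of parameters $(\nu,v_{s+i+1})$ leading to a future recollision, or to a three-particle encounter incompatible with the two-body scattering picture of Chapter \ref{scattering}, has small measure; and (ii) a \emph{stability under microscopic spatial translations} of size $O(\eps)$ of this bad set, since at each backward step the BBGKY configuration differs from the corresponding Boltzmann configuration by $O(\eps)$ and these defects compound through the $k$ iterations. The stability forces one to thicken the bad set by $\eps$-neighbourhoods; proving that the thickened set still has vanishing measure in the Boltzmann--Grad limit requires a delicate geometric analysis of the scattering map, via the two lemmas referred to in Chapter \ref{convergenceHS} (Lemmas \ref{geometric-lem1} and \ref{geometric-lem3}) and carried out in detail in Chapter \ref{pseudotraj}. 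Once that is in hand, summing the error estimates and letting $n\to\infty$, $R\to\infty$, $\delta\to 0$ in an appropriate order, with $\eps\to 0$ last under the scaling $N\eps^{d-1}\equiv 1$, yields $\widetilde F_N \tildeto F$ uniformly on $[0,T]$ as claimed.
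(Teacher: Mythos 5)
Your proposal follows essentially the same route as the paper: reduce to finite Duhamel sums via the uniform bounds, discard higher-order cluster collision operators $\C_{s,s+m}$, $m\geq 2$, using the $\eps^{m-1}$ prefactor from Proposition~\ref{propcontinuityclusters}, introduce the energy, time-separation and good-configuration truncations, then compare BBGKY and Boltzmann pseudo-trajectories collision by collision using the scattering estimates of Chapter~\ref{scattering} and the geometric lemmas of Chapter~\ref{pseudotraj}, with assumption~(\ref{strangeassumption}) ensuring the cross-section is a genuine reparametrization. One small organizational remark: the paper proves Theorem~\ref{main-thmpotential} directly for the truncated marginals $\widetilde F_N$, which are the objects satisfying the closed hierarchy~(\ref{BBGKY-mild-total}), and only passes to the untruncated $F_N$ afterwards in Corollary~\ref{cortotheoremcv} via Lemma~\ref{lem:trunc-untrunc}; your suggestion to swap $\widetilde f^{(s)}_N$ for $f^{(s)}_N$ mid-argument should instead be applied only at the level of the initial data at time zero (Lemma~\ref{data}), since $F_N$ itself does not satisfy a closed hierarchy one can iterate, and the energy cutoff in the BBGKY functional is on $E_\eps$ (the full Hamiltonian) rather than $E_0$.
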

  
\begin{Cor}\label{cortotheoremcv}
 Assume the potential satisfies Assumption~{\rm\ref{propertiesphi}} as well as~{\rm(\ref{strangeassumption})}.  Let~$\beta_0 >0$ and~$\mu_0 \in \R$ be given. There is a time~$T>0$ such that the following holds. For any admissible Boltzmann datum~$F_0$ in~${\mathbf X }_{0,\beta_0,\mu_0}$     associated with a family~$( \widetilde F_{0,N})_{N \geq 1}$ of   BBGKY data in~${\mathbf X }_{\eps,\beta_0,\mu_0}$,   the associate family of untruncated marginals~$F_N$   satisfies
 $$    F_N \tildeto F \, , $$  
 uniformly on $[0,T]$,
 where~$F$ is the solution to the Boltzmann hierarchy with data~$F_0$.
   \end{Cor}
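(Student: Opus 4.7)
The plan is to deduce Corollary \ref{cortotheoremcv} from Theorem \ref{main-thmpotential} by showing that the difference $F_N - \widetilde F_N$ between untruncated and truncated marginals is negligible on $[0,T]$. The required ingredients are all already in place: the time-dependent extension of Lemma \ref{lem:trunc-untrunc}, explicitly anticipated in Remark \ref{rem:for-cor}, which itself rests on the uniform a priori bound of Theorem \ref{existence-thm} and the cluster estimate \eqref{cluster-est1} from Lemma \ref{estimationsgrandcanoniques}.

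First I would apply identity \eqref{cluster-dec}  to the $N$-particle density $f_N(t,\cdot)$ at arbitrary time $t \in [0,T]$ (simply by integrating \eqref{cluster-dec} in the velocity variables), which yields the analogue of \eqref{f-tilde-f} at time $t$:
\begin{equation*}
 f^{(s)}_N(t, Z_s) - \widetilde f^{(s)}_N(t, Z_s) = \sum_{m=1}^{N-s} C^m_{N-s} \int_{\Delta_m(X_s) \times \R^{dm}} \widetilde f^{(s+m)}_N(t, Z_{s+m}) \, dZ_{(s+1,s+m)} \, .
\end{equation*}
By Theorem \ref{existence-thm}, the sequence $(\widetilde F_N)$ is bounded in ${\bf X}_{\e,{\boldsymbol\beta},{\boldsymbol\mu}}$ uniformly in $N$ in the Boltzmann-Grad scaling. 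Inserting the pointwise bound $|\widetilde f^{(s+m)}_N(t,Z_{s+m})| \leq C e^{-\mu(t)(s+m) -\boldsymbol\beta(t) E_\e(Z_{s+m})}$, performing the Gaussian integration over $V_{(s+1,s+m)}$ and applying the cluster bound \eqref{cluster-est1} with $\zeta = \e^{-d}$ exactly as in the proof of Lemma \ref{lem:trunc-untrunc}, the right-hand side is bounded by a geometric series in $\e$, hence by a quantity of the form $\omega(\e) \exp(-(\boldsymbol\beta(T)/2) |V_s|^2)$ with $\omega(\e) \to 0$ as $\e \to 0$, provided $\e$ is small enough so that $2\e e^{\kappa_d - \mu(T)} (C_d/\boldsymbol\beta(T))^{d/2} < 1$.

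Next, given any test function $\varphi_s \in C^0_c(\R^{ds}; \R)$, the Gaussian velocity decay of the above pointwise estimate is integrable, and there holds
\begin{equation*}
 \sup_{t \in [0,T]} \sup_{X_s \in \R^{ds}} \big| I_{\varphi_s}(f^{(s)}_N - \widetilde f^{(s)}_N)(t,X_s) \big| \leq \omega(\e) \int_{\R^{ds}} |\varphi_s(V_s)| e^{-(\boldsymbol\beta(T)/2)|V_s|^2} dV_s \longrightarrow 0 \, .
\end{equation*}
In particular $f^{(s)}_N - \widetilde f^{(s)}_N \tildeto 0$ uniformly on $[0,T]$. Combining this with the conclusion $\widetilde F_N \tildeto F$ of Theorem \ref{main-thmpotential}, the triangle inequality and the linearity of $\varphi_s \mapsto I_{\varphi_s}$ directly yield $F_N \tildeto F$ uniformly on $[0,T]$.

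There is no real obstacle here: the corollary is a clean consequence of the fact that the truncation away from the interaction zones affects only configurations whose probability is $O(\e)$ at each order $m$, summable thanks to the cluster bound, and of the uniform Gaussian velocity control furnished by Theorem \ref{existence-thm}. The only point requiring attention is to make sure the bounds indeed propagate to $t \in [0,T]$, which is precisely where the uniform a priori bound on $(\widetilde F_N)$ in ${\bf X}_{\e,{\boldsymbol\beta},{\boldsymbol\mu}}$ is used.
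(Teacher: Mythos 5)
Your proposal is correct and follows essentially the same route as the paper: the paper deduces the corollary from Theorem \ref{main-thmpotential} by invoking Lemma \ref{lem:trunc-untrunc} together with Remark~\ref{rem:for-cor} (for $\widetilde f^{(s)}_N - f^{(s)}_N \to 0$ uniformly on $[0,T]\times\Omega_s$) and then Lemma~\ref{lem:cv} to pass to convergence in the sense of observables. You simply unpack these invoked results explicitly --- rederiving the time-$t$ version of \eqref{f-tilde-f} from \eqref{cluster-dec}, running the cluster bound \eqref{cluster-est1}, and integrating against the test function directly --- but the key estimate, the use of Theorem~\ref{existence-thm}, and the logic are identical.
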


\begin{proof} By Proposition \ref{cor:B-datapot}, the family $F_0$ is an admissible Boltzmann datum. Denoting $\widetilde F_{0,N}$ an associated BBGKY   datum, let $T > 0$ be a   time for which the solution  the BBGKY hierarchy $\widetilde F_N$ with datum~$\widetilde F_{0,N}$ has a uniform bound,  as given by Theorem \ref{existence-thm}. 

By Theorem \ref{main-thmpotential}, the convergence~$I_{\varphi_s}\big(\widetilde f^{(s)}_N - f^{(s)}\big) \to 0$ holds uniformly in $[0,T]$ and locally uniformly in $\Omega_s.$ 
 Then, by Lemma \ref{lem:trunc-untrunc} and Remark \ref{rem:for-cor}, there holds $f^{(s)}_N - \widetilde f^{(s)}_N \to 0,$ for fixed $s,$ uniformly in~$[0,T] \times \Omega_s.$ By Lemma \ref{lem:cv}, this implies $I_{\varphi_s}\big(  f^{(s)}_N -\widetilde f^{(s)}_N\big) \to 0,$ uniformly in $[0,T]$ and locally uniformly in $\Omega_s.$ 
 We conclude that $  f^{(s)}_N \tildeto f^{(s)},$ uniformly in $[0,T].$ \end{proof}

In the next paragraph we shall prove that in the   sum defining~$\widetilde  f_N^{(s)} (t) $ one can neglect all higher-order interactions and restrict our attention to the case when~$m_i = 1$ for each~$i \in [1,n]$ and each~$n \in \N$. Then we can, exactly as in the hard spheres case discussed in Chapter~\ref{convergenceHS}, consider only a finite number of collisions, and reduce the study to bounded energies and  well separated collision times.
 

\section{Reductions of the BBGKY hierarchy, and pseudotrajectories}\label{finite-m}
\setcounter{equation}{0}

In this paragraph, we  first prove that the estimates obtained in Chapter~\ref{chaptercluster}  enable us to reduce the  study of the BBGKY hierarchy to the equation
\begin{equation}
\label{BBGKY-mild2}
 \widetilde g_N^{(s)} (t,Z_s)=  {\bf H}_s(t  ) \widetilde f_{N}^{(s)}(0,Z_s) +   \int_0^t {\bf H}_s(t - \t)   {\mathcal C}_{s,s+1} \widetilde g_N^{(s+1)} (\t,Z_s)\, d\t \, , \qquad 1 \leq s \leq N-1 \, . 
 \end{equation}

Estimate \eqref{est-col:1} in Proposition \ref{propcontinuityclusters} shows indeed that higher-order collisions are negligible in the Boltzmann-Grad limit. For the solution to the BBGKY hierarchy, this translates as follows.

\begin{prop} \label{modified-h} Let~$\beta_0>0$ and~$\mu_0$ be given. Then with the same notation as Theorem~{\rm\ref{existence-thm}}, in the Boltzmann-Grad scaling $N \eps^{d-1} \equiv 1,$ any   family of initial marginals~$\widetilde F_N(0) =\big(\widetilde f_N^{(s)} (0) \big)_{1 \leq s \leq N}  $ in~$ {\bf X}_{\eps,\beta_0,\mu_0}$
gives rise to a unique solution~$\widetilde G_N \in {\bf X}_{\e,{\boldsymbol\beta},{\boldsymbol\mu}}$ of~{\rm(\ref{BBGKY-mild2})} and there holds the bound 
    $$\Nt{ \widetilde G_N}_{ {\eps,{\boldsymbol\beta},{\boldsymbol\mu}}} \leq 2 \| \widetilde F_N(0)\|_{\e,\b_0,\mu_0} \, .$$
    Besides, the solution $\widetilde G_N$ to the modified hierarchy \eqref{BBGKY-mild2} is asymptotically close to the solution $\widetilde F_N$ to the BBGKY hierarchy \eqref{BBGKY-mild-total}:
\begin{equation}\label{F1closetoF}
\Nt{\widetilde G_N - \widetilde F_N} _{  {\eps,{\boldsymbol\beta},{\boldsymbol\mu}}}  \leq 2 \e    \|  \widetilde  F_{N}(0) \|_{\e,\b_0,\mu_0} \, .
\end{equation}
 \end{prop}
 
\begin{proof} 
We find    the bound for $\widetilde G_N,$   in the same way as for Theorem \ref{existence-thm}. 
We turn to the proof of~\eqref{F1closetoF}. There holds
 $$ \begin{aligned}
\Nt{\widetilde G_N - \widetilde F_N} _{  {\eps,{\boldsymbol\beta},{\boldsymbol\mu}}}  & \leq
 \BigNt{
 \int_0^t \Big({\bf H}_s(t-t') \C_{s,s+1} (\widetilde g^{(s+1)}_N -\widetilde f_N^{(s+1)})(t')\Big)_{1 \leq s \leq N} \, dt' }_{ {\eps,{\boldsymbol\beta},{\boldsymbol\mu}}} \\
  & \quad + \BigNt{\int_0^t \Big( {\bf H}_s(t - t') \sum_{2 \leq m \leq N-s} \C_{s,s+m} f^{(s+m)}_N(t') \Big)_{1 \leq s \leq N} \, dt' }_{  {\eps,{\boldsymbol\beta},{\boldsymbol\mu}}}  \,.
 \end{aligned}$$
 With \eqref{est:C1pot}, this implies 
 $$ \begin{aligned}
 \Nt{\widetilde G_N - \widetilde F_N} _{  {\eps,{\boldsymbol\beta},{\boldsymbol\mu}}} \leq c_0\BigNt{ \int_0^t \Big( {\bf H}_s(t - t') \sum_{2 \leq m \leq N-s} \C_{s,s+m} f^{(s+m)}_N(t') \Big)_{1 \leq s \leq N} \, dt' }_{ {\eps,{\boldsymbol\beta},{\boldsymbol\mu}}}  \,,
 \end{aligned}$$
 with $c_0 := \big(1 - \bar c(\b_0,\mu_0,\lambda,T)\big)^{-1},$ which is indeed strictly positive by assumption. We conclude as in the proof of Proposition~\ref{propcontinuityclusters} and Lemma \ref{lemukaipotential}. \end{proof}

  One has the following formulation for~$   \widetilde g_{N}^{(s)} $ in  terms of the initial datum:
$$  \widetilde  g_N^{(s)} (t) =\sum_{k=0}^\infty  \int_0^t \int_0^{t_1}\dots  \int_0^{t_{k-1}}  {\bf H}_s(t-t_1) {\mathcal C}_{s,s+1}  {\bf H}_{s+1}(t_1-t_2)  \dots   {\bf H}_{s+k}(t_k)  \widetilde  f_N^{(s+k)}(0) \: dt_{k} \dots dt_1 \, .
$$
 We define the functional\label{observablepotential}
$$
\begin{aligned} I_{s}(t )(X_s)  := \sum_{k=0}^\infty \int  \varphi_s(V_s) &\int_{{\mathcal T}_{k}(t) }   {\bf H}_s(t-t_1) {\mathcal C}_{s,s+1} {\bf H}_{s+1}(t_1-t_2) {\mathcal C}_{s+1, s+2}\\
&  \dots {\mathcal C}_{s+k-1 , s+k}  {\bf H}_{s+k}(t_k- t_{k+1}) \widetilde f^{(s+k)}_{N,0}   dT_kdV_s
 \end{aligned}
 $$
and following Chapter~\ref{convergenceHS}, the reduced elementary functional
 \begin{equation}\label{defIsnRdelta}
\begin{aligned}
 I^{R,\delta}_{s,k}(t )(X_s)  :=  \int  \varphi_s(V_s) &\int_{{\mathcal T}_{k,\delta}(t) }   {\bf H}_s(t-t_1) {\mathcal C}_{s,s+1} {\bf H}_{s+1}(t_1-t_2) {\mathcal C}_{s+1, s+2}\\
&  \dots {\mathcal C}_{s+k-1 , s+k}  {\bf H}_{s+k}(t_k- t_{k+1}) \indc_{E_{\e}(Z_{s+k})\leq R^2} \widetilde f^{(s+k)}_{N,0}   dT_kdV_s \, .
 \end{aligned}
 \end{equation}

We can reproduce the proofs of Propositions~\ref{domcv}, \ref{delta1small} and~\ref{delta2smallHS}
 to obtain the following result, as in Corollary~\ref{finalresultHS}.
   \begin{prop}\label{delta2small}
With the notation of Theorem~{\rm\ref{existence-thm}}, given~$s \in \N^*$ and~$t \in [0,T]$, there are two positive constants~$C$ and~$C'$ such that  for all~$n \in \N^*$,
$$
 \big  \| I_{s} (t)  - \sum_{k=0}^n I_{s,k} ^{R,\delta} (t) \big \|_{L^\infty (\R^{ds})} \leq C \left(  2^{-n} + e^{-C'\beta_0R^2} + \frac{n^2}T\delta \right) \|\varphi\|_{L^\infty (\R^{ds})}
\|\widetilde F_{N,0}\|_{\eps,\beta_0,\mu_0} \, .$$
\end{prop}

As in the hard-spheres case, in the integrand of the collision operators ${\mathcal C}_{s,s+1}$ defined in \eqref{css+k}, we can distinguish between pre- and post-collisional configurations, as we decompose
$$
  {\mathcal C}_{s,s+1} =   {\mathcal C}^{+}_{s,s+1} -   {\mathcal C}^-_{s,s+1}
$$
where 
 \begin{equation} \label{css+1+}
 {\mathcal C}_{s,s+1}^\pm \widetilde g^{(s+1)} = \sum_{ m = 1}^{s}   {\mathcal C}^{\pm,m}_{s,s+1} \widetilde g^{(s+1)}
 \end{equation} 
 the index $m$ referring to the index of the interacting particle among the $s$ ``fixed" particles, with the notation
 $$\begin{aligned}
 \big({\mathcal C}^{\pm,m}_{s,s+1} \widetilde g^{(s+1)} \big)(Z_s) :=   (N-s ) \eps^{d-1}\int_{{\bf S}_1^{d-1}\times\R^d}
(\nu  \cdot ( v_{s+1}  - v_m) )_\pm \widetilde g^{(s+1)}  (Z_s,x_m+\eps \nu, v_{s+1}) \\
\times \prodetage{1 \leq j \leq s }{j \neq m} \indc_{|x_j-x_{s+1}| \geq \eps} \, d\nu dv_{s+1} \, ,
\end{aligned}$$
the index $+$ corresponding to post-collisional configurations and the index $-$ to pre-collisional configurations, according to terminology set out in  Chapter~\ref{scattering}.

\medskip
The elementary BBGKY observables we are interested in  can therefore be decomposed as 
 \begin{equation}\label{formulafE}
  \begin{aligned}
  I_{s,k}^{R,\delta}(t,X_s)  &=   \sum_{J,M} \Big( \prod_{i=1}^k j_i\Big) I_{s,k} ^{R,\delta} (t,J,M)(X_s) 
    \end{aligned}
\end{equation} 
  where the   elementary functionals  $I_{s,k}^{R,\delta} (t,J,M) $  are defined by
  $$
   \begin{aligned}
   I_{s,k} ^{R,\delta}  (t,J,M)(X_s) := \int  \varphi_s(V_s) \int_{{\mathcal T}_{k,\delta}(t) }   &{\bf H}_s(t-t_1){\mathcal C}_{s,s+1}^{j_1,m_1} {\bf H}_{s+1}(t_1-t_2) {\mathcal C}^{j_2,m_2}_{s+1, s+2}\\
   &\dots  {\bf H}_{s+k}(t_k- t_{k+1})\indc_{E_\eps(Z_{s+k})\leq R^2} \widetilde f^{(s+k)}_{N,0}   dT_kdV_s\,  ,
   \end{aligned}
 $$
  with
  $$
 J := (j_1,\dots,j_k) \in \{+,-\}^k \,  \,  \mbox{ and } \,  \, M := (m_1,\dots,m_k)  \,  \,  \mbox{ with } \,  \,m_i \in \{1,\dots , s+i-1\}\, .
$$
 
 \bigskip

As in the hard spheres case,  we still cannot study directly the convergence of~$I_{s,n} ^{R,\delta} (t,J,M) -I_{s,n} ^{0,R,\delta} (t,J,M) $ since the transport operators ${\bf H}_k$  do not coincide everywhere with the free transport operators ${\bf S}_k$, which means -- in terms of pseudo-trajectories --  that there are  recollisions. Note that, because the interaction potential is compactly supported,  recollisions happen only for characteristics such that
 there exist $i,j \in [1,k]$ with $i\neq j$, and $\tau >0$ such that
 $$|(x_i-\tau v_i)  -(x_j-\tau v_j)|\leq  \eps\,.$$

 We shall thus prove that these recollisions arise only for a few pathological pseudo-trajectories, which can be eliminated by additional truncations of the domains of integration. This is the goal of  Part IV, which deals with the hard-spheres and the potential case simultaneously.


\part{Term-by-term convergence}

\chapter{Elimination of recollisions}\label{pseudotraj}
\setcounter{equation}{0}

This last part is the heart of our contribution. We prove the term-by-term convergence of the series giving the observables, both in the case of hard spheres and in the case of smooth hamiltonian systems.

We have indeed seen in Corollary~\ref{finalresultHS} (for the hard-spheres case) and Proposition~\ref{delta2small} (for the potential case) that the convergence of observables  reduces to the convergence to zero of the elementary functionals~$I_{s,k}^{R,\delta} -I_{s,k}^{0,R,\delta}  $, where~$I_{s,k}^{R,\delta}$ is defined in~(\ref{defIsnRdeltaHS}) in the hard-spheres case and  in~(\ref{defIsnRdelta}) for the potential case, and~$I_{s,k}^{0,R,\delta}  $ is defined  in~(\ref{defIsnRdeltaHS}). These functionals correspond to  dynamics
\begin{itemize}
\item 
 involving only a finite number $s+k$ of particles, 
 \item with bounded energies (at most $R^2$), 
 \item   such that the $k$ additional particles are adjoined through binary collisions, 
 \item at times separated at least by $\delta$.
 \end{itemize}
 
 \bigskip

 What we shall establish is that recollisions can occur only for very pathological pseudo-trajectories, in the sense that the velocities and impact parameters of the additional particles in the collision trees have to be chosen in small measure sets.
 
 We point out the fact that, even in the case of hard spheres, these bad sets are generally not of zero measure because they are built as non countable unions of zero measure sets. The arguments are actually very similar whatever the precise nature of the microscopic interaction.
 
 The only differences we shall see between the case of hard spheres and the case of smooth potentials are the following:
 \begin{itemize}
 \item the parametrization of collisions by the deflection angle is trivial in the case of hard spheres since it coincides exactly with the impact parameter;
 \item there is no time shift between pre-collisional and post-collisional configurations in the case of hard spheres since the reflection is instantaneous.
 \end{itemize}
 These two simplifications will enable us to obtain   explicit estimates on the convergence rate in the case of hard spheres. For more general interactions, this convergence rate can be expressed as an implicit function depending on the potential.

%

\section{Stability of good configurations by adjunction of collisional particles}\label{proposition-geometry}
\setcounter{equation}{0}

In this paragraph we momentarily forget the BBGKY and Boltzmann hierarchies, and focus on the study of  pseudo-trajectories. 

 \begin{Def}[Good configuration] For any constant~$c>0$, we denote by\label{indexdefGk} $\cG_k(c)$ the set of ``good configurations" of $k$ particles, separated by at least~$c$ through backwards transport: that is
 the set of  $(X_k,V_k) \in \R^{dk} \times B_R^k$ such that
 the image of $(X_k, V_k)$ by the backward free transport satisfies the separation condition
 $$\forall \tau\geq 0 ,\quad \forall i\neq j ,\quad  |x_i-x_j-\tau(v_i-v_j)| \geq c\,,$$
 in particular it  is never collisional.
 \end{Def}
  We recall that~$ B_R^k := \Big\{V_k \in \R^{dk} \, / \, |V_k| \leq R\Big\}$ and in the following we write~$B_R:=B_R^1$.

Our aim is to show that ``good configurations" are stable by adjunction of a collisional particle provided that the deflection angle and the velocity of the additional particle do not belong to a small pathological set. Furthermore the set to be excluded can be chosen in a uniform way with respect to the initial positions of the particles in a small neighborhood of any fixed ``good configuration".

\begin{nota} \label{ssmall} In all the sequel, given two positive parameters $\eta_1$ and $\eta_2$, we shall say that  $$\eta_1 \ll \eta_2 \,  \hbox{ if } \,\eta_1 \leq C \eta_2$$
for some large constant $C$ which does not depend  on any parameter.
\end{nota}
In the following we shall fix three parameters~$ a,\eps_0,\eta\ll 1$     such that
\begin{equation}\label{sizeofparameters}
 a \ll \eps_0 \ll \eta \delta \, .
\end{equation}
We recall that the parameter~$\delta$     scales like  time while we shall see that~$\eta$, like~$R$, scales like a velocity. The parameters~$a$ and~$\eps_0$, just like~$\eps$,  will have the scaling of a distance.

\begin{prop}\label{geometric-prop}
Let~$a,\eps_0,\eta\ll 1$ satisfy~{\rm(\ref{sizeofparameters})}. 
Given~$\overline Z_k \in \cG_k(\eps_0)$, there is a subset~$\cB_k(\overline Z_k)$ of~${\mathbf S}_1^{d-1} \times B_R$  of small measure: for some fixed constant~$C>0$ and some constant~$C(\Phi, \eta ,R)>0$,
\begin{equation} 
\label{pathological-size}
\begin{aligned}
\big |\cB_k(\overline Z_k) \big | & \leq Ck \Big(R  \eta^{d-1} + R^d \Big(
  \frac{a}{\eps_0} \Big)^{d-1} + R\Big(  \frac{\eps_0}{\delta} 
\Big)^{d-1} \Big) \\
&\quad \hbox{ in the case of hard spheres}\,\\
\big |\cB_k(\overline Z_k) \big |  &\leq C k \Big(R  \eta^{d-1} + C(\Phi,R, \eta)   \, R^d \Big(
  \frac{a}{\eps_0} \Big)^{d-1} +  C(\Phi, R, \eta )  \, R\Big(  \frac{\eps_0}{\delta} 
\Big)^{d-1} \Big) \\
&\quad \hbox{ in the case of a smooth interaction potential }\Phi \, , \\
 \end{aligned}
\end{equation}
and  such that good configurations close to $\overline Z_k$  are stable by adjunction of a collisional particle close to~$\bar x_k$ and not belonging to $\cB_k(\overline Z_k)$, in the following sense. 

Consider~$(\nu,v) \in ({\mathbf S}_1^{d-1} \times B_R) \setminus  \cB_k(\overline Z_k)$ and let~$Z_k$ be a configuration of $k$ particles such that~$V_k = \overline V_k$ and~$
| X_k- \overline X_k  | \leq a $. 

$ \bullet $ $ $ If~$\nu \cdot (v-\bar v_k)< 0$ then for all~$\eps > 0$ sufficiently small,
\begin{equation}\label{taugeq0precoll}
\forall \tau \geq 0 \, , \quad \left\{ \begin{aligned}
\forall i \neq j \in [1,k] \, , \quad  |  (x_i - \tau \bar v_i) - (x_j - \tau \bar v_j ) | \geq \eps \, ,\\
\forall j \in [1,k] \, , \quad |  (x_k + \eps \nu - \tau v) - (x_j - \tau \bar v_j ) | \geq \eps \, .
\end{aligned}
\right.
\end{equation}
Moreover after the time~$\delta$, the~$k+1$ particles are in a good configuration:
\begin{equation}\label{taugeqdelta2precoll}
(X_k - \delta \overline V_k   ,\overline V_k , x_k + \eps \nu - \delta v , v) \in \cG_{k+1} (\eps_0/2)\, .
\end{equation}

$ \bullet $ $ $ If~$\nu \cdot (v-\bar v_k)> 0$ then define for~$j \in [1,k-1]$ 
$$
(z_k^{\eps*},z^{\eps*}):=\sigma^{-1} \big (z_k,(x_k + \eps \nu , v) \big) \quad \mbox{and} \quad z_j^{\eps*}:=(x_j- \bar v_j, \bar v_j) $$
in the hard-spheres case, where~$\sigma$ is defined in~{\rm(\ref{defscatteringhardspheres})},
and
$$
(z_k^{\eps*},z^{\eps*}):=\sigma_\eps^{-1} \big (z_k,(x_k + \eps \nu , v) \big) \quad \mbox{and} \quad z_j^{\eps*}:=(x_j-t_\eps \bar v_j, \bar v_j) $$
in the potential case,
 where~$\sigma_\eps$ is
the scattering operator  as  in Definition~{\rm\ref{scatteringbbgky}} and where~$t_\eps <\delta $ denotes the scattering time between~$z_k$ and~$(x_k + \eps \nu , v)$.
Then  for all~$\eps > 0$ sufficiently small,
\begin{equation}\label{taugeq0postcoll}
\forall \tau \geq 0 \, , \quad
\left\{ \begin{aligned}
\forall i \neq j \in [1,k ] \, , \quad   |  (x_i^{\eps*}  - \tau v_i^{\eps*} ) - (x_j^{\eps*}  - \tau v_j^{\eps*} ) | \geq \eps \, ,\\
\forall j \in [1,k] \, , \quad  |   ( x^{\eps*}   - \tau  v^{\eps*} ) - (x_j^{\eps*}  - \tau v_j^{\eps*} )| \geq \eps \, .
\end{aligned}\right.\end{equation}
Moreover after the time~$\delta$, the~$k+1$ particles are in a good configuration:
\begin{equation}\label{taugeqdelta2postcoll}
\Big(X_k^{\eps*} -(\delta  -t_\eps) V_k^{\eps*}   ,  V_k^{\eps*}   , x^{\eps*}    -  (\delta  -t_\eps)  v^{\eps*}  , v^{\eps*} \Big) \in \cG_{k+1}(\eps_0/2) \, ,
\end{equation}
with~$t_\eps := 0$ in the hard-spheres case.
\end{prop}
The proof of the proposition may be found in Section~\ref{proofgeometric-prop}. It relies on some elementary geometrical lemmas, stated and proved in the next section. The first one describes the bad trajectories associated with  (free) transport. The other ones explain how they are modified by collisions, both in the case of hard spheres and in the case of smooth interactions.

\begin{Rem}\label{betterresult}
For the sake of simplicity, we have assumed in the statement of Proposition~{\rm\ref{geometric-prop}} that the additional particle collides with the particle numbered~$k$. Of course, a simple symmetry argument shows that an analogous statement holds if the new particle  is added close to any of the particles in~$Z_k$.

The proof of Proposition~{\rm\ref{geometric-prop}} shows that if~$Z_k = \overline Z_k$ then the factor~$\eps_0/2$ in~{\rm(\ref{taugeqdelta2precoll})} and~{\rm(\ref{taugeqdelta2postcoll})}  may be replaced by~$\eps_0$. The loss if~$Z_k \not= \overline Z_k$ comes from the fact that the set to be excluded has to be chosen in a uniform way with respect to the initial positions of the particles in a small neighborhood of $\overline X_k$.

\end{Rem}

\section{Geometrical lemmas}\label{geometry}

We first consider the case of two particles moving freely, and describe the set of velocities $v_2$ leading possibly to collisions (or recollisions).

   Here and in   the sequel, we   denote by~$K(w,y,\rho)$\label{defindexKyeta} the cylinder of origin $w \in \R^d$, of axis $y\in \R^d$ and radius $\rho>0$ and by~$B_{\rho}(y)$ the ball centered at~$y$ of radius~$\rho$.

 \subsection{Bad trajectories associated to   free transport}$ $
 
   \begin{Lem}\label{geometric-lem1}
 Given~$ \bar a>0$ satisfying~$\eps \ll \bar  a \ll \eps_0 $,
   consider $\bar x_1,\bar x_2$ in~$\R^d$ such that $| \bar x_1-\bar x_2|\geq \eps_0$,  and~$ v_1 \in B_R$.
 Then for any~$x_1 \in B_{\bar a}(\bar x_1 )$, any~$x_2 \in B_{\bar a}(\bar x_2 )$  and any $ v_2\in B_R$, the following results hold.
 
 $ \bullet $ $ $ If   $v_2 \notin K(v_1, \bar x_1-\bar x_2, 6 R\bar a /\eps_0)$, then
  $$
   \forall \tau\geq 0 \,, \quad |(x_1-v_1 \tau)-(  x _2-v_2\tau)| >\eps \, ;
   $$
 $ \bullet $ $ $ If  ~$v_2 \notin K(v_1, \bar x_1-\bar x_2, 6 \eps_0/\delta)$
   $$
   \forall \tau\geq \delta  \,, \quad |(x_1-v_1 \tau)-(  x _2-v_2\tau)| >\eps_0 \, .
 $$
 \end{Lem}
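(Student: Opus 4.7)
The plan is to reduce both assertions to a one-variable computation on the relative trajectory. Set $\bar y := \bar x_1-\bar x_2$, $w := v_1-v_2$, and $y(\tau) := (x_1-x_2)-w\tau$. Since $|x_i-\bar x_i|\leq\bar a$, one can write $y(\tau) = \bar y - w\tau + \delta x$ with $|\delta x|\leq 2\bar a$. Decompose $w = \alpha\,\hat{\bar y} + w_\perp$, where $\hat{\bar y} := \bar y/|\bar y|$ and $w_\perp\perp \bar y$. By the very definition of the cylinder, the hypothesis $v_2\notin K(v_1,\bar y,\rho)$ is equivalent to $|w_\perp|>\rho$, which will be the only quantitative information I use.

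First I would handle the second bullet, which is essentially immediate. For $\tau\geq\delta$, projecting $y(\tau)$ on the hyperplane orthogonal to $\hat{\bar y}$ gives
$$|y(\tau)|\;\geq\;|w_\perp|\,\tau-2\bar a\;\geq\;6\eps_0-2\bar a\;>\;\eps_0,$$
the last inequality by the scaling $\bar a\ll\eps_0$ from \eqref{sizeofparameters}.

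For the first bullet I would minimize the unperturbed squared distance
$$|\bar y-w\tau|^2 = (|\bar y|-\alpha\tau)^2 + \tau^2|w_\perp|^2$$
over $\tau\geq 0$. If $\alpha\leq 0$, the derivative is nonnegative on $[0,+\infty)$, so the minimum is $|\bar y|\geq\eps_0$. If $\alpha>0$, the minimum is attained at $\tau^* = \alpha|\bar y|/|w|^2\geq 0$ and equals $|\bar y|\,|w_\perp|/|w|\geq \eps_0|w_\perp|/(2R)$, using $|\bar y|\geq\eps_0$ and $|w|\leq 2R$. The hypothesis $|w_\perp|>6R\bar a/\eps_0$ makes this last bound exceed $3\bar a$. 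Thus in both cases the unperturbed distance is $\geq 3\bar a$, and after absorbing the $2\bar a$ perturbation from $\delta x$ one is left with $|y(\tau)|\geq \bar a>\eps$, by $\eps\ll\bar a$.

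There is no genuine obstacle: the content of the lemma is the transparent geometric fact that two straight trajectories whose relative velocity has a large enough orthogonal component with respect to their initial offset cannot approach each other. The only thing to track is the chain of scalings $\eps\ll\bar a\ll\eps_0\ll\eta\delta$ of \eqref{sizeofparameters}, which successively guarantees that the $2\bar a$-spatial perturbation is swallowed, in the first bullet by a factor $\bar a$ above $\eps$, and in the second bullet by the gap $6\eps_0-2\bar a$ above $\eps_0$.
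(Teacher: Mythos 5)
Your proof is correct and follows essentially the same geometric idea as the paper: the condition $v_2\notin K(v_1,\bar y,\rho)$ is rephrased as a lower bound on the orthogonal component $|w_\perp|$ of the relative velocity, and the exclusion then follows by dominating the orthogonal projection of the relative trajectory. The paper phrases this as a contrapositive (if the distance ever drops below $\eps$, then $v_1-v_2$ lies in a cone over a small ball, hence in the cylinder), while you argue directly with an explicit minimization of $|\bar y - w\tau|^2$; the computations are equivalent.
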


   \begin{proof}
    $ \bullet $ $ $Assume that there exists $\tau_*$ such that
$$ |(x_1-v_1 \tau_*)-(  x _2-v_2\tau_*) |\leq \eps\,.$$
Then, by the triangular inequality and provided that $\eps$ is sufficiently small,
$$
|(\bar x_1 -\bar x_2) -  \tau_*(v_1 -v_2)| \leq    \eps+2\bar a \leq 3\bar a \,.
 $$
This means that $( v_1 -v_2)$ belongs to the cone of vertex 0 based on the ball centered at~$\bar x_1 -\bar x_2$ and of radius $  3\bar a $,   which is a cone of solid angle $( 3\bar a /|\bar x_1 -\bar x_2|)^{d-1}$ (since $\bar a \ll \eps_0$).

The intersection of this  cone and of the sphere of radius $2R$  is obviously embedded in the cylinder of axis $\bar x_1 -\bar x_2$ and radius $6R\bar a /\eps_0$, which proves the first result.

 $ \bullet $ $ $ Similarly   assume that there exists $\tau^* \geq \delta $ such that
$$|(x_1-v_1 \tau_*)-(  x _2-v_2\tau_*) |\leq \eps_0 \,.$$
Then, by the triangular inequality again,
$$|(\bar x_1 -\bar x_2) -  \tau_*(v_1 -v_2) |\leq \eps_0+2\bar a    \leq 3  \eps_0 \,.$$
In particular, for any unit vector $n$ orthogonal to $\bar x_1-\bar x_2$,
$$\tau^* | n\cdot (v_1-v_2) |=|n\cdot \left( (\bar x_1 -\bar x_2) -  \tau_*(v_1 -v_2)\right) |   \leq 3 \eps_0 \, .$$
This  tells us exactly  that $v_1-v_2$ belongs to the cylinder of axis $\bar x_1 -\bar x_2$ and radius $3\eps_0/\delta$.
 
 The lemma is proved.
\end{proof}

\subsection{Modification of bad trajectories by hard sphere reflection}$ $

We now consider the case when particles 1 and 2 undergo a hard sphere collision before being transported, and look at impact parameters $\nu$ and velocities $v_2$ leading possibly to collisions (or recollisions).

 \begin{Lem}\label{geometric-lem2}
Consider   $\rho\ll R$, and $(y,w) \in \R^d \times B_R$. 
For any~$v_1 $ in~$ B_R$, define  
$$
\begin{aligned}
\cN^*( w,y,\rho) (v_1) :=\big\{ (\nu ,v_2 )\in {\mathbf S}_1^{d-1}\times  B_R  \,/\, & (v_2-v_1) \cdot \nu > 0 \, ,  \\
&   v^*_1\in K(w,y,\rho) \hbox{ or }  v^*_2 \in K(w, y,\rho) \big\}\, ,
\end{aligned}
$$
where
$$
v_1^{*} := v_1 - \nu \cdot ( v_1  -   v_2 ) \, \nu \quad \mbox{and} \quad v_2^{*} := v_2+ \ \nu \cdot ( v_1  -   v_2 ) \, \nu \, .
$$
Then
$$|\cN^*( w,y,\rho) (v_1) | \leq C_d R \rho^{d-1} \,,$$
where the constant $C_d$ depends only on the dimension $d$.
\end{Lem}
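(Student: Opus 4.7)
The strategy is to split $\cN^*(w,y,\rho)(v_1)$ according to whether $v_1^* \in K$ or $v_2^* \in K$, and in each case to exploit the orthogonal decomposition $v_2 - v_1 = u_\parallel \nu + u^\perp$ with $u_\parallel := \nu \cdot (v_2 - v_1)$ and $u^\perp \in \nu^\perp$. Under this decomposition, the scattering formulas reduce to
\[
v_1^* = v_1 + u_\parallel \nu, \qquad v_2^* = v_1 + u^\perp,
\]
with $dv_2 = du_\parallel\, du^\perp$ for each fixed $\nu$, the constraint $(v_2 - v_1) \cdot \nu > 0$ becoming $u_\parallel > 0$, and $v_2 \in B_R$ forcing $u_\parallel^2 + |u^\perp|^2 \lesssim R^2$.

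For the contribution of $\{v_2^* \in K\}$, the constraint forces $u^\perp$ to lie in the intersection of the affine hyperplane $v_1 + \nu^\perp$ with the cylinder $K(w, y, \rho)$. An elementary geometric computation gives that this intersection has $(d-1)$-dimensional Lebesgue measure at most $C_d \rho^{d-1}$ in the regime where $\nu$ makes an angle with $y$ bounded away from $\pi/2$, while in the complementary regime where $\nu$ is nearly orthogonal to $y$ (so that $\nu^\perp$ nearly contains the axis) the intersection is a slab whose measure is controlled via the constraint $|u^\perp| \leq 2R$. Integrating over the free variable $u_\parallel \in (0, 2R)$ and summing the two angular regimes yields a contribution of order $C_d R \rho^{d-1}$.

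For the contribution of $\{v_1^* \in K\}$, I would perform the polar change of variables $(u_\parallel, \nu) \mapsto v_1^* = v_1 + u_\parallel \nu$, with Jacobian $du_\parallel\, d\nu = |v_1^* - v_1|^{-(d-1)}\, dv_1^*$. The contribution then becomes an integral of $|v_1^* - v_1|^{-(d-1)}$ over $K \cap B_{2R}(v_1)$, multiplied by the $(d-1)$-dimensional measure in $u^\perp$ of the admissible $v_2^* \in B_R$, which is at most $\kappa_{d-1} R^{d-1}$. Decomposing $K \cap B_{2R}(v_1)$ dyadically in the distance $|v_1^* - v_1|$ and using that the cylinder has transverse size $\rho^{d-1}$ gives the desired $C_d R \rho^{d-1}$ bound.

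\textbf{Main obstacle.} The delicate point is obtaining the sharp power $\rho^{d-1}$ rather than $\rho$ in each contribution: a direct Fubini argument integrating first over $v_2$ for fixed $\nu$ yields only a bound $R^{d-1} \rho$, since the length $L_1(\nu) := |(v_1 + \R_+\nu) \cap K|$ is at most $C\rho/\sin\theta$ and integrates over $\nu$ only to order $\rho$. Recovering the optimal exponent requires transferring the narrow transverse dimension of~$K$ directly into the parameter space via the scattering change of variables, and handling separately the singular angular regions where the line $v_1 + \R\nu$ (respectively the hyperplane $\nu^\perp$) becomes nearly parallel to the cylinder axis~$y$, so as to exploit the boundedness $|u^\perp|, |u_\parallel| \leq 2R$ there.
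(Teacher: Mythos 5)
Your change of variables is correct ($v_1^*=v_1+u_\parallel\nu$, $v_2^*=v_1+u^\perp$, $dv_2=du_\parallel\,du^\perp$), and your route is genuinely different from the paper's: the paper fixes $(v_1,v_2)$, observes that as $\nu$ varies the pre-collisional velocities sweep the sphere of diameter $r=|v_1-v_2|$, asserts that the solid angle of the intersection of that sphere with $K(w,y,\rho)$ is at most $C_d\min\bigl(1,(\rho/r)^{d-1}\bigr)$, and integrates $r^{d-1}dr$. The problem is that neither of your two concluding summations actually produces $C_dR\rho^{d-1}$. For the $v_2^*$ term: with $\alpha$ the angle between $\nu$ and the axis $y$, the measure of $(v_1+\nu^\perp)\cap K\cap B_{2R}(v_1)$ is $\leq C\rho^{d-2}\min(\rho/|\cos\alpha|,R)$, and since $d\nu\sim dt$ near $t=\cos\alpha=0$, integrating over $\nu$ gives $C\rho^{d-1}\bigl(1+\log(R/\rho)\bigr)$: each dyadic range $|\cos\alpha|\sim t$, $\rho/R\leq t\leq1$, contributes $\sim\rho^{d-1}$, and there are $\log(R/\rho)$ of them. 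Your two-regime splitting cannot remove this logarithm, because it is genuinely there: if $v_1$ lies on the axis $w+\R y$, summing the corresponding dyadic lower bounds shows $|\{(\nu,v_2):\,v_2^*\in K\}|\gtrsim R\rho^{d-1}\log(R/\rho)$. For the $v_1^*$ term the loss is worse: when $v_1$ lies in (or within $\rho$ of) the cylinder, your integral $\int_{K\cap B_{2R}(v_1)}|v-v_1|^{1-d}dv$ is dominated by the scales $|v-v_1|\leq\rho$, where it contributes $\sim\rho$, and after multiplying by the $\kappa_{d-1}R^{d-1}$ from the $u^\perp$ integration you get $\rho R^{d-1}\gg R\rho^{d-1}$ for $d\geq3$. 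This too is sharp: grazing parameters $0<\nu\cdot(v_2-v_1)\leq\rho$ give $|v_1^*-v_1|\leq\rho$, hence $v_1^*\in K$ whenever $v_1$ is on the axis, and that set alone has measure $\gtrsim\rho R^{d-1}$.

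So the concrete gap is the pair of assertions "summing the two angular regimes yields $C_dR\rho^{d-1}$" and "the dyadic decomposition gives the desired bound": the first carries a $\log(R/\rho)$, the second fails outright near grazing. You should know that the paper's own one-line proof is silent at exactly the same two points: the solid-angle bound $\min\bigl(1,(\rho/r)^{d-1}\bigr)$ degrades to $(\rho/r)^{d-3/2}$ when the axis is nearly tangent to the sphere (whence the logarithm after integrating in $r$), and the sphere of diameter $[v_1,v_2]$ always passes through $v_1$ itself (whence the grazing set when $v_1\in K$). In other words, you have correctly identified where the real difficulty sits rather than missed the paper's idea; but as written your argument, like the paper's, does not deliver the inequality in the stated generality. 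To close it one must either exclude grazing configurations by requiring $\nu\cdot(v_2-v_1)>\eta$ (exactly as is done in Lemma~\ref{geometric-lem3} for the potential case) together with a hypothesis keeping $v_1$ away from the axis of $K$, or accept a bound of the form $C_dR\rho^{d-2}\bigl(\rho\log(R/\rho)+\rho_{\mathrm{graz}}\bigr)$, which is still sufficient for the application in Proposition~\ref{geometric-prop} since only smallness, not the precise power, is used there.
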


\begin{proof} 
Denote by 
$r =|v_1-v_2|=|v_1^*-v_2^*|$.
The reflection condition shows that, as $\nu$ varies in ${\mathbf S}_1^{d-1}$, the velocities~$v^*_1$ and $v^*_2$ range over a sphere of diameter $r$.

The solid angle of the intersection of such a  sphere with the cylinder $ K(w,y,\rho)$  is less than 
$$C_d \min\left(1, \left( {\rho\over r}\right)^{d-1}\right) $$
which implies that 
$$
\begin{aligned}
\Big | \big \{ (\nu, v_2) \,/\,  v_1^*\in K(w,y,\rho) \hbox{ or }  v_2^* \in K(w, y,\rho) \big\} \Big | &\leq C_d \int r^{d-1} \min \left(1, \left( {\rho\over r}\right)^{d-1}\right) dr\\
& \leq C_d R \rho^{d-1} \,.
\end{aligned}
$$
This proves Lemma~\ref{geometric-lem2}.
\end{proof}

\subsection{Modification of bad trajectories by the scattering associated to $\Phi$}$ $

 The last  geometrical lemma   requires the use of notation coming from scattering theory, introduced in Chapter~\ref{scattering}: it states that if two particles~$z_1, z_2 $ in~$ \R^{2d}$ are in a post-collisional configuration and if~$v_1$ or~$v_2$ belong to a cylinder as in Lemma~\ref{geometric-lem1}, then the pre-image $z_2^*$ of~$z_2$ through the scattering operator belongs to a small set of~$\R^{2d}$.
 
 \begin{Lem}\label{geometric-lem3}
Consider  two parameters $\rho\ll R$  and~$\eta \ll 1$, and $(y,w) \in \R^d \times B_R$. 
For any~$v_1 $ in~$ B_R$, define  
$$
\begin{aligned}
\cN^*( w,y,\rho) (v_1) :=\big\{ (\nu ,v_2 )\in {\mathbf S}_1^{d-1}\times  B_R  \,/\, & (v_2-v_1) \cdot \nu > \eta \, ,  \\
&   v^*_1\in K(w,y,\rho) \hbox{ or }  v^*_2 \in K(w,y,\rho) \big\}\, ,
\end{aligned}
$$
where~$(\nu^*, v^*_1, v_2^*)=\sigma_0^{-1}(\nu,v_1,v_2)$ with the notations of Chapter~{\rm\ref{scattering}}.
Then
$$|\cN^*( w,y,\rho) (v_1) | \leq C (\Phi,R,\eta) R \rho^{d-1} $$
where the constant  depends on  the potential~$\Phi$ through the $L^\infty$ norm of the cross-section $b$ on the compact set~$B_{2R} \times [\eta/2R, \pi/2]$ defined in Chapter~{\rm\ref{scattering}}.
\end{Lem}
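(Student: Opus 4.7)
The plan is to follow the strategy of Lemma \ref{geometric-lem2}, replacing the impact-direction parametrization $\nu$ of the scattering sphere by the apse-line parametrization $\omega$ from Chapter \ref{scattering}. The constraint $(v_2-v_1)\cdot\nu > \eta$ bounds $r := |v_1-v_2|$ from below by $\eta$ and forces the impact parameter $\cJ_0 = \sin\alpha$ to stay away from $1$ uniformly on $(v_1,v_2) \in B_R^2$. Lemma \ref{assumptiontheta}, applied under assumption \eqref{strangeassumption}, then gives a strictly positive lower bound on $\partial_{\cJ_0} \Theta$ on that region, and by \eqref{defcross-section} the cross-section $b(v_1-v_2,\omega)$ is consequently bounded by some $C(\Phi,R,\eta)$ on the compact set $B_{2R} \times [\eta/2R,\pi/2]$.

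At fixed $(v_1,v_2)$, I would then change variables from $\nu$ to $\omega$ via identity \eqref{jac}, which reads $|\nu\cdot(v_1-v_2)|\,d\nu = b(v_1-v_2,\omega)\,d\omega$. Combined with the lower bound $|\nu\cdot(v_1-v_2)| \geq \eta$ on the relevant region, this yields $d\nu \leq \eta^{-1} b\, d\omega$, so it is enough to control the $\omega$-measure of the set $\{\omega \in {\mathbf S}_1^{d-1} \,/\, v_1^* \in K(w,y,\rho) \hbox{ or } v_2^*\in K(w,y,\rho)\}$. A direct computation using the explicit formulas $v_1^* = v_1 - (\omega\cdot(v_1-v_2))\omega$ and $v_2^* = v_2 + (\omega\cdot(v_1-v_2))\omega$ from Chapter \ref{scattering} shows that $|v_i^* - (v_1+v_2)/2| = r/2$ for $i=1,2$, so both $v_1^*$ and $v_2^*$ sweep out the sphere centered at $(v_1+v_2)/2$ of radius $r/2$ as $\omega$ varies. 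The geometric argument used in the proof of Lemma \ref{geometric-lem2} then bounds the relative surface measure of this set by $C_d \min(1,(\rho/r)^{d-1})$.

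Integrating over $v_2 \in B_R$ in spherical coordinates centered at $v_1$ finally yields
\begin{equation*}
|\cN^*(w,y,\rho)(v_1)| \leq \frac{C(\Phi,R,\eta)}{\eta} \int_{\eta}^{2R} r^{d-1} \min\!\Bigl(1, \Bigl(\frac{\rho}{r}\Bigr)^{d-1}\Bigr)\, dr \leq C(\Phi,R,\eta)\, R\, \rho^{d-1},
\end{equation*}
which is the claimed estimate (the factor $\eta^{-1}$ is absorbed in $C(\Phi,R,\eta)$). The main obstacle is to control the cross-section $b$ uniformly on the non-grazing region cut out by $(v_2-v_1)\cdot\nu > \eta$: this is precisely where the technical assumption \eqref{strangeassumption} becomes essential, since without it Lemma \ref{assumptiontheta} would not apply and $b$ could fail to remain bounded as $\cJ_0 \to 1$.
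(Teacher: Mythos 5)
Your proof is correct and follows essentially the same route as the paper: estimate the bad set in the $(\omega,v_2)$ variables by the sphere-sweep argument of Lemma~\ref{geometric-lem2}, then transfer back to the $(\nu,v_2)$ variables using that the change of variables $\nu\mapsto\omega$ has Jacobian controlled by the cross-section $b$, which is bounded on the non-grazing region carved out by $(v_2-v_1)\cdot\nu>\eta$ thanks to Lemma~\ref{assumptiontheta} and assumption~\eqref{strangeassumption}. You are in fact slightly more explicit than the paper, which simply invokes that $(\nu,v_1-v_2)\mapsto(\omega,v_1-v_2)$ is a Lipschitz diffeomorphism away from $\nu\cdot(v_1-v_2)=0$, whereas you spell out the Jacobian identity~\eqref{jac} and the lower bound $\nu\cdot(v_2-v_1)\geq\eta$.
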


\begin{proof} 
Denote by 
$r =|v_1-v_2|=|v_1^*-v_2^*|$, and by $\omega$ the deflection angle.

From the proof of the previous lemma, we deduce that
$$
\begin{aligned}
\Big| \big \{ (\omega, v_2) \,/\,  v_1^*\in K(w,y,\rho) \hbox{ or }  v_2^* \in K(w,y,\rho) \big\} \Big | &\leq C_d \int r^{d-1} \min \left(1, \left( {\rho\over r}\right)^{d-1}\right) dr\\
& \leq C_d R \rho^{d-1}  \, .
\end{aligned}
$$
According to Chapter~\ref{scattering}, the change of variables
$ (\nu,v_1-v_2) \mapsto  (\omega,v_1- v_2)$
is a Lipschitz diffeomorphism away from $\nu \cdot (v_1-v_2) = 0.$ We therefore get the expected estimate.
 \end{proof}
 
\begin{Rem}
Note that the geometrical Lemmas~{\rm\ref{geometric-lem1}} to~{\rm\ref{geometric-lem3}}  consist in eliminating  sets in the  velocity variables and deflection angles only, and do not concern the position variables.
\end{Rem}


\section{Proof of the geometric proposition }\label{proofgeometric-prop} 
In this section we prove Proposition~\ref{geometric-prop}.
  We fix a good configuration~$\overline Z_k \in \cG_k(\eps_0)$, and we consider a configuration~$ Z_k \in \R^{2dk}$, with the same velocities as~$\overline Z_k$, and neighboring positions: $| X_k - \overline X_k | \leq a$. In particular we notice that
  for all~$ \tau\geq 0 $ and all~$ i\neq j$,
 \begin{equation}\label{almostgood}
  |x_i-x_j-\tau(\bar v_i-\bar v_j)| \geq  | \bar x_i-\bar x_j-\tau(\bar v_i-\bar v_j)| - 2a \geq \eps_0 /2
  \end{equation}
   since~$a \ll \eps_0$. This implies that~$ Z_k \in \cG_k(\eps_0/2)$.
   Next we consider an additional particle $(x_k+\eps \nu,v_{k+1})$  and we shall separate the analysis into two parts, depending on whether the situation is pre-collisional (meaning~$  \nu \cdot (v_{k+1}-\bar v_k) < 0$) or post-collisional (meaning~$  \nu \cdot (v_{k+1}-\bar v_k) >0$).

 \subsection{The pre-collisional case}\label{decloss}$ $
We assume that
 $$  \nu \cdot (v_{k+1}-\bar v_k) < 0\,,$$
meaning that~$(x_k + \eps\nu,v_{k+1})$ and~$z_k$ form a pre-collisional pair. In particular we have for all times~$\tau \geq 0$ and all~$\eps >0$
$$
\big | \big (
x_k+\eps \nu - v _{k+1}\tau
\big) - \big(
x_k - \bar v_k \tau
\big) \big | \geq \eps \, .
$$
Furthermore up to excluding the ball~$B_\eta(\bar v_k)$ in the set of admissible~$v_{k+1}$, we may assume that
$$
|v_{k+1}-\bar v_k| > \eta \, .
$$
Under that assumption we have for all~$\tau \geq \delta$ and all~$\eps>0$ sufficiently small,
$$
\begin{aligned}
\big | \big (
x_k+\eps \nu - v _{k+1}\tau
\big) - \big(
x_k - \bar v_k \tau
\big) \big | & \geq \tau |v_{k+1}- \bar v_k| - \eps \\
& \geq  {\delta  }\eta- \eps > \eps_0/2 \, . 
\end{aligned}
$$
Furthermore we know that~$Z_{k }$ belongs to~$\cG_{k }(\eps_0/2)$ thanks to~(\ref{almostgood}).  

\noindent Now let~$j \in [1,k-1]$ be given.  According to Lemma~\ref{geometric-lem1}, we
find that
for any $v_{k+1}  $ belonging to the set~$ B_R \setminus K(\bar v_j, \bar x_j-\bar x_k, 6 Ra /\eps_0+6 \eps_0/\delta) ,$
we have
$$ \forall \tau \geq0 \, , \quad |(x_{k }+ \eps \nu-v _{k+1} \tau)-(x_j-\bar v_j \tau)| >\eps \,,$$
and
$$ \forall \tau \geq \delta \, , \quad |(x_{k }+ \eps \nu-v_{k+1}  \tau)-(x_j-\bar v_j \tau)| >\eps_0 \,.$$
Notice that
 $$
 \Big|B_R \cap K(\bar v_j, \bar x_j-\bar x_k, 6 Ra /\eps_0+6 \eps_0/\delta)  \Big|  \leq  C \Big( R^d \Big(
  \frac{a}{\eps_0} \Big)^{d-1} + R\Big(  \frac{\eps_0}{\delta} 
\Big)^{d-1} \Big)\, .
 $$
Defining~$\displaystyle \cM^{-} (\overline Z_k):= \bigcup_{j \leq k-1} K(\bar v_j, \bar x_j-\bar x_k, 6 Ra /\eps_0+6 \eps_0/\delta)  $ and
$$
{\mathcal B}_k^-(\overline Z_k):= {\mathbf S}_1 ^{d-1} \times \Big(B_\eta(\bar v_k) \cup\cM^{-} (\overline Z_k)\Big)
$$
we find that
$$
\Big |{\mathcal B}_k^-(\overline Z_k) \Big | \leq   C k \Big(\eta^d + R^d \big({a\over\eps_0} \big)^{d-1} + R\big({\eps_0\over\delta} \big)^{d-1}\Big)
$$
and~(\ref{taugeq0precoll}) and~(\ref{taugeqdelta2precoll}) hold as soon as~$(\nu,v_{k+1} ) \notin {\mathcal B}_k^-(\overline Z_k)$.

   \subsection{The post-collisional case with hard sphere reflection}\label{gaindec-HS}$ $
   
   We now assume that
 $$  \nu \cdot (v_{k+1} -\bar v_k) > 0\,,$$
 meaning that~$(x_k + \eps\nu,v_{k+1})$ and~$z_k$ form a post-collisional pair. In particular, at time $\tau = 0+$, the configuration is changed and we have the pre-collisional pair $(x_k + \eps\nu,v_{k+1}^*)$ and~$(x_k, v_k^*)$ where $v_k^*$ and~$v_{k+1}^*$ are defined by the usual reflection condition. Furthermore, we have for all times~$\tau \geq 0$ and all~$\eps >0$
$$
\big | \big (
x_k+\eps \nu - v _{k+1}^*\tau
\big) - \big(
x_k -  v_k^* \tau
\big) \big | \geq \eps \, .
$$

We can then repeat the same arguments as inthe pre-collisional case replacing $\bar v_k, v_{k+1}$ by $v_k^*, v_{k+1}^*$.

   Excluding the ball~$B_\eta(\bar v_k)$ in the set of admissible~$v_{k+1}$, we find that 
   $$
\begin{aligned}
\big | \big (
x_k+\eps \nu - v^* _{k+1}\tau
\big) - \big(
x_k - v_k^* \tau
\big) \big | & \geq \tau |v_{k+1}- \bar v_k| - \eps \\
& \geq  {\delta  }\eta- \eps > \eps_0/2 \, . 
\end{aligned}
$$

According to Lemma~\ref{geometric-lem1}, if $v_k^*, v_{k+1}^*  $ belong to the set~$ B_R \setminus K(\bar v_j, \bar x_j-\bar x_k, 6 Ra /\eps_0+6 \eps_0/\delta) ,$
we have
$$
\begin{aligned}
 \forall \tau \geq0 \, , \quad& |(x_{k }+ \eps \nu-v _{k+1}^* \tau)-(x_j-\bar v_j \tau)| >\eps \,,\\
 & |(x_{k }-v _{k} ^*\tau)-(x_j-\bar v_j \tau)| >\eps \,,
 \end{aligned}
 $$
and
$$
\begin{aligned}
 \forall \tau \geq \delta \, , \quad &|(x_{k }+ \eps \nu-v_{k+1} ^* \tau)-(x_j-\bar v_j \tau)| >\eps_0\,\\
 &|(x_{k }-v_k^*  \tau)-(x_j-\bar v_j \tau)| >\eps_0 \,.
 \end{aligned}$$
 
 Combining Lemmas~\ref{geometric-lem1}and ~\ref{geometric-lem2}, we therefore obtain that
 (\ref{taugeq0precoll}) and~(\ref{taugeqdelta2precoll}) hold as soon as~$(\nu,v_{k+1} ) \notin {\mathcal B}_k^+(\overline Z_k)$ where 
 $$
{\mathcal B}_k^+(\overline Z_k):= {\mathbf S}_1 ^{d-1} \times B_\eta(\bar v_k) \cup  \bigcup_{j \leq k-1} \cN^*(\bar v_j, \bar x_j-\bar x_k, 6 Ra /\eps_0+6 \eps_0/\delta)(\bar v_k)\,.
$$
In particular,
$$
\Big |{\mathcal B}_k^+(\overline Z_k) \Big | \leq   C k \Big(\eta^d + R^d \big({a\over\eps_0} \big)^{d-1} + R\big({\eps_0\over\delta} \big)^{d-1}\Big)\,.
$$
 
    \subsection{The post-collisional case with smooth scattering}\label{gaindec}$ $
    
    In the case of a smooth  interaction potential, dealing with the post-collisional case is a little bit more intricate because of the time shift. Furthermore, using Lemma~\ref{geometric-lem3} instead of Lemma~\ref{geometric-lem2},   we   lose the explicit estimate for the bad set $ {\mathcal B}_k^+(\overline Z_k)$.

Let us first  define
  \begin{equation}\label{defcone}
 C(\overline Z_k):=\Big \{(\nu,v_{k+1} ) \in {\mathbf S}_{1}^d \times B_R \, , \,  \nu \cdot (v_{k+1} - \bar v_k)  \leq \eta  \Big\} \, ,
 \end{equation}
 which satisfies
 $$
 |C(\overline Z_k)| \leq CR \eta^{d-1}\,  . 
 $$
 Choosing~$(\nu,v_{k+1} ) \in  ({\mathbf S}_{1}^d \times B_R) \setminus C(\overline Z_k)$   ensures that the cross-section is well defined (see Definition~\ref{def:b}), and that the scattering time~$t_\eps$ is  of order $C(\Phi,R,\eta)\eps$ by Proposition~\ref{scatteringestimates}.
   
  Considering the formulas~(\ref{scattering-def1})
expressing~$(z^{\eps*}_{k},z^{\eps*}_{k+1} )$ in terms of~$\big(z_k,(x_{k }+\eps \nu, v_{k+1} )\big )$, we know    that
 \begin{equation}\label{distancex*x}
 \begin{aligned}
  | x_k ^{\eps*} -  x_k| & \leq  \frac12 | x_k ^{\eps*} -   x_{k+1}^{\eps^*}|  + \frac12 |(x_k ^{\eps*} +   x_{k+1}^{\eps^*})-(x_k+x_{k+1})|+\frac12  |(x_k-x_{k+1})  |\\&  \leq Rt_\eps +\eps  \leq  C(\Phi , R,\eta)  \eps \, ,  \\
 | x_{k+1} ^{\eps*} - (x_k+\eps \nu) | &\leq  \frac12 | x_k ^{\eps*} -   x_{k+1}^{\eps^*}|  + \frac12 |(x_k ^{\eps*} +   x_{k+1}^{\eps^*})-(x_k+x_{k+1})|+\frac12  |(x_k-x_{k+1})  | \\
 &\leq Rt_\eps +\eps  \leq C(\Phi , R,\eta ) \eps\, .
 \end{aligned}
\end{equation}
 Note that due to~(\ref{almostgood}), all particles $x_j$ with $j\leq k-1$ are at a distance at least $\eps_0/2-\eps\geq \eps_0/3$ of the particles $x_k$ and~$ x_{k }+\eps \nu $. Since  they have bounded velocities, they cannot enter the protection spheres of these post-collisional particles  during the interaction time $t_\eps$, provided that $\eps$ is small enough:
 $$R t_\eps \ll \eps_0/3 \, .$$
   Since the dynamics of the particles $j \leq k-1$ is not affected by the scattering, we get    that~$Z_{k-1}^{\eps*}$  
belongs to~${\mathcal G}_{k-1}(\eps_0/2)$: 
\begin{equation}\label{k-1good}
  \forall \tau\geq 0 \, , \, \forall (i,j) \in [1, k-1] ^2\hbox{ with } i\neq j \, ,\quad  |x_i^{\eps*} -x_j^{\eps*} -\tau(v_i^{\eps*} -v_j^{\eps*} )| \geq \eps_0/2\,. 
  \end{equation}

The pair~$(z_k^{\eps*},z^{\eps*}_{k+1})$ is a pre-collisional pair by definition, so we know that for all~$\tau \geq 0$,
$$
 |  (x_k^{\eps*}  - \tau v_k^{\eps*} ) - ( x^{\eps*}_{k+1}   - \tau  v_{k+1}^{\eps*} ) | \geq \eps \, .
$$
 Excluding the ball~$B_\eta(\bar v_k)$ in the set of admissible~$v_{k+1}$, we find as above that
   $$ \forall \tau\geq \delta \, , \quad  |x_k^{\eps*}-x^{\eps*}-\tau(v_k^{\eps*}-v_{k+1}^{\eps*})| \geq \eta \delta - \e \geq   \eps_0 \,,
 $$
 for $\eps$ sufficiently small, since~$ \eps_0 \ll  \eta \delta$.

 Next for~$j \leq k-1$ we have for $\eps$ sufficiently small, recalling that the uniform, rectilinear motion of the center of mass as described in~(\ref{rectiunif}),
  $$
 \begin{aligned}
  | x_j ^{\eps*} - \bar x_j| \leq  | x_j ^{\eps*} -   x_j|  + | x_j   - \bar x_j| \leq Rt_\eps + a \leq 2a \\
  | x_k ^{\eps*} - \bar x_k| \leq   | x_k ^{\eps*} -   x_k|  + | x_k   - \bar x_k| \leq Rt_\eps + \eps + a \leq 2 a\\
   | x_{k+1} ^{\eps*} - \bar x_k| \leq  | x_{k+1} ^{\eps*} -   x_{k+1}|  + | x_k+\eps \nu    - \bar x_k| \leq Rt_\eps + 2\eps + a \leq 2 a\, .
 \end{aligned}
 $$
By Lemma \ref{geometric-lem1}, provided  $v_k^{\eps*}$ and~$v_{k+1}^{\eps*} $ do not belong to
$$K( \bar v_j, \bar x_j -\bar x_k, 12Ra/\eps_0+12\eps_0/\delta) \cap B_R \, , 
$$
we get since~$v_j^{\eps*}  = \bar v_j$,
$$\begin{aligned}
  \forall \tau\geq 0 \, , \quad  |x_k^{\eps*}-x_j^{\eps*}-\tau(v_k^{\eps*}-v_j^{\eps*})| &\geq  \eps\,,\\
 \mbox{and}\quad  |x_{k+1}^{\eps*}-x_j^{\eps*}-\tau(v_{k+1}^{\eps*}-v_j^{\eps*})| &\geq \eps
 \end{aligned}  $$
as well as
$$\begin{aligned}
  \forall \tau\geq \delta/2 \, , \quad  |x_k^{\eps*}-x_j^{\eps*}-\tau(v_k^{\eps*}-v_j^{\eps*})| &\geq  \eps_0/2\,,\\
 \mbox{and}\quad  |x_{k+1}^{\eps*}-x_j^{\eps*}-\tau(v_{k+1}^{\eps*}-v_j^{\eps*})| &\geq \eps_0/2\,.
 \end{aligned}  $$

Lemma \ref{geometric-lem3} bounds from the above the size of the set ${\mathcal N}^*(\bar v_j, \bar x_j - \bar x_k,\rho)$ of all $(\nu,v_{k+1})$ belonging to~$  ({\mathbf S}_{1}^d \times B_R) \setminus C(\overline Z_k)$ such that 
$v_k^{\eps*}$ or $v_{k+1}^{\eps*}$ belongs to $K(\bar v_j, \bar x_j -\bar x_k, \rho).$ We let $\rho = 12Ra/\eps_0+12\eps_0/\delta,$ and define
%
$$
{\mathcal B}_k^+(\overline Z_k):= C(\overline Z_k) \cup \big( {\mathbf S}_1 ^{d-1} \times  B_\eta( \bar v_k)  \big)  \bigcup_{j \leq k-1} \cN^*(\bar v_j, \bar x_j-\bar x_k, 12 Ra /\eps_0+12 \eps_0/\delta)(\bar v_k)\, .$$
By Lemma \ref{geometric-lem3},
$$
\Big |{\mathcal B}_k^+(\overline Z_k) \Big | \leq   Ck  R \eta^{d-1} + C(\Phi , R,\eta) R\big(R{a\over\eps_0}  +  {\eps_0\over\delta} \big)^{d-1}
$$
and~(\ref{taugeq0postcoll}) and~(\ref{taugeqdelta2postcoll}) hold as soon as~$(\nu,v) \notin {\mathcal B}_k^+(\overline Z_k)$.
 Proposition~\ref{geometric-prop} is proved.\qed
  
  Note that, in order to prove that pathological sets have vanishing measure as $\eps\to 0$, we have to choose~$\eta$ small enough, and then $a$ and $\eps_0$ even smaller in order that (\ref{sizeofparameters}) is satisfied and that (\ref{pathological-size}) is small.
Moreover, if we want to get a rate of convergence, we need to have more precise  bounds on the cross-section $b$ in terms of the truncation parameters $R$ and $\eta$.

\chapter{Truncated collision integrals}
\setcounter{equation}{0}

 Our goal in the present chapter is to slightly modify (in a uniform way) the  functionals~$I_{s,k}^{R,\delta} $ (defined in~(\ref{defIsnRdeltaHS}) in the hard-spheres case and  in~(\ref{defIsnRdelta}) for the potential case)  and~$I_{s,k}^{0,R,\delta} $, defined  in~(\ref{defIsnRdeltaHS}),  
  in order for the corresponding   pseudo-trajectories to be decomposed as a succession of free transport and binary collisions, without any recollision.  This will be possible thanks to Proposition~\ref{geometric-prop}.
We then expect to be able to compare these approximate observables, which will be done in the next chapter.
 
%

  \section{Initialization}\label{initialization}
  
 The first step consists in preparing the initial configuration~$Z_s$ so that it is a good configuration. We define
$$
\Delta_s (\eps_0):= \Big \{Z_s \in \R^{ds} \times B_R^s \, / \, \inf_{1 \leq \ell  < j \leq s}   |x_\ell-x_j| \geq \e_0 \Big \} \, ,
$$
and we shall  assume from now on that~$ Z_s $ belongs to~$ \Delta_s (\eps_0)$.  We also define for convenience
$$
\Delta_s^X (\eps_0):= \Big \{X_s \in \R^{ds}   \, / \, \inf_{1 \leq \ell  < j \leq s}   |x_\ell-x_j| \geq \e_0 \Big \} \, .
$$

 \begin{prop}\label{straightHs}
 For all~$X_s \in\Delta_s^X (\eps_0) $, there is a subset~${\mathcal M}_s (X_s)$ of~$\R^{ds}$ such that
 $$
\big |{\mathcal M}_s (X_s) \big |\leq CR s^2 \left(  \Big (R{\eps\over\eps_0}\Big)^{d-1} + \Big ({\eps_0\over\delta}\Big)^{d-1}  \right) \,,
 $$
 and defining $\displaystyle
 {\mathcal P}_s := \Big\{
 Z_s \in \Delta_s  (\eps_0) \, / \, V_s \notin  {\mathcal M}_s (X_s)
 \Big\}  $,
 then \begin{equation}\label{goodinitialization}
 \begin{aligned}
 \forall \tau \geq 0 \,     , \quad    \indc_{ {\mathcal P}_s}\circ  {\bf T}_s(\tau) &  \equiv \indc_{ {\mathcal P}_s} \circ{\bf S}_s(\tau) \\
&  \!\!\!\! \mbox{in the hard-spheres case} \, , \\
 \forall \tau \geq 0 \,     , \quad    \indc_{ {\mathcal P}_s}\circ  {\bf H}_s(\tau) &  \equiv \indc_{ {\mathcal P}_s} \circ{\bf S}_s(\tau) \\
&\!\!\!\! \mbox{in the potential case, and}   \\
  \forall \tau \geq \delta  \, , \quad   \indc_{ {\mathcal P}_s} \circ {\bf S}_s(\tau) &  \equiv  \indc_{ {\mathcal P}_s} \circ {\bf S}_s(\tau) \circ  \indc_{ \cG_s(\eps_0)} \, .
   \end{aligned}
 \end{equation}
 denoting abusively by $\indc_A$ the operator of multiplication by the indicator of $A$.
\end{prop}
\begin{proof}
The proof is very similar to the arguments of the previous chapter. For any~$Z_s$ in~$\Delta_s (\eps_0)$, we apply Lemma~\ref{geometric-lem1} which   shows that  outside a   small measure set~${\mathcal M}_s (X_s)\subset \R^{ds}$   of velocities~$(v_1,\dots,v_s)$, with
$$
|\cM_{s}(X_s) |\leq CR s^2 \left(  \Big (R{\eps\over\eps_0}\Big)^{d-1} + \Big ({\eps_0\over\delta}\Big)^{d-1}  \right)  \, ,
$$
 the backward nonlinear flow is actually the free flow and the particles remain at a distance larger than~$\eps$ to one  another for all times:
  $$ \forall \tau>0, \quad \forall \ell\neq \ell' \in \{1,\dots , s\} \, ,\quad |(x_ \ell-v_ \ell \tau)-(x_{\ell'} - v_{\ell'} \tau)| >\eps \, ,$$
and that
$$
 \forall \tau\geq\delta, \quad \forall \ell\neq \ell' \in \{1,\dots , s\} \, ,\quad |(x_ \ell-v_ \ell \tau)-(x_{\ell'} - v_{\ell'} \tau)| \geq \eps_0  \, .
$$
By construction, $\cM_s(X_s)$ depends continuously on $X_s$; the result follows by definition of~${\mathcal P}_s$. \end{proof}


 \section{Approximation of the Boltzmann functional}\label{approxboltzmann}

 We recall  that we consider a family of initial data~$F_{0} = (f_0^{(s)})$ satisfying
  $$
\|   F_{0} \|_{0,\beta_0,\mu_0} := \sup_{s\in \N} \sup_{Z_s}\big( \exp(\beta_0 E (Z_s) +\mu_0 s)   f_{0}^{(s)}(Z_s) \big)<+\infty
$$
and after the reductions of Chapters~\ref{convergenceHS} and \ref{Convergence}, the observable we are interested in is the following:
 \begin{equation} \label{elementary} 
   \begin{aligned}
    I^{0, R,\delta}_{s,k}   (t,J,M)(X_s) := \int  \varphi_s(V_s) \int_{{\mathcal T}_{k,\delta}(t) }   &{\bf S}_s(t-t_1) {\mathcal C}_{s,s+1}^{0,j_1,m_1} {\bf S}_{s+1}(t_1-t_2) {\mathcal C}^{0,j_2,m_2}_{s+1, s+2}\\
   &\dots   {\bf S}_{s+k}(t_k- t_{k+1})  \indc_{E_0(Z_{s+k})\leq R^2} f^{(s+k)}_{0}   dT_k dV_s\, ,
   \end{aligned}
    \end{equation}
By Proposition \ref{straightHs}, up to an error term of order $CR s^2\displaystyle \big(  \big (R{\eps\over\eps_0}\Big)^{d-1} + \big ({\eps_0\over\delta} \big)^{d-1}  \big) $, we can assume that the initial configuration $Z_s$ is a good configuration, meaning that 
$$
\begin{aligned}
  I^{0,R,\delta}_{s,k}(t,J,M )(X_s)  = \int_{B_R \setminus \cM_s(X_s)}  \varphi_s(V_s) &\int_{{\mathcal T}_{k,\delta}(t) }   {\bf S}_s(t-t_1) {\mathcal C}_{s,s+1}^{0,j_1,m_1} {\bf S}_{s+1}(t_1-t_2) {\mathcal C}^{0,j_2,m_2}_{s+1, s+2}\\
&  \dots {\mathcal C}^{0,j_k,m_k}_{s+k-1 , s+k}  {\bf S}_{s+k}(t_k- t_{k+1}) \indc_{|E_0(Z_{s+k})|\leq R^2}   f^{(s+k)}_{0}   \,dT_kdV_s \\
&+  O\left( c_{k,J,M}R s^2 \left(  \Big (R{\eps\over\eps_0}\Big)^{d-1} + \Big ({\eps_0\over\delta}\Big)^{d-1}  \right)\| F_0\|_{0,\beta_0,\mu_0}  \right) \, ,
\end{aligned}
$$
where~$\displaystyle \sum_k \sum_{J,M} c_{k,J,M} = 1$ and
$$
 \begin{aligned}\big({\mathcal C}^{0,-,m}_{s,s+1}   f^{(s+1)}  \big)(Z_s)
 &= \int_{ {\bf S}_{1}^{d-1} \times \R^d}  ((v_{s+1}-v_m)\cdot \nu_{s+1})_-  f^{(s+1)}  (Z_s, x_m, v_{s+1}) \, d\nu_{s+1} dv_{s+1} \quad \mbox{and}
\\
\big({\mathcal C}^{0,+,m}_{s,s+1}   f^{(s+1)}  \big)(Z_s)
& =  \int_{ {\bf S}_{1}^{d-1} \times \R^d} \! \! \! \! \! 
 ((v_{s+1}-v_m)\cdot \nu_{s+1})_+   f^{(s+1)}  ( z_1, \dots ,x_m, v^*_m,\dots ,z_s, x_m, v^*_{s+1}) \,d\nu _{s+1}dv_{s+1}\, .
\end{aligned}
$$

 Now let us introduce some notation which we shall be using constantly from now on: 
given~$Z_s \in \Delta_s (\eps_0)$, we call~$Z_s^0(\tau)$ the position of the backward free flow initiated from~$Z_s$, at time~$t_1\leq \tau \leq t$. Then given~$j_1 \in \{+,-\}$, $m_1 \in [1,s]$, a deflection angle~$\nu_{s+1}$ and a velocity~$v_{s+1} $ we call~$Z_{s+1}^0(\tau )$ the   position at time~$   t_2 \leq \tau < t_1$ of the Boltzmann pseudo-trajectory initiated by the adjunction of the particle~$(\nu_{s+1},v_{s+1})$ to the particle~$z_{m_1}^0(t_1)$ (which is simply free-flow in the pre-collisional case~$j_1 = -$, and free-flow after scattering of particles~$z_{m_1}^0(t_1)$ and~$(\nu_{s+1},v_{s+1})$ in the post-collisional case~$j_1 = +$). 

Similarly by induction given~$Z_s \in \Delta_s (\eps_0)$, $T, J$ and~$M$ we denote for each~$1 \leq k \leq n$ by~$Z_{s+k}^0(\tau )$ the position at time~$ t_{k+1} \leq   \tau < t_k $ of the pseudo-trajectory initiated by the adjunction of the particle~$(\nu_{s+k},v_{s+k})$ to the particle~$z_{m_k}^0(t_k)$ (which is simply free-flow in the pre-collisional case~$j_k = -$, and free-flow after scattering of particles~$z_{m_k}^0(t_k)$ and~$(\nu_{s+k},v_{s+k})$ in the post-collisional case~$j_k= +$). 

Notice that~$\tau \mapsto~Z_{s+k}^0(\tau )$ is pointwise right-continuous on~$[0,t_k]$.
 
With this notation, the elementary functional~$   I^{0,R,\delta}_{s,k}$ may be reformulated as 
 $$
  \begin{aligned}
     I^{0,R,\delta}_{s,k} (t,J,M)(X_s)&   =  \int_{B_R \setminus \cM_s(X_s)} dV_s  \varphi_s(V_s) 
     \int_{{\mathcal T}_{k,\delta}(t) }  dT_k
  \int_{{\mathbf S}^{d-1}_{1} \times B_R} \!  \!  \!  \!  \!  d\nu_{s+1} dv_{s+1}
  ((v_{s+1} - v_{m_1}^0(t_1)\cdot \nu_{s+1} )_+ \\
       & \quad\dots    \int_{{\mathbf S}^{d-1}_{1} \times B_R}   \!  \!  \!  \!  \! d\nu_{s+k} dv_{s+k}
  ((v_{s+k} - v_{m_k}^0(t_k)\cdot \nu_{s+k} )_+  \indc_{E_0(Z_{s+k}^0 (0))\leq R^2}    f^{(s+k)}_{0}   (Z_{s+k}^0 (0)) \\
  &\qquad +  O\left( c_{k,J,M}R s^2 \left(  \Big (R{\eps\over\eps_0}\Big)^{d-1} + \Big ({\eps_0\over\delta}\Big)^{d-1}  \right)\| F_0\|_{0,\beta_0,\mu_0}\right) \, ,
   \end{aligned}
  $$
  where~$\displaystyle \sum_k \sum_{J,M} c_{k,J,M} = 1$.
  Let $ a,\eps_0,\eta\ll 1$  be  such that
$$
 a \ll \eps_0 \ll \eta \delta \, .
$$
According to  Proposition~\ref{geometric-prop},  for any good configuration~$\overline Z_{s+k-1} \in \R^{2d(s+k-1)}$, we can define a set  $$
 ^c{} {\mathcal B}_{s+k-1} ^{m_k}(\overline Z_{s+k-1}):=\big( {\mathbf S}^{d-1}_{1} \times B_R \big)\setminus {\mathcal B}_{s+k-1} ^{m_k} (\overline Z_{s+k-1} )\, ,
$$
such that good configurations $Z_{s+k-1}=(X_{s+k-1}, \overline V_{s+k-1})$ with $|X_{s+k-1}-\overline X_{s+k-1}|\leq Ca$  are stable by adjunction of a collisional particle $z_{s+k} = (x_{m_k} +\eps \nu_{k+s}, v_{k+s})$ with $(\nu_{k+s}, v_{k+s}) \in {} ^c {\mathcal B}_{s+k-1}^{m_k}  (\overline Z_{s+k-1})$.

We further notice that
thanks to  Remark~\ref{betterresult}, if the adjoined pair~$(\nu_{s+k},v _{s+k})$ belongs to the set~$ ^c{} {\mathcal B}_{s+k-1}^{m_k} (  Z_{s+k-1}^0(t_k))$ with~$ Z_{s+k-1}^0(t_k) \in \cG_{s+k-1}(\eps_0)$, then~$Z_{s+k}^0(t_{k+1}) $ belongs to~$\cG_{s+k }(\eps_0)$.

As a consequence we may define recursively the approximate Boltzmann functional
 \begin{equation}\label{boltzmannfunctionalapprox}
  \begin{aligned}
 J^{0,R,\delta}_{s,k} (t,J,M)(X_s)   & =  \int_{B_R \setminus \cM_s(X_s)} dV_s  \varphi_s(V_s) 
     \int_{{\mathcal T}_{k,\delta}(t) }  dT_k\\
  &   \quad 
    \int_{^c{} {\mathcal B}_{s} ^{m_1}(  Z_s^0(t_1))} \!  \!  \!  \!  \!  d\nu_{s+1} dv_{s+1}
  (v_{s+1} - v_{m_1}^0(t_1)\cdot \nu_{s+1} )_{j_1} \\
& \qquad       \dots    \int_{^c{} {\mathcal B}_{s+k-1}^{m_k} (  Z_{s+k-1}^0(t_k))}   \!  \!  \!  \!  \! d\nu_{s+k} dv_{s+k}
  (v_{s+k} - v_{m_k}^0(t_k)\cdot \nu_{s+k} )_{j_k} \\
  & \qquad\qquad\times \indc_{E_0(Z_{s+k}^0 (0))\leq R^2}    f^{(s+k)}_{0}   (Z_{s+k}^0 (0))  \, .
   \end{aligned}
  \end{equation}
The following result is an immediate consequence of Proposition~\ref{geometric-prop}, together with the continuity estimates for the Boltzmann collision operator in Proposition \ref{propcontinuityBHS}.

\begin{prop}\label{lastapproxb}
  Let $a,\eps_0,\eta\ll 1$  satisfying {\rm(\ref{sizeofparameters})}.
  Then, we have the following error estimates for the observables associated to the Boltzmann dynamics:
  \begin{itemize}
  \item with the cross-section associated to hard-spheres,
  $$
\begin{aligned}
\Big| \sum_{k=0}^n \sum_{J,M}\1_{\Delta_s (\eps_0)} \big (   I_{s,k}^{0,R,\delta}&-   J_{s,k}^{0,R,\delta } \big) (t,J,M )   \Big|  \leq  C n^2 (s+n)  \\
&  \times{}\Big(R  \eta^{d-1} + R^d \Big(
  \frac{a}{\eps_0} \Big)^{d-1} + R\Big(  \frac{\eps_0}{\delta} 
\Big)^{d-1} \Big) \| F_{0}\|_{0,\beta_0,\mu_0} \, ;
\end{aligned}
$$
\item with the cross-section associated with a smooth short-range potential $\Phi$,
 $$
\begin{aligned}
&\Big| \sum_{k=0}^n \sum_{J,M}\1_{\Delta_s (\eps_0)} \big (   I_{s,k}^{0,R,\delta}-   J_{s,k}^{0,R,\delta } \big) (t,J,M )   \Big|  \leq  C n^2 (s+n) \\
&\qquad \times{}\Big(R  \eta^{d-1} + C(\Phi, \eta, R) R^d \Big(
  \frac{a}{\eps_0} \Big)^{d-1} + C(\Phi, \eta, R)  R\Big(  \frac{\eps_0}{\delta} 
\Big)^{d-1} \Big) \|  F_{0}\|_{0,\beta_0,\mu_0} \, .
\end{aligned}
$$
\end{itemize}
\end{prop}
  
 \section{Approximation of the BBGKY functional}\label{approxbbgky}

We recall that after the reductions of Chapters~\ref{convergenceHS} and \ref{Convergence}, the elementary functionals we are interested in are 
\begin{itemize}
\item in the case of hard spheres:
$$
 \begin{aligned}
I_{s,k} ^{R,\delta}  (t,J,M)(X_s) := \int  \varphi_s(V_s) \int_{{\mathcal T}_{k,\delta}(t) }   &{\bf T}_s(t-t_1){\mathcal C}_{s,s+1}^{j_1,m_1} {\bf T}_{s+1}(t_1-t_2) {\mathcal C}^{j_2,m_2}_{s+1, s+2}\\
   &\dots{\mathcal C}^{j_k,m_k}_{s+k-1 , s+k}  {\bf T}_{s+k}(t_k- t_{k+1})\indc_{E_\eps(Z_{s+k})\leq R^2}  f^{(s+k)}_{N,0}   dT_k dV_s\, ,
    \end{aligned}
$$ 
where~$  F_{N,0} = ( f_{N,0}^{(s)})_{1 \leq s \leq N}$ satisfies 
  $$
\|   F_{N,0}  \|_{\eps,\beta_0,\mu_0} := \sup_{s\in \N} \sup_{Z_s \in \cD_s }\big( \exp(\beta_0 E_0 (Z_s) +\mu_0 s)   f_{N,0}^{(s)}(Z_s) \big)<+\infty \, ;
$$
\item in the case of a smooth interaction potential $\Phi$:
 $$
 \begin{aligned}
I_{s,k} ^{R,\delta}  (t,J,M)(X_s) := \int  \varphi_s(V_s) \int_{{\mathcal T}_{k,\delta}(t) }   &{\bf H}_s(t-t_1){\mathcal C}_{s,s+1}^{j_1,m_1} {\bf H}_{s+1}(t_1-t_2) {\mathcal C}^{j_2,m_2}_{s+1, s+2}\\
   &\dots{\mathcal C}^{j_k,m_k}_{s+k-1 , s+k}  {\bf H}_{s+k}(t_k- t_{k+1})\indc_{E_\eps(Z_{s+k})\leq R^2} \widetilde f^{(s+k)}_{N,0}   dT_k dV_s\, ,
    \end{aligned}
$$ 
where~$\widetilde  F_{N,0} = (\widetilde f_{N,0}^{(s)})_{1 \leq s \leq N}$ satisfies
  $$
\| \widetilde  F_{N,0}  \|_{\eps,\beta_0,\mu_0} := \sup_{s\in \N} \sup_{Z_s}\big( \exp(\beta_0 E_\eps (Z_s) +\mu_0 s) \widetilde  f_{N,0}^{(s)}(Z_s) \big)<+\infty \, .
$$
\end{itemize}

Since both formulas are quite similar, we shall deal  with the case of smooth potentials and will indicate --  if need be --  simplifications arising in the case of hard spheres.

\bigskip
Thanks to Proposition~\ref{straightHs}, we have
$$
 \begin{aligned}
        I^{R,\delta}_{s,k}  (t,J,M)(X_s) &= \int _{B_R \setminus \cM_s(X_s)} \varphi_s(V_s) \int_{{\mathcal T}_{k,\delta}(t) }   { \bf S}_s(t-t_1) \indc_{{\mathcal G}_s(\eps_0)}{\mathcal C}_{s,s+1}^{j_1,m_1} {\bf H}_{s+1}(t_1-t_2) {\mathcal C}^{j_2,m_2}_{s+1, s+2}\\
   &\quad \dots {\mathcal C}^{ j_k,m_k}_{s+k-1 , s+k}  {\bf H}_{s+k}(t_k- t_{k+1})  \indc_{E_\eps(Z_{s+k}(0))\leq R^2}     \widetilde f_{N,0}^{(s+k)}  dT_k  dV_s\\
  & \qquad +{} O\left(  c_{k,J,M} R s^2 \left(  \Big (R{\eps\over\eps_0}\Big)^{d-1} + \Big ({\eps_0\over\delta}\Big)^{d-1}  \right)\| \widetilde  F_{N,0}  \|_{\eps,\beta_0,\mu_0}  \right)\, ,
   \end{aligned}
$$
where recall that~$c_{k,J,M}$ denotes a sequence of positive real numbers satisfying~$\displaystyle \sum_k \sum_{J,M} c_{k,J,M} = 1$.

Then using the notation introduced in the previous paragraph for the Boltzmann pseudo-trajectory, let us define
 the approximate functionals
$$
 \begin{aligned}
       J_{s,k} ^{R,\delta} (t,J,M)(X_s) := \int _{B_R \setminus \cM_s(X_s)} \varphi_s(V_s) \int_{{\mathcal T}_{k,\delta}(t) }   { \bf S}_s(t-t_1) \indc_{{\mathcal G}_s(\eps_0)}\widetilde {\mathcal C}_{s,s+1}^{j_1,m_1} {\bf H}_{s+1}(t_1-t_2) \\
   \dots \widetilde{\mathcal C}^{ j_k,m_k}_{s+k-1 , s+k}  {\bf H}_{s+k}(t_k- t_{k+1}) \indc_{E_\eps(Z_{s+k}(0))\leq R^2}      \widetilde f_0^{(s+k)}  dT_k  dV_s \, ,
   \end{aligned}
$$
where the modified collision operators are obtained by elimination of the pathological  set of impact parameters and velocities 
$$
 \begin{aligned}
\big(\widetilde {\mathcal C}_{s+k-1,s+k}^{\pm, m_k} g^{(s+k)} \big) (Z_{s+k-1})  :=   (N- s-k+1) \eps^{d-1} 
 \int_{  ^c{} {\mathcal B}_{s+k-1} ^{m_k} (  Z_{s+k-1}^0(t_k))
}   (\nu_{s+k}   \cdot ( v_{s+k}  - v_{m_k}(t_k))) _\pm \\ 
   \times  {} g^{(s+k)}  (\cdot, x_{m_k} (t_k)+  \eps \nu_{s+k}  ,v_{s+k}(t_k))  \prodetage{1 \leq j \leq s +k-1}{j \neq m_k} \indc_{|(x_j-x_{m_k})(t_k)-\eps \nu_{s+k}| \geq \eps}   \, d\nu_{s+k}  dv_{s+k} \, .
\end{aligned}
$$

\bigskip
By construction, we know that the remaining collision trees are nice, in the sense that collisions involve only two particles and are well-separated in time. 
Using the pre/post-collisional change of variables, we can rewrite the gain terms as follows
$$
 \begin{aligned}
 &\indc_{{\mathcal G}_{s+k-1} (\eps_0/2)}\big(\widetilde {\mathcal C}_{s+k-1,s+k}^{+, m_k} {\bf H}_{s+k} (t_k-t_{k+1}) g^{(s+k)} \big) (Z_{s+k-1}) \\
 & :=   (N- s-k+1) \eps^{d-1}  \indc_{{\mathcal G}_{s+k-1} (\eps_0/2)}
 \int_{  ^c{} {\mathcal B}_{s+k-1} ^{m_k} (  Z_{s+k-1}^0(t_k))
}   (\nu_{s+k}   \cdot ( v_{s+k}  - v_{m_k}(t_k))) _+ \\ 
 &\quad  \times  {} {\bf H}_{s+k} (t_k-t_{k+1}-t_\eps (Z_{s+k}))g^{(s+k)}  (\cdot, x_{m_k}^*, v_{m_k}^*,\dots  ,x_{s+k}^*, v_{s+k}^*) \\ &
 \qquad \times  {}\prodetage{1 \leq j \leq s +k-1}{j \neq m_k} \indc_{|(x_j-x_{m_k})(t_k)-\eps \nu_{s+k}| \geq \eps}   \, d\nu_{s+k}  dv_{s+k} .
\end{aligned}
$$
denoting as previously by $(x_{m_k}^*, v^*_{m_k}, x^*_{s+k}   ,v_{s+k}^*)$ the pre-image  by the scattering operator~$\sigma_\eps$ of the point~$ (x_{m_k}, v_{m_k}(t_k), x_{m_k} (t_k)+  \eps \nu_{s+k}  ,v_{s+k}(t_k))$.

Note that this last step is obvious in the case of hard spheres since there is no time shift~: $t_\eps \equiv 0$.

\bigskip
As in the Boltzmann case described above, the following result is an immediate consequence of Proposition~\ref{geometric-prop} together with the continuity estimates for the BBGKY collision operator in Propositions~\ref{propcontinuityclustersHS} and~\ref{propcontinuityclusters}.

\begin{prop}\label{lastapproxbbgky}
 Let $a,\eps_0,\eta\ll 1$  satisfying {\rm(\ref{sizeofparameters})}.
  Then, for $\eps$ sufficiently small,
 we have the following error estimates for the observables associated to the BBGKY dynamics:
  \begin{itemize}
  \item in the case of  hard-spheres
  $$
\begin{aligned}
&\Big| \sum_{k=0}^n \sum_{J,M}\1_{\Delta_s (\eps_0)} \big (   I_{s,k}^{R,\delta}-   J_{s,k}^{R,\delta } \big) (t,J,M )   \Big|  \leq  C n^2 (s+n) \Big(R  \eta^{d-1} + R^d \Big(
  \frac{a}{\eps_0} \Big)^{d-1} + R\Big(  \frac{\eps_0}{\delta} 
\Big)^{d-1} \Big) \|  F_{N,0}\|_{\eps,\beta_0,\mu_0} \, ,
\end{aligned}
$$
\item in the case of  some smooth short-range potential $\Phi$
 $$
\begin{aligned}
\Big|\sum_{k=0}^n \sum_{J,M} \1_{\Delta_s (\eps_0)} \big (   I_{s,k}^{0,R,\delta}&-   J_{s,k}^{0,R,\delta } \big) (t,J,M )   \Big|  \leq  C n^2 (s+n) \\
&\times{}\Big(R  \eta^{d-1} + C(\Phi, \eta, R) R^d \Big(
  \frac{a}{\eps_0} \Big)^{d-1} +C(\Phi, \eta, R)  R\Big(  \frac{\eps_0}{\delta} 
\Big)^{d-1} \Big)\|  \widetilde F_{N,0}\|_{\eps,\beta_0,\mu_0} \, .
\end{aligned}
$$
\end{itemize}
\end{prop}

   The functional~$J_{s,k} ^{R,\delta}$ can be written in terms of pseudo-trajectories, as in~(\ref{boltzmannfunctionalapprox}).  Let us therefore  introduce some notation which we shall be using constantly from now on: 
given~$Z_s \in \Delta_s (\eps_0)$, we call~$Z_s^0(\tau)$ the position of the backward free flow initiated from~$Z_s$, at time~$t_1 \leq \tau \leq t$. Then given~$j_1 \in \{+,-\}$, $m_1 \in [1,s]$, an angle~$\nu_{s+1}$ (or equivalently a position~$x_{s+1} = x_{m_1}^0(t_1) + \eps \nu_{s+1}$) and a velocity~$v_{s+1} $ we call~$Z^\eps_{s+1}(\tau)$ the   position at time~$ t_2 \leq \tau < t_1$ of the BBGKY pseudo-trajectory initiated by the adjunction of the particle~$z_{s+1}$ to the particle~$z_{m_1}^0(t_1)$. 

Similarly by induction given~$Z_s \in \Delta_s (\eps_0)$, $T, J$ and~$M$ we denote for each~$1 \leq k \leq n$ by~$Z^\eps_{s+k}(\tau )$ the position at time~$  t_{k+1} \leq \tau< t_k $ of the BBGKY pseudo-trajectory initiated by the adjunction of the particle~$z_{s+k}$ to the particle~$z_{m_k}(t_k)$. 
We have
\begin{equation}\label{firstbbgkypseudotraj}
 \begin{aligned}
  & J_{s,k} ^{R,\delta} (t,J,M)(X_s) = \frac{(N-s)! }{(N- s-k)!}\eps^{k(d-1)} \int_{B_R \setminus \cM_s(X_s)} dV_s \varphi_s(V_s) \int_{{\mathcal T}_{k,\delta}(t) }   dT_k    \\
&\quad \int_{  ^c{} {\mathcal B}_{s}^{m_1}  (  Z_{s}^0(t_1))
}   d\nu_{s+1}  dv_{s+1} \,   (\nu_{s+1}   \cdot ( v_{s+1}  - v_{m_1}(t_1)))_{j_1}  \prodetage{1 \leq j \leq s}{j \neq m_1} \indc_{|(x_j-x_{m_1})(t_1)-\eps \nu_{s+1}| \geq \eps}   \\ 
 & \quad \dots  \int_{  ^c{} {\mathcal B}^{j_k}_{s+k-1}  (  Z_{s+k-1}^0(t_k))
}  d\nu_{s+k}  dv_{s+k}  \, (\nu_{s+k}   \cdot ( v_{s+k}  - v_{m_k}(t_k)))_{j_k}   \\ 
  &\qquad \times   \prodetage{1 \leq j \leq s +k-1}{j \neq m_k} \indc_{|(x_j-x_{m_k})(t_k)-\eps \nu_{s+k}   | \geq \eps}  
 \indc_{E_\eps(Z_{s+k}(0))\leq R^2}      \widetilde f_{N,0}^{(s+k)} (Z^\eps_{s+k}(0))   \, .
\end{aligned}
\end{equation}

Thanks to Propositions~\ref{lastapproxb} and~\ref{lastapproxbbgky} the proof of Theorems~\ref{main-thm} and \ref{main-thmpotential}
 reduces to the proof of the convergence to zero of~$ J^{R,\delta}_{s,k}-  J^{0,R,\delta}_{s,k}$. This is the object of the next chapter.

  \chapter{Convergence proof}\label{convergenceproof}
\setcounter{equation}{0}
In this chapter we conclude the proof of Theorems~\ref{main-thm} and \ref{main-thmpotential} by proving that~$ J^{R,\delta}_{s,k}-  J^{0,R,\delta}_{s,k}$ goes to zero in the Boltzmann-Grad limit, with the notation of the previous chapter, namely~(\ref{boltzmannfunctionalapprox}) and~(\ref{firstbbgkypseudotraj}). 
The main difficulty lies in the fact that in contrast to the Boltzmann situation, collisions in the BBGKY configuration are not pointwise in space (nor in time in the case of the smooth Hamiltonian system). At each collision time~$t_k$ a small error is therefore introduced, which needs to be controlled.

We recall that, as in the previous chapters, we consider dynamics
\begin{itemize}
\item 
 involving only a finite number $s+k$ of particles, 
 \item with bounded energies (at most $R^2\gg1$), 
 \item   such that the $k$ additional particles are adjoined through binary collisions  at times separated at least by $\delta\ll1$.
 \end{itemize}
 The additional truncation parameters $a,\eps_0,\eta\ll 1$  satisfy (\ref{sizeofparameters}).

\section{Proximity of Boltzmann and BBGKY trajectories}\label{finalinduction}

This paragraph  is   devoted to the proof, by induction, that the BBGKY and Boltzmann
pseudo-trajectories remain close for all times, in particular that there is no recollision for the BBGKY dynamics. 

We recall that the notation~$Z_k^0(t)$ and~$Z_k(t)$ were defined in Paragraphs~\ref{approxboltzmann}
 and~\ref{approxbbgky}
 respectively.
 \begin{Lem}\label{translation-lem}
Fix $T\in {\mathcal T}_{n,\delta}(t)$, $J$, and~$M$ and given~$Z_s $ in~$ \Delta_s (\eps_0)$, consider  for all $i\in \{1,\dots n\}$, an impact parameter $\nu_{s+i}$ and a velocity~$v_{s+i}$ such that~$(\nu_{s+i},v_{s+i}) \notin {\mathcal B}_{s+i-1}(Z_{s+i-1}^0(t_i))$.
Then, for $\eps$ sufficiently small, for all $i\in [1,n]$, and all $ k\leq s+i$,
\begin{itemize}
\item for the hard sphere dynamics
\begin{equation}\label{theresult-HS}
| x^\eps_k(t_{i+1}) - x_k^0(t_{i+1}) |   \leq \eps i
\quad \mbox{and}\quad v_k(t_{i+1 }) = v_k^0(t_{i +1})  \,,
\end{equation}
\item for the hamiltonian dynamics associated to $\Phi$
\begin{equation}\label{theresult}
| x^\eps_k(t_{i+1}) - x_k^0(t_{i+1}) |   \leq C(\Phi, R,\eta)\eps i
\quad \mbox{and}\quad v_k(t_{i+1 }) = v_k^0(t_{i +1})  \,,
\end{equation}
where the constant $C(\Phi, R,\eta)$ depends only on $\Phi$, $R$, and~$\eta$.
\end{itemize}
\end{Lem}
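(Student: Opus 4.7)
The proof proceeds by induction on $i\in\{1,\dots,n\}$, following the backward construction of the pseudo--trajectories. At time $t_1$ both trajectories coincide: $Z_s^\eps(t_1)=Z_s^0(t_1)\in {\mathcal G}_s(\eps_0)$ by the initialization Proposition~\ref{straightHs}, so the $i=0$ statement holds trivially. Assume inductively that at time $t_i$ we have $Z_{s+i-1}^0(t_i)\in {\mathcal G}_{s+i-1}(\eps_0)$, that $V_{s+i-1}(t_i)=V_{s+i-1}^0(t_i)$, and that $|X_{s+i-1}(t_i)-X_{s+i-1}^0(t_i)|\le C\eps(i-1)$ (with $C=1$ in the hard--sphere case and $C=C(\Phi,R,\eta)$ in the potential case).

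At the collision time $t_i$, the new particle is adjoined to the $m_i$-th particle; in the Boltzmann pseudo--trajectory at position $x_{m_i}^0(t_i)$ with velocity $v_{s+i}$, in the BBGKY pseudo--trajectory at position $x_{m_i}(t_i)+\eps\nu_{s+i}$ with the same $v_{s+i}$. Since $(\nu_{s+i},v_{s+i})\notin {\mathcal B}_{s+i-1}(Z_{s+i-1}^0(t_i))$, the hypotheses of Proposition~\ref{geometric-prop} are satisfied provided $C\eps(i-1)\ll a$, which holds for $\eps$ small enough. Consequently:
\begin{itemize}
\item In the pre--collisional case $j_i=-$, the velocities are unchanged, and~(\ref{taugeq0precoll}) guarantees that the BBGKY free flow of $Z_{s+i}^\eps$ on $[t_{i+1},t_i]$ produces no encounter of distance $\le \eps$, so it coincides with free transport. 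The position offset between the BBGKY and Boltzmann pseudo--trajectories at $t_{i+1}$ inherits the offset at $t_i$, plus the single extra $\eps\nu_{s+i}$ introduced at the adjunction, so the bound on $|x_k^\eps(t_{i+1})-x_k^0(t_{i+1})|$ grows by at most $\eps$.
\item In the post--collisional case $j_i=+$, both trajectories apply the scattering/reflection rule depending only on $(\nu_{s+i},v_{m_i}(t_i),v_{s+i})$, so velocities after the collision agree (this uses the equality $V_{s+i-1}(t_i)=V_{s+i-1}^0(t_i)$). In the hard--sphere case there is no time shift and~(\ref{taugeq0postcoll}) together with~(\ref{taugeqdelta2postcoll}) ensures no further encounter at distance $\le\eps$ on $[t_{i+1},t_i]$, with position error again augmented only by $O(\eps)$. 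In the potential case there is additionally a microscopic interaction time $t_\eps\le C(\Phi,R,\eta)\eps$ given by Proposition~\ref{scatteringestimates}; the displacement during this scattering contributes an extra $O(C(\Phi,R,\eta)\eps)$ to the position discrepancy, and Proposition~\ref{geometric-prop} (in particular~(\ref{distancex*x})--(\ref{k-1good})) guarantees that the other particles stay out of the protection sphere while scattering takes place.
\end{itemize}
In both cases $Z_{s+i}^0(t_{i+1})\in {\mathcal G}_{s+i}(\eps_0/2)$ by (\ref{taugeqdelta2precoll})/(\ref{taugeqdelta2postcoll}), which feeds the next induction step (one absorbs the factor $1/2$ by taking $\eps_0$ slightly larger, as noted in Remark~\ref{betterresult}).

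The main obstacle is the interplay between the accumulated position error and the applicability of Proposition~\ref{geometric-prop} at each step: since the geometric proposition requires $|X_{s+i-1}(t_i)-\overline{X}_{s+i-1}|\le a$ with a fixed $a\ll\eps_0$, one must verify that the bound $C\eps i$ (eventually $C\eps n$) remains $\ll a$; this is the reason for the assumption ``$\eps$ sufficiently small'' (depending on $n$, $\Phi$, $R$, $\eta$) in the statement. The potential case further requires checking that the scattering time shift $t_\eps$ is small enough that no third particle enters the interaction region during scattering, which is precisely~(\ref{k-1good}). Once these verifications are in place, the claimed bounds~(\ref{theresult-HS}) and~(\ref{theresult}) follow directly from the inductive accumulation of $O(\eps)$ (respectively $O(C(\Phi,R,\eta)\eps)$) position errors at each of the $i$ collision times.
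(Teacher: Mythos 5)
Your induction structure, base case, case split into pre-collisional and post-collisional (with and without time shift), and the accounting of the per-step $O(\eps)$ (resp.\ $O(C(\Phi,R,\eta)\eps)$) position increments all match the paper's proof. The observation that the applicability of Proposition~\ref{geometric-prop} at each step forces $C\eps n \leq a$, hence the ``$\eps$ sufficiently small'' hypothesis, is also correctly identified.

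However, there is a genuine gap in the good-configuration bookkeeping that closes the induction. You assert that $Z_{s+i}^0(t_{i+1})\in\cG_{s+i}(\eps_0/2)$ and propose to ``absorb the factor $1/2$ by taking $\eps_0$ slightly larger.'' This does not work: if at each step the good-configuration constant degrades by a factor $2$, after $n$ adjunctions you would be in $\cG_{s+n}(\eps_0/2^n)$, which is incompatible with the fixed hierarchy $a\ll\eps_0$ once $n$ is not $O(1)$. The issue is that Proposition~\ref{geometric-prop} requires its reference configuration $\overline Z$ to lie in $\cG(\eps_0)$, not $\cG(\eps_0/2)$, so you cannot iterate it if that hypothesis drifts. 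The correct resolution --- which is what Remark~\ref{betterresult} actually says --- is that the conclusion $\cG(\eps_0/2)$ only applies to the \emph{perturbed} configuration $Z_k$, while for the choice $Z_k=\overline Z_k$ one gets $\cG(\eps_0)$ with no loss. Since in the induction the reference configuration one feeds into $\cB_{s+i}(\cdot)$ is always the Boltzmann pseudo-trajectory $Z^0_{s+i}(t_{i+1})$, which is precisely the ``unperturbed'' case (the adjoined particle sits \emph{exactly} at $x^0_{m_i}(t_i)$, with no $\eps\nu$ offset), the Boltzmann trajectory stays in $\cG_{s+i}(\eps_0)$ at every step. The BBGKY trajectory only inherits $\cG_{s+i}(\eps_0/2)$, and that is harmless because it never serves as the base point of Proposition~\ref{geometric-prop} --- it only needs to be within distance $a$ of the Boltzmann trajectory. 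If you repair this point, the rest of your argument closes.
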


\begin{proof}
We proceed by induction on~$i$, the index of the time variables~$t_{i+1}$ for~$0\leq i \leq n$.

We first notice that by construction, $ Z_{s}(t_{1 }) -Z_s^0(t_{1 })=0$, 
so~(\ref{theresult}) holds for~$i=0$. The initial configuration being a good configuration, we indeed know -- by definition -- that there is no possible  recollision.

Now let~$i\in [1,n]$ be fixed, and assume that for all~$\ell \leq i$
\begin{equation}\label{theresult2}
\forall k\leq s +\ell-1,\qquad | x^\eps_k(t_{\ell}) - x_k^0(t_{\ell}) |   \leq C\eps (\ell-1)
\quad \mbox{and}\quad v_k(t_{\ell }) = v_k^0(t_{\ell })  \,,
\end{equation}
with $C=1$ for hard spheres.

Let us prove that~(\ref{theresult2}) holds for~$\ell= i+1$.
We shall consider two cases depending on whether the particle adjoined at time~$t_{i}$ is pre-collisional or post-collisional. 

\medskip
\noindent
$\bullet$
As usual, the case of pre-collisional velocities $(v_{s+i}, v_{m_i}(t_i))$ at time $t_i$ is the most simple to handle. We indeed have $\forall \tau \in [t_{i+1},t_i]$
$$ 
\begin{aligned}
\forall k< s+i\,,\quad x^0_k(\tau) &=x^0_k (t_i) +(\tau-t_i) v^0_k(t_i)\,,\qquad v^0_k(\tau) =v^0_k(t_i)\,,\\
x^0_{s+i}(\tau) &= x^0_{m_i} (t_i)  + (\tau-t_i) v_{s+i}\,,\qquad v_{s+i}^0(\tau) =v_{s+i}\,.
\end{aligned}
$$
Now let us study the BBGKY trajectory.
We recall that the particle is adjoined in such a way that~$(\nu_{s+i}, v_{s+i})$  belongs to~$^c{} {\mathcal B}_{s+i-1}  (  Z_{s+i-1}^0(t_i))$. Provided that $\eps$ is sufficiently small, by the induction assumption~(\ref{theresult2}), we have
$$\forall k\leq s+i-1,\quad  | x_{k}^\eps (t_i) -x_{k}^0 (t_i) |\leq C\eps (i-1) \leq a \,,$$
with $C=1$ for hard spheres.

Since~$Z_{s+i-1}^0(t_i)$ belongs to~$\cG_{s+i-1}(\eps_0)$ (see Paragraph~\ref{approxboltzmann}), we can apply Proposition~\ref{geometric-prop}
which implies  that backwards in time, there is free flow for~$Z^\eps_{s+i}$. In particular,
$$ 
\begin{aligned}
&\forall k< s+i\,,\quad x_k(\tau) =x_k (t_i) +(\tau-t_i) v_k(t_i)\,,\qquad v_k(\tau) =v_k(t_i)\,,\\
&x_{s+i}(\tau) = x_{m_i} (t_i) + \eps \nu_{s+i}  + (\tau-t_i) v_{s+i}\,,\qquad v_{s+i}(\tau) =v_{s+i}\,.
\end{aligned}
$$
We therefore obtain
\begin{equation}\label{induction1}
\forall k\leq s+i\,,\quad \forall \tau \in [t_{i+1},t_i] \, , \quad v_k(\tau ) -v_k^0(\tau) = v_k(t_i ) -v_k^0(t_i) = 0\, ,
\end{equation}
and  
\begin{equation}\label{induction2}\forall k\leq s+i\,,\quad  \forall \tau \in [t_{i+1},t_i] \, , \quad  
 |x_k(\tau)- x_k^0(\tau)| \leq C\eps (i-1) +\eps \,,
 \end{equation}
 with $C=1$ in the case of hard spheres.
 
 \medskip
\noindent
$\bullet$
The case of post-collisional velocities  $(v_{s+i}, v_{m_i}(t_i))$ at time $t_i$ for the hard sphere dynamics is very similar.
We indeed have $\forall \tau \in [t_{i+1},t_i[$
$$ 
\begin{aligned}
&\forall k< s+i, \quad k\neq m_i \,,\quad x^0_k(\tau) =x^0_k (t_i) +(\tau-t_i) v^{0*}_k(t_i)\,,\qquad v^0_k(\tau) =v^{0}_k(t_i)\,,\\
& x^0_{m_i}(\tau) =x^0_{m_i}  (t_i) +(\tau-t_i) v^{0*}_{m_i}(t_i)\,,\qquad v^0_k(\tau) =v^{0*}_{m_i}(t_i)\,,\\
&x^0_{s+i}(\tau) = x^0_{m_i} (t_i)  + (\tau-t_i) v_{s+i}^*\,,\qquad v_{s+i}^0(\tau) =v_{s+i}^*\,.
\end{aligned}
$$
Now let us study the BBGKY trajectory.
We recall that the particle is adjoined in such a way that~$(\nu_{s+i}, v_{s+i})$  belongs to~$^c{} {\mathcal B}_{s+i-1} ^{m_i} (  Z_{s+i-1}^0(t_i))$. Provided that $\eps$ is sufficiently small, by the induction assumption~(\ref{theresult2}), we have
$$\forall k\leq s+i-1,\quad  | x_{k}^\eps (t_i) -x_k^0 (t_i) |\leq \eps (i-1) \,.$$
Since~$Z_{s+i-1}^0(t_i)$ belongs to~$\cG_{s+i-1}(\eps_0)$ (see Paragraph~\ref{approxboltzmann}), we can apply Proposition~\ref{geometric-prop}
which implies  that backwards in time, there is free flow for~$Z^\eps_{s+i}$. In particular,
\begin{equation}\label{induction11}
\forall k\leq s+i\,,\quad \forall \tau \in [t_{i+1},t_i[ \, , \quad v_k(\tau ) -v_k^0(\tau) = v_k(t_i ^-) -v_k^0(t_i^-) = 0\, ,
\end{equation}
and  
\begin{equation}\label{induction21}\forall k\leq s+i\,,\quad  \forall \tau \in [t_{i+1},t_i[\, , \quad  
 |x_k(\tau)- x_k^0(\tau)| \leq \eps (i-1) +\eps \leq i\eps \,.
 \end{equation}

\medskip
\noindent
$\bullet$
The case of post-collisional velocities is a little more complicated since there is a (small) time interval during which  interaction occurs.

Let us start by describing the Boltzmann flow. By definition of the post-collisional configuration, we know that   the following identities hold: 
$$
\forall t_{i+1} \leq \tau < t_i \, , \,  \left \{\begin{aligned}
 (v_{m_i}^0,v_{s+i}^0 )(\tau) = (v_{m_i}^{0*}(t_i) , v_{s+i} ^* ) \hbox{ with } (\nu_{s+i}^*,v_{m_i}^{0*}(t_i) , v_{s+i} ^* ) := \sigma_0^{-1}( \nu_{s+i}, v_{m_i}^{0}(t_i)  , v_{s+i} ) \\
  x_{m_i}^0 (\tau) = x_{m_i}^0 (t_i) + (\tau - t_i) v_{m_i}^{0*}(t_i) \, ,  \, x_{s+i}^0 (\tau) = x_{s+i}^0 (t_i) + (\tau - t_i) v_{s+i}^{*} \\
 \forall j \notin \{ m_i, s+1\} \,, \quad v_{j}^0(\tau) =v_{j}^0(t_i)  \, , \,  x_j^0(\tau) = x_j^0(t_i) + (\tau - t_i) v_{j}^0(t_i)  \, ,
\end{aligned}
\right.
$$
where~$\sigma_0$ denotes the scattering operator defined in Definition~\ref{scatteringbbgky}
 in Chapter~\ref{scattering}.

First, by Proposition~\ref{geometric-prop}, we know that for $j\notin \{m_i, s+i\}$ and $\forall \tau \in [t_{i+1},t_i]$,
$$ 
 x_j(\tau) =x_j (t_i) +(\tau-t_i) v_j(t_i)\, ,\qquad v_j(\tau) =v_j(t_i)\,,
$$
so that by the induction assumption~(\ref{theresult2}) we obtain
\begin{equation}\label{induction3}
\begin{aligned}
\forall j\notin \{m_i, s+i\} \, , \,  \forall \tau \in [t_{i+1},t_i] \, , \quad  |x_j(\tau)- x_j^0(\tau)| =  |x_j(t_i)- x_j^0(t_i)|  \leq C \eps (i-1)\\
 \mbox{and} \quad v_j(\tau) =  v^0_j(\tau)\,.
 \end{aligned}
\end{equation}

We now have  to focus on the pair $(s+i, m_i)$. According to Chapter~\ref{scattering}, the relative velocity evolves under the nonlinear dynamics on a  time interval~$[t_i-t_\eps,t_i]$ with $t_\eps \leq C(\Phi,R,\eta) \eps$ (recalling that by construction, the relative velocity~$|v_{s+i} -  v_{m_i}(t_i)|$ is bounded from above  by $R$ and from below by $\eta$, and that the impact parameter is also bounded from below by $\eta$). Then,    for all $\tau \in [t_{i+1}, t_i-t_\eps ]$,
\begin{equation}\label{thevelocities}
v_{s+i}(\tau) =v_{s+i}^* =  v_{s+i}^0(\tau)  \, , \quad v_{m_i}(\tau) =v_{m_i} ^*(t_i) = v_{m_i}^{0*}(t_i) =v_{m_i}^0  (\tau)  \, .
\end{equation}
In particular,
\begin{equation}\label{induction4}
 v_{s+i}(t_{i+1}) = v_{s+i}^0(t_{i+1}) \quad \mbox{and} \quad  v_{m_i}(t_{i+1})  = v_{m_i}^0(t_{i+1})  \, .
\end{equation}
\noindent
The conservation of total momentum  as in Paragraph \ref{gaindec} shows that
$$
 \begin{aligned}
  \Big|\frac12 (x_{m_i} ^{\eps} (t_i-t_\eps)+x_{s+i} ^{\eps} (t_i-t_\eps)) - \frac12(x_{m_i} ^0 (t_i-t_\eps)+x_{s+i} ^0 (t_i-t_\eps))\Big| \\=\Big|\frac12 (x_{m_i} ^{\eps} (t_i)+x_{s+i} ^{\eps} (t_i) - \frac12(x_{m_i} ^0 (t_i)+x_{s+i} ^0 (t_i))\Big| \\
  = \Big|x_{s+i} ^{\eps} (t_i) - x_{s+i} ^0 (t_i)\Big| +\frac\eps2 \leq C\eps (i-1) +\frac\eps 2 \, \cdotp
 \end{aligned}
 $$
 On the other hand, by definition of the scattering time $t_\eps$, 
 $$
 \begin{aligned}
 |x_{m_i} ^{\eps} (t_i-t_\eps)-x_{s+i} ^{\eps} (t_i-t_\eps)|=\eps \, ,\\
  |x_{m_i} ^0 (t_i-t_\eps)-x_{s+i} ^0 (t_i-t_\eps)| = t_\eps | v_{m_i}^*-v_{s+i}^*|\leq C(\Phi,R,\eta)\,  \eps \, .
 \end{aligned}
 $$
 We obtain finally
 \begin{equation}\label{outsidescatteringclose}
  |x_{m_i} ^{\eps} (t_i-t_\eps)-x_{m_i} ^0 (t_i-t_\eps)|\leq C\eps i
 \hbox{ and } |x_{s+i} ^{\eps} (t_i-t_\eps)-x_{s+i} ^0 (t_i-t_\eps)| \leq C\eps i
 \end{equation}
  provided that $C$ is chosen sufficiently large (depending on $\Phi$, $R$ and $\eta$).

Now let us apply  Proposition~\ref{geometric-prop}, which implies that
for all $\tau \in [t_{i+1}, t_i-t_\eps]$ the backward in time evolution of the two particles~$x^\eps_{s+i}(t_i-t_\eps )  $ and~$ x^\eps_{m_i} (t_i-t_\eps )  $,  is that of free flow:
 we have therefore, using~(\ref{thevelocities}),
 $$ 
 \begin{aligned}
 x^\eps_{m_i} (t_{i+1})-x^0_{m_i} (t_{i+1})  =x_{m_i}^\eps (t_i-t_\eps ) -x_{m_i}^0  (t_i-t_\eps ) \,,\\
 x^\eps_{s+i} (t_{i+1})-x^0_{s+i} (t_{i+1})  =x_{s+i}^\eps (t_i-t_\eps ) -x_{s+i}^0  (t_i-t_\eps ) \,.
\end{aligned}
$$
From~(\ref{outsidescatteringclose}) we therefore deduce that the induction assumption is  satisfied at time step~$t_{i+1}$, and the proposition is proved.
     \end{proof}
     
     Note that, by construction,
     $$Z_{s+k}^0(0)\in \cG_{s+k}(\eps_0)\,,$$
so that an obvious application of the triangular inequality leads to
 $$Z_{s+k}^\eps(0)\in \cG_{s+k}(\eps_0/2)\,.$$
 Note also that the indicator functions are identically equal to 1 for good configurations.
 We therefore have the following
 
     \begin{Cor}\label{secondbbgkypseudotraj}
    Under the assumptions of Lemma~{\rm\ref{translation-lem}}, the functional~$J^{R,\delta}_{s,n}  (t,J,M)$ defined in~{\rm(\ref{firstbbgkypseudotraj})} may be written as follows:
       $$
\begin{aligned}
   J_{s,k} ^{R,\delta} (t,J,M)(X_s)& = \frac{(N-s)! }{(N- s-k)!}\eps^{k(d-1)} \int_{B_R \setminus \cM_s(X_s)} dV_s \varphi_s(V_s) \int_{{\mathcal T}_{k,\delta}(t)}   dT_k    \\
&\quad \int_{  ^c{} {\mathcal B}^{m_1}_{s}  (  Z_{s}^0(t_1))
}   d\nu_{s+1}  dv_{s+1} \,   (\nu_{s+1}   \cdot ( v_{s+1}  - v_{m_1}(t_1)))_{j_1}    \\ 
 & \quad \dots  \int_{  ^c{} {\mathcal B}_{s+k-1}^{m_k}  (  Z_{s+k-1}^0(t_n))
}  d\nu_{s+k}  dv_{s+k}  \, (\nu_{s+n}   \cdot ( v_{s+k}  - v_{m_k}(t_k)))_{j_k}   \\ 
  &\qquad \times  
 \indc_{E_\eps(Z_{s+k}(0))\leq R^2}    \indc_{ Z_{s+k}(0) \in \cG_{s+k}(\eps_0/2)}
  \widetilde f_{N,0}^{(s+k)} (Z^\eps_{s+k}(0))   \, .
\end{aligned}
$$
     \end{Cor}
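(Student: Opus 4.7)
The plan is to observe that this corollary is essentially a bookkeeping consequence of Lemma~\ref{translation-lem}, which has already established the crucial closeness estimate between BBGKY and Boltzmann pseudo-trajectories along the full collision tree. First I would verify that all the indicator functions of the form
\[
\prodetage{1 \leq j \leq s+i-1}{j \neq m_i} \indc_{|(x_j-x_{m_i})(t_i)-\eps \nu_{s+i}| \geq \eps}
\]
appearing in the original formula \eqref{firstbbgkypseudotraj} are identically equal to $1$ on the domain of integration. Indeed, Proposition~\ref{geometric-prop} guarantees that $Z^0_{s+i-1}(t_i) \in \cG_{s+i-1}(\eps_0)$, so that for $j \neq m_i$ we have $|x^0_j(t_i) - x^0_{m_i}(t_i)| \geq \eps_0$; by Lemma~\ref{translation-lem}, the BBGKY positions differ from the Boltzmann ones by at most $C(\Phi,R,\eta)\eps (i-1)$, hence for $\eps$ small enough $|x_j(t_i) - x_{m_i}(t_i) - \eps\nu_{s+i}| \geq \eps_0 - C\eps i - \eps \geq \eps$. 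Thus these indicators may be dropped without changing the integral.

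Next I would insert the two indicators $\indc_{E_\eps(Z_{s+k}(0))\leq R^2}$ and $\indc_{Z_{s+k}(0)\in \cG_{s+k}(\eps_0/2)}$. The first is already present in \eqref{firstbbgkypseudotraj} up to replacing $E_0$ by $E_\eps$: since the Hamiltonian is conserved along the BBGKY flow and the truncation was imposed at time $t_{k+1}$, we may transport the energy bound to time $0$. The second is the heart of the claim and follows from combining Proposition~\ref{geometric-prop} with Lemma~\ref{translation-lem}: the Boltzmann pseudo-trajectory by construction satisfies $Z^0_{s+k}(\tau) \in \cG_{s+k}(\eps_0)$ for all $\tau \in [0,t_k]$, and by the induction of Lemma~\ref{translation-lem} the BBGKY pseudo-trajectory remains within $C(\Phi,R,\eta)\eps k$ of it. By the triangle inequality, for any $\tau \geq 0$ and $i\neq j$,
\[
|x^\eps_i(0) - x^\eps_j(0) - \tau(v^\eps_i(0) - v^\eps_j(0))| \geq \eps_0 - 2C(\Phi,R,\eta)\eps k \geq \eps_0/2,
\]
provided $\eps$ is small enough relative to $\eps_0, k, R$ and (in the potential case) to the scattering-time bounds of Proposition~\ref{scatteringestimates}; this is exactly the statement $Z^\eps_{s+k}(0) \in \cG_{s+k}(\eps_0/2)$. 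Hence the added indicator is identically $1$ on the integration domain.

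The only mildly delicate point is to keep track of the microscopic time-shifts $t_\eps$ in the potential case and to check that the cumulative error $C(\Phi,R,\eta)\eps k$ across the $k$ collision times indeed remains much smaller than $\eps_0/2$; this is permitted by the scale hierarchy $\eps \ll a \ll \eps_0$ fixed in \eqref{sizeofparameters} once $k$ is bounded by $n$ and $n, R, \eta, \delta$ are fixed in the dominated convergence reduction of Chapter~\ref{convergenceHS}. The hard-sphere case is simpler since $t_\eps = 0$ and the constant in Lemma~\ref{translation-lem} may be taken equal to $1$, but the structure of the argument is identical. Once these two verifications are made, substituting into \eqref{firstbbgkypseudotraj} yields exactly the announced expression for $J^{R,\delta}_{s,k}(t,J,M)$.
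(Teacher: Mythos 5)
Your proposal is correct and follows essentially the same route as the paper's own argument: use Proposition~\ref{geometric-prop} (via Remark~\ref{betterresult}) to guarantee $Z_{s+k}^0(0)\in\cG_{s+k}(\eps_0)$ for the Boltzmann pseudo-trajectory, invoke Lemma~\ref{translation-lem} and the triangle inequality (noting that the velocities of the two pseudo-trajectories coincide at time $0$) to deduce $Z_{s+k}^\eps(0)\in\cG_{s+k}(\eps_0/2)$, and observe that on such good configurations the indicator functions $\prod_{j\neq m_i}\indc_{|(x_j-x_{m_i})(t_i)-\eps\nu_{s+i}|\geq\eps}$ are identically $1$ so that they can be dropped and the good-configuration indicator can be inserted for free. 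One negligible slip: you describe the energy indicator as needing $E_0\to E_\eps$ conversion, but both~\eqref{firstbbgkypseudotraj} and the corollary already carry $\indc_{E_\eps(Z_{s+k}(0))\leq R^2}$, so no replacement is required (and indeed none is made in the paper); this does not affect the validity of the rest of the argument.
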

     
\section{Proof of convergence for the hard sphere dynamics: proof of Theorem 8}\label{theend-HS}
In this section we   prove Theorem~\ref{main-thm}, which concerns the case of hard spheres. The potential case will be treated  in the following section.

From Corollary \ref{finalresultHS}, we know that any observable associated to the BBGKY hierarchy  can be approximated by a finite sum~:  more precisely, given~$s$ and~$t \in [0,T]$, there are two positive constants~$C$ and~$C'$ such that 
\begin{equation}
\label{est1}
 \big  \| I_{s} (t)  - \sum_{k=0}^n  I_{s,k} ^{R,\delta} (t) \big \|_{L^\infty (\R^{ds})} \leq C\left(2^{-n} + e^{-C'\beta_0R^2} +\frac{n^2}T \delta \right) \|\varphi\|_{L^\infty (\R^{ds})}
  \|  F_{N,0}\|_{\eps,\beta_0,\mu_0} \, .
  \end{equation}
 Similarly, for the Boltzmann hierarchy, we get
\begin{equation}
\label{est1-0}
 \big  \| I^0_{s} (t)  - \sum_{k=0}^n  I_{0,s,k} ^{R,\delta} (t) \big \|_{L^\infty (\R^{ds})} \leq  C\left(2^{-n} + e^{-C'\beta_0R^2} +\frac{n^2}T \delta \right)\|\varphi\|_{L^\infty (\R^{ds})}
  \|  F_{0}\|_{0,\beta_0,\mu_0} \, .
  \end{equation}

\bigskip

Then, 
from Propositions \ref{lastapproxb} and \ref{lastapproxbbgky}, we obtain  the error terms corresponding to the elimination of pathological velocities and impact parameters
 \begin{equation}
\label{est2}
\begin{aligned}
\Big| \1_{\Delta_s (\eps_0)} \sum_{k=0}^n\sum_{J,M} \big (   I_{s,k}^{0,R,\delta}&-   J_{s,k}^{0,R,\delta } \big) (t,J,M )   \Big|  \leq  C n^2 (s+n) \\
&\quad \times{}\Big(R  \eta^{d-1} + R^d \Big(
  \frac{a}{\eps_0} \Big)^{d-1} + R\Big(  \frac{\eps_0}{\delta} 
\Big)^{d-1} \Big) \|  F_{0}\|_{0,\beta_0,\mu_0} \|\varphi\|_{L^\infty (\R^{ds})}\, ,
\end{aligned}
\end{equation}
and
\begin{equation}
\label{est2-0}
\begin{aligned}
\Big| \1_{\Delta_s (\eps_0)} \sum_{k=0}^n\sum_{J,M}  \big (   I_{s,k}^{R,\delta}&-   J_{s,k}^{R,\delta } \big) (t,J,M )   \Big|  \leq  C n^2 (s+n) \\
&\quad \times{}\Big(R  \eta^{d-1} + R^d \Big(
  \frac{a}{\eps_0} \Big)^{d-1} + R\Big(  \frac{\eps_0}{\delta} 
\Big)^{d-1} \Big) \|  F_{N,0}\|_{\eps,\beta_0,\mu_0} \|\varphi\|_{L^\infty (\R^{ds})} \,.
\end{aligned}
\end{equation}

\bigskip
The end of the proof of Theorem~\ref{main-thm} consists  in estimating the error terms in~$ J^{R,\delta}_{s,k}-  J^{0,R,\delta}_{s,k}$ coming essentially  from the micro-translations described in the previous paragraph and from the initial data.

\bigskip
\subsection{Error coming from  the initial data}$ $

Let us     replace the initial data in $J^{R,\delta}_{s,k} $ by that of the Boltzmann hierarchy, defining:
 $$
     \begin{aligned}
    \widetilde J^{R,\delta}_{s,k}  (t,J,M)(X_s) &:= \frac{(N-s)! }{(N- s-k)!}\eps^{k(d-1)} \int_{B_R \setminus \cM_s(X_s)} dV_s \varphi_s(V_s) \int_{{\mathcal T}_{k,\delta}(t)}   dT_k    \\
&\quad  \int_{  ^c{} {\mathcal B}_{s} ^{m_1} (  Z_{s}^0(t_1))
}  \!  \!  \!  \!  \!  d\nu_{s+1}  dv_{s+1} \,   (\nu_{s+1}   \cdot ( v_{s+1}  - v_{m_1}(t_1)) )_{j_1}  \\
 &\qquad   \dots  \int_{  ^c{} {\mathcal B}_{s+k-1}  ^{m_k} (  Z_{s+k-1}^0(t_k))
}  \!  \!  \!  \!  \!  d\nu_{s+k}  dv_{s+k}  \, (\nu_{s+k}   \cdot ( v_{s+k}  - v_{m_k}(t_k)))_{j_k}   \\ 
 & \qquad \times    \indc_{E_0(Z_{s+k}(0))\leq R^2}   \indc_{ Z^\eps_{s+k}(0) \in \cG_{s+k}(\eps_0/2)}        f_{0}^{(s+k)} (Z_{s+k}(0))   \, .
\end{aligned}
   $$

   Since, by definition of admissible Boltzmann data, we have for any fixed $s$
    $$ f_{0,N}^{(s)} \longrightarrow f_0^{(s)}   \quad \mbox{as $N \to \infty$ with $N \e^{d-1} \equiv 1 \, ,$ locally uniformly in $\Omega_s \, ,$}$$
we expect that 
$$J^{R,\delta}_{s,k}  (t,J,M)(X_s) - \widetilde J^{R,\delta}_{s,k}  (t,J,M)(X_s) \to 0 $$
 as $N \to \infty$ with $N \e^{d-1} \equiv 1 $,  locally uniformly in $\Omega_s$.

    \begin{Lem}\label{data}
    Let $F_0$ be an admissible Boltzmann datum and $F_{0,N}$ an associated BBGKY datum. Then, in the Boltzmann-Grad scaling $N \eps^{d-1} = 1$,
    for all fixed $s, k\in \N$ and $t<T$,
    $$J^{R,\delta}_{s,k}  (t,J,M)(X_s) - \widetilde J^{R,\delta}_{s,k}  (t,J,M)(X_s) \to 0 \, ,$$
locally uniformly in $\Omega_s$.

For tensorized initial data
     $$f^{(N)}_{0,N} (Z_N)= {\mathcal Z}_N ^{-1}  \indc_{Z_N \in \D_N} f_0^{\otimes N}(Z_N)\quad  \hbox{ with } \quad \big\| f_0 \exp (\beta_0 |v|^2) \big \|_{L^\infty} <+\infty \, ,$$
we further have the following error estimate~:
$$
  \big |  \1_{\Delta^X_s (\eps_0)} \sum_{k=0}^n \sum_{J,M} ( J^{R,\delta}_{s,k}-  \widetilde J^{R,\delta}_{s,k} )  (t,J,M)(X_s) \big | \leq  C \eps (s+n) \|  F_{0}\|_{0,\beta_0,\mu_0} \|\varphi\|_{L^\infty (\R^{ds})} \, .
  $$
   \end{Lem}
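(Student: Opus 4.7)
The functionals $J^{R,\delta}_{s,k}$ and $\widetilde J^{R,\delta}_{s,k}$ share the same integration domain, the same collision cross-section factors, and the same truncation indicators: they differ only in the final factor, which is $\widetilde f^{(s+k)}_{N,0}(Z^\eps_{s+k}(0))$ in $J^{R,\delta}_{s,k}$ versus $f^{(s+k)}_0(Z_{s+k}(0))$ in $\widetilde J^{R,\delta}_{s,k}$. The strategy is to bound the pointwise difference of these two factors on the integration domain and then to close the argument with the continuity estimate of Proposition \ref{propcontinuityclustersHS} (which controls the finite-dimensional collision integral for $k \leq n$). The essential geometric input comes from Lemma \ref{translation-lem} and Corollary \ref{secondbbgkypseudotraj}: both pseudo-trajectory endpoints $Z^\eps_{s+k}(0)$ and $Z_{s+k}(0)$ share the same velocity components, have positions differing by at most $C\eps k$, and both lie in a good configuration $\cG_{s+k}(\eps_0/2)$; in particular they sit in a fixed compact subset of $\Omega_{s+k}$ bounded away from the diagonal, uniformly in $\eps$.

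For the first (qualitative) assertion, I would write
\[
\widetilde f^{(s+k)}_{N,0}(Z^\eps_{s+k}(0)) - f^{(s+k)}_0(Z_{s+k}(0)) = \bigl[\widetilde f^{(s+k)}_{N,0} - f^{(s+k)}_0\bigr](Z^\eps_{s+k}(0)) + \bigl[f^{(s+k)}_0(Z^\eps_{s+k}(0)) - f^{(s+k)}_0(Z_{s+k}(0))\bigr].
\]
The first bracket tends to zero as $N \to \infty$ with $N\eps^{d-1} \equiv 1$ by the definition of admissible data, namely the locally uniform convergence \eqref{def:cv-data} on $\Omega_{s+k}$, because $Z^\eps_{s+k}(0)$ remains in an $\eps$-independent compact of $\Omega_{s+k}$. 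The second bracket tends to zero by continuity of $f^{(s+k)}_0$ on $\Omega_{s+k}$ together with the bound $|Z^\eps_{s+k}(0)-Z_{s+k}(0)|\leq C\eps k$. Dominated convergence on the (finite-dimensional) integral, with pointwise domination furnished by $e^{-\mu_0(s+k)-\beta_0 E_0}\|F_0\|_{0,\beta_0,\mu_0}$ together with the boundedness $|v_j|\leq R$ of the cross-sections, then yields the claimed convergence, locally uniformly in $X_s$.

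For the quantitative tensorized bound, I would apply directly the decomposition \eqref{marginal-dec} from the proof of Proposition \ref{init-cv1}, at the marginal of order $s+k$:
\[
\indc_{Z_{s+k}\in\D_{s+k}}f_0^{\otimes(s+k)} - \widetilde f^{(s+k)}_{N,0} = \bigl(1-{\mathcal Z}_N^{-1}{\mathcal Z}_{N-(s+k)}\bigr)\indc_{\D_{s+k}}f_0^{\otimes(s+k)} + {\mathcal Z}_N^{-1}{\mathcal Z}^\flat_{(s+k+1,N)}\indc_{\D_{s+k}}f_0^{\otimes(s+k)}.
\]
By Lemma \ref{lem:bd-f0HS} and the elementary inequality $(1-x)^{-s}-1 \leq 2sx$ for $2sx \leq 1/2$, the first summand is bounded by $C\eps(s+k) f_0^{\otimes(s+k)}$ in the scaling $N\eps^{d-1}\equiv 1$, while the second summand is controlled by the bound \eqref{bd:flat} combined with Lemma \ref{lem:bd-f0HS}, giving the same $C\eps(s+k) f_0^{\otimes(s+k)}$ bound. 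Since $Z^\eps_{s+k}(0)\in\cG_{s+k}(\eps_0/2)\subset\D_{s+k}$, the indicator equals $1$ at that configuration, so
\[
\bigl|\widetilde f^{(s+k)}_{N,0}(Z^\eps_{s+k}(0)) - f_0^{\otimes(s+k)}(Z^\eps_{s+k}(0))\bigr| \leq C\eps(s+k)\,f_0^{\otimes(s+k)}(Z^\eps_{s+k}(0)).
\]
Inserting this into $J^{R,\delta}_{s,k} - \widetilde J^{R,\delta}_{s,k}$, bounding the cross-section factors by $2R$, integrating the Gaussian velocity weight, and summing over $k\leq n$ and over $J,M$ via Proposition \ref{propcontinuityclustersHS} absorbs the finite combinatorial factor and produces the announced $C\eps(s+n)\|F_0\|_{0,\beta_0,\mu_0}\|\varphi\|_{L^\infty}$ estimate.

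The principal obstacle is to handle the trajectory shift $f_0^{\otimes(s+k)}(Z^\eps) - f_0^{\otimes(s+k)}(Z_{s+k}(0))$ in the quantitative part, since the hypothesis $\|f_0\,e^{\beta_0 |v|^2/2}\|_{L^\infty}<\infty$ alone does not provide a modulus of continuity. One either assumes Lipschitz regularity of $f_0$ (so that $V^\eps = V^0$ and $|X^\eps - X^0|\leq C\eps k$ combine to give an extra $O(\eps(s+k))$ contribution, which is absorbed into the same final bound), or equivalently reinterprets $\widetilde J^{R,\delta}_{s,k}$ in the tensorized case as evaluating $f_0^{\otimes(s+k)}$ at the BBGKY endpoint $Z^\eps_{s+k}(0)$; this is precisely the source of the Lipschitz-dependent explicit rate asserted in the last part of Theorem \ref{main-thm}.
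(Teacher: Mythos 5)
Your decomposition into a data difference and a trajectory-shift term, the use of the admissible-data convergence \eqref{def:cv-data} on a compact of $\Omega_{s+k}$ bounded away from the diagonal for the qualitative statement, and the use of \eqref{marginal-dec}, Lemma~\ref{lem:bd-f0HS}, the bound \eqref{bd:flat}, and Proposition~\ref{propcontinuityclustersHS} for the quantitative tensorized bound are exactly the paper's argument.

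The only place you create extra work for yourself is the second bracket, $f^{(s+k)}_0(Z^\eps_{s+k}(0)) - f^{(s+k)}_0(Z_{s+k}(0))$: despite the notational looseness in the source, the definition of $\widetilde J^{R,\delta}_{s,k}$ evaluates $f^{(s+k)}_0$ at the \emph{same} BBGKY pseudo-trajectory endpoint $Z^\eps_{s+k}(0)$ as $J^{R,\delta}_{s,k}$ does for $\widetilde f_{N,0}^{(s+k)}$ (this is forced by the chain of comparisons $J \to \widetilde J \to \overline J \to J^0$, since it is $\overline J^{R,\delta}_{s,k}$ versus $J^{0,R,\delta}_{s,k}$, i.e.\ the BBGKY endpoint versus the Boltzmann endpoint, that requires the Lipschitz hypothesis and is handled in Lemma~\ref{lipschitz-estimate}). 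So your second bracket is identically zero and the lemma reduces to the first bracket alone, integrated and summed exactly as you do. Your final paragraph correctly identifies this alternative, so your reasoning converges to the paper's; you just reach it by first worrying about an obstacle that the definitions already remove.
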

 
\begin{proof}

By definition of the good sets~${\mathcal G}_k(c)$, the positions in the argument of~$   f^{(s+k)}_{N,0} -  f^{(s+k)}_{0}$ satisfy the separation condition~$|x_{i} - x_j | \geq \eps_0/2> \eps $ for~$i \neq j$~:
$$\indc_{{\mathcal G}_{s+k}(\eps_0/2)}    (  f^{(s+k)}_{N,0}   - f^{(s+k)}_{0}) =\indc_{{\mathcal G}_{s+k}(\eps_0/2)}  \1_{\Delta^X_{s+k} (\eps_0/2)}   (   f^{(s+k)}_{N,0}   - f^{(s+k)}_{0}) \,.$$
So we can write
$$
     \begin{aligned}
      (J^{R,\delta}_{s,k}(t,J,M)-  \widetilde J^{R,\delta}_{s,k}(t,J,M))(X_s) & = \frac{(N-s)! }{(N- s-k)!}\eps^{k(d-1)} \int_{B_R \setminus \cM_s(X_s)} \! \! \!  dV_s \varphi_s(V_s) \int_{\mathcal T_{k,\delta}(t)}   dT_k  \\
     &\quad    \int_{  ^c{} {\mathcal B}_{s}^{m_1}  (  Z_{s}^0(t_1))
}   d\nu_{s+1}  dv_{s+1}   (\nu_{s+1}   \cdot ( v_{s+1}  - v_{m_1}(t_1))  )_{j_1}  
  \\ &\qquad \dots  \int_{  ^c{} {\mathcal B}_{s+k-1} ^{m_k} (  Z_{s+k-1}^0(t_k))
}   d\nu_{s+k}  dv_{s+k}  \, (\nu_{s+k}   \cdot ( v_{s+k}  - v_{m_k}(t_k)) )_{j_k}  \\ 
    &\qquad    \times   \indc_{E_\eps(Z^\eps_{s+k}(0))\leq R^2} \indc_{\Delta_{s+k}(\eps_0/2)}    ( f^{(s+k)}_{N,0} -f^{(s+k)}_0 )   \, ,\end{aligned}
   $$
and
 we find directly that
  $$
    \Big| \1_{\Delta^X_s(\eps_0)}(J^{R,\delta}_{s,k}(t,J,M)-  \widetilde J^{R,\delta}_{s,k}(t,J,M))(X_s)\Big|      \leq C {R^{k(d+1)}t^k \over k!} \left\|\indc_{\Delta_{s+k}(\eps_0/2)}    ( f^{(s+k)}_{N,0} -f^{(s+k)}_0 )   \right\|_{L^\infty}
\, .
  $$

 Note that, summing  all the elementary contributions (i.e. summing over $J$, $M$ and $k$), we get the convergence to 0, but with a very bad dependence with respect to $R$ and $n$.

  \bigskip
  In the case of tensorized initial data, this estimate can be improved using some explicit control on the convergence of the initial data.
Looking at the proof of Proposition \ref{init-cv1}, we indeed see that
$$\indc_{Z_s \in {\mathcal D}_s}f^{\otimes s}_{0} 
 -   f_{0,N}^{(s)} = \Big(1 - {\mathcal Z}_{N}^{-1} {\mathcal Z}_{N-s}\Big) \indc_{Z_s \in {\mathcal D}_s}f^{\otimes s}_{0} 
+  {\mathcal Z}_N^{-1} {\mathcal Z}^\flat_{(s+1,N)} \indc_{Z_s \in {\mathcal D}_s}f^{\otimes s}_{0} $$
with 
$$ 
 \Big|1 - {\mathcal Z}_{N}^{-1} {\mathcal Z}_{N-s}\Big| \leq (1-\eps \kappa_d |f_0|_{L^\infty L^1})^{-s} - 1\leq \e s \k_d |f_0|_{L^\infty L^1} \big(1 - \e \k_d | f_0|_{L^\infty L^1}\big)^{-(s+1)}$$
 according to Lemma \ref{lem:bd-f0HS}, and
 $$  {\mathcal Z}_N^{-1} {\mathcal Z}^\flat_{(s+1,N)}  \leq \e s \k_d |f_0|_{L^\infty L^1} \big(1 - \e \k_d | f_0|_{L^\infty L^1}\big)^{-(s+1)} \,.$$
 Using the continuity estimate in Proposition~\ref{propcontinuityclustersHS}, we then deduce that 
  $$
     \begin{aligned}
  & \Big| \1_{\Delta^X_s(\eps_0)}(J^{R,\delta}_{s,k}(t,J,M)-  \widetilde J^{R,\delta}_{s,k}(t,J,M))(X_s)\Big|    \\
  &\quad \leq \e (s+k) \k_d |f_0|_{L^\infty L^1}  \|  F_{0}\|_{0,\beta_0,\mu_0} \|\varphi\|_{L^\infty (\R^{ds})}c_{k,J,M} \, .
 \end{aligned}
  $$
  denoting by $(c_{k,J,M})$  a sequence of nonnegative real numbers such that $\sum_{k} \sum_{J,M}  c_{k,J,M} = 1$. This concludes the proof of Lemma~\ref{data}.
 \end{proof}

\bigskip
\subsection{Error coming from the prefactors in the collision operators}$ $

As $\eps \to 0$ in the Boltzmann-Grad scaling, we have 
$$
 \frac{(N-s)! }{(N- s-k)!}\eps^{k(d-1)} \to 1 \, .
$$
Defining
\begin{equation}
\label{lastfunctional}
     \begin{aligned}
   \overline J^{R,\delta}_{s,k}  (t,J,M)(X_s) &= \int_{B_R \setminus \cM_s(X_s)} dV_s \varphi_s(V_s) \int_{{\mathcal T}_{k,\delta}(t)}   dT_k\\
   &\quad   \int_{  ^c{} {\mathcal B}_{s} ^{m_1} (  Z_{s}^0(t_1))
}   d\nu_{s+1}  dv_{s+1} \,  ( \nu_{s+1}   \cdot ( v_{s+1}  - v_{m_1}(t_1)))_{j_1}    \\ 
 & \qquad  \dots  \int_{  ^c{} {\mathcal B}_{s+k-1} ^{m_k} (  Z_{s+k-1}^0(t_k))
}  d\nu_{s+k}  dv_{s+k}  \, (\nu_{s+k}   \cdot ( v_{s+k}  - v_{m_k}(t_k)) )_{j_k}  \\ 
  & \qquad \times    \indc_{E_0(Z_{s+k}(0))\leq R^2}   \indc_{ Z^\eps_{s+k}(0) \in \cG_{s+k}(\eps_0/2)}        f_{0}^{(s+k)} (Z_{s+k}(0))   \, ,
\end{aligned}
\end{equation}
  and using again the continuity estimate in Proposition~\ref{propcontinuityclustersHS}, we have the following obvious convergence.   
   
    \begin{Lem}\label{cross-section}
In the Boltzmann-Grad scaling $N \eps^{d-1} = 1$,
$$
 \big |  \1_{\Delta^X_s(\eps_0)} \sum_{k=0}^n \sum_{J,M}(\widetilde J^{R,\delta}_{s,k}-\overline  J^{R,\delta}_{s,k}  ) (t,J,M)(X_s) \big |\leq  C{ (s+n)^2 \over N} \|\varphi\|_{L^\infty (\R^{ds})} \| F_{0}\|_{0,\beta_0,\mu_0}\,.
$$

\end{Lem}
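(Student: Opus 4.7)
Comparing the definition of $\widetilde J^{R,\delta}_{s,k}(t,J,M)$ (introduced in the proof of Lemma~\ref{data}) with that of $\overline J^{R,\delta}_{s,k}(t,J,M)$ in \eqref{lastfunctional}, the two functionals are built from the \emph{same} integrand over the \emph{same} truncated domains of integration, differing only by a multiplicative prefactor. Setting
\[
A_{N,s,k}:=\frac{(N-s)!}{(N-s-k)!}\,\eps^{k(d-1)}= \prod_{j=0}^{k-1}\Bigl(1-\frac{s+j}{N}\Bigr),
\]
(where we used the Boltzmann-Grad scaling $N\eps^{d-1}=1$), the plan is to exploit the exact identity
\[
\widetilde J^{R,\delta}_{s,k}(t,J,M)(X_s)-\overline J^{R,\delta}_{s,k}(t,J,M)(X_s) = \bigl(A_{N,s,k}-1\bigr)\,\overline J^{R,\delta}_{s,k}(t,J,M)(X_s).
\]

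First I would bound the prefactor error. For $N\geq 2(s+n)$, all factors $1-(s+j)/N$ lie in $[1/2,1]$, and the elementary inequality $0\leq 1-\prod_{j=0}^{k-1}(1-x_j)\leq \sum_{j=0}^{k-1} x_j$, valid for $x_j\in[0,1]$, yields
\[
|A_{N,s,k}-1|\;\leq\; \sum_{j=0}^{k-1}\frac{s+j}{N}\;\leq\; \frac{k(s+k)}{N}\;\leq\; \frac{(s+n)^2}{N}
\]
uniformly in $k\leq n$. (For smaller $N$ the announced bound is trivially true, up to adjusting~$C$.)

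Second, I would bound the sum $\sum_{k=0}^n\sum_{J,M}|\overline J^{R,\delta}_{s,k}(t,J,M)(X_s)|$ by invoking the functional framework of Chapter~\ref{existence}. Each $|\overline J^{R,\delta}_{s,k}|$ is dominated from above by the analogous quantity in which we replace the truncated domains $^c{\mathcal B}_{s+j-1}^{m_j}$ by the full sphere~$\times B_R$, drop the energy cutoff and the good-configuration cutoff, and bound $|\varphi_s|$ by $\|\varphi_s\|_{L^\infty}$. The resulting quantity is exactly a summand of the iterated Duhamel series for the Boltzmann hierarchy with data~$F_0$. Applying the pointwise collision-operator estimate~\eqref{est-col:BHS} of Proposition~\ref{propcontinuityBHS} iteratively along the collision tree, combined with the time-ordering argument of Lemma~\ref{lemukai}, one gets
\[
\sum_{k=0}^n\sum_{J,M}\bigl|\overline J^{R,\delta}_{s,k}(t,J,M)(X_s)\bigr|\;\leq\; C\,\|\varphi_s\|_{L^\infty(\R^{ds})}\,\|F_0\|_{0,\beta_0,\mu_0},
\]
where $C$ depends only on $s$, $t$, $\beta_0$, $\mu_0$ and $d$.

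Multiplying the two bounds delivers the claim. No genuine obstacle is expected: the estimate is a direct consequence of the $X_{0,\boldsymbol\beta,\boldsymbol\mu}$-continuity estimates of Chapter~\ref{existence} together with an elementary Taylor expansion of~$A_{N,s,k}$. It is actually simpler than Lemma~\ref{data}, since only the combinatorial prefactor enters---no control on the convergence of the initial marginals themselves is required here.
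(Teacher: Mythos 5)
Your proposal is correct and follows exactly the route the paper intends: since $\widetilde J^{R,\delta}_{s,k}$ and $\overline J^{R,\delta}_{s,k}$ share the same integrand and domains, the difference is $(A_{N,s,k}-1)\overline J^{R,\delta}_{s,k}$ with $A_{N,s,k}=\prod_{j=0}^{k-1}(1-(s+j)/N)$ in the Boltzmann--Grad scaling, the elementary bound $1-\prod(1-x_j)\leq\sum x_j$ gives the $(s+n)^2/N$ factor, and the sum $\sum_{k,J,M}|\overline J^{R,\delta}_{s,k}|$ is controlled by the continuity estimates of Chapter~5. The paper states this as an "obvious convergence" citing Proposition~\ref{propcontinuityclustersHS}; your write-up simply supplies the details of that same argument.
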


\bigskip
\subsection{Error coming from the divergence of trajectories}$ $

We can now compare the definition (\ref{boltzmannfunctionalapprox}) of $J^{0,R,\delta}_{s,k}  (t,J,M)$: 
$$  \begin{aligned}
     J^{0,R,\delta}_{s,k} (t,J,M)(X_s)&   =  \int_{B_R \setminus \cM_s(X_s)} dV_s  \varphi_s(V_s) 
     \int_{{\mathcal T}_{k,\delta}(t)}  dT_k  \int_{^c{} {\mathcal B}_{s} ^{m_1}(  Z_s^0(t_1))} \!  \!  \!  \!  \!  d\nu_{s+1} dv_{s+1}
  ((v_{s+1} - v_{m_1}^0(t_1)\cdot \nu_{s+1} )_{j_1} \\
       &\quad\dots    \int_{^c{} {\mathcal B}_{s+k-1}^{m_k} (  Z_{s+k-1}^0(t_k))}   \!  \!  \!  \!  \! d\nu_{s+k} dv_{s+k}
  ((v_{s+k} - v_{m_k}^0(t_n)\cdot \nu_{s+k} )_{j_k} \\
  & \qquad \times{} \indc_{E_0(Z_{s+k}^0 (0))\leq R^2}    f^{(s+k)}_{0}   (Z_{s+k}^0 (0))  \, .
   \end{aligned}
$$
and the formulation (\ref{lastfunctional}) for the approximate BBGKY hierarchy.

 Lemma \ref{translation-lem} implies  that at time~$0$
we have
$$ | X_{s+k}(0 )- X_{s+k}^0(0)| \leq  Ck \eps\, ,\quad \mbox{and} \quad  V_{s+k}(0 )= V_{s+k}^0(0)\, .$$
Since $f_0^{(s+k)}$  is continuous, we  obtain the expected convergence as stated in the following lemma.

\begin{Lem}\label{lipschitz-estimate}
In the Boltzmann-Grad scaling $N \eps^{d-1} = 1$,
    for all fixed $s, k\in \N$ and $t<T$,
    $$\bar J^{R,\delta}_{s,k}  (t,J,M)(X_s) - J^{0,R,\delta}_{s,k}  (t,J,M)(X_s) \to 0 \, .$$

For tensorized Lipschitz initial data, we further have the following error estimate~:
$$
  \big |  \1_{\Delta^X_s (\eps_0)} \sum_{k=0}^n \sum_{J,M} ( \bar J^{R,\delta}_{s,k}-   J^{0,R,\delta}_{s,k} )  (t,J,M)(X_s) \big | \leq  C \eps n \| \nabla_x f_0\|_\infty  \|  F_{0}\|_{0,\beta_0,\mu_0} \|\varphi\|_{L^\infty (\R^{ds})}\,.$$
\end{Lem}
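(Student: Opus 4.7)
The two functionals $\bar J^{R,\delta}_{s,k}(t,J,M)$ and $J^{0,R,\delta}_{s,k}(t,J,M)$ are defined by \emph{identical} iterated collision integrals, with identical cutoffs on impact parameters and velocities (since both are indexed by the \emph{same} Boltzmann pseudo-trajectory $Z^0_{s+k-1}(t_k)$), identical measures $d\nu_{s+i}dv_{s+i}$, and the same energy cutoff $\indc_{E_0(Z_{s+k}(0))\leq R^2}$; they differ only in the point at which $f^{(s+k)}_0$ is evaluated at the end of the tree, namely $Z^\eps_{s+k}(0)$ versus $Z^0_{s+k}(0)$. By Lemma~\ref{translation-lem} (equation~\eqref{theresult-HS}) the velocities agree exactly, $V^\eps_{s+k}(0)=V^0_{s+k}(0)$, while $|X^\eps_{s+k}(0)-X^0_{s+k}(0)|\leq Ck\eps$. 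Both pseudo-trajectories have $V^0_{s+k}(0)$ in the ball $B^{s+k}_R$, so positions and velocities remain in a compact set determined by $R$ and $t$.

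For the first (qualitative) claim, continuity of $f^{(s+k)}_0$ on the compact set obtained from the bounds on $R$ and the separation condition $Z^\eps_{s+k}(0)\in\cG_{s+k}(\eps_0/2)$ yields pointwise convergence
\[
 f^{(s+k)}_0(Z^\eps_{s+k}(0))-f^{(s+k)}_0(Z^0_{s+k}(0))\longrightarrow 0
\]
as $\eps\to 0$. The collision integrands are dominated uniformly in $\eps$ by $CR^{k(d+1)}t^k/k!\,\|F_0\|_{0,\beta_0,\mu_0}$ times the characteristic functions of bounded sets, so dominated convergence on the fixed (finite) product of bounded integration domains gives the claimed convergence, locally uniformly in $X_s\in\Delta_s^X(\eps_0)$.

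For the quantitative statement under the tensorization hypothesis $f^{(s+k)}_0=f_0^{\otimes(s+k)}$, I use the multilinear Lipschitz bound
\[
 \Bigl|\prod_{i=1}^{s+k}f_0(z^\eps_i(0))-\prod_{i=1}^{s+k}f_0(z^0_i(0))\Bigr|
 \leq \|\nabla_x f_0\|_\infty\sum_{i=1}^{s+k}|x^\eps_i(0)-x^0_i(0)|\prod_{j\neq i}|f_0(z^0_j(0))|,
\]
using that velocities coincide at $t=0$. Combining $|X^\eps_{s+k}(0)-X^0_{s+k}(0)|\leq Ck\eps$ with the pointwise bound $|f_0(z^0_j(0))|\leq \|F_0\|_{0,\beta_0,\mu_0}e^{-\mu_0-\beta_0|v^0_j(0)|^2/2}$ yields
\[
 \bigl|f^{(s+k)}_0(Z^\eps_{s+k}(0))-f^{(s+k)}_0(Z^0_{s+k}(0))\bigr|
 \leq Ck\eps(s+k)\|\nabla_x f_0\|_\infty\,e^{-(s+k-1)\mu_0-\beta_0 E_0(Z^0_{s+k}(0))/2}\|F_0\|^{s+k-1}_{0,\beta_0,\mu_0}.
\]
Injecting this pointwise bound into the definition~\eqref{lastfunctional} of $\bar J^{R,\delta}_{s,k}-J^{0,R,\delta}_{s,k}$, the remaining collision integrals are bounded term-by-term exactly as in the proof of Proposition~\ref{propcontinuityclustersHS} and Lemma~\ref{lemukai}, producing after summation over $J\in\{\pm\}^k$, $M\in[1,s]\times\cdots\times[1,s+k-1]$, and over $k\in[0,n]$, a geometric series bounded by a universal constant. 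The extra factor $k\eps$ extracted from the Lipschitz estimate yields the announced bound $Cn\eps\|\nabla_x f_0\|_\infty\|F_0\|_{0,\beta_0,\mu_0}\|\varphi\|_{L^\infty}$.

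The main (minor) subtlety will be verifying that the multiplicative factor $(s+k)\|F_0\|^{s+k-1}_{0,\beta_0,\mu_0}$ combined with the Maxwellian weight $e^{-\beta_0 E_0/2}$ is absorbed by the parameters $\boldsymbol\beta, \boldsymbol\mu$ and by the convergence of the Cauchy--Kowalevskaya series on the short time interval $[0,T]$ furnished by Theorem~\ref{existence-thmboltzmannHS}; this is a direct rerun of the proof of Lemma~\ref{lemukai} applied to the sequence of differences, with $\lambda$ possibly adjusted so that the chosen $\boldsymbol\beta(t)/2$ still lies in $(0,\beta_0)$.
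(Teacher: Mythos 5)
Your proof follows the same route that the paper sketches in the two lines preceding the lemma (Lemma~\ref{translation-lem} gives $|X^\eps_{s+k}(0)-X^0_{s+k}(0)|\leq Ck\eps$ with equal velocities, then use continuity/Lipschitz regularity of $f_0^{(s+k)}$), so in substance you are reconstructing the intended argument; the paper does not spell out a proof.

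One technical point you gloss over is worth making explicit. In the telescoping Lipschitz bound, the factor carrying $f_0(z^\eps_i)-f_0(z^0_i)$ comes only with $\|\nabla_x f_0\|_\infty$ and \emph{loses} its Maxwellian weight $e^{-\beta_0|v_i|^2/2}$, so you cannot simply ``rerun Lemma~\ref{lemukai}'' on the difference as a new initial datum: one of the $s+k$ velocity integrals is then taken over $B_R$ with no Gaussian, contributing an extra factor of order $R^{d+1}$ rather than $\beta_0^{-d/2}$. This is harmless because the functionals $\bar J^{R,\delta}_{s,k}$ and $J^{0,R,\delta}_{s,k}$ already carry the cutoff $E_0\leq R^2$, so the integration domain is finite and a crude term-by-term bound over the simplex and the balls $B_R$ suffices, producing a constant that is polynomial in $R$ (absorbed later in the optimization $R^2\sim C|\log\eps|$ in Section~\ref{theend-HS}). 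Relatedly, your pointwise estimate produces $\|F_0\|_{0,\beta_0,\mu_0}^{s+k-1}$; to reduce to the single power appearing in the lemma you must invoke the normalization~\eqref{cond:f0} on tensorized data, under which $e^{\mu_0}|f_0|_{0,1,\beta_0}\leq 1$ forces $\|F_0\|_{0,\beta_0,\mu_0}\leq 1$. With these two clarifications your proposal is complete and is the argument the paper has in mind.
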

Notice that putting together Lemmas~\ref{data}, \ref{cross-section} and~\ref{lipschitz-estimate}, along with  the estimates~(\ref{est1})-(\ref{est1-0}) and~(\ref{est2})-(\ref{est2-0}),  end the proof of Theorem~\ref{main-thm} up to the rate of convergence. This is the object of the next paragraph.

\bigskip
\subsection{Optimization for tensorized Lipschitz initial data}$ $
We can now conclude the proof of Theorem~\ref{main-thm}.
Gathering the results of Lemmas \ref{data}, \ref{cross-section} and \ref{lipschitz-estimate}, together with the estimates~(\ref{est1})-(\ref{est1-0}) and~(\ref{est2})-(\ref{est2-0}), we get
$$
\begin{aligned}
 \big  \| I_{s} (t)  -  I_s^0 (t) \big \|_{L^\infty (\R^{ds})} \leq & C\left( 2^{-n} + e^{-C'\beta_0R^2} +\frac{n^2}T \delta \right)\|\varphi\|_{L^\infty (\R^{ds})}
 \sup_N \|  F_{N,0}\|_{\eps,\beta_0,\mu_0} \\
 & +C n^2 (s+n) \Big(R  \eta^{d-1} + R^d \Big(
  \frac{a}{\eps_0} \Big)^{d-1} + R\Big(  \frac{\eps_0}{\delta} 
\Big)^{d-1} \Big)\|  F_{N,0}\|_{\eps,\beta_0,\mu_0} \|\varphi\|_{L^\infty (\R^{ds})} \\
&+ C \eps (s+n)\|  F_{0}\|_{0,\beta_0,\mu_0} \|\varphi\|_{L^\infty (\R^{ds})}\\
& + C{ (s+n)^2 \over N} \|\varphi\|_{L^\infty (\R^{ds})} 
  \| F_{0}\|_{0,\beta_0,\mu_0}\\
  & + C n \eps \| \nabla_x f_0 \|_{L^\infty} \|\varphi\|_{L^\infty (\R^{ds})}
\| F_{0}\|_{0,\beta_0,\mu_0}\
  \end{aligned}
  $$
  
Therefore, choosing
$$n\sim C_1 |\log \eps|, \quad R^2 \sim C_2|\log \eps|$$
for some sufficiently large constants $C_1$ and $C_2$, and
$$\delta = \eps^{(d-1)/(d+1)},\quad \eps_0= \eps^{d/(d+1)} $$
we find that the total error is smaller than $C\eps^\alpha$ for any $\alpha <(d-1)/(d+1)$.

This ends  the proof of Theorem~\ref{main-thm}.

\section{Convergence in the case of a smooth interaction potential: proof of Theorem 11}\label{theend}
Let us now prove Theorem~\ref{main-thmpotential}.

The same arguments as in the previous section provide the convergence for any smooth short-range potential satisfying (\ref{strangeassumption}). Let us only sketch the proof and point out how to deal with the following minor differences.
\begin{itemize}
\item The elimination of multiple collisions gives an additional error term~: from Propositions \ref{modified-h} and~\ref{delta2small}, we indeed deduce the analogue of~(\ref{est1}):
\begin{equation}
\label{est1-pot}
 \big  \| I_{s} (t)  -  I_{s,n} ^{R,\delta} (t) \big \|_{L^\infty (\R^{ds})} \leq C\left( \eps + 2^{-n} + e^{-C'\beta_0R^2} +\frac{n^2}T \delta \right ) \|\varphi\|_{L^\infty (\R^{ds})}
\|\widetilde F_{N,0}\|_{\eps,\beta_0,\mu_0} \, .
\end{equation}

\medskip

\item The  error term coming from the elimination of pathological velocities and impact parameters depends (in a non trivial way) on the local $L^\infty$ norm of the cross-section: estimate~(\ref{est2}) becomes
 $$
\begin{aligned}
&\Big| \1_{\Delta_s (\eps_0)} \sum_{k=0}^n\sum_{J,M} \big (   I_{s,k}^{0,R,\delta}-   J_{s,k}^{0,R,\delta } \big) (t,J,M )   \Big|  \\
&\leq  C n^2(s+n) \Big(R  \eta^{d-1} + C(\Phi, R,\eta) R^d \Big(
  \frac{a}{\eps_0} \Big)^{d-1} + C(\Phi, R,\eta)  R\Big(  \frac{\eps_0}{\delta} 
\Big)^{d-1}\Big)\|  F_{0}\|_{0,\beta_0,\mu_0} \|\varphi\|_{L^\infty (\R^{ds})} \, .
\end{aligned}
$$

\medskip

\item  Additional error terms come from the difference between truncated marginals and true marginals (namely on the initial data)~: by Lemma \ref{lem:trunc-untrunc}, there holds the convergence
 $$ f^{(s)}_{0,N} -\widetilde f^{(s)}_{0,N} \longrightarrow 0 \, ,\qquad \mbox{ for fixed $s \geq 1 \, ,$ as $N \to \infty$ with $N \e^{d-1} \equiv 1 \, ,$ uniformly in $\Omega_s \, .$}$$
 Together with Lemma \ref{data}, this implies that
 $$J^{R,\delta}_{s,k}  (t,J,M)(X_s) - \widetilde J^{R,\delta}_{s,k}  (t,J,M)(X_s) \to 0 \, .$$

\item The micro-translations between the ``good" Boltzmann and BBGKY pseudo-trajectories depend on the maximal duration of the interactions to be considered
$$ | X_{s+k}(0 )- X_{s+k}^0(0)| \leq  C(\Phi, R,\eta) k \eps\, ,\quad \mbox{and} \quad  V_{s+k}(0 )= V_{s+k}^0(0)\, ,$$
so that the convergence
$$\bar J^{R,\delta}_{s,k}  (t,J,M)(X_s) - J^{0,R,\delta}_{s,k}  (t,J,M)(X_s) \to 0$$
may be very slow.

\end{itemize}

\bigskip

Combining all estimates shows that for any fixed $s \in \N$ and any $t<T$
$$I_{s} (t)(X_s)  -  I_s^0 (t)(X_s)  \to 0$$
locally uniformly in~$\Omega_s$,
which concludes the proof of Theorem~\ref{main-thmpotential}.

\chapter{Concluding remarks}\label{conclusion}
\setcounter{equation}{0}

  \section{On the time of validity of Theorems~\ref{existence-thm} and~\ref{main-thm}}

Let us first note that, for any fixed $N$,  the BBGKY hierarchy  has a global solution since it is formally equivalent to the Liouville equation in the phase space of dimension $2Nd$, which is nothing else than a   linear transport equation. The fact that we obtain a  uniform bound on a finite life span only, is therefore due to the analytical-type functional spaces~${\bf X}_{\eps,\beta,\mu}$ we consider. Belonging to such a functional space requires indeed a strong control on the growth of marginals. 

An important point is that the time of convergence is exactly the time for which these uniform a priori estimates hold. By definition of the functional spaces, we are indeed in a situation where the high order correlations can be neglected (see (\ref{est1}) and (\ref{est1-pot})), so that we only have to study the dynamics of a finite system of particles. The term-by-term convergence relies then on geometrical properties of the transport in the whole space, which do not introduce any restriction on the time of convergence.

\bigskip
\noindent
A natural question is therefore to know whether or not it is possible to get better uniform a priori estimates  and thus to improve  the time of convergence.
Let us first remark that such a priori estimates would hold for the Boltzmann hierarchy and thus for the nonlinear non homogeneous  Boltzmann equation.
As mentioned in Chapter \ref{boltz-chapter}, Remark~\ref{Linfty-rmk}, the main difficulty is to control the possible spatial concentrations of particles, which would contradict the rarefaction assumption and lead to an uncontrolled collision process. 

\section{More general potentials}

 A first natural extension
 to this work concerns the case of a compactly supported, repulsive potential, but 
 no longer satisfying~(\ref{strangeassumption}). As explained in Chapter~\ref{scattering}, that assumption guarantees that the cross section is well defined everywhere, since  the deflection angle is   a one-to-one function of the impact parameter. If that is no longer satisfied, additional decompositions are necessary to split the integration domain in subdomains where the cross-section is well-defined~:  as mentioned in Remark~\ref{potential-rmk}, 
we then expect to be able to extend the convergence proof, up to some technical complications due to the resummation procedures (see~\cite{pulvirentiss} for an alternative method).  Note that, if the deflection angle  can be locally constant as a function of the impact parameter, the method does not apply, which is consistent with the fact that we do not expect the Boltzmann equation to be a good approximation of the dynamics (see the by now classical counterexample by Uchiyama \cite{CIP}).

 From a physical point of view it would be more interesting to study the case of 
long-range potentials. Then the cross section actually  becomes singular, so a different notion of limit must be considered, possibly in the spirit of Alexandre and Villani \cite{AV}.  One intermediate step, as in~\cite{desvillettespulvirenti}, would be to extend this work to the case when the support of the potential goes to infinity with the number of particles. Then one could try truncating the long-range potential and showing that the tail of the potential has very little effect in the convergence.
 
 Note that in the case when grazing collisions
 become predominant, then the  Boltzmann equation should be replaced by the Landau equation, whose derivation is essentially open; a first result in that direction was obtained very recently by A. Bobylev, M. Pulvirenti and C. Saffirio in~\cite{BPS}, where a time zero convergence result is established.

    \section{Other boundary conditions}
    
As it stands, our analysis is restricted to the whole space (namely~$X_N \in \R^{dN}$). It is indeed important that free flow corresponds to straight lines (see in particular Lemmas~\ref{geometric-lem1} and~\ref{geometric-lem3} as well more generally as the analysis of pathological  trajectories in Chapter~\ref{pseudotraj}).

 It would be very interesting to generalize this work to more general geometries. A first step in that direction is to study the case of periodic flows in~$X_N$. The geometric lemmas must  be adapted  to that framework, and in particular it appears that a finite life span must a priori be given before the surgery of the collision integrals may be performed (see \cite{BGSR}).
 
 The case of a general domain is again  much more complicated, and results from the theory of billiards would probably need to be  used.

\backmatter

\chapter*{Notation Index}
 \setlength{\columnseprule}{0.001cm}
 \begin{multicols}{2}


$B_R$, ball of radius~$R$ centered at zero in~$\R^{d}$, page~\pageref{indexdefBR}

$B_R^s$, ball of radius~$R$ centered at zero in~$\R^{ds}$, page~\pageref{indexdefBR}  

$B_R(x)$, ball of radius~$R$ centered at $x$ in $ \R^{d}$, page~\pageref{indexdefBepsx}  

$\cB_k(\overline Z_k)$ a  small set of angles and velocities of a particle adjoined to~$\overline Z_k$ (or a neighboring configuration), leading to pathological trajectories, page~\pageref{geometric-prop}

%
%
%

$b(w,\omega)/ |w|$,  cross-section, page~\pageref{defcross-section}  

\medskip

${\mathbf C}_N$,  BBGKY hierarchy collision operator, page~\pageref{def:collisionop0} for the hard-spheres case and
 page~\pageref{bH-def}    for the potential case

${\mathbf C^0} $,  Boltzmann hierarchy collision operator,   page~\pageref{mild-Boltzmann} for the hard-spheres case and 
 page~\pageref{bH-defboltzmannpotential}   for the potential case

${\mathcal C}_{s,s+1}$, BBGKY collision operator, page~\pageref{def:collisionop0} for the hard-spheres case and
page~\pageref{css+k}   for the potential case

${\mathcal C}_{s,s+m}$, BBGKY collision operator involving~$m$ additional particles, page~\pageref{css+k}

${\mathcal C}^0_{s,s+1}$, Boltzmann collision operator, page~\pageref{Boltzmann-ophardspheres} for the hard-spheres case   and 
 page~\pageref{Boltzmann-opscatchapter}   for the potential case

\medskip

${\mathcal D}_N$, domain on which the hard-spheres dynamics take place, page~\pageref{def:DNHS}

${\mathcal D}_N^s$, artificial set in~$X_N$ variables on which the Hamiltonian dynamics take place, page~\pageref{def:DNS}



$\Delta_m(X_s)$, $m$-particle cluster based on~$X_s$,  page~\pageref{definitionclusterking}

$\Delta_s$,  well-separated initial configurations,  page~\pageref{initialization}

$\Delta_s^X$,   well-separated initial positions,  page~\pageref{initialization}

$d\sigma_N^{i,j}$, surface measure on~$\Sigma_N^s(i,j)$,   page~\pageref{defSigmaNij}

$d\sigma $,  surface measure on~$S_\eps(x_i)$,   page~\pageref{dsigmaSeps}  



$dZ_{(i,j)}$,  $2d(j-i+1)$-dimensional Lebesgue measure, page \pageref{page:ij-measure}

\medskip

$E(X_s,X_n)$, $\eps$-closure of~$X_s$ in~$X_N$, page~\pageref{definitionclusterking}

$E_{<i_0,j_0>}(X_s,X_n)$, $\eps$-closure of~$X_s$ in~$X_N$ with a weak link at~$(i_0,j_0)$,  page~\pageref{definitionclusterking}

$E_\eps(Z_s)$, $s$-particle Hamiltonian, page~\pageref{indexdefPhieps}  

$E_0(Z_s)$, $s$-particle free Hamiltonian, page~\pageref{def:freehamiltonian}  

\medskip

$f_N^{(s)}$, marginal of order~$s$ of the $N$-particle distribution function,  page~\pageref{marginalHS}  for the hard-spheres case, page~\pageref{marginal}  for the potential case

$\widetilde f_N^{(s)}$, truncated marginal of order~$s$ of the $N$-particle distribution function,  page~\pageref{t-marginal}

$  f^{(s)}$,   marginal of order~$s$ associated with the Boltzmann hierarchy, page~\pageref{mild-Boltzmann}  

${\mathbf \Phi}_\eps$, rescaled potential, page~\pageref{indexdefPhieps}

\medskip

${\mathcal G}_k$,  set of good configurations of~$k$ particles,   page~\pageref{indexdefGk}

\medskip

${\mathbf H}_s(t)$, $s$-particle  flow in the potential case, page~\pageref{solutionoperator}  

${\mathbf H}(t)$,    BBGKY hierarchy flow in the potential case, page~\pageref{bH-def}  
\medskip

$I_\varphi$, observable (average with respect to momentum variables), page~\pageref{indexdefobservable}

$I_s(t)(X_s)$ BBGKY observable, page~\pageref{observablesIsHS} for the hard-spheres case, page~\pageref{observablepotential} for the potential case

$I_s^0(t)(X_s)$ Boltzmann observable, page~\pageref{observablesHS}

$I^{R,\delta}_{s,k}(t )(X_s) $ reduced BBGKY observable, page~\pageref{formulafEHS} for the hard-spheres case, page~\pageref{defIsnRdelta} for the potential case

$I^{0,R,\delta}_{s,k}(t )(X_s) $ reduced Boltzmann observable, page~\pageref{formulafEHS}

\medskip
$K(w,y,\rho)$, cylinder of origin $w \in \R^d$, of axis $y\in \R^d$ and radius $\rho>0$, page~\pageref{defindexKyeta}

\medskip

$\k_d,$ volume of the unit ball in $\R^d,$ page \pageref{defkappad}




\medskip

$n^{i,j}$, outward normal to~$\Sigma_N(i,j)$, page~\pageref{indexdefnij}

$\nu^{i,j}$, direction of~$x_i-x_j$, page~\pageref{HS-BC}  

\medskip
 
 ${\mathcal M}_s(X_s)$, good set of initial velocities associated with well separated positions, page~\pageref{straightHs}

\medskip

${\mathcal P}$,    the set of continuous densities of probability in $\R^{2d}$, page~\pageref{def:P} 

\medskip

$\rho_*$, distance of minimal approach, page~\pageref{reduced-lem} 

\medskip

${\mathbf S}_s(t)$, $s$-particle free  flow,   page~\pageref{mild-Boltzmann}

${\mathbf S}(t)$, total free flow,   page~\pageref{mild-Boltzmann}

${\mathbf S}_1^{d-1}$, unit sphere in~$\R^d$, page~\pageref{unitsphere}

$S_\eps(x_i)$,  sphere in~$\R^d$ of radius~$\eps$, centered at~$x_i$,  page~\pageref{dsigmaSeps}

$\sigma$,  scattering operator in the hard-spheres case,  page~\pageref{defscatteringhardspheres}  

$\sigma_\eps$,  scattering operator in the case of a potential,  page~\pageref{scatteringbbgky}

$\sigma_0$,  Boltzmann scattering operator,  page~\pageref{scatteringsigma0}  

$\Sigma_N(i,j)$, boundary of~${\mathcal D}_N$, page~\pageref{indexdefnij} 

$\Sigma_N^s(i,j)$,  boundary of the artificial set~${\mathcal D}_N^s$,  page~\pageref{defSigmaNij}

\medskip

$ {\bf T}_s(t)$, $s$-particle flow for hard spheres, page~\pageref{solutionoperatorHS}

$ {\bf T}(t)$, total flow for hard spheres, page~\pageref{bH-defHS}

$t_\eps = \eps \tau_*$, nonlinear interaction time, page~\pageref{reduced-lem}

${\mathcal T}_n(t)$, set of collision times, page~\pageref{defTnotdeltaHS}

${\mathcal T}_{n,\delta}(t) $, set of well-separated collision times, page~\pageref{defTdeltaHS}

\medskip

$X_{\eps,s,\beta}$ function space for  BBGKY   marginals, page~\pageref
{def:functional-spacesHS}  for the hard-spheres case and 
 page~\pageref{def:functional-spaces} for the potential case

$X_{0,s,\beta}$ function space for  Boltzmann   marginals, page~\pageref
{def:functional-spacesHS}

${\bf{X}}_{\eps,\beta,\mu}$ function space for the BBGKY  hierarchies, 
page~\pageref{def:functional-spaces2HS}  for the hard-spheres case and 
page~\pageref{def:functional-spaces} for the potential case

${\bf{X}}_{0,\beta,\mu}$ function space for the Boltzmann  hierarchies,  page~\pageref{def:functional-spaces2HS}

${\bf X}_{\e,{\boldsymbol\beta},{\boldsymbol\mu}}$ function space for the uniform existence to the   BBGKY  hierarchies, page~\pageref{deffunctionspacesexistenceHS} for the hard-spheres case and 
page~\pageref{deffunctionspacesexistence} for the potential case

${\bf X}_{0,{\boldsymbol\beta},{\boldsymbol\mu}}$ function space for the uniform existence to the   Boltzmann  hierarchies, page~\pageref{deffunctionspacesexistenceHS}

\medskip

${\bf \Psi}_s(t)$, $s$-particle hard-spheres flow, page~\pageref{HSflow}

\medskip

$\omega$, direction of the apse line, page~\pageref{indexdefomega}   

$\Omega_N$, phase space for the Liouville equation, page~\pageref{defOmegaN}

\medskip

${\mathcal Z}_N$, partition function, page~\pageref{def:Z-f} 
\medskip

$| \cdot |_{\eps,s,\beta}$ norm for the BBGKY marginal of order~$s$, page~\pageref
{def:functional-spacesHS}  for the hard-spheres case and page~\pageref{def:functional-spaces}  for the potential case

$| \cdot |_{0,s,\beta}$ norm for the Boltzmann  marginal of order~$s$, page~\pageref{norm:e-b}  

$\|{\cdot}\|_{\eps,\beta,\mu}$ norm   for the BBGKY hierarchy,
page~\pageref{def:functional-spaces2HS}  for the hard-spheres case and 
 page~\pageref{normepsbetamu}     for the potential case

$\|{\cdot}\|_{0,\beta,\mu}$ norm   for the Boltzmann hierarchy, page~\pageref{def:functional-spaces2HS}  

$  | \! \|\cdot     | \! \| _{ \e,{\boldsymbol\beta},{\boldsymbol\mu}}$, norm in~${\bf X}_{\e,{\boldsymbol\beta},{\boldsymbol\mu}}$,  page~\pageref{deffunctionspacesexistenceHS} for the hard-spheres case
and page~\pageref{deffunctionspacesexistence}   for the potential case

$  | \! \|\cdot     | \! \| _{ 0,{\boldsymbol\beta},{\boldsymbol\mu}}$, norm in~${\bf X}_{0,{\boldsymbol\beta},{\boldsymbol\mu}}$,  page~\pageref{deffunctionspacesexistenceHS}

\end{multicols} 

\printindex



\begin{thebibliography}{99}




\bibitem{alexanderthesis} R. Alexander,  The Infinite Hard Sphere System,  {\it Ph.D. dissertation,} Dept. Mathematics, Univ. California, Berkeley, 1975.

\bibitem{alexander} R. Alexander,  Time Evolution for Infinitely Many Hard Spheres, {\it 
Commun. in Math. Phys.} {\bf 49} (1976), p.217-232.


\bibitem{AV}
R. Alexandre and C. Villani, On the Boltzmann equation for long-range interactions, {\it Comm. Pure Appl. Math. } {\bf 55} (2002), no. 1, p. 30-70.

%

 
\bibitem{BPS} A. Bobylev, M. Pulvirenti and C. Saffirio,  From particle systems to the Landau
equation: a consistency result, {\it preprint}.

\bibitem{BGSR} T. Bodineau, I. Gallagher and L. Saint-Raymond. The linear Boltzmann equation as the low density limit of short-range interacting system of particles, {\it preprint}.

\bibitem{Bogoliubov} N. Bogoliubov,   {\it Problems of Dynamical Theory in Statistical Physics}, in Studies in Statistical Mechanics, J. de Boer and G.E. Uhlenbeck, Ed. Interscience, New York, 1962.
 
{\bibitem{bollobas}  B. Bollob\'as,
{\it Random graphs}, 
Second edition. Cambridge Studies in Advanced Mathematics, {\bf 73}, Cambridge University Press, Cambridge, 2001. xviii+498 pp.}

\bibitem{boltzmann1} L. Boltzmann, Weitere Studien uber das Warme gleichgenicht unfer Gasmolakular. Sitzungsberichte
der Akademie der Wissenschaften 66 (1872), 275-370. Translation : Further studies on the thermal
equilibrium of gas molecules, in Kinetic Theory 2, 88-174, Ed. S.G. Brush, Pergamon, Oxford (1966).


\bibitem{boltzmann} L. Boltzmann, {\it Le\c cons sur la th\'eorie des gaz}, Gauthier-Villars (Paris, 1902-1905). R\'e-\'edition
Jacques Gabay, 1987.

\bibitem{Born} M. Born, H. S.   Green, A general kinetic theory of liquids. I. The molecular distribution functions, {\it Proc. Roy. Soc. London.} Ser. A. {\bf 188} (1946), p. 10-18. 
 
  

\bibitem{braun-hepp}  W. Braun, K. Hepp, The Vlasov dynamics and its fluctuations in the 1/N limit of interacting classical particles, {\it Comm. Math. Phys.,} {\bf 56} (1977),  p. 101-113.

\bibitem{cercignanirigid} C.  Cercignani, On the Boltzmann equation for rigid spheres, {\it Transport Theory Statist. Phys.} {\bf 2} (1972), no. 3, p. 211-225.

\bibitem{cercignani} C.  Cercignani, {\it The Boltzmann equation and its application}, Springer-Verlag, 1988. 

\bibitem{CGP} C. Cercignani, V. I. Gerasimenko, D. I. Petrina, {\it Many-Particle Dynamics and Kinetic Equations},   Kluwer Academic Publishers, Netherlands, 1997.

\bibitem{CIP} C. Cercignani, R. Illner, M. Pulvirenti,
{\it   The Mathematical Theory of Dilute Gases},
Springer Verlag, New York NY, 1994.

\bibitem{desvillettespulvirenti} L. Desvillettes and M. Pulvirenti, The linear Boltzmann equation for long-range forces: a derivation from particle systems, {\it
Math. Models Methods Appl. Sci.} {\bf 9} (1999), no. 8, p. 1123-1145. 

\bibitem{desvillettesricci}  L. Desvillettes, V. Ricci, A rigorous derivation of a linear kinetic equation of Fokker-Planck type in the limit of grazing collisions. {\it J. Statist. Phys.} {\bf 104} (2001), no. 5-6, p. 1173-1189.


\bibitem{diperna-lions}  R. DiPerna and P.-L. Lions, On the Cauchy problem for Boltzmann equations: global existence and weak stability, {\it Ann. of Math.} (2) {\bf 130} (1989), no. 2, 321–366.
 
\bibitem{EP} R. Esposito, M. Pulvirenti, From particles to fluids,  {\it Handbook of mathematical fluid dynamics.} Vol. III, p. 1-82, North-Holland, Amsterdam, 2004.

\bibitem{Georgii} H.-O. Georgii, The Equivalence of Ensembles for Classical
Systems of Particles, {\em Journal of Statistical Physics} {\bf 80} (1995), p. 1341-1378.

\bibitem{golse} F. Golse, {\it The Mean-Field Limit for the Dynamics of Large Particle Systems}.  Journ\'ees``\'Equations aux D\'eriv\'ees Partielles'', Exp. No. IX, 47 pp., Univ. Nantes, Nantes, 2003. 


\bibitem{GLPS} F. Golse, P.-L.  Lions, B. Perthame, R. Sentis, 
Regularity of the moments of the solution of a transport equation,  {\it 
J. Funct. Anal. } {\bf 76} (1988), no. 1, 110–125. 

\bibitem{gradphys} H. Grad, {Principles of the kinetic theory of gases}, {\it Handbuch der Physik} {\bf 12}, Thermodynamik der Gase p. 205-294 Springer-Verlag, Berlin-Gottingen-Heidelberg, 1958.

\bibitem{grad} H. Grad,  On the kinetic theory of rarefied gases, {\it Comm. Pure Appl. Math. } {\bf 2} (1949), p. 331-407.

\bibitem{haurayjabin} M. Hauray, P.-E. Jabin,  N-particles approximation of the Vlasov equations with singular potential, {\it
Arch. Ration. Mech. Anal.} {\bf 183} (2007), no. 3, p. 489–524. 

\bibitem{HS} E. Hewitt, L. Savage,  Symmetric measures on Cartesian products. {\it Trans. Amer. Math.
Soc.} {\bf 80} (1955), p. 470-501.

\bibitem{hilbert} D. Hilbert, {\it Sur les probl\`emes futurs des math\'ematiques}, Compte-Rendu du 2\`eme Congr\'es
International de Math\'ematiques, tenu \`a Paris en 1900 : Gauthier-Villars (Paris, 1902), p. 58-114.

\bibitem{IP1} R. Illner, M. Pulvirenti, Global validity of the Boltzmann equation for a two-dimensional rare gas in vacuum. {\it Comm. Math. Phys.} {\bf 105} (1986), no. 2, p. 189-203.

\bibitem{IP} R. Illner, M. Pulvirenti,  Global validity of the Boltzmann equation for two- and three dimensional
rare gas in vacuum. Erratum and improved result, {\it Comm. Math. Phys. } {\bf 121}
(1989),  p. 143-146.

\bibitem{K} F. King,  BBGKY hierarchy for positive potentials, {\it Ph.D. dissertation,} Dept. Mathematics, Univ. California, Berkeley, 1975.


\bibitem{Kirkwook} J. G. Kirkwood, {The Statistical Mechanical Theory of Transport Processes I. General Theory}, {\it  Journal of Chemical Physics} {\bf 14} (1946) p.180-202.
 
 

\bibitem{LL5} L.D. Landau, E.M. Lifshitz, {\em Statistical Physics}, Part 1. Vol. 5 (3rd ed.)  (1980),   Butterworth-Heinemann.

\bibitem{LL9} L.D. Landau, E.M. Lifshitz, {\em Statistical Physics},  Part 2. Vol. 9 (1rst ed.) (1980),   Butterworth-Heinemann.

 \bibitem{lanford} O.E. Lanford,
 {\it Time evolution of large classical systems}, 
Lect. Notes in Physics {\bf 38}, J. Moser ed., 1--111, Springer Verlag (1975).

\bibitem{nirenberg} L. Nirenberg, An abstract form of the nonlinear Cauchy-Kowalewski theorem,  {\it Journal of Differential Geometry}, {\bf 6} (1972), p. 561-576.
 
   
\bibitem{nishida} T. Nishida, A note on a theorem by Nirenberg, {\it Journal of Differential Geometry}, {\bf 12} (1977), p. 629-633.
 

\bibitem{poincare} H. Poincar\'e,  {\it Works. Vol. VII~: Rotating fluid masses. Principles of analytic mechanics. Three-body problem. With a preface by Jacques L\'evy. }Reprint of the 1952 edition. Gauthier-Villars Great Classics, 1996. 


\bibitem{pulvirenti} M. Pulvirenti,  {\it A short introduction to the kinetic theory}, {available on the webpage} {\tt http://www.mat.uniroma1.it/people/pulvirenti/didattica/index.html} 

\bibitem{pulvirentiss} M. Pulvirenti, C. Saffirio and S. Simonella, {\it personal communication}.





\bibitem{S1} H. Spohn, Boltzmann hierarchy and Boltzmann equation,  in {\it Kinetic theories and the Boltzmann equation } (Montecatini, 1981), p. 207-220.


\bibitem{S2} H. Spohn, {\it Large Scale Dynamics of Interacting Particles}, Texts and Monographs in Physics, Springer Verlag, Heidelberg, 1991.

\bibitem{uchiyama} K. Uchiyama, Derivation of the Boltzmann equation from particle dynamics, {\it
Hiroshima Math. J. } {\bf 18} (1988), no. 2, p. 245-297. 

\bibitem{ukai1} S. Ukai, On the existence of global solutions of mixed problem for non-linear Boltzmann equation, {\it Proc. Japan Acad.,} {\bf 50} (1974), p. 179-184.


\bibitem{ukai2} S. Ukai, Les solutions globales de l'\'equation de Boltzmann dans l'espace tout entier et dans le demi-espace, {\it C.R. Acad. Sci. Paris,} {\bf 282}(6) (1976), p. 317-320.

\bibitem{ukai} S. Ukai, The Boltzmann-Grad Limit and Cauchy-Kovalevskaya Theorem, {\it  Japan J. Indust. Appl. Math.,} {\bf 18} (2001), p. 383-392.


\bibitem{villani} C. Villani, {\it A review of mathematical topics in collisional kinetic theory}, Handbook of mathematical fluid dynamics, Vol. I, 71-305, North-Holland, Amsterdam, 2002. 


  \bibitem{Yvan} J. Yvon, {\it La th\'eorie statistique des fluides et l'\'equation d'\'etat}, Actual. Sci. et Indust. {\bf 203} (Paris, Hermann), 1935.
\end{thebibliography}
\end{document}